%% LyX 2.3.2 created this file.  For more info, see http://www.lyx.org/.
%% Do not edit unless you really know what you are doing.
\documentclass[oneside]{amsart}
\usepackage[T1]{fontenc}
\usepackage[latin9]{inputenc}
\usepackage{geometry}
\geometry{verbose,tmargin=2.3cm,bmargin=2.3cm,lmargin=2.3cm,rmargin=2.3cm}
\usepackage{mathrsfs}
\usepackage{mathtools}
\usepackage{bm}
\usepackage{amsbsy}
\usepackage{amstext}
\usepackage{amsthm}
\usepackage{amssymb}
\usepackage[all]{xy}

\makeatletter
%%%%%%%%%%%%%%%%%%%%%%%%%%%%%% Textclass specific LaTeX commands.
\numberwithin{equation}{section}
\numberwithin{figure}{section}
\theoremstyle{plain}
\newtheorem*{thm*}{\protect\theoremname}
\theoremstyle{plain}
\newtheorem{thm}{\protect\theoremname}[section]
\theoremstyle{remark}
\newtheorem{rem}[thm]{\protect\remarkname}
\theoremstyle{definition}
\newtheorem{defn}[thm]{\protect\definitionname}
\theoremstyle{plain}
\newtheorem{lem}[thm]{\protect\lemmaname}
\theoremstyle{definition}
\newtheorem{example}[thm]{\protect\examplename}
\theoremstyle{plain}
\newtheorem{prop}[thm]{\protect\propositionname}
\theoremstyle{plain}
\newtheorem{cor}[thm]{\protect\corollaryname}

%%%%%%%%%%%%%%%%%%%%%%%%%%%%%% User specified LaTeX commands.
\usepackage{blindtext}

\makeatletter
%default definition of article.cls
%using \renewcommand instead of \newcommand
\renewcommand\part{%
   \if@noskipsec \leavevmode \fi
   \par
   \addvspace{4ex}%
   \@afterindentfalse
   \secdef\@part\@spart}

\def\@part[#1]#2{%
    \ifnum \c@secnumdepth >\m@ne
      \refstepcounter{part}%
      \addcontentsline{toc}{part}{\thepart\hspace{1em}#1}%
    \else
      \addcontentsline{toc}{part}{#1}%
    \fi
    {\parindent \z@ \raggedright
     \interlinepenalty \@M
     \normalfont
     \ifnum \c@secnumdepth >\m@ne
       \Large\bfseries \partname\nobreakspace\thepart
       \par\nobreak
     \fi
     \huge \bfseries #2%
     %%%\markboth{}{}\par}% removing redefinition of headings
     \par}%
    \nobreak
    \vskip 3ex
    \@afterheading}
\def\@spart#1{%
    {\parindent \z@ \raggedright
     \interlinepenalty \@M
     \normalfont
     \huge \bfseries #1\par}%
     \nobreak
     \vskip 3ex
     \@afterheading}
\makeatother

\usepackage{shuffle}
\usepackage[dvipdfmx]{graphicx}
\usepackage{pgfplots}
\usepackage{tikz}
%\pgfplotsset{compat=1.3}

\usetikzlibrary{decorations.markings}
\tikzset{->-/.style={decoration={
  markings,
  mark=at position #1 with {\arrow{>}}},postaction={decorate}}}

\makeatother

\providecommand{\corollaryname}{Corollary}
\providecommand{\definitionname}{Definition}
\providecommand{\examplename}{Example}
\providecommand{\lemmaname}{Lemma}
\providecommand{\propositionname}{Proposition}
\providecommand{\remarkname}{Remark}
\providecommand{\theoremname}{Theorem}

\begin{document}
\address[Minoru Hirose]{Institute for Advanced Research, Nagoya University,  Furo-cho, Chikusa-ku, Nagoya, 464-8602, Japan}
\email{minoru.hirose@math.nagoya-u.ac.jp}
\address[Nobuo Sato]{Department of Mathematics, National Taiwan University, No. 1, Sec. 4, Roosevelt Rd., Taipei 10617, Taiwan (R.O.C.)}
\email{nbsato@ntu.edu.tw}
\subjclass[2010]{11M32, 20F34}
\title[{The motivic Galois group of mixed Tate motives over $\mathbb{Z}[1/2]$}]{The motivic Galois group of mixed Tate motives over $\mathbb{Z}[1/2]$
and its action on the fundamental group of $\mathbb{P}^{1}\setminus\{0,\pm1,\infty\}$}
\author{Minoru Hirose and Nobuo Sato}
\date{\today}
\keywords{Motivic Galois group, mixed Tate motives, fundamental groups, Euler
sums, multiple zeta values, iterated integrals, hyperlogarithms, associator,
confluence relation}
\begin{abstract}
In this paper we introduce confluence relations for motivic Euler
sums (also called alternating multiple zeta values) and show that
all linear relations among motivic Euler sums are exhausted by the
confluence relations. This determines all automorphisms of the de
Rham fundamental groupoid of $\mathbb{P}^{1}\setminus\{0,\pm1,\infty\}$
coming from the action of the motivic Galois group of mixed Tate motives
over $\mathbb{Z}[1/2]$. Moreover, we also discuss other applications
of the confluence relations such as an explicit $\mathbb{Q}$-linear
expansion of a given motivic Euler sum by their basis and $2$-adic
integrality of the coefficients in the expansion.
\end{abstract}

\maketitle
\global\long\def\pio#1#2{{_{#1}\Pi_{#2}}}%
\global\long\def\dpath#1#2#3{{_{#1}#2_{#3}}}%
\global\long\def\depth{{\rm dep}}%
\global\long\def\word{{\rm w}}%
\global\long\def\modzeta{\tilde{\zeta}}%
\global\long\def\modword{\tilde{{\rm w}}}%
\global\long\def\dist{{\rm dist}}%
\global\long\def\Reg{\mathop{{\rm Reg}}}%
\global\long\def\reg{\mathop{{\rm reg}}}%
\global\long\def\barreg{\mathop{{\rm \overline{reg}}}}%
\global\long\def\shreg{{\rm reg}_{\shuffle}}%
\global\long\def\iconf{\mathcal{I}_{\mathrm{CF}}}%
\global\long\def\restr#1#2{{\left.#1\right|_{#2}}}%
\global\long\def\indset{\mathfrak{I}}%
\global\long\def\hconf{\mathcal{H}_{{\rm CF}}}%
\global\long\def\vspan{{\rm span}}%
\global\long\def\modx{\tilde{X}}%
\global\long\def\formalit#1#2#3#4{\mathbb{I}_{#1}(#2;#3;#4)}%
\global\long\def\evallim#1{{\rm ev}_{#1}^{\mathfrak{m}}}%
\global\long\def\compmap{{\rm per}}%
\global\long\def\history#1#2{\mathcal{F}(#1,#2)}%
\global\long\def\jhistory#1#2{\mathcal{G}(#1,#2)}%
\global\long\def\jexthistory#1#2#3#4{\mathcal{G}_{#1}^{#2}(#3,#4)}%
\global\long\def\monohistory#1{\mathcal{F}(#1)}%
\global\long\def\subweak{\prec}%
\global\long\def\subweakneq{\precneqq}%
\global\long\def\substrong{\lhd}%

\tableofcontents{}

\section{Introduction}

\subsection{Basic idea of confluence relations}

In a previous article of the authors \cite{HS_confluence}, we introduced
a class of rational linear relations among multiple zeta values which
we call `confluence relations', and discussed their significance.
The confluence relations are later proven by Furusho \cite{Furusho_confluence}
to be equivalent to Drinfeld's pentagon equation of the KZ-associator.
The purpose of this article is to generalize and develop the theory
of confluence relations for Euler sums and show that they shed new
light on understanding the motivic Galois actions of a class of mixed
Tate motives over $\mathbb{Z}[1/2]$ on the fundamental group of the
$\{0,1,-1,\infty\}$-punctured projective line. 

To give a little more details, we consider a complex function of $z$
defined by
\[
I(p_{0}(z);p_{1}(z),\dots,p_{k}(z);p_{k+1}(z))
\]
where $I$ is the iterated integral symbol and $p_{0},\dots,p_{k+1}$
are polynomials in $z$. By making use of Goncharov's differential
formula
\begin{align*}
 & \frac{d}{dz}I(p_{0}(z);p_{1}(z),\dots,p_{k}(z);p_{k+1}(z))\\
 & =\sum_{\varepsilon\in\{1,-1\}}\varepsilon\sum_{\substack{1\leq i\leq k\\
p_{i}\neq p_{i+\varepsilon}
}
}\frac{d\log(p_{i}(z)-p_{i+\varepsilon}(z))}{dz}I(p_{0}(z);p_{1}(z),\dots,p_{i-1}(z),p_{i+1}(z),\dots,p_{k}(z);p_{k+1}(z)),
\end{align*}
for a fixed complex number $x$, we can expand $I(p_{0}(z);p_{1}(z),\dots,p_{k}(z);p_{k+1}(z))$
as a $\mathbb{C}$-linear sum of the functions of the form
\[
I(x;a_{1},\dots,a_{l};z)\ \ \ (a_{1},\dots,a_{l}\in\mathbb{C})
\]
in a canonical way. Then, by passing the ``limit'' $z$ (in some generalized
sense) to some point $y\in\mathbb{C}$ of this expansion, we obtain
relations among iterated integrals which we refer to as the `confluence
relations' (of general type). In a nutshell, confluence relations
are the relations of special values of hyperlogarithms that arise
from the system of differential equations satisfied by hyperlogarithms.
The aforementioned article of the authors \cite{HS_confluence} studies
the simplest case of the confluence relations when $x=0$, $y=1$,
$p_{1},\dots,p_{k}\in\{0,1,z\}$ and $p_{0}=0$, $p_{k+1}=1$. In
this article, we focus on the confluence relations for which $x=0$,
$y=1$, $p_{1},\dots,p_{k}\in\{0,-1,z,-z^{2}\}$ and $p_{0}=0$, $p_{k+1}=z$.
We will prove that these relations actually give all the motivic relations
among Euler sums, which also has a significant implication in the
Grothendieck-Teichm{\"u}ller theory. In the rest of this introduction,
we will discuss our main results and their significance in three folds. 

\subsection{Confluence relations for real-valued Euler sums and their consequences}

To begin with, we define the set of indices $\indset$ by
\[
\indset\coloneqq\{(k_{1},\dots,k_{d})\in(\mathbb{Z}\setminus\{0\})^{d}\mid d=0\text{ or }k_{d}\neq1\},
\]
and an Euler sum $\zeta(\Bbbk)$ together with its modified version
$\modzeta(\Bbbk)$ by
\[
\zeta(\Bbbk)\coloneqq\sum_{0<m_{1}<\cdots<m_{d}}\prod_{j=1}^{d}\frac{{\rm sgn}(k_{j})^{m_{j}}}{m_{j}^{\left|k_{j}\right|}}
\]
and $\modzeta(\Bbbk)\coloneqq(-2)^{d}\zeta(\Bbbk)$ for $\Bbbk=(k_{1},\dots,k_{d})\in\indset$.
We call ${\rm wt}(\Bbbk)=\left|k_{1}\right|+\cdots+\left|k_{d}\right|$
the weight and $\depth(\Bbbk)=d$ the depth of $\Bbbk$. Also, we
put
\[
\indset(k,d)\coloneqq\{\Bbbk\in\indset\mid{\rm wt}(\Bbbk)=k,\depth(\Bbbk)\leq d\}
\]
and
\[
\indset^{\mathrm{D}}(k,d)\coloneqq\{(k_{1},\dots,k_{r})\in\indset(k,d)\mid k_{1},\dots,k_{r-1}>0,k_{r}<0,k_{2}\equiv\cdots\equiv k_{r}\equiv1\pmod{2}\}.
\]
Fix $k,d\geq0$. In Part \ref{part:Confluence_Euler_sum} of this
article, we will show by an elementary argument using the confluence
relations for Euler sums that
\begin{thm*}[see Theorem \ref{thm:main_explicit}]
\label{thm:main_explicit_intro}For any $\Bbbk\in\indset(k,d)\setminus\indset^{\mathrm{D}}(k,d)$,
we have
\[
\modzeta(\Bbbk)=\sum_{\Bbbk'\in\indset^{\mathrm{D}}(k,d)}\alpha_{\Bbbk,\Bbbk'}\modzeta(\Bbbk')
\]
where $\alpha_{\Bbbk,\Bbbk'}$ are certain explicitly given rational
numbers with odd denominators.
\end{thm*}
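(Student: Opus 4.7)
My plan is to prove the theorem by designing an explicit reduction algorithm driven by the confluence relations for Euler sums established earlier in this part of the paper. At each step the algorithm rewrites $\modzeta(\Bbbk)$ for some $\Bbbk \notin \indset^{\mathrm{D}}(k,d)$ as a $\mathbb{Q}$-linear combination of $\modzeta(\Bbbk')$ for strictly simpler $\Bbbk'$, while simultaneously controlling the 2-adic valuation of the coefficients; iterating terminates with an expansion supported on $\indset^{\mathrm{D}}(k,d)$.

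First I would encode each value $\modzeta(\Bbbk)$ as the regularized specialization at $z=1$ of a hyperlogarithm $I(0;p_1(z),\dots,p_k(z);z)$ with letters $p_i \in \{0,-1,z,-z^2\}$, choosing the letters so that the sign and absolute value of each component of $\Bbbk$ are faithfully recorded. The normalization $\modzeta(\Bbbk)=(-2)^d\zeta(\Bbbk)$ is built in to match the powers of $2$ generated when the letter $-z^2$ is differentiated via Goncharov's formula and when the iterated integral is eventually specialized at $z=1$.

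Next, I would define a well-founded order $\prec$ on $\indset(k,d)$ whose set of minimal elements is exactly $\indset^{\mathrm{D}}(k,d)$, most naturally by ordering indices first by depth and then by a combined sign/parity statistic that measures how far $\Bbbk$ is from having a single trailing negative entry with odd tail. For each non-minimal $\Bbbk$, I would exhibit a specific confluence relation that expresses
\[
\modzeta(\Bbbk)=\sum_{\Bbbk'\prec\Bbbk}\beta_{\Bbbk,\Bbbk'}\,\modzeta(\Bbbk')
\]
with each $\beta_{\Bbbk,\Bbbk'}\in\mathbb{Z}_{(2)}$. The construction is a case analysis on how $\Bbbk$ fails the defining conditions of $\indset^{\mathrm{D}}(k,d)$: a non-negative last entry, an even entry beyond the first, or an interior negative entry, each handled by a tailored confluence relation coming from a carefully chosen limiting word.

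Iterating these reduction rules terminates by well-foundedness of $\prec$, and concatenating coefficients along the way produces the desired numbers $\alpha_{\Bbbk,\Bbbk'}$. The main obstacle I foresee is the 2-adic integrality claim: raw confluence relations readily generate halves and quarters, so the assertion that every accumulated $\alpha_{\Bbbk,\Bbbk'}$ has odd denominator is a strong cancellation phenomenon. I would prove it via an invariant asserting that the renormalization factor $(-2)^d$ built into $\modzeta$ exactly absorbs the binary denominators produced by the differentiation of $-z^2$ and by the limit $z\to 1$, so that the reduction matrix is globally 2-integral. Verifying this invariant uniformly across the case analysis is the technical core of the argument.
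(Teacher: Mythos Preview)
Your overall shape is right, but there is a real gap exactly where you flag the main obstacle. You assert that for each non-minimal $\Bbbk$ a single confluence relation will produce
\[
\modzeta(\Bbbk)=\sum_{\Bbbk'\prec\Bbbk}\beta_{\Bbbk,\Bbbk'}\,\modzeta(\Bbbk'),\qquad \beta_{\Bbbk,\Bbbk'}\in\mathbb{Z}_{(2)},
\]
i.e.\ a strictly triangular reduction over $\mathbb{Z}_{(2)}$. This is not what a confluence relation gives. Written in the $\modword$-basis, the relation attached to $\Bbbk$ has an \emph{odd} (not unit) coefficient on $\modword(\Bbbk)$ and carries terms $\modword(\Bbbk')$ with $\Bbbk'\succ\Bbbk$ whose coefficients are merely even, not zero (the worked example at $(k,d)=(3,3)$ already exhibits this). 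So the system is only lower unitriangular \emph{modulo $2$}, and naive iteration does not terminate; one must invert the whole matrix over $\mathbb{Z}_{(2)}$ or, equivalently, invoke Nakayama's lemma. Your proposed ``invariant'' does not address this: the factor $(-2)^d$ indeed absorbs the $\tfrac12$ arising when depth drops, but it does nothing about the same-depth error terms.

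The paper makes this precise via a two-stage reduction through the intermediate set $\indset^{Y}(k,d)$ (last entry negative, others positive), which merges your ``interior negative'' and ``last entry non-negative'' cases. Step~1 uses words in $\{e_0,e_{-1},e_z\}$ to prove $X(k,d)\subset Y(k,d)+2X(k,d)$ (Lemma~\ref{lem:main_step1}); Step~2 uses words in $\{e_0,e_{-1},e_{-z^2}\}$, with a carefully chosen split between $e_{-1}$'s and $e_{-z^2}$'s dictated by the position of the last even $k_j$ and the run of $1$'s preceding it, to prove $Y(k,d)\subset X^{\mathrm D}(k,d)+2X(k,d)+\tfrac12 X(k,d-1)$ (Lemma~\ref{lem:main_step2}). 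In $\modx$-coordinates this becomes $\modx(k,d)\subset\modx^{\mathrm D}(k,d)+\modx(k,d-1)+2\modx(k,d)$, and Nakayama over $\mathbb{Z}_{(2)}$ followed by induction on $d$ finishes. The $2$-adic control in Step~2 is hard and is not a soft invariant: it rests on an explicit mod-$4$ computation of $\reg_{z\to0}(e_{-z^2}e_0^{k_1-1}\cdots e_{-z^2}e_0^{k_d-1})$ (Lemma~\ref{lem:reg_z0_mod4_congruence}) feeding into a computation of $\varphi_\otimes(u)$ modulo $2N_{k,d}+4M_{k,d}+M_{k,d-1}$ (Lemma~\ref{lem:main_step2_2}). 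Your sketch supplies none of this machinery, and in particular your ``even entry beyond the first'' case cannot be completed without it.
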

The above theorem is significant for the following three reasons:
First, the theorem gives an upper bound 
\[
\dim\left\langle \zeta(\Bbbk)\mid\Bbbk\in\indset(k,d)\right\rangle _{\mathbb{Q}}\leq\#\indset^{\mathrm{D}}(k,d),
\]
which was already shown by Deligne \cite{Deli_es} (see also \cite{Glanois_fundamental_groupoid}
where the author considers a closely related alternative for $\{\zeta(\Bbbk)\left|\Bbbk\in\indset^{\mathrm{D}}(k,d)\right.\}$)
using the motivic framework, but up to now no elementary proof ($=$
does not depend on the theory of mixed Tate motives, Borel's calculation
of the $K$-groups modulo torsion, etc.) has been given. Thus, our
result gives the first elementary proof of the above dimension upper
bound. Second, by the method in \cite{Deli_es}, the existence of
the rational numbers $\alpha_{\Bbbk,\Bbbk'}$ can be proved, while
the above theorem tells us more precisely that the denominators of
$\alpha_{\Bbbk,\Bbbk'}$ are odd. Third, the coefficients $\alpha_{\Bbbk,\Bbbk'}$
are given in a completely explicit way, which enables us to obtain
lots of new information about $\alpha_{\Bbbk,\Bbbk'}$ such as an
upper bound of their denominators. It should be emphasized that such
precise results (oddness of the denominators of the coefficients,
an explicit formula for the coefficients and a calculable bound of
the denominator of the coefficients) seem to be unreachable by simply
extending the method of \cite{Deli_es}.

\subsection{Confluence relations for motivic Euler sums and their consequences}

In Part \ref{part:Motivicity} of this article, we prove that the
confluence relations are also satisfied by motivic Euler sums. Here
we state three remarkable applications of the confluence relations
for motivic Euler sums. For $\Bbbk\in\indset$, let $\zeta^{\mathfrak{m}}(\Bbbk)$
denote the motivic Euler sum and $\modzeta^{\mathfrak{m}}(\Bbbk)\coloneqq(-2)^{\depth(\Bbbk)}\zeta^{\mathfrak{m}}(\Bbbk)$
its modified version. Our first application is the following theorem.
\begin{thm*}[see Theorem \ref{thm:main_general_motivic_zeta}]
For any $\Bbbk\in\indset(k,d)\setminus\indset^{\mathrm{D}}(k,d)$,
we have
\[
\modzeta^{\mathfrak{m}}(\Bbbk)=\sum_{\Bbbk'\in\indset^{\mathrm{D}}(k,d)}\alpha_{\Bbbk,\Bbbk'}\modzeta^{\mathfrak{m}}(\Bbbk')
\]
where $\alpha_{\Bbbk,\Bbbk'}$ are the same rational numbers as in
Theorem \ref{thm:main_explicit_intro}.
\end{thm*}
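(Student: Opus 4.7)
The strategy is to reduce this theorem to its real-valued counterpart (Theorem \ref{thm:main_explicit_intro}) by lifting every step of that elementary argument to the motivic setting. Concretely, the proof of Theorem \ref{thm:main_explicit_intro} in Part \ref{part:Confluence_Euler_sum} uses nothing about real Euler sums except (i) the confluence relations themselves and (ii) purely formal combinatorial manipulations of indices. So, provided we can establish a motivic version of the confluence relations, the same chain of deductions will produce exactly the same coefficients $\alpha_{\Bbbk,\Bbbk'}$ on the motivic side.

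My plan is therefore to first prove that every confluence relation used in Part \ref{part:Confluence_Euler_sum} lifts to a relation among motivic Euler sums $\zeta^{\mathfrak{m}}(\Bbbk)$. This requires setting up a motivic avatar of the iterated-integral symbol $\formalit{}{p_0(z)}{p_1(z),\dots,p_k(z)}{p_{k+1}(z)}$ with polynomial endpoints drawn from $\{0,-1,z,-z^{2}\}$, and verifying that Goncharov's differential formula holds motivically. The key point is that both sides of the intended confluence identity can be interpreted as motivic periods living in the motivic fundamental torsor of the relevant affine line with punctures over the base field $\mathbb{Q}(z)$; the relation then reduces to the statement that two such motivic periods which satisfy the same first-order differential equation and agree at the chosen regularized base point must coincide. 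The limit procedure $z\to 1$ is realized by specialization (a motivic analog of tangential base-point regularization).

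Once the motivic confluence relations are in place, the proof of the main theorem becomes essentially a transcription. Namely, the algorithm in the proof of Theorem \ref{thm:main_explicit_intro} produces, for each $\Bbbk \in \indset(k,d) \setminus \indset^{\mathrm{D}}(k,d)$, an explicit $\mathbb{Q}$-linear combination of confluence relations whose evaluation yields $\modzeta(\Bbbk) - \sum_{\Bbbk'} \alpha_{\Bbbk,\Bbbk'}\modzeta(\Bbbk') = 0$. Applying the motivic lift of each confluence relation used and reading off the same coefficients, we obtain the desired identity in the motivic setting. Since the $\alpha_{\Bbbk,\Bbbk'}$ depend only on the combinatorics of the reduction procedure and not on the periods themselves, they are literally identical to those of Theorem \ref{thm:main_explicit_intro}.

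The principal obstacle is the motivic lifting itself: establishing that Goncharov's differential formula and the tangential-base-point specialization used to take the $z \to 1$ limit are valid at the level of motivic iterated integrals over $\mathbb{Z}[1/2]$ with the enlarged alphabet $\{0,-1,z,-z^{2}\}$. This is subtle because $-z^{2}$ is not a linear polynomial in $z$, so one has to confirm that the resulting motivic iterated integrals still lie in the ring of motivic periods of mixed Tate motives over $\mathbb{Z}[1/2]$ (after specializing $z$), and that the functorial properties (shuffle product, path-composition formula, reversal) needed to derive the differential formula continue to hold. Once this foundational work is done, the passage from the real theorem to the motivic theorem is automatic by the formality of the argument.
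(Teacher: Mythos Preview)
Your high-level strategy is correct and matches the paper's: once one knows that $\iconf\subset\ker(L^{\mathfrak m})$, Theorem~\ref{thm:main_explicit} applies verbatim with $Z=L^{\mathfrak m}$ and yields the motivic statement with the identical coefficients $\alpha_{\Bbbk,\Bbbk'}$. The paper states this explicitly.

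The gap is in how you propose to prove the motivicity of the confluence relations. Your plan is to interpret both sides as motivic periods ``over $\mathbb{Q}(z)$'' and invoke a uniqueness principle: two motivic periods satisfying the same first-order ODE with the same regularized initial value must coincide. This is precisely the approach the authors discuss at the opening of Part~\ref{part:Motivicity} and say they were \emph{unable} to carry out. The difficulty is real: there is no ready-made uniqueness theorem for solutions of differential equations in the category of motivic periods, and the na\"ive translation via Kontsevich--Zagier period manipulations runs into singular integrals (poles on the boundary of the integration domain) that do not fit the definition of formal periods without blow-ups the authors could not control. Your proposal asserts this step without supplying the missing mechanism.

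The paper's actual proof of $\iconf\subset\ker(L^{\mathfrak m})$ takes a completely different route. It constructs a motivic ``regularized limit'' map $J_\gamma(p_0;p_1,\dots,p_k;p_{k+1})\in\mathcal H[T]$ by hand (Sections~\ref{subsec:Behavior-at-inf}--\ref{subsec:limit-for-any-path}), proves that it is compatible with Goncharov's coproduct (Proposition~\ref{prop:Brown_J_gamma}), and then shows that the element $\psi_{x,y}(u)$ encoding the confluence relation satisfies $\mathrm{per}(\psi_{x,y}(u))=0$ and $D_r(\psi_{x,y}(u))=0$ for all $r<k$ via an elaborate combinatorial cancellation (Section~\ref{sec:Motivicity-of-confluence}). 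Brown's criterion (Lemma~\ref{lem:Brown}) then forces $\psi_{x,y}(u)=0$. No differential-equation uniqueness over $\mathbb{Q}(z)$ is ever invoked; everything happens inside $\mathcal H$ via the coaction.
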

Our second application is to fill the ``missing final piece'' of Brown's
decomposition algorithm for motivic multiple zeta values (which also
works for motivic Euler sums). In \cite{Bro_dec}, Brown gives an
algorithm to decompose a motivic multiple zeta value in terms of a
given basis. In \cite[p54]{Bro_dec}, he remarked that
\begin{quote}
``This is only an algorithm in the true sense of the word in so far
as it is possible to compute the coefficients $c_{\xi}$ and this
is the only transcendental input.''
\end{quote}
The determination of $c_{\xi}$ is an inevitable part of Brown's algorithm.
By computing $c_{\xi}$ as a real number to many digits, one can guess
the numerator and denominator of $c_{\xi}$, but since a guess is
just a guess, its correctness can not be mathematically verified,
no matter how many digits one calculates. Our theorem gives the first
successful method to determine the numerator and denominator of $c_{\xi}$
definitely. Suppose that we know the existence of $c_{\xi}$ such
that $\sum_{\Bbbk}q_{\Bbbk}\zeta^{\mathfrak{m}}(\Bbbk)=c_{\xi}\zeta^{\mathfrak{m}}(k)$
for some $\mathbb{Q}$-linear combination $\sum_{\Bbbk}q_{\Bbbk}\zeta^{\mathfrak{m}}(\Bbbk)$
of motivic Euler sums. Then, we find 
\[
\sum_{\Bbbk}q_{\Bbbk}\zeta^{\mathfrak{m}}(\Bbbk)=\sum_{\Bbbk}(-2)^{-\depth(\Bbbk)}q_{\Bbbk}\alpha_{\Bbbk,(-k)}\modzeta^{\mathfrak{m}}(-k)=\sum_{\Bbbk}(-2)^{1-\depth(\Bbbk)}(2^{1-k}-1)q_{\Bbbk}\alpha_{\Bbbk,(-k)}\zeta^{\mathfrak{m}}(k),
\]
and thus we get the following formula for $c_{\xi}$.
\begin{thm*}
If $\sum_{\Bbbk}q_{\Bbbk}\zeta^{\mathfrak{m}}(\Bbbk)=c\cdot\zeta^{\mathfrak{m}}(k)$
for some $c\in\mathbb{Q}$, then
\[
c=\sum_{\Bbbk}(-2)^{1-\depth(\Bbbk)}(2^{1-k}-1)q_{\Bbbk}\alpha_{\Bbbk,(-k)}.
\]
\end{thm*}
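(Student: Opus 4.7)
The plan is to read this off as a direct corollary of the preceding motivic main theorem, by singling out the contribution of one particular basis element. The key observation is that $(-k)\in\indset^{\mathrm{D}}(k,1)\subseteq\indset^{\mathrm{D}}(k,d)$, and $\modzeta^{\mathfrak{m}}(-k)$ is a nonzero rational multiple of $\zeta^{\mathfrak{m}}(k)$ via the motivic lift of the classical reflection formula
\[
\zeta^{\mathfrak{m}}(-k)=(2^{1-k}-1)\zeta^{\mathfrak{m}}(k),
\]
so that $\modzeta^{\mathfrak{m}}(-k)=-2(2^{1-k}-1)\zeta^{\mathfrak{m}}(k)$.

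First I would apply the motivic main theorem to each $\zeta^{\mathfrak{m}}(\Bbbk)$ appearing in the assumed identity, yielding
\[
\sum_{\Bbbk}q_{\Bbbk}\zeta^{\mathfrak{m}}(\Bbbk)=\sum_{\Bbbk}(-2)^{-\depth(\Bbbk)}q_{\Bbbk}\sum_{\Bbbk'\in\indset^{\mathrm{D}}(k,d)}\alpha_{\Bbbk,\Bbbk'}\modzeta^{\mathfrak{m}}(\Bbbk').
\]
Second, using that $\{\modzeta^{\mathfrak{m}}(\Bbbk')\}_{\Bbbk'\in\indset^{\mathrm{D}}(k,d)}$ is a basis of the weight-$k$, depth-at-most-$d$ component of the space of motivic Euler sums (Deligne's theorem, whose upper bound is reproved elementarily by the confluence relations), I would equate the coefficient of the basis element $\modzeta^{\mathfrak{m}}(-k)$ on both sides of $\sum_{\Bbbk}q_{\Bbbk}\zeta^{\mathfrak{m}}(\Bbbk)=c\cdot\zeta^{\mathfrak{m}}(k)$. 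Since the right-hand side equals $-\frac{c}{2(2^{1-k}-1)}\modzeta^{\mathfrak{m}}(-k)$, this comparison gives
\[
\sum_{\Bbbk}(-2)^{-\depth(\Bbbk)}q_{\Bbbk}\,\alpha_{\Bbbk,(-k)}=-\frac{c}{2(2^{1-k}-1)},
\]
from which the claimed expression for $c$ follows after multiplying both sides by $-2(2^{1-k}-1)$.

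There is no genuine obstacle here beyond the inputs already supplied: the preceding motivic main theorem does all of the nontrivial lifting. The one point that must be handled carefully — and which might be called the main delicate step — is the collapse from many basis terms in the expansion to only the $\Bbbk'=(-k)$ term. This collapse is legitimate precisely because the right-hand side $c\cdot\zeta^{\mathfrak{m}}(k)$ lives in the one-dimensional subspace spanned by $\modzeta^{\mathfrak{m}}(-k)$, and the basis property of $\indset^{\mathrm{D}}(k,d)$ guarantees a unique coefficient extraction; the coefficients of all other $\modzeta^{\mathfrak{m}}(\Bbbk')$ must vanish on both sides simultaneously, a consistency condition that is automatically satisfied by hypothesis but plays no role in computing $c$ itself.
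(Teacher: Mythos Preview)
Your argument is correct and matches the paper's own derivation essentially line for line: expand each $\zeta^{\mathfrak{m}}(\Bbbk)=(-2)^{-\depth(\Bbbk)}\modzeta^{\mathfrak{m}}(\Bbbk)$ via the motivic main theorem, use the linear independence of $\{\modzeta^{\mathfrak{m}}(\Bbbk')\mid\Bbbk'\in\indset^{\mathrm{D}}(k,d)\}$ to isolate the $\Bbbk'=(-k)$ coefficient, and convert via $\modzeta^{\mathfrak{m}}(-k)=-2(2^{1-k}-1)\zeta^{\mathfrak{m}}(k)$. The paper presents exactly this computation inline just before stating the theorem.
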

Notice that, as a consequence of this theorem (also as a consequence
of the previous theorem), we obtain a purely algebraic algorithm of
determining the expansion of multiple zeta values by a given basis. 

Finally, as our third application, we shall show 
\begin{thm*}
The set of relations among motivic Euler sums coincides with the set
of confluence relations.
\end{thm*}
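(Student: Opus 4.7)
The plan is a dimension-counting sandwich. For each weight $k$, let $V_k$ be the $\mathbb{Q}$-vector space freely generated by the formal symbols $\zeta(\Bbbk)$ for $\Bbbk\in\indset$ of weight $k$, and let $\phi_k\colon V_k\to\mathcal{H}$ denote the evaluation map $\zeta(\Bbbk)\mapsto\zeta^{\mathfrak{m}}(\Bbbk)$ into the motivic Hopf algebra $\mathcal{H}$. Write $R_k^{\mathrm{mot}}\coloneqq\ker\phi_k$ for the full space of $\mathbb{Q}$-linear motivic relations in weight $k$, and $R_k^{\mathrm{CF}}\subseteq V_k$ for the subspace carved out by the confluence relations. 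The theorem is precisely $R_k^{\mathrm{CF}}=R_k^{\mathrm{mot}}$ for all $k$.

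The first ingredient is motivicity. By the results of Part \ref{part:Motivicity} every confluence relation is satisfied in $\mathcal{H}$, so $R_k^{\mathrm{CF}}\subseteq R_k^{\mathrm{mot}}$; this yields a surjection $V_k/R_k^{\mathrm{CF}}\twoheadrightarrow V_k/R_k^{\mathrm{mot}}$ and the inequality $\dim V_k/R_k^{\mathrm{CF}}\ge\dim V_k/R_k^{\mathrm{mot}}$. The second ingredient is the reduction formula read formally: inspecting the proof of Theorem \ref{thm:main_general_motivic_zeta}, each step expressing $\modzeta(\Bbbk)$ in terms of the basis $\indset^{\mathrm{D}}(k,k)$ is produced by applying a confluence relation, so the identity $\modzeta(\Bbbk)-\sum_{\Bbbk'}\alpha_{\Bbbk,\Bbbk'}\modzeta(\Bbbk')$ already lies in $R_k^{\mathrm{CF}}$. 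Hence modulo $R_k^{\mathrm{CF}}$ every class $[\zeta(\Bbbk)]$ is a rational combination of the $[\zeta(\Bbbk')]$ with $\Bbbk'\in\indset^{\mathrm{D}}(k,k)$, giving the upper bound $\dim V_k/R_k^{\mathrm{CF}}\le|\indset^{\mathrm{D}}(k,k)|$.

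The third ingredient is Deligne's theorem \cite{Deli_es}, which supplies the matching lower bound: the family $\{\zeta^{\mathfrak{m}}(\Bbbk')\mid\Bbbk'\in\indset^{\mathrm{D}}(k,k)\}$ is $\mathbb{Q}$-linearly independent in $\mathcal{H}$, so $\dim V_k/R_k^{\mathrm{mot}}\ge|\indset^{\mathrm{D}}(k,k)|$. Chaining the three bounds yields
\[
|\indset^{\mathrm{D}}(k,k)|\le\dim V_k/R_k^{\mathrm{mot}}\le\dim V_k/R_k^{\mathrm{CF}}\le|\indset^{\mathrm{D}}(k,k)|,
\]
so all three are equalities. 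In particular $\dim V_k/R_k^{\mathrm{CF}}=\dim V_k/R_k^{\mathrm{mot}}$, and combined with the inclusion $R_k^{\mathrm{CF}}\subseteq R_k^{\mathrm{mot}}$ this forces $R_k^{\mathrm{CF}}=R_k^{\mathrm{mot}}$, which is the claim.

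The main obstacle in this strategy is genuinely external: one must invoke Deligne's lower bound, which ultimately rests on Borel's rank computation for the $K$-theory of $\mathbb{Z}[1/2]$ together with the Tannakian description of $\mathrm{MT}(\mathbb{Z}[1/2])$. Everything else reduces to an accounting of previous sections, since the substantive new content — the construction, explicit coefficients, and motivicity of the confluence relations — has already been produced in Parts \ref{part:Confluence_Euler_sum} and \ref{part:Motivicity}.
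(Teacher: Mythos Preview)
Your argument is correct and matches the paper's proof of Theorem~\ref{thm:main_conf-2} essentially step for step: the inclusion $R_k^{\mathrm{CF}}\subseteq R_k^{\mathrm{mot}}$ is Proposition~\ref{prop:confluence_ker_L}, the upper bound on $\dim V_k/R_k^{\mathrm{CF}}$ is Theorem~\ref{thm:main_main}, and the lower bound is Deligne's computation that $\dim V_k/R_k^{\mathrm{mot}}=F_k=\#\indset^{\mathrm{D}}(k,\infty)$, after which the sandwich closes. The only cosmetic difference is that the paper phrases everything in the word algebra $\mathfrak{H}^{(2)}$ (and extends to non-admissible words by throwing $e_0,e_1$ into the ideal), whereas you work directly with formal $\zeta(\Bbbk)$'s indexed by $\indset$; the two setups are equivalent via $\Bbbk\mapsto\word(\Bbbk)$.
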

As we will see in the next section, this theorem has remarkable implications
in the Grothendieck-Teichm{\"u}ller theory. 

\subsection{Actions of the motivic Galois groups on the fundamental groups}

The study of the actions of (various types of) Galois groups on (various
types of) fundamental groups of algebraic varieties is one of the
most important research topics in the Grothendieck--Teichm{\"u}ller
theory. For example, it is well-known that the absolute Galois group
acts on the geometric fundamental torsor of $\mathbb{P}^{1}\setminus\{0,1,\infty\}$:
\[
\phi_{{\rm abs}}:{\rm Gal}(\overline{\mathbb{Q}}/\mathbb{Q})\to{\rm Aut}(\pi_{1}^{{\rm geom}}(\mathbb{P}^{1}\setminus\{0,1,\infty\},\overrightarrow{1}_{0},-\overrightarrow{1}_{1})).
\]
A celebrated theorem of Belyi \cite{Belyi} shows that $\phi_{{\rm abs}}$
is injective, which says that its kernel is trivial. On the other
hand, no concrete description (e.g. via generators, defining equations
etc.) of the image $\phi_{{\rm abs}}({\rm Gal}(\overline{\mathbb{Q}}/\mathbb{Q}))$
is known. It has been shown by Drinfeld \cite{Dr_quasi} that the
image $\phi_{{\rm abs}}({\rm Gal}(\overline{\mathbb{Q}}/\mathbb{Q}))$
is contained in the profinite Grothendieck-Teichm{\"u}ller group
\[
\widehat{\mathrm{GT}}\subset{\rm Aut}(\pi_{1}^{{\rm geom}}(\mathbb{P}^{1}\setminus\{0,1,\infty\},\overrightarrow{1}_{0},-\overrightarrow{1}_{1}))
\]
which is closely related to Grothendieck's idea in \cite{Gro}. However,
it is not known whether or not $\phi_{{\rm abs}}({\rm Gal}(\overline{\mathbb{Q}}/\mathbb{Q}))$
coincides with $\widehat{\mathrm{GT}}$ at the moment. As a motivic
analog of this, it is known that the motivic Galois group $\mathcal{G}_{\mathcal{MT}(\mathbb{Z})}$
of mixed Tate motives over $\mathbb{Z}$ acts on the de Rham fundamental
torsor of $\mathbb{P}^{1}\setminus\{0,1,\infty\}$:
\[
\phi_{1}:\mathcal{G}_{\mathcal{MT}(\mathbb{Z})}\to{\rm Aut}(\pi_{1}^{{\rm dR}}(\mathbb{P}^{1}\setminus\{0,1,\infty\},\overrightarrow{1}_{0},-\overrightarrow{1}_{1}))
\]
and a celebrated theorem of Brown \cite{Bro_mix} states that $\phi_{1}$
is injective. However, in this case again, no concrete description
(e.g. via generators, defining equations etc.) of the image $\phi_{1}(\mathcal{G}_{\mathcal{MT}(\mathbb{Z})})$
is known. The graded version of the Grothendieck--Teichm{\"u}ller
group
\[
{\rm GRT}\subset{\rm Aut}(\pi_{1}^{{\rm dR}}(\mathbb{P}^{1}\setminus\{0,1,\infty\},\overrightarrow{1}_{0},-\overrightarrow{1}_{1}))
\]
which was also proposed by Drinfeld \cite{Dr_quasi} is one of the
most famous conjectural combinatorial description of the image $\phi_{1}(\mathcal{G}_{\mathcal{MT}(\mathbb{Z})})$,
and it is known that
\[
\phi_{1}(\mathcal{G}_{\mathcal{MT}(\mathbb{Z})})\subset{\rm GRT}.
\]

Our main result gives a combinatorial description of $\phi_{1}(\mathcal{G}_{\mathcal{MT}(\mathbb{Z})})$.
More strongly, we shall give an exact combinatorial description of
the image of
\[
\phi_{2}:\mathcal{G}_{\mathcal{MT}(\mathbb{Z}[1/2])}\to{\rm Aut}(\pi_{1}^{{\rm dR}}(\mathbb{P}^{1}\setminus\{0,\pm1,\infty\},\overrightarrow{1}_{0},-\overrightarrow{1}_{1}))
\]
where $\mathcal{G}_{\mathcal{MT}(\mathbb{Z}[1/2])}$ is the motivic
Galois group of mixed Tate motives over $\mathbb{Z}[1/2]$\footnote{The injectivity of $\phi_{2}$ was proved by Deligne \cite{Deli_es}.},
a larger class of mixed Tate motives over the rationals admitting
ramification at $2$. Let us explain about our description of $\phi_{2}(\mathcal{G}_{\mathcal{MT}(\mathbb{Z}[1/2])})$
here. Since it is more natural to consider the Galois action on the
entire fundamental groupoids rather than the individual fundamental
torsors, we will describe the former in what follows. 

Let $N$ be either $1$ or $2$. Then the fundamental groupoids are
defined as the union $\bigsqcup_{a,b\in B^{(N)}}\pio ab^{(N)}$ of
\begin{align*}
\pio ab^{(N)} & \coloneqq\pi_{1}^{{\rm dR}}(\mathbb{P}^{1}\setminus\left(B^{(N)}\cup\{\infty\}\right),a',b')\simeq{\rm Spec}(\mathfrak{H}^{(N)})\quad\left(a,b\in B^{(N)}\right)
\end{align*}
where $B^{(1)}=\{0,1\}$, $B^{(2)}=\{0,1,-1\}$ and $0'$, $1'$,
$-1$' denote tangential base points $\overrightarrow{1}_{0}$, $-\overrightarrow{1}_{1}$
and $\overrightarrow{1}_{-1}$, respectively, and $\mathfrak{H}^{(1)}\coloneqq(\mathbb{Q}\left\langle e_{0},e_{1}\right\rangle ,\shuffle)$,
(resp. $\mathfrak{H}^{(2)}\coloneqq(\mathbb{Q}\left\langle e_{0},e_{1},e_{-1}\right\rangle ,\shuffle)$)
is the commutative ring consisting of non-commutative polynomials
in two (resp. three) indeterminates equipped with the shuffle product.
Then $\mathcal{G}_{\mathcal{MT}(\mathbb{Z}[1/N])}$ acts on the fundamental
groupoid $\bigsqcup_{a,b\in B^{(N)}}\pio ab^{(N)}$, and we denote
this action by 
\[
\widehat{\phi}_{N}:\mathcal{G}_{\mathcal{MT}(\mathbb{Z}[1/N])}\to\Gamma^{(N)}\coloneqq\mathrm{Aut}(\bigsqcup_{a,b\in B^{(N)}}\pio ab^{(N)}).
\]
The set of $\mathbb{Q}$-rational points of $\pio ab^{(N)}$ can be
canonically identified with the set of group-like elements in $R^{(N)}$
where $R^{(1)}\coloneqq\mathbb{Q}\left\langle \left\langle e^{0},e^{1}\right\rangle \right\rangle $
and $R^{(2)}\coloneqq\mathbb{Q}\left\langle \left\langle e^{0},e^{1},e^{-1}\right\rangle \right\rangle $.
For a group-like element $p\in R^{(N)}$, we write $\dpath apb$ for
the corresponding element of $\pio ab^{(N)}$. Then an automorphism
$\sigma\in\Gamma^{(N)}$ (with respect to the groupoid scheme structure
$[g;a,b][h;b,c]=[gh;a,c]$) is completely determined by its action
to the elements
\begin{align*}
t_{0},\ t_{1}\text{ and }\dpath 011 & \quad\text{ for }N=1\\
t_{0},\ t_{1},\ t_{-1},\ \dpath 011,\text{ and }\dpath 01{-1} & \quad\text{ for }N=2
\end{align*}
where $t_{a}\coloneqq\dpath a{\exp(e^{a})}a$. Then the image of $\mathcal{G}_{\mathcal{MT}(\mathbb{Z}[1/N])}$
in $\Gamma^{(N)}$ under $\widehat{\phi}_{N}$ is equal to
\[
\left\{ \sigma_{m,p}^{(N)}\in\Gamma^{(N)}\left|(m,p)\in\mathbb{Q}^{\times}\times{\rm Spec}\left(\mathfrak{H}^{(N)}/(e_{1}e_{0},\ker(L^{\mathfrak{m}})\cap\mathfrak{H}^{(N)})\right)\right.\right\} 
\]
with 
\[
\sigma_{m,p}^{(1)}(t_{a})=t_{a}^{m}\;\left(a\in\{0,1\}\right),\;\sigma_{m,p}^{(1)}(\dpath 011)=\dpath 0p1
\]
and
\[
\sigma_{m,p}^{(2)}(t_{a})=t_{a}^{m}\;\left(a\in\{0,\pm1\}\right),\;\sigma_{m,p}^{(2)}(\dpath 011)=\dpath 0p1,\sigma_{m,p}^{(2)}(\dpath 01{-1})=\dpath 0{\tau(p)}{-1},
\]
where $t_{a}^{m}=\dpath a{\exp(me^{a})}a$, $\tau$ is an automorphism
of $\mathfrak{H}^{(2)}$ defined by $\tau(e_{a})=e_{-a}$ for $a\in\{0,\pm1\}$,
and $L^{\mathfrak{m}}$ is a linear map from $\mathfrak{H}^{(2)}$
to the set $\mathcal{H}_{2}$ of motivic Euler sums defined by
\[
L^{\mathfrak{m}}(e_{a_{1}}\cdots e_{a_{k}})=I^{\mathfrak{m}}(\overrightarrow{1}_{0};a_{1},\dots,a_{k};-\overrightarrow{1}_{1}).
\]
Thus $\widehat{\phi}_{N}(\mathcal{G}_{\mathcal{MT}(\mathbb{Z}[1/N])})$
is completely determined by the information of $\ker(L^{\mathfrak{m}})$.
In this article, we introduce a submodule $\widehat{\mathcal{I}}_{\mathrm{CF}}\subset\mathfrak{H}^{(2)}$
with explicit generators, namely, ``(the non-admissible extension
of) the confluence relations of level two'', and show the following
theorem.
\begin{thm*}[See Theorem \ref{thm:main_conf-2}]
\label{thm:main_conf}We have
\[
\widehat{\mathcal{I}}_{\mathrm{CF}}=\ker(L^{\mathfrak{m}}).
\]
\end{thm*}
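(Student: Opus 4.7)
The plan is to establish the two inclusions $\widehat{\mathcal{I}}_{\mathrm{CF}} \subseteq \ker(L^{\mathfrak{m}})$ and $\ker(L^{\mathfrak{m}}) \subseteq \widehat{\mathcal{I}}_{\mathrm{CF}}$ separately, using motivicity on one side and a dimension squeeze on the other.

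For the inclusion $\widehat{\mathcal{I}}_{\mathrm{CF}} \subseteq \ker(L^{\mathfrak{m}})$, I would invoke the motivic lift of the confluence construction carried out in Part \ref{part:Motivicity}: each generator of $\widehat{\mathcal{I}}_{\mathrm{CF}}$ arises by iterating Goncharov's differential formula on a one-parameter family of iterated integrals with $p_i \in \{0, -1, z, -z^2\}$ and then taking the regularized limit $z \to 1$. Since both differentiation and tangential regularization lift cleanly to motivic iterated integrals, the resulting identities transfer to the motivic setting and therefore lie in $\ker(L^{\mathfrak{m}})$.

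For the reverse inclusion, the strategy is a weight-by-weight dimension count. By the motivic analogue of the main explicit theorem (Theorem \ref{thm:main_general_motivic_zeta}), every motivic Euler sum $\modzeta^{\mathfrak{m}}(\Bbbk)$ with $\Bbbk \in \indset(k,\infty)$ is an explicit $\mathbb{Q}$-linear combination of the $\modzeta^{\mathfrak{m}}(\Bbbk')$ indexed by $\Bbbk' \in \indset^{\mathrm{D}}(k,\infty)$, and the derivation of this reduction takes place entirely inside the quotient $\mathfrak{H}^{(2)}/\widehat{\mathcal{I}}_{\mathrm{CF}}$. Augmenting this admissible reduction with the non-admissible part of $\widehat{\mathcal{I}}_{\mathrm{CF}}$ (which encodes shuffle regularization and the boundary terms produced by the confluence limit) shows that every element of the weight-$k$ piece $\mathfrak{H}^{(2)}_k$ is congruent modulo $\widehat{\mathcal{I}}_{\mathrm{CF}}$ to a combination of admissible words indexed by $\indset^{\mathrm{D}}(k,\infty)$, so
\[
\dim_{\mathbb{Q}}\bigl(\mathfrak{H}^{(2)}_k / (\widehat{\mathcal{I}}_{\mathrm{CF}} \cap \mathfrak{H}^{(2)}_k)\bigr) \leq \#\indset^{\mathrm{D}}(k,\infty).
\]
On the other hand, Deligne's injectivity of $\phi_2$ combined with the Tannakian structure of mixed Tate motives over $\mathbb{Z}[1/2]$ yields the matching lower bound
\[
\dim_{\mathbb{Q}}\bigl(\mathfrak{H}^{(2)}_k / (\ker(L^{\mathfrak{m}}) \cap \mathfrak{H}^{(2)}_k)\bigr) = \#\indset^{\mathrm{D}}(k,\infty).
\]
Combined with the first inclusion, this squeeze forces $\widehat{\mathcal{I}}_{\mathrm{CF}} \cap \mathfrak{H}^{(2)}_k = \ker(L^{\mathfrak{m}}) \cap \mathfrak{H}^{(2)}_k$ in every weight $k$, and summing yields the theorem.

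The main obstacle I expect is the bookkeeping needed to certify that the explicit reduction to the $\indset^{\mathrm{D}}$-basis can be performed entirely inside $\widehat{\mathcal{I}}_{\mathrm{CF}}$, including the many non-admissible intermediate words produced during the confluence procedure; this is precisely what motivates enlarging $\iconf$ to its non-admissible extension. A secondary technical point is to match the weight (and possibly depth) grading on both sides carefully enough that Deligne's dimension count applies graded piece by graded piece.
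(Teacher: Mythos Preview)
Your approach is essentially identical to the paper's: establish $\widehat{\mathcal{I}}_{\mathrm{CF}}\subset\ker(L^{\mathfrak{m}})$ via the motivicity result of Part~\ref{part:Motivicity} (Proposition~\ref{prop:confluence_ker_L}), then squeeze dimensions weight-by-weight using Theorem~\ref{thm:main_main} for the upper bound $\dim\bigl(\mathfrak{H}^{(2)}_k/\widehat{\mathcal{I}}_{\mathrm{CF}}\cap\mathfrak{H}^{(2)}_k\bigr)\leq\#\indset^{\mathrm{D}}(k,\infty)=F_k$ and Deligne's result for the exact value $\dim\bigl(\mathfrak{H}^{(2)}_k/\ker(L^{\mathfrak{m}})\cap\mathfrak{H}^{(2)}_k\bigr)=F_k$. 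One small point: for the upper bound you should cite Theorem~\ref{thm:main_main} rather than Theorem~\ref{thm:main_general_motivic_zeta}, since the latter already presupposes $\iconf\subset\ker(L^{\mathfrak{m}})$ and the bound you need is purely a statement about the quotient by $\widehat{\mathcal{I}}_{\mathrm{CF}}$, independent of any motivic input.
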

As corollaries of this theorem, we obtain the following descriptions
of the images of the motivic Galois groups $\mathcal{G}_{\mathcal{MT}(\mathbb{Z}[1/2])}$
and $\mathcal{G}_{\mathcal{MT}(\mathbb{Z})}$.
\begin{thm}
\label{thm:main_galois}The image $\widehat{\phi}_{2}(\mathcal{G}_{\mathcal{MT}(\mathbb{Z}[1/2])})$
is given by
\begin{align*}
\widehat{\phi}_{2}(\mathcal{G}_{\mathcal{MT}(\mathbb{Z}[1/2])}) & =\left\{ \sigma_{m,p}^{(2)}\in\Gamma^{(2)}\left|(m,p)\in\mathbb{Q}^{\times}\times{\rm Spec}\left(\mathfrak{H}^{(2)}/(e_{1}e_{0},\widehat{\mathcal{I}}_{\mathrm{CF}}\right)\right.\right\} .
\end{align*}
\end{thm}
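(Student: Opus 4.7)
The plan is to obtain Theorem \ref{thm:main_galois} as a direct consequence of Theorem \ref{thm:main_conf-2}, by inserting the equality $\widehat{\mathcal{I}}_{\mathrm{CF}} = \ker(L^{\mathfrak{m}})$ into the general Tannakian parametrization of $\widehat{\phi}_{2}(\mathcal{G}_{\mathcal{MT}(\mathbb{Z}[1/2])})$ recalled in the introduction.

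First I would verify (or invoke, if already established in the body of the paper) the parametrization stated just before the theorem: the image of $\widehat{\phi}_{N}$ consists precisely of the automorphisms $\sigma_{m,p}^{(N)}$ indexed by $(m,p) \in \mathbb{Q}^{\times} \times \mathrm{Spec}(\mathfrak{H}^{(N)}/(e_{1}e_{0}, \ker(L^{\mathfrak{m}}) \cap \mathfrak{H}^{(N)}))$. The standard route is: the action of $\mathcal{G}_{\mathcal{MT}(\mathbb{Z}[1/2])}$ on the de Rham fundamental groupoid is determined by its action on the generators $t_{0}, t_{1}, t_{-1}, \dpath{0}{1}{1}, \dpath{0}{1}{-1}$; the $\mathbb{G}_{m}$-part of $\mathcal{G}_{\mathcal{MT}(\mathbb{Z}[1/2])}$ acts by weight giving the scalar $m$; the unipotent radical $\mathcal{U}_{\mathcal{MT}(\mathbb{Z}[1/2])}$ embeds into group-like elements of $R^{(2)}$ as the motivic coordinates of the straight path, namely $p$; the symmetry $a \mapsto -a$ forces $\sigma_{m,p}^{(2)}(\dpath{0}{1}{-1}) = \dpath{0}{\tau(p)}{-1}$; and the remaining constraint that $p$ lies in $\mathrm{Spec}(\mathfrak{H}^{(2)}/(e_{1}e_{0}, \ker(L^{\mathfrak{m}}) \cap \mathfrak{H}^{(2)}))$ encodes exactly the shuffle relations $e_{1}e_{0} = 0$ and the motivic relations among the periods $L^{\mathfrak{m}}$, by Deligne's injectivity result \cite{Deli_es} applied to $\mathcal{G}_{\mathcal{MT}(\mathbb{Z}[1/2])}$.

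Once this parametrization is in place, the proof reduces to substituting $\ker(L^{\mathfrak{m}})$ with $\widehat{\mathcal{I}}_{\mathrm{CF}}$ using Theorem \ref{thm:main_conf-2}. Since $\widehat{\mathcal{I}}_{\mathrm{CF}}$ is a submodule of $\mathfrak{H}^{(2)}$ by construction, we have $\ker(L^{\mathfrak{m}}) \cap \mathfrak{H}^{(2)} = \widehat{\mathcal{I}}_{\mathrm{CF}}$, and the description of the image becomes the one asserted in the theorem.

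The main obstacle is not in the present theorem but rather upstream: all of the genuine content sits in Theorem \ref{thm:main_conf-2}, which identifies the confluence ideal with the full motivic kernel. Here, the only subtle point I anticipate is bookkeeping between the ``admissible'' space on which $L^{\mathfrak{m}}$ is a priori defined (words not ending in $e_{1}$, say, or modulo the shuffle-regularization conventions for $e_{1}e_{0}$) and its ``non-admissible extension'' $\widehat{\mathcal{I}}_{\mathrm{CF}}$, which is why the quotient in the parametrization is taken simultaneously by $e_{1}e_{0}$ and by $\widehat{\mathcal{I}}_{\mathrm{CF}}$. Once this consistency between the two ideals is checked, the theorem follows immediately.
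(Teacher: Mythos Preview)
Your proposal is correct and matches the paper's approach: the paper presents Theorem~\ref{thm:main_galois} explicitly as a corollary of Theorem~\ref{thm:main_conf-2}, obtained by substituting $\widehat{\mathcal{I}}_{\mathrm{CF}} = \ker(L^{\mathfrak{m}})$ into the Tannakian parametrization of $\widehat{\phi}_{2}(\mathcal{G}_{\mathcal{MT}(\mathbb{Z}[1/2])})$ stated in the introduction (and attributed to Deligne's framework). No separate proof is given in the paper beyond this.
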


\begin{thm}
\label{thm:main_galois-1}The image $\widehat{\phi}_{1}(\mathcal{G}_{\mathcal{MT}(\mathbb{Z})})$
is given by
\[
\widehat{\phi}_{1}(\mathcal{G}_{\mathcal{MT}(\mathbb{Z})})=\left\{ \sigma_{m,p}^{(1)}\in\Gamma^{(1)}\left|(m,p)\in\mathbb{Q}^{\times}\times{\rm Spec}\left(\mathfrak{H}^{(1)}/(e_{1}e_{0},\widehat{\mathcal{I}}_{\mathrm{CF}}\cap\mathfrak{H}^{(1)}\right)\right.\right\} .
\]
\end{thm}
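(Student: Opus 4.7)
The plan is to derive Theorem \ref{thm:main_galois-1} as an immediate corollary of Theorem \ref{thm:main_conf-2}, by substituting into the general description of $\widehat{\phi}_N(\mathcal{G}_{\mathcal{MT}(\mathbb{Z}[1/N])})$ recalled in the introduction.

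Specializing that general formula to $N=1$ yields
\[
\widehat{\phi}_1(\mathcal{G}_{\mathcal{MT}(\mathbb{Z})}) = \left\{ \sigma_{m,p}^{(1)}\;\middle|\;(m,p)\in\mathbb{Q}^{\times}\times\mathrm{Spec}\left(\mathfrak{H}^{(1)}/(e_{1}e_{0},\ker(L^{\mathfrak{m}})\cap\mathfrak{H}^{(1)})\right)\right\}.
\]
This identity rests on two classical inputs, both implicit in the introduction: the Tannakian identification of the image of the motivic Galois action on the de Rham fundamental torsor with automorphisms parametrized by a group-like element $p$ modulo the motivic period relations; and the full faithfulness of $\mathcal{MT}(\mathbb{Z}) \hookrightarrow \mathcal{MT}(\mathbb{Z}[1/2])$. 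The latter ensures that the motivic period ring $\mathcal{H}_1$ of $\mathcal{MT}(\mathbb{Z})$ injects into $\mathcal{H}_2$, so that for a word $w \in \mathfrak{H}^{(1)}$ the relation $L_1^{\mathfrak{m}}(w) = 0$ holds in $\mathcal{H}_1$ exactly when $L^{\mathfrak{m}}(w) = 0$ holds in $\mathcal{H}_2$. Consequently the kernel of the $N=1$ period map agrees with $\ker(L^{\mathfrak{m}}) \cap \mathfrak{H}^{(1)}$.

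I will then substitute $\ker(L^{\mathfrak{m}}) = \widehat{\mathcal{I}}_{\mathrm{CF}}$ supplied by Theorem \ref{thm:main_conf-2} into the displayed formula to obtain the desired description. All the non-trivial content of Theorem \ref{thm:main_galois-1} is absorbed into Theorem \ref{thm:main_conf-2}, so no new obstacle arises in this final deduction; the entire difficulty lies in proving $\widehat{\mathcal{I}}_{\mathrm{CF}} = \ker(L^{\mathfrak{m}})$, which is the focus of Part \ref{part:Motivicity}.
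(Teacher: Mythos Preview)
Your proposal is correct and matches the paper's approach: the paper presents Theorem \ref{thm:main_galois-1} explicitly as a corollary of Theorem \ref{thm:main_conf-2}, obtained by substituting $\widehat{\mathcal{I}}_{\mathrm{CF}}=\ker(L^{\mathfrak{m}})$ into the general description of $\widehat{\phi}_{N}(\mathcal{G}_{\mathcal{MT}(\mathbb{Z}[1/N])})$ stated in the introduction, and offers no further argument. Your added remark about full faithfulness of $\mathcal{MT}(\mathbb{Z})\hookrightarrow\mathcal{MT}(\mathbb{Z}[1/2])$ justifying the intersection with $\mathfrak{H}^{(1)}$ is a reasonable expansion of what the paper leaves implicit.
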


\begin{rem}
The above theorems also determine the images of $\phi_{1}$ and $\phi_{2}$
since 
\[
\phi_{N}(\mathcal{G}_{\mathcal{MT}(\mathbb{Z}[1/N])})=\left\{ \sigma\mid_{\pio 01}\:\left|\sigma\in\widehat{\phi}_{N}(\mathcal{G}_{\mathcal{MT}(\mathbb{Z}[1/N])})\right.\right\} 
\]
where $\mid_{\pio 01}$ means the restriction to $\pio 01$.
\end{rem}

\subsection{Structure of the article}

This article is organized into two parts of different flavors: Part
1 focuses on (real-valued) Euler sums and their confluence relations,
while Part 2 (except for its last section) is devoted to proving the
motivicity of the confluence relations. As the contents are mostly
independent, for convenience, we will use slightly different conventions
for the two parts. Since Part 1 is dedicated to the study of Euler
sums, it deals only with very special cases of hyperlogarithms and
their limits that are relevant to Euler sums. Part 2, on the other
hand, studies the motivic counterpart of hyperlogarithms in a great
generality. This is because restricting ourselves to Euler sum case
does not simplify the argument greatly, and also because we consider
that proving such a fundamental result in a great generality would
be useful from the perspective of future applications. We will construct
the motivic counterpart of ``limits of hyperlogarithms'' thereby defining
the confluence relations in ultimate generality, and as an upshot
prove their motivicity. Using this motivicity theorem, we will derive
the aforementioned consequences in the Grothendieck-Teichm{\"u}ller
theory from the results of Part 1 at the end. The proof of the motivicity
of (generalized) confluence relation consists of loads of auxiliary
lemmas and propositions with loads of notations, which may be painful
to read. Therefore, except for those who are interested in the technical
details of the proof of the motivicity of the confluence relations
are advised to ignore Part 2. 

Part 1 consists of two sections, Sections \ref{sec:Definition-of-confluence}
and \ref{sec:Main-theorem}. In Section \ref{sec:Definition-of-confluence},
we give an algebraic set-up analogous to that by Hoffman \cite{Hoffman},
and make use of the differential formula for hyperlogarithms together
with a suitable limiting process called \emph{regularized limit} to
define a class of ($\mathbb{Q}$-linear) relations satisfied by Euler
sums denote by $\iconf$, which we call \emph{confluence relations
of level two}. Then, in Section \ref{sec:Main-theorem}, we prove
Theorems \ref{thm:main_main} and \ref{thm:main_explicit} as goals
of Part 1.

Part 2 consists of four sections, Sections \ref{sec:Part2_preliminaries},
\ref{sec:Evaluation-map}, \ref{sec:Motivicity-of-confluence}, and
\ref{sec:Proof-of-Theorem}. Section \ref{sec:Part2_preliminaries}
is devoted to preliminaries to the following sections. In Section
\ref{sec:Evaluation-map}, we define the motivic counterpart of ``regularized
limits of hyperlogarithms'' and prove basic (and desirable) properties
satisfied by those ``limits''. Then in Section \ref{sec:Motivicity-of-confluence},
we generalize the notion of the confluence relation to a very general
setting and prove the motivicity of the generalized confluence relations
using the properties shown in Section \ref{sec:Evaluation-map}. Finally,
in Section \ref{sec:Proof-of-Theorem}, we shall prove the aforementioned
main theorems.

\pagebreak{}

\part{\label{part:Confluence_Euler_sum}Confluence relations for Euler
sums}

In this part, we define confluence relations for Euler sums and study
their consequences. 

\section{Definition of confluence relations\label{sec:Definition-of-confluence}}

\subsection{Iterated integrals}

First of all we give an algebraic and analytic set-up for iterated
integrals.
\begin{defn}
For a subset $S\subset\mathbb{C}$, we denote by $\mathcal{V}(S)$
the $\mathbb{Z}$-module generated by the formal symbols $\{e_{a}\mid a\in S\}$.
\end{defn}

\begin{defn}
For $u\in\mathcal{V}(S)$, we define a rational $1$-form $\omega_{u}$
on $\mathbb{P}^{1}(\mathbb{C})$ by the linearity on $u$ and
\[
\omega_{e_{a}}(t)=\frac{dt}{t-a}.
\]
\end{defn}

\begin{defn}
For $a\in\mathbb{P}^{1}(\mathbb{C})=\mathbb{C}\cup\{\infty\}$, we
define $\mathcal{V}^{(a)}(S)\subset\mathcal{V}(S)$ by
\[
\mathcal{V}^{(a)}(S)=\begin{cases}
\bigoplus_{t\in S\setminus\{a\}}\mathbb{Z}\cdot e_{t} & a\in\mathbb{C}\\
\sum_{t,u\in S}\mathbb{Z}\cdot(e_{t}-e_{u}) & a=\infty.
\end{cases}
\]
\end{defn}

Note that $\omega_{u}$ is regular at $a\in\mathbb{P}^{1}(\mathbb{C})$
if $u\in\mathcal{V}^{(a)}(S)$.

\begin{defn}
For a subset $S\subset\mathbb{C}$, we denote by $\mathcal{A}(S)$
the free non-commutative polynomial ring over $\mathbb{Z}$ generated
by the formal symbols $\{e_{a}\mid a\in S\}$. In other words,
\[
\mathcal{A}(S)=\bigoplus_{k=0}^{\infty}\mathcal{V}(S)^{\otimes k}.
\]
\end{defn}

\begin{defn}
For a subset $S\subset\mathbb{C}$ and $x,y\in\mathbb{P}^{1}(\mathbb{C})$,
we set
\[
\mathcal{A}^{(x,y)}(S)=\mathbb{Z}\oplus(\mathcal{V}^{(x)}(S)\cap\mathcal{V}^{(y)}(S))\oplus\mathcal{V}^{(x)}(S)\mathcal{A}(S)\mathcal{V}^{(y)}(S).
\]
\end{defn}

\begin{defn}
Let $x$ and $y$ be usual or tangential base points of $\mathbb{P}^{1}(\mathbb{C})$.
We denote by $\pi_{S}^{x,y}$ the set of homotopy class of paths $\gamma:[0,1]\to\mathbb{P}^{1}(\mathbb{C})$
from $x$ to $y$ such that $\gamma((0,1))\subset\mathbb{C}\setminus S$.
\end{defn}

\begin{defn}[Iterated integral]
For $x,y\in\mathbb{P}^{1}(\mathbb{C})$, $\gamma\in\pi_{S}^{x,y}$
and $u\in\mathcal{A}^{(x,y)}(S)$, define $I_{\gamma}(x;u;y)$ by
the linearity for $u$ and
\[
I_{\gamma}(x;v_{1}\cdots v_{k};y)=\int_{0<t_{1}<\cdots<t_{k}<1}\omega_{v_{1}}(\gamma(t_{1}))\cdots\omega_{v_{k}}(\gamma(t_{k}))
\]
for $v_{1}\in\mathcal{V}^{(x)}(S)$, $v_{2},\dots,v_{k-1}\in\mathcal{V}(S)$,
and $v_{k}\in\mathcal{V}^{(y)}(S)$. Furthermore, we extend the definition
of $I_{\gamma}(x;u;y)$ for tangential base points $x$ and $y$ on
$\mathbb{P}^{1}(\mathbb{C})$ and $u\in\mathcal{A}(S)$ in the usual
way (see, for example, \cite{Deli_tan}).
\end{defn}

We simply write $I_{\gamma}(-)$ as $I(-)$ if the choice of $\gamma$
is obvious from the context.

\subsection{Iterated integral expression of Euler sums and the distribution relation}
\begin{defn}
For $x>0$, define a linear map
\[
L_{x}:\mathcal{A}^{(0,x)}(\mathbb{R}\setminus(0,x))\longrightarrow\mathbb{R}
\]
by
\begin{align*}
L_{x}(e_{a_{1}}\cdots e_{a_{k}}) & \coloneqq I(0;e_{a_{1}}\cdots e_{a_{k}};x)\\
 & =\int_{0<t_{1}<\cdots<t_{k}<x}\frac{dt_{1}}{t_{1}-a_{1}}\cdots\frac{dt_{k}}{t_{k}-a_{k}}.
\end{align*}
\end{defn}

\begin{defn}
For $S\subset\{0,1,-1\}$, $k\in\mathbb{Z}_{\geq0}$ and $d\in\mathbb{Z}_{\geq0}\cup\{\infty\}$,
we put $\mathcal{A}^{0}(S)\coloneqq\mathcal{A}^{(0,1)}(S)$ and
\begin{align*}
\mathcal{A}_{k,d}(S) & \coloneqq\bigoplus_{\substack{a_{1},\dots,a_{k}\in S\\
\#\{i\mid a_{i}\neq0\}\leq d
}
}\mathbb{Z}e_{a_{1}}\cdots e_{a_{k}}\subset\mathcal{A}(S)\\
\mathcal{A}_{k,d}^{0}(S) & \coloneqq\bigoplus_{\substack{a_{1},\dots,a_{k}\in S\\
\#\{i\mid a_{i}\neq0\}\leq d\\
a_{1}\neq0,a_{k}\neq1
}
}\mathbb{Z}e_{a_{1}}\cdots e_{a_{k}}\subset\mathcal{A}^{0}(S).
\end{align*}
\end{defn}

\begin{defn}
\label{def:reg_shuffle-1}For $S\subset\{0,1,-1\}$, define $\shreg:\mathcal{A}(S)\to\mathcal{A}^{0}(S)$
as the unique linear map such that
\[
\shreg(e_{0}^{n}\shuffle e_{1}^{m}\shuffle u)=\begin{cases}
u & n=m=0\\
0 & \text{otherwise}
\end{cases}
\]
for all $u\in\mathcal{A}^{0}(S)$.
\end{defn}

\begin{defn}
\label{def:index_word}For $\Bbbk=(k_{1},\dots,k_{d})\in\indset$,
we define $\word(\Bbbk)\in\mathcal{A}^{0}(\{0,1,-1\})$ and $\modword(\Bbbk)\in\mathcal{A}^{0}(\{0,1,-1\})$
by
\[
\word(\Bbbk)\coloneqq e_{\epsilon_{1}}e_{0}^{\left|k_{1}\right|-1}\cdots e_{\epsilon_{d}}e_{0}^{\left|k_{d}\right|-1}
\]
and $\modword(\Bbbk)\coloneqq2^{d}\word(\Bbbk)$ where $\epsilon_{i}={\rm sgn}(k_{i})\epsilon_{i+1}$
and $\epsilon_{d+1}=1$.
\end{defn}

Now we can express an Euler sum by $L_{1}$:
\[
\zeta(\Bbbk)=(-1)^{d}L_{1}(\word(\Bbbk)),\ \modzeta(\Bbbk)=L_{1}(\modword(\Bbbk)).
\]

\begin{defn}
We define a $\dist:\mathcal{A}_{k,d}(\{0,1\})\to\mathcal{A}_{k,d}(\{0,1,-1\})$
as the ring homomorphism such that

\[
\dist(e_{0})=2e_{0},\ \dist(e_{1})=e_{1}+e_{-1}.
\]
\end{defn}

\begin{defn}
For $k\in\mathbb{Z}_{\geq0}$ and $d\in\mathbb{Z}_{\geq0}\cup\{\infty\}$,
define the subspace $\tilde{\mathcal{A}}_{k,d}^{0}(\{0,1,-1\})\subset\mathcal{A}_{k,d}^{0}(\{0,1,-1\})$
by

\[
\tilde{\mathcal{A}}_{k,d}^{0}(\{0,1,-1\})=\begin{cases}
\mathcal{A}_{k,d}^{0}(\{0,1,-1\})=\mathbb{Z} & k=0\\
2\mathcal{A}_{k,d}^{0}(\{0,1,-1\}) & k>0.
\end{cases}
\]
\end{defn}

By definition, for $u\in\mathcal{A}_{k,d}^{0}(\{0,1\})$, we have
\begin{equation}
\dist(u)\in\tilde{\mathcal{A}}_{k,d}^{0}(\{0,1,-1\}).\label{eq:dist_in_tilde}
\end{equation}
The following equation is well-known.
\begin{lem}[Distribution relation]
\label{lem:distr}For $u\in\mathcal{A}^{0}(\{0,1\})$, 
\[
L_{1}(u)=L_{1}(\dist(u)).
\]
\end{lem}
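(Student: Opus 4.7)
My plan is to derive the distribution relation from the change of variables $t=s^{2}$ in the defining integral, or equivalently by pulling back iterated integrals along the squaring map $\pi\colon \mathbb{P}^{1}(\mathbb{C}) \to \mathbb{P}^{1}(\mathbb{C})$, $s\mapsto s^{2}$. The first computation I would carry out is
\[
\pi^{*}\!\left(\frac{dt}{t}\right) = \frac{2\,ds}{s}, \qquad \pi^{*}\!\left(\frac{dt}{t-1}\right) = \frac{2s\,ds}{s^{2}-1} = \frac{ds}{s-1} + \frac{ds}{s+1},
\]
where the second equality is a partial fraction expansion. In the notation of this section, this reads $\pi^{*}\omega_{e_{a}} = \omega_{\dist(e_{a})}$ for $a\in\{0,1\}$.

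Next, by $\mathbb{Z}$-linearity it suffices to treat the case of a single admissible monomial $u = e_{a_{1}}\cdots e_{a_{k}} \in \mathcal{A}^{0}(\{0,1\})$, with $a_{1}=1$ and $a_{k}=0$ to guarantee convergence. I would then take $\sigma\colon[0,1]\to[0,1]$, $s\mapsto s$ to be the straight path from $0$ to $1$; since $\pi(0)=0$ and $\pi(1)=1$, the composite $\pi\circ\sigma$ is the reparametrization $u\mapsto u^{2}$ of the same path. Applying the functoriality of iterated integrals under pullback termwise and then reparametrization invariance will yield
\[
L_{1}(\dist(u)) = \int_{\sigma}\pi^{*}\omega_{e_{a_{1}}}\cdots\pi^{*}\omega_{e_{a_{k}}} = \int_{\pi\circ\sigma}\omega_{e_{a_{1}}}\cdots\omega_{e_{a_{k}}} = L_{1}(u).
\]
Convergence of the right-hand side is ensured by observation \eqref{eq:dist_in_tilde}, which places $\dist(u)$ in $\tilde{\mathcal{A}}^{0}(\{0,1,-1\}) \subset \mathcal{A}^{0}(\{0,1,-1\})$, so that no tangential regularization is needed at either endpoint.

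I do not anticipate any serious obstacle, as the whole argument reduces to an elementary substitution. The one point requiring a bit of care is to justify the interchange of pullback and iteration cleanly, which I would handle by writing both sides directly as integrals over the simplex $0<t_{1}<\cdots<t_{k}<1$, performing the substitution $t_{i}=s_{i}^{2}$ (which maps this simplex bijectively onto $0<s_{1}<\cdots<s_{k}<1$ via positive square roots), and expanding $\omega_{e_{1}+e_{-1}}(s)=\omega_{e_{1}}(s)+\omega_{e_{-1}}(s)$ before integrating term by term to recover $L_{1}(\dist(u))$.
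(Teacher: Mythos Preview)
Your proposal is correct and follows exactly the same approach as the paper: the change of variables $t=s^{2}$, yielding $\frac{dt}{t}=\frac{2\,ds}{s}$ and $\frac{dt}{t-1}=\frac{ds}{s-1}+\frac{ds}{s+1}$. The paper's proof is a one-line sketch of precisely this substitution, and your write-up simply supplies the details you outline.
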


\begin{proof}
It follows from the change of variables $t=s^{2}$ since $\frac{dt}{t}=\frac{2ds}{s}$
and $\frac{dt}{t-1}=\frac{ds}{s-1}+\frac{ds}{s+1}$.
\end{proof}

\subsection{Evaluation at $z=0$}

Let $z$ be a real variable such that $0<z<1$. We put
\[
\mathcal{B}_{k,d}\coloneqq\bigoplus_{\substack{a_{1},\dots,a_{k}\in\{0,-1,z,-z^{2}\}\\
a_{1}\neq0,a_{k}\neq z\\
\#\{i\mid a_{i}\neq0\}\leq d
}
}\mathbb{Z}e_{a_{1}}\cdots e_{a_{k}}
\]
and
\[
\mathcal{B}\coloneqq\bigoplus_{k=0}^{\infty}\mathcal{B}_{k,\infty}.
\]
Then for $u\in\mathcal{B}$, we can regard $L_{z}(u)$ as a function
on $0<z<1$. The limit $\lim_{z\to0}L_{z}(u)$ does not necessarily
exist, but the following \emph{regularized limit} exists.
\begin{defn}[Regularized limit]
Fix $a\in\mathbb{R}$. Suppose that $f(z)$ behaves as
\[
f(a\pm\epsilon)=c_{0}+c_{1}\log(\epsilon)+\cdots+c_{M}\log^{M}(\epsilon)+O(\epsilon\log^{M}\epsilon)
\]
around $a$ for some $M>0$ and $c_{0},\dots,c_{M}\in\mathbb{C}$.
Then we define the regularized limit $\Reg_{z\to a\pm0}f(z)$ to be
the constant term, i.e.,
\[
\Reg_{z\to a\pm0}f(z)\coloneqq c_{0}.
\]
\end{defn}

Notice that $\Reg_{z\to a\pm0}$ extends the notion of the limit in
the usual sense, since $\Reg_{z\to a\pm0}f(z)=\lim_{z\to a\pm0}f(z)$
if $\lim_{z\to a\pm0}f(z)$ exists. In this subsection, we shall define
$\reg_{z\to0}:\mathcal{B}_{k,d}\to\mathcal{A}_{k,d}^{0}(\{0,1,-1\})$
which naturally satisfies
\[
\Reg_{z\to+0}L_{z}(u)=L_{1}(\reg_{z\to0}u)
\]
for $u\in\mathcal{B}_{k,d}$.

Let $u=e_{a_{1}}\cdots e_{a_{k}}\in\mathcal{B}$. Note that $\lim_{z\to0}L_{z}(u)=0$
if $a_{j}=-1$ for some $j$. Thus the computation of $\Reg_{z\to+0}L_{z}(u)$
for $u\in\mathcal{B}$ is reduced to the case
\[
u\in\mathcal{B}'\coloneqq\mathcal{B}\cap\mathbb{Z}\left\langle e_{0},e_{z},e_{-z^{2}}\right\rangle .
\]
Put
\begin{align*}
\mathcal{B}'' & \coloneqq\mathcal{B}'\cap\left(\mathbb{Z}\oplus e_{z}\mathbb{Z}\left\langle e_{0},e_{z},e_{-z^{2}}\right\rangle \right),\\
\mathcal{B}''' & \coloneqq\mathcal{B}'\cap\mathbb{Z}\left\langle e_{0},e_{-z^{2}}\right\rangle .
\end{align*}
Since we have
\[
\mathcal{B}''\otimes\mathcal{B}'''\simeq\mathcal{B}'\ ;\ u_{1}\otimes u_{2}\mapsto u_{1}\shuffle u_{2},
\]
the computation of $\Reg_{z\to+0}L_{z}(u)$ for $u\in\mathcal{B}'$
is further reduced to the cases $u\in\mathcal{B}''$ and $u\in\mathcal{B}'''$.
For $u=e_{a_{1}}\cdots e_{a_{k}}\in\mathcal{B}''$, we have
\begin{align*}
\lim_{z\to0}L_{z}(u) & =\lim_{z\to0}L_{1}(e_{\frac{a_{1}}{z}}\cdots e_{\frac{a_{k}}{z}})\\
 & =L_{1}(\barreg_{z\to0}(u))
\end{align*}
where $\mathop{{\rm \overline{reg}}}_{z\to0}$ is the ring homomorphism
determined by $\mathop{{\rm \overline{reg}}}_{z\to0}(e_{0})=\mathop{{\rm \overline{reg}}}_{z\to0}(e_{-z^{2}})=e_{0}$
and $\mathop{{\rm \overline{reg}}}_{z\to0}(e_{z})=e_{1}$. Next, for
$u=e_{a_{1}}\cdots e_{a_{k}}\in\mathcal{B}'''$, put $w=w(u)=e_{b_{1}}\cdots e_{b_{k}}\in\mathcal{C}\coloneqq\mathbb{Z}\oplus e_{1}\mathbb{Z}\left\langle e_{0},e_{1}\right\rangle $
where $b_{j}=-\frac{a_{j}}{z^{2}}$ for $1\leq j\leq k$. Then
\[
L_{z}(u)=I_{\gamma_{z}}(0;w;-\frac{1}{z})
\]
where $\gamma_{z}$ is the straight path from $0$ to $-1/z$. Furthermore,
by the change of variables $t\mapsto t/(t-1)$, we have
\[
I_{\gamma_{z}}(0;w;-\frac{1}{z})=I_{\gamma_{z}'}(0;\varrho(w);\frac{1}{1+z})
\]
where $\varrho$ is the automorphism of $\mathcal{A}(\{0,1\})$ defined
by $\varrho(e_{0})=e_{0}-e_{1}$ and $\varrho(e_{1})=-e_{1}$, and
$\gamma'_{z}$ is the straight path from $0$ to $1/(1+z)$. Thus
we have
\[
\Reg_{z\to+0}I_{\gamma_{z}}(0;w;-\frac{1}{z})=L_{1}(\shreg(\varrho(w))).
\]
This already gives an expression of $\Reg_{z\to+0}L_{z}(u)$ for $u\in\mathcal{B}'''$
in terms of $L_{1}$. However, we further have to rewrite $L_{1}(\shreg(\varrho(w)))$
for depth compatibility since $\varrho(\mathcal{A}_{k,d}(\{0,1\}))\not\subset\mathcal{A}_{k,d}(\{0,1\})$
in general. Define an anti-automorphism $\varsigma$ of $\mathcal{A}(\{0,1\})$
by $\varsigma(e_{0})=-e_{1}$ and $\varsigma(e_{1})=-e_{0}$.
\begin{defn}
Define the linear map $\wp:\mathcal{C}\to\mathcal{A}^{0}(\{0,1,-1\})$
as follows. For $d\geq0$ and $\Bbbk=(k_{1},\dots,k_{d})\in\mathbb{Z}_{\geq1}^{d}$,
define $\theta(\Bbbk)$ recursively by

\[
\theta(k_{1},\dots,k_{d})=\begin{cases}
-\sum_{j=1}^{d}e_{1}e_{0}^{k_{j}-1}\cdots e_{1}e_{0}^{k_{1}-1}\shuffle\theta(k_{j+1},\dots,k_{d}) & d>0\\
1 & d=0,
\end{cases}
\]
and $\theta_{1}(\Bbbk)$ by
\[
\theta_{1}(\Bbbk)=\theta'(\Bbbk)+2\sum_{\substack{\Bbbk=(\Bbbk',\{1\}^{2m})\\
m\geq1
}
}e_{-1}e_{0}^{2m-1}\shuffle\theta'(\Bbbk')
\]
with $\theta'=\dist\circ\shreg\circ\varsigma\circ\theta$. Put
\[
\word^{\star}(\emptyset)=1,
\]
\[
\word^{\star}(k_{1},\dots,k_{d})=-e_{1}e_{0}^{k_{1}-1}(e_{0}-e_{1})e_{0}^{k_{2}-1}\cdots(e_{0}-e_{1})e_{0}^{k_{d}-1}.
\]
Note that $\{\varrho(\word^{\star}(\Bbbk))\}$ is a basis of $\mathcal{C}$.
Thus, we finally define $\wp$ by
\[
\wp(\varrho(\word^{\star}(k_{1},\dots,k_{d})))=\theta_{1}(k_{1},\dots,k_{d}).
\]
\end{defn}

\begin{lem}
\label{lem:L1_shreg_varrho_is_L1_wp}For $w\in\mathcal{C}$, we have
\[
L_{1}(\shreg(\varrho(w)))=L_{1}(\wp(w)).
\]
\end{lem}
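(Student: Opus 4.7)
The plan is to leverage the fact that $\{\varrho(\word^{\star}(\Bbbk))\}_{\Bbbk}$ is a basis of $\mathcal{C}$ and verify the identity pointwise on this basis. For $w=\varrho(\word^{\star}(\Bbbk))$ the right-hand side is $L_{1}(\theta_{1}(\Bbbk))$ by the very definition of $\wp$, while on the left-hand side a direct check that $\varrho^{2}(e_{0})=e_{0}$ and $\varrho^{2}(e_{1})=e_{1}$ shows $\varrho$ is an involution, so that $L_{1}(\shreg(\varrho(w)))=L_{1}(\shreg(\word^{\star}(\Bbbk)))$. Hence the task reduces to proving
\[
L_{1}(\shreg(\word^{\star}(\Bbbk)))=L_{1}(\theta_{1}(\Bbbk))
\]
for every $\Bbbk=(k_{1},\ldots,k_{d})\in\mathbb{Z}_{\geq1}^{d}$.

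Next I would decompose $\theta_{1}(\Bbbk)=\theta'(\Bbbk)+2\sum_{\Bbbk=(\Bbbk',\{1\}^{2m})}e_{-1}e_{0}^{2m-1}\shuffle\theta'(\Bbbk')$ and simplify the main summand. Since $\theta'=\dist\circ\shreg\circ\varsigma\circ\theta$, the distribution relation (Lemma~\ref{lem:distr}) gives $L_{1}(\theta'(\Bbbk))=L_{1}(\shreg(\varsigma(\theta(\Bbbk))))$. The anti-automorphism $\varsigma$ is the algebraic avatar of the change of variables $t\mapsto1-t$ on iterated integrals from $0$ to $1$; moreover $\varsigma$ is a shuffle homomorphism preserving $\mathcal{A}^{0}(\{0,1\})$, which yields $\shreg\circ\varsigma=\varsigma\circ\shreg$ and hence the duality identity $L_{1}(\shreg(\varsigma(u)))=L_{1}(\shreg(u))$. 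Combining these, $L_{1}(\theta'(\Bbbk))=L_{1}(\shreg(\theta(\Bbbk)))$, and the identity to prove becomes the defect relation
\[
L_{1}(\shreg(\word^{\star}(\Bbbk)))-L_{1}(\shreg(\theta(\Bbbk)))=2\sum_{\Bbbk=(\Bbbk',\{1\}^{2m})}L_{1}\bigl(e_{-1}e_{0}^{2m-1}\shuffle\theta'(\Bbbk')\bigr).
\]

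I would establish this defect relation by induction on $d=\depth(\Bbbk)$. The shuffle product property of $L_{1}\circ\shreg$ (Chen's lemma combined with shuffle regularization) converts the concatenation in $\theta(k_{1},\ldots,k_{d})=-\sum_{j=1}^{d}\word(k_{j},\ldots,k_{1})\shuffle\theta(k_{j+1},\ldots,k_{d})$ into a shuffle product under $L_{1}$, which via the stuffle-type identity for Euler sums and repeated use of duality telescopes into the explicit signed expansion $\word^{\star}(\Bbbk)=-\sum_{T\subset\{2,\ldots,d\}}(-1)^{|T|}\word(\Bbbk_{T})$ obtained by expanding the factors $(e_{0}-e_{1})$ in $\word^{\star}$. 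When $\Bbbk$ does not end in ones, this telescoping is exact and the defect vanishes. When $\Bbbk=(\Bbbk',\{1\}^{n})$, the regularization $\shreg$ annihilates the tail contributions $(-)\shuffle e_{1}^{n}$ that survive in the expansion of $\word^{\star}$, leaving a defect supported on the trailing-ones structure. The main obstacle is to match this defect exactly with the explicit correction $2e_{-1}e_{0}^{2m-1}\shuffle\theta'(\Bbbk')$ and verify the parity restriction $n=2m$: the factor $2e_{-1}$ arises naturally from $\dist(e_{1})-e_{1}=e_{-1}$ applied to the regularization constants together with the doubling inherent in $\theta'$, while the even-parity condition follows from a sign cancellation in the $\theta$-recursion that kills all odd-length tail contributions. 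Carrying out this bookkeeping of shuffle products, signs and regularized constants --- essentially verifying that the definition of $\wp$ is precisely tailored to absorb the discrepancy between $\word^{\star}$ and $\theta$ under the shuffle regularization --- is the most delicate and technical part of the argument.
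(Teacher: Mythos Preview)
Your reduction is correct and in fact coincides with the paper's own reduction: writing $w=\varrho(\word^{\star}(\Bbbk))$, using that $\varrho$ is an involution, and applying distribution plus duality to get $L_{1}(\theta'(\Bbbk))=L_{1}(\shreg(\theta(\Bbbk)))$, the statement becomes
\[
L_{1}(\shreg(\word^{\star}(\Bbbk)))=L_{1}(\shreg(\theta(\Bbbk)))+2\sum_{\substack{\Bbbk=(\Bbbk',\{1\}^{2m})\\ m\geq1}}L_{1}(e_{-1}e_{0}^{2m-1})\cdot L_{1}(\shreg(\theta(\Bbbk'))).
\]
This is exactly the identity the paper isolates and proves.

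The gap is in your proposed proof of this identity. The paper does \emph{not} prove it by a direct induction on depth; it invokes three substantial external results. First, via a $2$-poset interpretation, $\theta(\Bbbk)=W(\bullet\!-\!\boxed{\Bbbk})$, and Kaneko--Yamamoto's star-regularization theorem \cite[Theorem~4.6]{KY} gives $Z_{\shuffle}^{(T)}(\theta(\Bbbk))=\rho_{Z}^{\star}\bigl(Z_{*}^{(T)}(\word^{\star}(\Bbbk))\bigr)$. Second, the Ihara--Kaneko--Zagier regularization theorem gives $Z_{\shuffle}^{(T)}(\word^{\star}(\Bbbk))=\rho_{Z}\bigl(Z_{*}^{(T)}(\word^{\star}(\Bbbk))\bigr)$. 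The defect between $\word^{\star}$ and $\theta$ is therefore governed by $\rho_{Z}\circ(\rho_{Z}^{\star})^{-1}$; the paper computes $\rho_{Z}\circ(\rho_{Z}^{\star})^{-1}(e^{T\xi})=\Gamma_{Z}(\xi)\Gamma_{Z}(-\xi)e^{T\xi}$, whose constant term in $T$ picks out $Z(\word^{\star}(\{2\}^{n/2}))$ for even $n$ and $0$ for odd $n$. Third, the closed form $\zeta^{\star}(\{2\}^{m})=2(1-2^{1-2m})\zeta(2m)$ from \cite{Li-Qin_DS} supplies the coefficient, which is then rewritten as $2L_{1}(e_{-1}e_{0}^{2m-1})$ via the distribution relation.

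None of this structure is visible in your sketch. The parity restriction does not come from ``a sign cancellation in the $\theta$-recursion'': it comes from the evenness of $\Gamma_{Z}(\xi)\Gamma_{Z}(-\xi)$, which is a feature of the comparison between the two regularization maps and has no obvious combinatorial counterpart inside $\theta$. Likewise, your explanation that the factor $2e_{-1}$ ``arises naturally from $\dist(e_{1})-e_{1}=e_{-1}$'' is misplaced: $\dist$ only enters after the coefficient $\zeta^{\star}(\{2\}^{m})$ has already been produced by the regularization comparison, and is used merely to rewrite that coefficient. Your telescoping induction would, in effect, have to reprove the Kaneko--Yamamoto star-regularization theorem together with the $\zeta^{\star}(\{2\}^{m})$ evaluation, and the proposal gives no mechanism for doing so.
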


\begin{proof}
We prove the lemma using a result in Kaneko-Yamamoto's article \cite{KY}.
To avoid confusion we will use $E_{b}=(-1)^{b}e_{b}$ instead of $e_{b}$
for $b\in\{0,\pm1\}$ throughout this proof, since $E_{b}$ is the
convention used in their article (and denoted as $e_{b}$ there).
We denote by $P$ the set of isomorphism classes of \emph{2-posets},
where a 2-poset means a pair $(X,\delta_{X})$ where $X$ is a finite
partially ordered set and $\delta_{X}:X\to\{0,1\}$ is a map. We depict
a $2$-poset as a Hasse diagram in which an element $x$ with $\delta(x)=0$
(resp. $\delta(x)=1$) is represented by $\circ$ (resp. $\bullet$).
Define $W:P\to\mathcal{A}(\{0,1\})$ by
\[
W((X,\delta))=\sum_{\substack{f:\{1,\dots,n\}\to X\\
{\rm bijection}
}
}\begin{cases}
E_{\delta(f(1))}\cdots E_{\delta(f(n))} & \text{if \ensuremath{f^{-1}(x)<f^{-1}(y)} for all \ensuremath{x,y\in X} such that \ensuremath{x<y}}\\
0 & {\rm otherwise}
\end{cases}
\]
where $n=\#X$. For $\Bbbk=(k_{1},\dots,k_{d})\in\mathbb{Z}_{\geq1}^{d}$,
we write
\[
\bullet\!\!-\!\!\boxed{\Bbbk}
\]
for the diagram

\[
\begin{xy}{(0,1)\ar@/^{2mm}/@{-}^{k_{d}}(7,8)},{(24,1)\ar@/^{2mm}/@{-}^{k_{2}}(31,8)},{(36,1)\ar@/^{2mm}/@{-}^{k_{1}}(43,8)},{(0,0)\ar@{{*}-}(4,4)},{(4,4)\ar@{{o}.}(8,8)},{(8,8)\ar@{{o}-}(12,0)},{(12,0)\ar@{{*}-}(14,2)},{(16,4)\ar@{.}(20,4)},{(23,2)\ar@{-}(24,0)},{(24,0)\ar@{{*}-}(28,4)},{(28,4)\ar@{{o}.}(32,8)},{(32,8)\ar@{{o}-}(36,0)},{(36,0)\ar@{{*}-}(40,4)},{(40,4)\ar@{{o}.{o}}(44,8)}\end{xy}.
\]
Then $W(\bullet\!\!-\!\!\boxed{\Bbbk})=\theta(k_{1},\dots,k_{d})$
by induction on $d$ from the case ${\bf k}=(k_{d},\dots,k_{1})$
and ${\bf l}=(0)$ of the second formula of \cite[Lemma 5.2]{KY}.
(Strictly speaking, the case ${\bf l}=(0)$ is not included in \cite[Lemma 5.2]{KY},
but the proof works also for this case). Let $R$ be a commutative
$\mathbb{Q}$-algebra and $Z:\mathcal{A}^{0}(\{0,1\})\to R$ a linear
map satisfying the regularized double shuffle relation (see \cite{KY}).
We extend $Z$ to a map $\mathbb{Z}+E_{1}\mathbb{Z}\left\langle E_{0},E_{1}\right\rangle \to R[T]$
in two ways, namely $Z_{\shuffle}^{(T)}$ and $Z_{*}^{(T)}$, by the
properties
\[
Z_{\shuffle}^{(T)}(E_{1})=Z_{*}^{(T)}(E_{1})=T,
\]
\[
Z_{\shuffle}^{(T)}\mid_{\mathcal{A}^{0}(\{0,1\})}=Z_{*}^{(T)}\mid_{\mathcal{A}^{0}(\{0,1\})}=Z
\]
and
\[
Z_{\shuffle}^{(T)}(u\shuffle v)=Z_{\shuffle}^{(T)}(u)Z_{\shuffle}^{(T)}(v),\ Z_{*}^{(T)}(u*v)=Z_{*}^{(T)}(u)Z_{*}^{(T)}(v),
\]
where $*$ is the harmonic product (see \cite{KY} for its definition).
Furthermore, define $\mathbb{Q}$-linear maps $\rho_{Z},\rho_{Z}^{\star}:R[T]\to R[T]$
by the equalities
\[
\rho_{Z}(e^{T\xi})=\Gamma_{Z}(\xi)e^{T\xi},\ \rho_{Z}^{\star}(e^{T\xi})=\Gamma_{Z}(-\xi)^{-1}e^{T\xi}
\]
in $R[T][[\xi]]$ ($\rho_{Z},\rho_{Z}^{\star}$ act coefficient-wisely)
where
\[
\Gamma_{Z}(\xi)=\exp\left(\sum_{n=2}^{\infty}\frac{Z(E_{1}E_{0}^{n-1})}{n}(-\xi)^{n}\right)\in R[[\xi]].
\]
Then the regularization theorem \cite{IKZ} says that
\[
Z_{\shuffle}^{(T)}(u)=\rho_{Z}(Z_{*}^{(T)}(u))
\]
for $u\in\mathbb{Z}+E_{1}\mathbb{Z}\left\langle E_{0},E_{1}\right\rangle $,
and the star-regularization theorem (\cite[Theorem 4.6]{KY}) says
that
\[
Z_{\shuffle}^{(T)}(W(\bullet\!\!-\!\!\boxed{\Bbbk}))=\rho_{Z}^{\star}(Z_{*}^{(T)}(\word^{\star}(\Bbbk)))
\]
for $\Bbbk=(k_{1},\dots,k_{d})\in\mathbb{Z}_{\geq1}^{d}$ since
\[
\word^{\star}(\Bbbk)=E_{1}E_{0}^{k_{1}-1}(E_{1}+E_{0})E_{0}^{k_{2}-1}\cdots(E_{1}+E_{0})E_{0}^{k_{d}-1}.
\]
If we write $\Bbbk=(\Bbbk',\{1\}^{m})$ with an admissible index $\Bbbk'$
and $m\geq0$, then

\begin{align*}
W(\bullet\!\!-\!\!\boxed{\Bbbk}) & =W\Bigg(\begin{xy}{(1,8)\ar@/^{1mm}/@{-}^{m}(4,5)},{(0,8)\ar@{{*}.}(4,4)},{(4,4)\ar@{{*}-}(8,0)},{(8,0)\ar@{{*}-}(12,0)},(14,0)*{\boxed{\Bbbk'}}\end{xy}\Bigg)\\
 & =\sum_{j=0}^{m}E_{1}^{j}\shuffle\shreg\circ W\Bigg(\begin{xy}{(1,8)\ar@/^{1mm}/@{-}^{m-j}(4,5)},{(0,8)\ar@{{*}.}(4,4)},{(4,4)\ar@{{*}-}(8,0)},{(8,0)\ar@{{*}-}(12,0)},(14,0)*{\boxed{\Bbbk'}}\end{xy}\Bigg)\\
 & =\sum_{j=0}^{m}E_{1}^{j}\shuffle\shreg\circ W\Bigg(\bullet\!\!-\!\!\boxed{(\Bbbk',\{1\}^{m-j})}\Bigg).
\end{align*}
Thus
\[
W(\bullet\!\!-\!\!\boxed{\Bbbk})=\sum_{\Bbbk=(\Bbbk'',\{1\}^{j})}E_{1}^{j}\shuffle\shreg\circ W\Bigg(\bullet\!\!-\!\boxed{\Bbbk''}\Bigg).
\]
Thus, by regularization theorem, star-regularization theorem, $Z_{\shuffle}^{(T)}\circ\shreg=Z_{\shuffle}^{(0)}$
and $Z_{\shuffle}^{(0)}=Z\circ\shreg$, we have
\begin{align*}
Z_{\shuffle}^{(T)}(\word^{\star}(\Bbbk)) & =\rho_{Z}\circ Z_{*}^{(T)}(\word^{\star}(\Bbbk))\\
 & =\rho_{Z}\circ(\rho_{Z}^{\star})^{-1}\circ Z_{\shuffle}^{(T)}(W(\bullet\!\!-\!\!\boxed{\Bbbk})).\\
 & =\sum_{\Bbbk=(\Bbbk'',\{1\}^{j})}\rho_{Z}\circ(\rho_{Z}^{\star})^{-1}\left(\frac{T^{j}}{j!}Z_{\shuffle}^{(0)}(W(\bullet\!\!-\!\boxed{\Bbbk''}))\right)\\
 & =\sum_{\Bbbk=(\Bbbk'',\{1\}^{j})}\rho_{Z}\circ(\rho_{Z}^{\star})^{-1}\left(\frac{T^{j}}{j!}\right)\times Z\circ\shreg(W(\bullet\!\!-\!\!\boxed{\Bbbk''})).
\end{align*}
Let us calculate $\rho_{Z}\circ(\rho_{Z}^{\star})^{-1}\left(\frac{T^{j}}{j!}\right)$.
By definition
\begin{align*}
\rho_{Z}\circ(\rho_{Z}^{\star})^{-1}(e^{T\xi}) & =\Gamma_{Z}(\xi)\Gamma_{Z}(-\xi)e^{T\xi}\\
 & =\exp\left(\sum_{n=2}^{\infty}\frac{Z(E_{1}E_{0}^{n-1})}{n}(\xi^{n}+(-\xi)^{n})\right)e^{T\xi}\\
 & =\exp\left(\sum_{m=1}^{\infty}\frac{Z(E_{1}E_{0}^{2m-1})}{m}\xi^{2m}\right)e^{T\xi},
\end{align*}
and 
\[
\exp\left(\sum_{m=1}^{\infty}\frac{Z(E_{1}E_{0}^{2m-1})}{m}\xi^{2m}\right)
\]
is equal to
\[
\sum_{m=0}^{\infty}Z(\word^{\star}(\{2\}^{m}))\xi^{2m}
\]
by harmonic relation. Thus
\[
\rho_{Z}\circ(\rho_{Z}^{\star})^{-1}(e^{T\xi})=\sum_{m=0}^{\infty}Z(\word^{\star}(\{2\}^{m}))\xi^{2m}e^{T\xi}
\]
From the coefficient of $\xi^{n}$, we get
\[
\rho_{Z}\circ(\rho_{Z}^{\star})^{-1}\left(\frac{T^{n}}{n!}\right)=\sum_{m=0}^{\left\lfloor n/2\right\rfloor }Z(\word^{\star}(\{2\}^{m}))\xi^{2m}\frac{T^{n-2m}}{(n-2m)!},
\]
and thus the constant term of $\rho_{Z}\circ(\rho_{Z}^{\star})^{-1}\left(\frac{T^{n}}{n!}\right)$
is given by
\[
\begin{cases}
Z(\word^{\star}(\{2\}^{n/2})) & n:{\rm even}\\
0 & n:{\rm odd}.
\end{cases}
\]
Furthermore, we have
\[
Z(\word^{\star}(\{2\}^{n/2}))=2(1-2^{1-n})Z(E_{1}E_{0}^{n-1})
\]
for even $n>0$ by \cite[Theorem 3.13 (2)]{Li-Qin_DS}. Therefore,
\[
Z_{\shuffle}^{(0)}(\word^{\star}(\Bbbk))=\sum_{\Bbbk=(\Bbbk'',\{1\}^{2m})}Z\circ\shreg(W(\bullet\!\!-\!\!\boxed{\Bbbk''}))\times\begin{cases}
1 & m=0\\
2(1-2^{1-2m})Z(E_{1}E_{0}^{2m-1}) & m>0.
\end{cases}
\]
Now, let us prove the lemma. If we put $w=\varrho(\word^{\star}(\Bbbk))$
then
\begin{align*}
L_{1}(\shreg(\varrho(w))) & =L_{1}(\shreg(\word^{\star}(\Bbbk)))\\
 & =\sum_{\Bbbk=(\Bbbk'',\{1\}^{2m})}L_{1}(\shreg(W(\bullet\!\!-\!\!\boxed{\Bbbk''})))\times\begin{cases}
1 & m=0\\
2(1-2^{1-2m})L_{1}(E_{1}E_{0}^{2m-1}) & m>0.
\end{cases}\\
 & =\sum_{\Bbbk=(\Bbbk'',\{1\}^{2m})}L_{1}(\shreg(\theta(\Bbbk'')))\times\begin{cases}
1 & m=0\\
-2L_{1}(E_{-1}E_{0}^{2m-1}) & m>0
\end{cases}\\
 & =L_{1}\circ\theta_{1}(\Bbbk)\\
 & =L_{1}(\wp(\varrho(\word^{\star}(\Bbbk))))\\
 & =L_{1}(\wp(w)).
\end{align*}
Here, the first equality is trivial, the second equality is because
$L_{1}$ satisfies the double shuffle relation, the third equality
is by the distribution relation (Lemma \ref{lem:distr}), the fourth
equality is by the duality relation of multiple zeta values and Lemma
\ref{lem:distr}, and the last two equalities are by definition. Since
any element of $\mathcal{C}$ can be written as a linear sum of $\varrho(\word^{\star}(\Bbbk))$,
we complete the proof.
\end{proof}
\begin{lem}
\label{lem:depth_preserve_wp}$\wp(\mathcal{A}_{k,d}(\{0,1\}))\subset\tilde{\mathcal{A}}_{k,d}(\{0,1,-1\})$
\end{lem}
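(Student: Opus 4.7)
The plan is to show that $\wp$ respects the depth filtration on $\mathcal{C}$, bearing in mind the key subtlety that the basis $\{\varrho(\word^{\star}(\Bbbk))\}$ of $\mathcal{C}$ used to define $\wp$ does not itself respect the word-depth filtration of $\mathcal{A}_{k,d}(\{0,1\})$: since $\varrho(e_{0})=e_{0}-e_{1}$ picks up a depth-one component, a generic basis element $\varrho(\word^{\star}(\Bbbk))$ has words of depth up to $w(\Bbbk)$, and correspondingly the individual $\theta_{1}(\Bbbk)$ can have depth exceeding $d(\Bbbk)$. So depth-preservation of $\wp$ on $\mathcal{A}_{k,d}(\{0,1\})\cap\mathcal{C}$ must come from cancellations occurring when an input of depth $\leq d$ is expanded in the $\varrho(\word^{\star})$-basis.

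First I would assemble the depth-preservation properties of the building blocks that are straightforward: (i) the shuffle regularization $\shreg$ preserves the bigrading (weight, depth), because it is the projection along the bigraded tensor decomposition $\mathcal{A}(\{0,1\})\cong\mathcal{A}^{0}(\{0,1\})\otimes_{\shuffle}\mathbb{Q}[e_{0},e_{1}]$; (ii) the distribution map $\dist$ sends $\mathcal{A}_{k,d}^{0}(\{0,1\})$ into $\tilde{\mathcal{A}}_{k,d}^{0}(\{0,1,-1\})$ by \eqref{eq:dist_in_tilde}, which simultaneously delivers the factor of~$2$ and preserves depth; (iii) an induction on $d$ using the shuffle recursion shows $\theta(\Bbbk)\in\mathcal{A}_{w(\Bbbk),d(\Bbbk)}(\{0,1\})$ with each surviving word having exactly $d(\Bbbk)$ occurrences of $e_{1}$; (iv) each correction term $2e_{-1}e_{0}^{2m-1}\shuffle\theta'(\Bbbk')$ has weight $w(\Bbbk)$, depth $1+d(\Bbbk')=d(\Bbbk)-(2m-1)\leq d(\Bbbk)$, and is manifestly in $2\mathcal{A}^{0}$.

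Next I would confront the main obstacle: because $\varsigma$ anti-homomorphically swaps $e_{0}\leftrightarrow -e_{1}$, it carries a weight-$w$, depth-$d$ word to a weight-$w$, depth-$(w-d)$ word, which can greatly exceed $d$ when $w>2d$. So even though each $\theta(\Bbbk)$ sits neatly in $\mathcal{A}_{w,d}$, the composite $\theta'(\Bbbk)=\dist\circ\shreg\circ\varsigma\circ\theta(\Bbbk)$ generically lives only in $\tilde{\mathcal{A}}_{w,w-d}^{0}$. To propagate depth bounds nevertheless, I would switch to a depth-adapted basis of $\mathcal{A}_{k,d}(\{0,1\})\cap\mathcal{C}$, for instance the monomial basis $\{e_{1}e_{a_{2}}\cdots e_{a_{k}}\mid \#\{i\geq 2\colon a_{i}=1\}\leq d-1\}$, and verify the claim directly on each such word by tracking how it decomposes in the $\varrho(\word^{\star})$-basis and then collecting the resulting sum of $\theta_{1}(\Bbbk)$'s. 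Equivalently, I would try to establish by induction on the weight that $\wp$ coincides on $\mathcal{A}_{k,d}(\{0,1\})\cap\mathcal{C}$, modulo the depth-$>d$ subspace, with an a priori depth-preserving construction obtained by combining $\shreg$, $\varrho$, and $\dist$ in the spirit of Lemma~\ref{lem:L1_shreg_varrho_is_L1_wp}.

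The hard part will be verifying the decisive cancellation: on a single basis vector $\varrho(\word^{\star}(\Bbbk))$ the claim has no reason to hold, and it is only after summing over the depth-controlled combinations corresponding to $u\in\mathcal{A}_{k,d}(\{0,1\})\cap\mathcal{C}$ that the dangerous depth-$(w-d)$ contributions coming from $\varsigma$ must cancel. I anticipate that this reduces to a combinatorial identity relating shuffle expansions of products of $(e_{0}-e_{1})$-factors, in which the correction terms $2e_{-1}e_{0}^{2m-1}\shuffle\theta'(\Bbbk')$ in $\theta_{1}$ — which single out exactly those indices ending in a positive even run of~$1$'s — play the role of absorbing what would otherwise be the offending high-depth surplus, parallel to the way the regularization and duality arguments from the proof of Lemma~\ref{lem:L1_shreg_varrho_is_L1_wp} already exchange high-depth star-values for low-depth ones.
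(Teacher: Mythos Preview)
Your plan rests on a misconception: no cancellation is needed, because the claim already holds for each basis vector $\varrho(\word^{\star}(\Bbbk))$ that can actually appear. The point you are missing is that the depth filtration on $\mathcal{C}$ is adapted to the $\varrho(\word^{\star})$-basis, but with the index \emph{reversed}. Since $\varrho(\word^{\star}(k_{1},\dots,k_{r}))=e_{1}(e_{0}-e_{1})^{k_{1}-1}e_{0}(e_{0}-e_{1})^{k_{2}-1}\cdots e_{0}(e_{0}-e_{1})^{k_{r}-1}$ has depth at most $k-r+1$, a dimension count (or, as the paper does, the explicit rewriting of $\varrho(e_{1}e_{0}^{l_{1}-1}\cdots e_{1}e_{0}^{l_{d}-1})$ via $-e_{1}=(e_{0}-e_{1})-e_{0}$) shows that
\[
\mathcal{A}_{k,d}(\{0,1\})\cap\mathcal{C}=\mathrm{span}\bigl\{\varrho(\word^{\star}(\Bbbk)):\depth(\Bbbk)\geq k-d+1\bigr\}.
\]
So when you expand a depth-$\leq d$ input in this basis, only indices $\Bbbk$ with $r=\depth(\Bbbk)\geq k-d+1$ occur---the opposite of what your sentence ``on a single basis vector the claim has no reason to hold'' presumes.

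Now your own observation about $\varsigma$ works \emph{for} you rather than against you. For each such $\Bbbk$ and each decomposition $\Bbbk=(\Bbbk',\{1\}^{2m})$, the element $\theta(\Bbbk')$ has depth exactly $r-2m$, hence $\varsigma(\theta(\Bbbk'))\in\mathcal{A}_{k-2m,\,k-r}(\{0,1\})\subset\mathcal{A}_{k-2m,\,d-1}(\{0,1\})$; applying $\shreg$ and $\dist$ preserves depth (and supplies the factor $2$ via \eqref{eq:dist_in_tilde}), and the final shuffle with $e_{-1}e_{0}^{2m-1}$ raises depth by one. Thus $\theta_{1}(\Bbbk)=\wp(\varrho(\word^{\star}(\Bbbk)))\in\tilde{\mathcal{A}}_{k,d}^{0}(\{0,1,-1\})$ term by term. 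The two depth reversals---$\varrho$ sending low-depth $u$ to high-$r$ basis elements, and $\varsigma$ sending high $r$ back to low depth $k-r$---match exactly, and the lemma follows directly with no combinatorial identity to chase.
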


\begin{proof}
Let $k>0$. It is enough to show that $\wp(u)\in\mathcal{A}_{k,d}(\{0,1,-1\})$
for $u=e_{1}e_{0}^{l_{1}-1}\cdots e_{1}e_{0}^{l_{d}-1}$ with $l_{1}+\cdots+l_{d}=k$.
Then
\begin{align*}
\varrho(u) & =(-e_{1})(e_{0}-e_{1})^{l_{1}-1}(-e_{1})(e_{0}-e_{1})^{l_{2}-1}\cdots(-e_{1})(e_{0}-e_{1})^{l_{d}-1}\\
 & =-e_{1}(e_{0}-e_{1})^{l_{1}-1}((e_{0}-e_{1})-e_{0})(e_{0}-e_{1})^{l_{2}-1}\cdots((e_{0}-e_{1})-e_{0})(e_{0}-e_{1})^{l_{d}-1}
\end{align*}
can be written as a $\mathbb{Q}$-linear sum of
\[
\word^{\star}(k_{1},\dots,k_{r})
\]
with $r\geq k-d+1$. In other words, $u$ can be written as a $\mathbb{Q}$-linear
sum of
\[
\varrho(\word^{\star}(k_{1},\dots,k_{r}))
\]
with $r\geq k-d+1$, and thus $\wp(u)$ can be written as a $\mathbb{Q}$-linear
sum of
\[
\wp(\varrho(\word^{\star}(k_{1},\dots,k_{r})))=\theta_{1}(k_{1},\dots,k_{r})
\]
with $r\geq k-d+1$. For any $k_{1},\dots,k_{r}$ with $k_{1}+\cdots+k_{r}=k$,
if
\[
(k_{1},\dots,k_{r})
\]
has the form
\[
(\Bbbk',\{1\}^{2m})\ \ \ (m\geq0),
\]
then
\[
\varsigma(\theta(\Bbbk'))\in\mathcal{A}_{k-2m,k-r}(\{0,1\}),
\]
and especially if $r\geq k-d+1$ then
\[
\varsigma(\theta(\Bbbk'))\in\mathcal{A}_{k-2m,d-1}(\{0,1\}).
\]

By (\ref{eq:dist_in_tilde}) and the facts that neither $\dist$ nor
$\shreg$ change the depth and that taking shuffle product with $e_{-1}e_{0}^{2m-1}$
increases depth by one, we can now conclude that
\[
\wp(u)\in\tilde{\mathcal{A}}_{k,d}(\{0,1,-1\}).\qedhere
\]
\end{proof}
Combining the arguments above, we can construct a map $\reg_{z\to0}:\mathcal{B}\to\mathcal{A}^{0}(\{0,1,-1\})$
with desirable properties as follows.
\begin{defn}
\label{def:reg_limit_Euler}Define $\reg_{z\to0}:\mathcal{B}\to\tilde{\mathcal{A}}^{0}(\{0,1,-1\})$
as the composite map
\[
\mathcal{B}\xrightarrow{e_{-1}\mapsto0}\mathcal{B}'\xrightarrow[\simeq]{u\shuffle v\mapsto u\otimes v}\mathcal{B}''\otimes\mathcal{B}'''\xrightarrow{\mathop{{\rm \overline{reg}}}_{z\to0}\otimes(e_{a}\mapsto e_{-a/z^{2}})}\mathcal{A}^{0}(\{0,1\})\otimes\mathcal{C}\xrightarrow{u\otimes v\mapsto\dist(u)\shuffle\wp(v)}\tilde{\mathcal{A}}^{0}(\{0,1,-1\}).
\]
Here, $e_{-1}\mapsto0$ in the first arrow denotes the ring homomorphism
that annihilates the monomials containing $e_{-1}$, and $e_{a}\mapsto e_{-a/z^{2}}$
in the third map denotes the ring homomorphism that maps $e_{a}$
to $e_{-a/z^{2}}$ for $a\in\{0,-z^{2}\}$. By definition and the
arguments above, we have
\begin{equation}
\Reg_{z\to+0}L_{z}(u)=L_{1}(\reg_{z\to0}u)\label{eq:reg_z0_val}
\end{equation}
for $u\in\mathcal{B}$. Furthermore, by the construction and Lemma
\ref{lem:depth_preserve_wp}, each step preserves the depth filtration.
Hence we have
\begin{equation}
\reg_{z\to0}(\mathcal{B}_{k,d})\subset\tilde{\mathcal{A}}_{k,d}^{0}(\{0,1,-1\})\label{eq:reg_z0_kd}
\end{equation}
for any $k,d$.
\end{defn}

\subsection{Auxiliary congruences for $\protect\reg_{z\to0}$}

In this section, we will prove the following auxiliary congruences
which will be used in the proof of Lemma \ref{lem:main_step2_2}.
\begin{lem}
\label{lem:reg_z0_mod4_congruence}For $d>0$ and $k_{1},\dots,k_{d}>0$,
we have
\begin{multline*}
\reg_{z\to0}(e_{-z^{2}}e_{0}^{k_{1}-1}\cdots e_{-z^{2}}e_{0}^{k_{d}-1})\\
\equiv\begin{cases}
2e_{-1}e_{0}^{k-1} & \text{\ensuremath{k} is even}\\
0 & \text{\ensuremath{k} is odd}
\end{cases}+\begin{cases}
2(e_{1}+e_{-1})^{k-1}e_{0} & \Bbbk=\{1\}^{k}\\
0 & \Bbbk\neq\{1\}^{k}
\end{cases}\pmod{4\mathcal{A}_{k,d}^{0}(\{0,1,-1\})}.
\end{multline*}
\end{lem}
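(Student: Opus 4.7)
First, I unwind the definition of $\reg_{z\to 0}$ to reduce the problem to a computation of $\wp(\word(\Bbbk)) \pmod 4$. Since $u = e_{-z^2}e_0^{k_1-1}\cdots e_{-z^2}e_0^{k_d-1}$ lies in $\mathcal{B}'''$, the decomposition $\mathcal{B}' \simeq \mathcal{B}'' \otimes \mathcal{B}'''$ sends $u$ to $1 \otimes u$, the substitution $e_a \mapsto e_{-a/z^2}$ produces $w := \word(\Bbbk) = e_1 e_0^{k_1-1}\cdots e_1 e_0^{k_d-1}$, and the last step yields $\reg_{z\to 0}(u) = \dist(1) \shuffle \wp(w) = \wp(w)$.

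Next I analyze $\theta_1(\Bbbk') \pmod 4$ for each composition $\Bbbk'$ of weight $k$. The key observation is that every nonzero word in $\mathcal{A}^0(\{0,1\})$ ends with $e_0$ (so contains at least one $e_0$), and $\dist(e_0) = 2 e_0$. Hence $\theta' = \dist \circ \shreg \circ \varsigma \circ \theta$ always lies in $2\mathcal{A}^0$, and modulo $4$ only the monomial $e_1^{k-1}e_0$ (the unique word with exactly one $e_0$) in $\shreg(\varsigma(\theta(\Bbbk')))$ contributes; writing $c_{\Bbbk'}$ for its coefficient, $\theta'(\Bbbk') \equiv 2 c_{\Bbbk'} (e_1+e_{-1})^{k-1} e_0 \pmod 4$. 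In the correction term $2\sum_{\Bbbk' = (\Bbbk'', \{1\}^{2m}),\, m\geq 1} e_{-1} e_0^{2m-1} \shuffle \theta'(\Bbbk'')$, since $\theta'(\Bbbk'')$ for $\Bbbk'' \neq ()$ is already divisible by $2$, only $\Bbbk'' = ()$ survives modulo $4$, producing $2 e_{-1} e_0^{k-1}$ precisely when $\Bbbk' = \{1\}^k$ with $k$ even.

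Then, expanding $w = \sum_{\Bbbk'} a_{\Bbbk', \Bbbk}\, \varrho(\word^\star(\Bbbk'))$, I obtain
\[
\wp(w) \equiv 2\Bigl(\sum_{\Bbbk'} a_{\Bbbk', \Bbbk}\, c_{\Bbbk'}\Bigr)(e_1+e_{-1})^{k-1} e_0 + 2 [k\text{ even}]\, a_{\{1\}^k, \Bbbk}\, e_{-1} e_0^{k-1} \pmod 4.
\]
Identifying compositions of $k$ with subsets $Q(\Bbbk) \subseteq \{2,\ldots,k\}$ (the positions of $e_1$'s beyond the first), the transition $\varrho(\word^\star(\Bbbk')) = \sum_{Q(\Bbbk'') \cap Q(\Bbbk') = \emptyset} (-1)^{|Q(\Bbbk'')|} \word(\Bbbk'')$ can be inverted explicitly by M\"obius inversion on the subset lattice, yielding $a_{\{1\}^k, \Bbbk} = 1$ for every $\Bbbk$. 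This already reproduces the $2 e_{-1} e_0^{k-1}$ part of the right-hand side in the $k$-even case.

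The hard part, and main obstacle, will be the parity identity $\sum_{\Bbbk'} a_{\Bbbk', \Bbbk}\, c_{\Bbbk'} \equiv [\Bbbk = \{1\}^k] \pmod 2$. Equivalently, it is the parity of the coefficient of $e_1^{k-1}e_0$ in $\shreg(\varsigma(\Theta(\word(\Bbbk))))$, where $\Theta: \mathcal{C} \to \mathcal{A}(\{0,1\})$ denotes the linear map $\varrho(\word^\star(\Bbbk')) \mapsto \theta(\Bbbk')$. I plan to prove this by combining the recursive formula for $\theta$ (a signed sum of shuffles of smaller $\theta$'s with $\word$'s), a mod-$2$ reduction of shuffle products (where cross-terms with even multinomial coefficients vanish), and the explicit form of $a_{\Bbbk', \Bbbk}$ from the previous paragraph, reducing the identity to a single combinatorial parity count that vanishes unless all $k_i = 1$.
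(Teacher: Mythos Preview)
Your overall structure matches the paper's proof: reduce to $\wp(\word(\Bbbk))$, observe that $\theta'(\Bbbk')\in 2\mathcal{A}^0$ for nonempty $\Bbbk'$ so that only the monomial $e_1^{k-1}e_0$ (the unique one with a single $e_0$) in $\shreg(\varsigma(\theta(\Bbbk')))$ survives modulo $4$, and handle the correction term $2e_{-1}e_0^{2m-1}\shuffle\theta'(\Bbbk'')$ by noting that only $\Bbbk''=\emptyset$ contributes. Your computation $a_{\{1\}^k,\Bbbk}=1$ is also correct and agrees with the paper.

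The gap is in your ``hard part''. You are missing the observation that immediately trivializes it: by the recursion, every monomial appearing in $\theta(\Bbbk')$ contains exactly $\depth(\Bbbk')$ letters $e_1$. Since $\varsigma$ exchanges $e_0\leftrightarrow e_1$ (up to sign and reversal) and $\shreg$ preserves the number of $e_0$'s in each monomial (because $e_0^n\shuffle e_1^m\shuffle v$ has exactly $n$ more $e_0$'s than $v$), every monomial in $\shreg(\varsigma(\theta(\Bbbk')))$ has exactly $\depth(\Bbbk')$ letters $e_0$. Hence your coefficient $c_{\Bbbk'}$ vanishes whenever $\depth(\Bbbk')\ge 2$, and also $c_{(1)}=0$. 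The sum $\sum_{\Bbbk'}a_{\Bbbk',\Bbbk}c_{\Bbbk'}$ therefore collapses to the single term $a_{(k),\Bbbk}c_{(k)}$ (for $k>1$). One computes directly $\varsigma(\theta(k))=(-1)^{k+1}e_1^{k-1}e_0$, so $c_{(k)}=(-1)^{k+1}$; and from $\word(\Bbbk)=e_1e_0^{k_1-1}(e_0-(e_0-e_1))e_0^{k_2-1}\cdots$ one reads off that the coefficient of $\varrho(\word^\star(k))=e_1(e_0-e_1)^{k-1}$ is $(-1)^{k-1}$ if $\Bbbk=\{1\}^k$ and $0$ otherwise. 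This gives $\sum_{\Bbbk'}a_{\Bbbk',\Bbbk}c_{\Bbbk'}=[\Bbbk=\{1\}^k]$ with no parity argument needed.

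Your proposed route through the $\theta$-recursion, mod-$2$ shuffle reductions, and M\"obius inversion is not wrong in spirit, but it is unnecessary and the plan as stated is too vague to count as a proof; the single depth observation above replaces all of it.
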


\begin{proof}
For any monomial $u$ in $\mathcal{C},$ 
\[
\dist\circ\shreg\circ\varrho(u)\equiv\begin{cases}
1 & u=1\\
(-1)^{k}2(e_{1}+e_{-1})^{k-1}e_{0} & u=e_{1}e_{0}^{k-1}\ \text{for some }k>1\\
0 & \text{otherwise}
\end{cases}
\]
modulo $4\mathcal{A}^{0}(\{0,1,-1\})$, since if there are more than
one $e_{1}$ in $u$ then there are more than one $e_{0}$ in each
monomial that appears in $\shreg\circ\varrho(u)$ and $\dist$ maps
$e_{0}$ to $2e_{0}$. Thus
\[
\theta'(\Bbbk)\equiv\begin{cases}
1 & \Bbbk=\emptyset\\
(-1)^{k+1}2(e_{1}+e_{-1})^{k-1}e_{0} & \Bbbk=(k),\ k>1\\
0 & \text{otherwise}
\end{cases}
\]
modulo $4\mathcal{A}^{0}(\{0,1,-1\})$. Therefore
\begin{align}
\theta_{1}(\Bbbk) & \equiv\begin{cases}
1 & \Bbbk=\emptyset\\
(-1)^{k+1}2(e_{1}+e_{-1})^{k-1}e_{0} & \Bbbk=(k),\ k>1\\
2e_{-1}e_{0}^{2m-1} & \Bbbk=\{1\}^{2m},\ m>1\\
0 & \text{otherwise}
\end{cases}\label{eq:theta1_modulo4}
\end{align}
modulo $4\mathcal{A}^{0}(\{0,1,-1\})$ since
\[
2e_{-1}e_{0}^{2m-1}\shuffle\theta'(\Bbbk')\equiv0\pmod{4\mathcal{A}^{0}(\{0,1,-1\})}
\]
if $\Bbbk'$ is not empty. Note that 
\[
\varrho(\word^{\star}(l_{1},\dots,l_{r}))=e_{1}(e_{0}-e_{1})^{l_{1}-1}e_{0}(e_{0}-e_{1})^{l_{2}-1}\cdots e_{0}(e_{0}-e_{1})^{l_{r}-1}.
\]
Now, let us calculate
\begin{align*}
\reg_{z\to0}(e_{-z^{2}}e_{0}^{k_{1}-1}\cdots e_{-z^{2}}e_{0}^{k_{d}-1}) & =\wp(e_{1}e_{0}^{k_{1}-1}\cdots e_{1}e_{0}^{k_{d}-1})
\end{align*}
modulo $4\mathcal{A}^{0}(\{0,1,-1\})$. Put $k=k_{1}+\cdots+k_{d}$.
Assume that $k>0$. Note that
\[
e_{1}e_{0}^{k_{1}-1}\cdots e_{1}e_{0}^{k_{d}-1}=e_{1}e_{0}^{k_{1}-1}(e_{0}-(e_{0}-e_{1}))e_{0}^{k_{2}-1}\cdots(e_{0}-(e_{0}-e_{1}))e_{0}^{k_{d}-1}.
\]
By expanding this, we can obtain the expression of $e_{1}e_{0}^{k_{1}-1}\cdots e_{1}e_{0}^{k_{d}-1}$
as a linear sum of $\varrho(\word^{\star}(l_{1},\dots,l_{r}))$. In
other words
\[
e_{1}e_{0}^{k_{1}-1}\cdots e_{1}e_{0}^{k_{d}-1}=\sum_{\substack{{\bf l}=(l_{1},\dots,l_{r})}
}c_{{\bf l}}\varrho(\word^{\star}({\bf l}))\ \ \ c_{{\bf l}}\in\mathbb{Z}.
\]
Then $c_{(k)}$ (the coefficient of $\varrho(\word^{\star}(k))=e_{1}(e_{0}-e_{1})^{k-1}$)
is equal to
\[
\begin{cases}
(-1)^{k-1} & k_{1}=\cdots=k_{d}=1\\
0 & \text{otherwise},
\end{cases}
\]
and $c_{\{1\}^{k}}$ (the coefficient of $\varrho(\word^{\star}(\{1\}^{k}))=e_{1}e_{0}^{k-1}$)
is equal to $1$ for any $k_{1},\dots,k_{d}$. Thus
\begin{align*}
\wp(e_{1}e_{0}^{k_{1}-1}\cdots e_{1}e_{0}^{k_{d}-1}) & =\sum_{\substack{{\bf l}=(l_{1},\dots,l_{r})}
}c_{{\bf l}}\wp(\varrho(\word^{\star}({\bf l})))\\
 & =\sum_{\substack{{\bf l}=(l_{1},\dots,l_{r})}
}c_{{\bf l}}\theta_{1}({\bf l})\\
 & \equiv\begin{cases}
2e_{-1}e_{0}^{k-1} & \text{\ensuremath{k} is even}\\
0 & \text{\ensuremath{k} is odd}
\end{cases}+\begin{cases}
2(e_{1}+e_{-1})^{k-1}e_{0} & \Bbbk=\{1\}^{k}\\
0 & \Bbbk\neq\{1\}^{k}
\end{cases}
\end{align*}
modulo $4\mathcal{A}^{0}(\{0,1,-1\})$.
\end{proof}

\subsection{Confluence relations for Euler sums}
\begin{defn}
\label{def:diff_operator_words}Note that any monomial $u=e_{b_{1}}\cdots e_{b_{k}}\in\mathcal{B}$
can be uniquely expressed in the form 
\[
u=e_{a_{1}}^{l_{1}}\cdots e_{a_{r}}^{l_{r}}\qquad(\substack{a_{1}\neq a_{2}\neq\cdots\neq a_{r}\\
l_{1},l_{2},\ldots,l_{r}>0
}
).
\]
For $c\in\{0,1,-1\}$, we define a linear map $\partial_{c}:\mathcal{B}\to\mathcal{B}$
by
\[
\partial_{c}(e_{a_{1}}^{l_{1}}\cdots e_{a_{r}}^{l_{r}})=\sum_{i=1}^{r}{\rm ord}_{z=c}\left(\frac{a_{i}-a_{i+1}}{a_{i}-a_{i-1}}\right)e_{a_{1}}^{l_{1}}\cdots e_{a_{i-1}}^{l_{i-1}}e_{a_{i}}^{l_{i}-1}e_{a_{i+1}}^{l_{i+1}}\cdots e_{a_{r}}^{l_{r}}
\]
with $a_{0}=0$ and $a_{r+1}=z$.
\end{defn}

As a special case of the differential formula for a general iterated
integral (\cite[Lemma 3.3.30]{Panzer}), we obtain

\begin{equation}
\frac{d}{dz}L_{z}(u)=\sum_{c\in\{0,1,-1\}}\frac{1}{z-c}L_{z}(\partial_{c}u)\label{eq:B_Lz_dif}
\end{equation}
for $u\in\mathcal{B}$. It should be noted that $\partial_{c}$ maps
$\mathcal{B}_{k,d}$ to $\mathcal{B}_{k-1,d-|c|}$ for $c\in\{0,1,-1\}$.

We define a linear map $L_{z}^{\shuffle}:\mathcal{A}(\{0,1,-1\})\to\mathbb{R}$
by
\[
L_{z}^{\shuffle}(u)=I(0';u;z)
\]
where $0'=\overrightarrow{1}_{0}.$ Note that
\begin{equation}
\Reg_{z\to+0}L_{z}^{\shuffle}(e_{a_{1}}\cdots e_{a_{k}})=\delta_{k,0},\label{eq:Lz_sh_lim}
\end{equation}
\begin{equation}
\frac{d}{dz}L_{z}^{\shuffle}(ue_{a})=\frac{1}{z-a}L_{z}^{\shuffle}(u).\label{eq:Lz_sh_dif}
\end{equation}

\begin{defn}
We define $\varphi_{\otimes}:\mathcal{B}\to\tilde{\mathcal{A}}^{0}(\{0,1,-1\})\otimes\mathcal{A}(\{0,1,-1\})$
by
\[
\varphi_{\otimes}(u)=\sum_{l=0}^{\infty}\sum_{c_{1},\dots,c_{l}\in\{0,1,-1\}}\reg_{z\to0}(\partial_{c_{1}}\cdots\partial_{c_{l}}u)\otimes e_{c_{1}}\cdots e_{c_{l}}.
\]
\end{defn}

\begin{lem}
\label{lem:conf_tensor}For $u\in\mathcal{B}$, we have 
\[
L_{z}(u)=(L_{1}\otimes L_{z}^{\shuffle})\circ\varphi_{\otimes}(u).
\]
\end{lem}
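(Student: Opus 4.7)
The plan is to view both sides of the identity as real-analytic functions of $z\in(0,1)$, exhibit a common first-order linear ODE they satisfy (parameterized by $u\in\mathcal{B}$), match their regularized limits at $z\to+0$, and close the argument by induction on $\mathrm{wt}(u)$.

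First I would differentiate the right-hand side with respect to $z$. Since $\reg_{z\to0}$ produces a $z$-independent element of $\tilde{\mathcal{A}}^{0}(\{0,1,-1\})$, only the factor $L_{z}^{\shuffle}$ contributes, and (\ref{eq:Lz_sh_dif}) applied to $e_{c_{1}}\cdots e_{c_{l}}$ yields a factor $\frac{1}{z-c_{l}}$ times $L_{z}^{\shuffle}(e_{c_{1}}\cdots e_{c_{l-1}})$. Setting $c=c_{l}$ and noting that $\partial_{c_{1}}\cdots\partial_{c_{l-1}}\partial_{c}u=\partial_{c_{1}}\cdots\partial_{c_{l-1}}(\partial_{c}u)$, a straightforward reindexing gives
\[
\frac{d}{dz}(L_{1}\otimes L_{z}^{\shuffle})\circ\varphi_{\otimes}(u)=\sum_{c\in\{0,1,-1\}}\frac{1}{z-c}(L_{1}\otimes L_{z}^{\shuffle})\circ\varphi_{\otimes}(\partial_{c}u).
\]
Meanwhile $L_{z}(u)$ satisfies exactly the same ODE by (\ref{eq:B_Lz_dif}). (Note that $\varphi_{\otimes}(u)$ is in fact a finite sum, since $\partial_{c}$ strictly lowers weight, so $\partial_{c_{1}}\cdots\partial_{c_{l}}u=0$ as soon as $l>\mathrm{wt}(u)$.)

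Next I would compute the regularized limits at $z\to+0$. For the left-hand side, (\ref{eq:reg_z0_val}) gives $\Reg_{z\to+0}L_{z}(u)=L_{1}(\reg_{z\to0}u)$. For the right-hand side, (\ref{eq:Lz_sh_lim}) annihilates every term with $l\geq1$, leaving only the $l=0$ contribution, which is again $L_{1}(\reg_{z\to0}u)$. Hence the difference $H(z,u):=L_{z}(u)-(L_{1}\otimes L_{z}^{\shuffle})\circ\varphi_{\otimes}(u)$ satisfies
\[
\frac{d}{dz}H(z,u)=\sum_{c\in\{0,1,-1\}}\frac{1}{z-c}H(z,\partial_{c}u),\qquad\Reg_{z\to+0}H(z,u)=0.
\]

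The conclusion follows by induction on $\mathrm{wt}(u)$. In the base case $\mathrm{wt}(u)=0$, both sides of the claimed identity are the constant $u\in\mathbb{Z}$. In the inductive step, every $\partial_{c}u$ has strictly smaller weight, so the induction hypothesis forces the right-hand side of the ODE to vanish; hence $H(\cdot,u)$ is constant on $(0,1)$, and the vanishing of its regularized limit at $z\to+0$ forces $H\equiv0$. The one point that requires a little care is the reindexing in the derivative computation --- verifying that the outer differentiation $\partial_{c_{l}}$ on the $\mathcal{B}$-side matches the rightmost letter $e_{c_{l}}$ on the $L_{z}^{\shuffle}$-side --- but this matching is built into the very definition of $\varphi_{\otimes}$, so no obstacle arises.
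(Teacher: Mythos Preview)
Your proof is correct and follows essentially the same approach as the paper: induction on the weight of $u$, matching the $z$-derivatives via (\ref{eq:Lz_sh_dif}) and (\ref{eq:B_Lz_dif}), and matching the regularized limits at $z\to+0$ via (\ref{eq:reg_z0_val}) and (\ref{eq:Lz_sh_lim}). The only cosmetic difference is that you package the argument as showing the difference $H(z,u)$ vanishes, whereas the paper works with the two sides separately.
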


\begin{proof}
We prove the claim by induction on the degree of $u$. By (\ref{eq:Lz_sh_lim})
and (\ref{eq:reg_z0_val}), we have
\begin{equation}
\Reg_{z\to+0}(L_{1}\otimes L_{z}^{\shuffle})\circ\varphi_{\otimes}(u)=L_{1}(\reg_{z\to0}u)=\Reg_{z\to+0}L_{z}(u).\label{eq:conf_e1}
\end{equation}
By (\ref{eq:Lz_sh_dif}), we have
\begin{align}
\frac{d}{dz}(L_{1}\otimes L_{z}^{\shuffle})\circ\varphi_{\otimes}(u) & =\sum_{l=1}^{\infty}\sum_{c_{1},\dots,c_{l}\in\{0,1,-1\}}\frac{1}{z-c_{l}}(L_{1}\otimes L_{z}^{\shuffle})\left(\reg_{z\to0}(\partial_{c_{1}}\cdots\partial_{c_{l}}u)\otimes e_{c_{1}}\cdots e_{c_{l-1}}\right)\nonumber \\
 & =\sum_{c\in\{0,1,-1\}}\frac{1}{z-c}(L_{1}\otimes L_{z}^{\shuffle})\circ\varphi_{\otimes}(\partial_{c}u).\label{eq:conf_e2}
\end{align}
Thus by (\ref{eq:conf_e2}), induction hypothesis, and (\ref{eq:B_Lz_dif}),
we have
\begin{align}
\frac{d}{dz}(L_{1}\otimes L_{z}^{\shuffle})\circ\varphi_{\otimes}(u) & =\sum_{c\in\{0,1,-1\}}\frac{1}{z-c}L_{z}(\partial_{c}u)\nonumber \\
 & =\frac{d}{dz}L_{z}(u).\label{eq:conf_e3}
\end{align}
Now the lemma readily follows from (\ref{eq:conf_e1}) and (\ref{eq:conf_e3}).
\end{proof}
\begin{defn}
Define $\varphi:\mathcal{B}\to\mathcal{A}^{0}(\{0,1,-1\})$ as the
composite map
\[
\mathcal{B}\xrightarrow{\varphi_{\otimes}}\tilde{\mathcal{A}}^{0}(\{0,1,-1\})\otimes\mathcal{A}(\{0,1,-1\})\xrightarrow{{\rm id}\otimes\shreg}\tilde{\mathcal{A}}^{0}(\{0,1,-1\})\otimes\mathcal{A}^{0}(\{0,1,-1\})\xrightarrow{\shuffle}\mathcal{A}^{0}(\{0,1,-1\}).
\]
\end{defn}

Since 
\[
\varphi_{\otimes}(\mathcal{B}_{k,d})\subset\sum_{\substack{k'+k''=k\\
d'+d''=d
}
}\tilde{\mathcal{A}}_{k',d'}^{0}(\{0,1,-1\})\otimes\mathcal{A}_{k'',d''}(\{0,1,-1\})
\]
we have
\begin{equation}
\varphi(\mathcal{B}_{k,d})\subset\mathcal{A}_{k,d}^{0}(\{0,1,-1\})\label{eq:phi_Bkd_in_Akd}
\end{equation}
for any $k,d\geq0$.

\begin{defn}[Confluence relations of level two]
We put
\[
\iconf\coloneqq\{\restr u{z\to1}-\varphi(u)\mid u\in\mathbb{Q}\otimes\mathcal{B}\}\subset\mathbb{Q}\otimes\mathcal{A}^{0}(\{0,1,-1\})
\]
where $\restr u{z\to1}$ is an element of $\mathbb{Q}\otimes\mathcal{A}(\{0,1,-1\})$
obtained by replacing all $e_{p(z)}$ in $u$ with $e_{p(1)}$.
\end{defn}

\begin{thm}
\label{thm:Confluecne_relations_for_Euler_sums}We have 
\[
\iconf\subset\ker\left(L_{1}:\mathbb{Q}\otimes\mathcal{A}^{0}(\{0,1,-1\})\to\mathbb{R}\right).
\]
\end{thm}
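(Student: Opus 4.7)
The plan is to deduce the theorem by taking the regularized limit $\Reg_{z \to 1^-}$ of the identity
\[
L_z(u) = (L_1 \otimes L_z^{\shuffle}) \circ \varphi_{\otimes}(u)
\]
from Lemma \ref{lem:conf_tensor}. I expect the left-hand side to tend to $L_1(\restr{u}{z \to 1})$ and the right-hand side to tend to $L_1(\varphi(u))$; subtracting then yields $L_1(\restr{u}{z \to 1} - \varphi(u)) = 0$ for every $u \in \mathbb{Q} \otimes \mathcal{B}$, which is exactly the vanishing of $L_1$ on the generators of $\iconf$.

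For the left-hand side, I would verify that $L_z(u)$ extends continuously to $z = 1$ with value $L_1(\restr{u}{z \to 1})$. Under the substitution $z \to 1$, the letters $e_z$ and $e_{-z^2}$ become $e_1$ and $e_{-1}$ respectively, while the admissibility conditions $a_1 \neq 0$ and $a_k \neq z$ built into $\mathcal{B}$ force $\restr{u}{z \to 1}$ into $\mathcal{A}^0(\{0, 1, -1\})$; moreover all singularities of the integrand remain off the interior of the path $[0, z]$ as $z \to 1^-$, so no power of $\log(1-z)$ arises in the expansion near $z = 1$ and the regularized limit coincides with the ordinary limit.

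For the right-hand side, the key reduction is the pointwise identity $\Reg_{z \to 1^-} L_z^{\shuffle}(w) = L_1(\shreg(w))$ for every $w \in \mathcal{A}(\{0, 1, -1\})$. I would prove this by using the shuffle decomposition $w = \sum_{n, m \geq 0} e_0^n \shuffle e_1^m \shuffle u_{n,m}$ with $u_{n,m} \in \mathcal{A}^0(\{0, 1, -1\})$, the shuffle property of $L_z^{\shuffle}$ (which comes from the tangential base point $\overrightarrow{1}_0$), and the explicit formulas $L_z^{\shuffle}(e_0^n) = (\log z)^n/n!$ and $L_z^{\shuffle}(e_1^m) = (\log(1-z))^m/m!$. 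Since each $L_z^{\shuffle}(u_{n,m})$ converges to $L_1(u_{n,m})$ (the admissibility of $u_{n,m}$ keeps the integrand bounded at the upper endpoint), the constant term in $\log(1-z)$ extracts precisely the $n = m = 0$ piece, which by definition of $\shreg$ equals $L_1(\shreg(w))$. Then, passing the regularized limit through $L_1$ and invoking the shuffle multiplicativity of $L_1$ on $\mathcal{A}^0(\{0, 1, -1\})$ gives
\[
\Reg_{z \to 1^-}(L_1 \otimes L_z^{\shuffle}) \circ \varphi_{\otimes}(u) = (L_1 \otimes (L_1 \circ \shreg)) \circ \varphi_{\otimes}(u) = L_1(\varphi(u)).
\]

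The principal obstacle I anticipate is controlling the asymptotic expansions of $L_z(u)$ and of $L_z^{\shuffle}(w)$ near $z = 1$ precisely enough to identify their constant terms and to confirm the absence of spurious $\log(1-z)$ contributions on the left-hand side. Once these two limit computations are in place, the theorem follows by direct comparison against the definitions of $\varphi$, $\shreg$, and $\iconf$.
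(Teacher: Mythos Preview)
Your proposal is correct and follows essentially the same approach as the paper: apply $\Reg_{z\to 1^-}$ to Lemma~\ref{lem:conf_tensor}, observe that the left-hand side gives $L_1(\restr{u}{z\to 1})$ by continuity, and identify $\Reg_{z\to 1^-}L_z^{\shuffle}(w)$ with $L_1(\shreg(w))$ via the shuffle decomposition and the explicit behavior of $L_z^{\shuffle}(e_0)=\log z$, $L_z^{\shuffle}(e_1)=\log(1-z)$. The paper phrases the latter identification through the three characterizing properties of $f(w)\coloneqq\Reg_{z\to 1^-}L_z^{\shuffle}(w)$ (multiplicativity, $f(e_0)=f(e_1)=0$, agreement with $L_1$ on admissibles), but this is the same argument you sketch.
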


\begin{proof}
Let $u\in\mathcal{B}$. We prove the theorem by taking the regularized
limit $\Reg_{z\to1-0}$ of Lemma \ref{lem:conf_tensor}:
\begin{equation}
\Reg_{z\to1-0}L_{z}(u)=\Reg_{z\to1-0}(L_{1}\otimes L_{z}^{\shuffle})\circ\varphi_{\otimes}(u).\label{eq:conf_e7}
\end{equation}
For the left-hand side, we have
\begin{equation}
\Reg_{z\to1-0}L_{z}(u)=\lim_{z\to1-0}L_{z}(u)=L_{1}(\restr u{z\to1})\label{eq:conf_e4}
\end{equation}
by definition. 

Now we compute the right-hand side. We obtain
\begin{equation}
\Reg_{z\to1-0}L_{z}^{\shuffle}(w)=L_{1}\circ\shreg(w)\ \ \ (w\in\mathcal{A}(\{0,1,-1\}))\label{eq:conf_e6}
\end{equation}
by the following properties of $f(w)\coloneqq\Reg_{z\to1-0}L_{z}^{\shuffle}(w)$:
\[
f(w_{1}\shuffle w_{2})=f(w_{1})\times f(w_{2}),\ \ \ f(e_{0})=f(e_{1})=0,\ \ \ \restr f{\mathcal{A}^{0}(\{0,1,-1\})}=L_{1}.
\]
Here, the first and the third properties are obvious while the second
property follows from the expressions $L_{z}^{\shuffle}(e_{0})=\log z$
and $L_{z}^{\shuffle}(e_{1})=\log(1-z)$. By (\ref{eq:conf_e6}),
we have
\begin{align}
\Reg_{z\to1-0}(L_{1}\otimes L_{z}^{\shuffle})\circ\varphi_{\otimes}(u) & =(L_{1}\otimes(L_{1}\circ\shreg))\circ\varphi_{\otimes}(u)\nonumber \\
 & =L_{1}\circ\varphi(u).\label{eq:conf_e8}
\end{align}
By (\ref{eq:conf_e7}), (\ref{eq:conf_e4}) and (\ref{eq:conf_e8}),
we have
\[
L_{1}(\restr u{z\to1})=L_{1}\circ\varphi(u).
\]
Thus
\[
\restr u{z\to1}-\varphi(u)\in\ker(L_{1}),
\]
which proves the theorem.
\end{proof}

\section{\label{sec:Main-theorem}Main theorem}

\subsection{The statement and the proof overview of the main theorem}

We put 
\[
\hconf\coloneqq\left(\mathbb{Q}\otimes\mathcal{A}^{0}(\{0,1,-1\})\right)/\iconf,
\]
and
\begin{align*}
\indset^{\mathrm{D}}(k,d) & \coloneqq\{(k_{1},\dots,k_{r})\in\indset(k,d)\mid k_{1},\dots,k_{r-1}>0,k_{r}<0,k_{2}\equiv\cdots\equiv k_{r}\equiv1\pmod{2}\}\\
X^{\bullet}(k,d) & \coloneqq\vspan_{\mathbb{Z}_{(2)}}\{\word(\Bbbk)\bmod\iconf\mid\Bbbk\in\indset^{\bullet}(k,d)\}\subset\hconf\ \ \ (\bullet=\emptyset\text{ or }\mathrm{D})\\
\modx^{\bullet}(k,d) & \coloneqq\vspan_{\mathbb{Z}_{(2)}}\{\modword(\Bbbk)\bmod\iconf\mid\Bbbk\in\indset^{\bullet}(k,d)\}\subset\hconf\ \ \ (\bullet=\emptyset\text{ or }\mathrm{D})
\end{align*}
where $\mathbb{Z}_{(2)}\coloneqq\{p/q\in\mathbb{Q}\mid p\in\mathbb{Z},q\in1+2\mathbb{Z}\}$.
The purpose of this section is to prove the following theorem.
\begin{thm}
\label{thm:main_main}For any $k\geq0$ and $d\geq0$, 
\[
\modx(k,d)=\modx^{\mathrm{D}}(k,d).
\]
\end{thm}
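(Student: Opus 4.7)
The inclusion $\modx^{\mathrm{D}}(k,d) \subset \modx(k,d)$ is immediate since $\indset^{\mathrm{D}}(k,d) \subset \indset(k,d)$. For the reverse inclusion I would exploit that $\mathbb{Z}_{(2)}$ is a local ring with maximal ideal $(2)$ and that both $\modx(k,d)$ and $\modx^{\mathrm{D}}(k,d)$ are finitely generated $\mathbb{Z}_{(2)}$-submodules of $\hconf$. Nakayama's lemma, applied to the finitely generated quotient $\modx(k,d)/\modx^{\mathrm{D}}(k,d)$, then reduces the whole problem to proving the mod-$2$ statement
\[
\modx(k,d) \subset \modx^{\mathrm{D}}(k,d) + 2\,\modx(k,d).
\]

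For this mod-$2$ reduction I would proceed by strong induction on the pair $(k,d)$, supplemented by an inner induction on a measure of how far an index $\Bbbk\in\indset(k,d)$ is from being Deligne-type (e.g.\ the number of positions where $\Bbbk$ violates one of the defining conditions: $k_r<0$, $k_1,\dots,k_{r-1}>0$, or $k_i$ odd for $i\ge 2$). For each non-Deligne $\Bbbk$ the plan is to exhibit an explicit test word $u_{\Bbbk}\in\mathcal{B}_{k,d}$, built from the letters $e_0,e_{-1},e_z,e_{-z^2}$, such that $\restr{u_{\Bbbk}}{z\to1}$ equals $\modword(\Bbbk)$ (up to a $\mathbb{Z}_{(2)}^{\times}$-unit and terms of smaller complexity already controlled by induction), and whose image under $\varphi$ lies in $\modx^{\mathrm{D}}(k,d)+2\,\modx(k,d)$ modulo $\iconf$. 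The resulting confluence relation $\restr{u_{\Bbbk}}{z\to1}-\varphi(u_{\Bbbk})\in\iconf$ then supplies the required reduction of $\modword(\Bbbk)$.

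The main technical input is Lemma \ref{lem:reg_z0_mod4_congruence}, which computes $\reg_{z\to0}$ on a word of the shape $e_{-z^2}e_0^{k_1-1}\cdots e_{-z^2}e_0^{k_d-1}$ modulo $4\mathcal{A}^0_{k,d}$: the output is either $0$ or $2e_{-1}e_0^{k-1}$, with at most an extra $\{1\}^k$-correction. After the factor $2^{\depth}$ built into the transition from $\word$ to $\modword$ is absorbed, this gives precisely the kind of mod-$2\modx$ congruence we need, the two surviving terms lying manifestly in $\modx^{\mathrm{D}}(k,d)$. Combined with the depth-preservation properties \eqref{eq:reg_z0_kd} and \eqref{eq:phi_Bkd_in_Akd}, and the shuffle factorisation $\mathcal{B}'\simeq\mathcal{B}''\otimes\mathcal{B}'''$ used in Definition \ref{def:reg_limit_Euler}, this shows that $\varphi(u_{\Bbbk})$ has the desired shape without leaking into higher depth.

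The main obstacle is the combinatorial design of the family $\{u_{\Bbbk}\}$: one must simultaneously arrange that $\restr{u_{\Bbbk}}{z\to1}$ matches $\modword(\Bbbk)$ and that the structure of $\partial_c$ acting on $u_{\Bbbk}$, propagated through $\reg_{z\to0}$, only produces words to which Lemma \ref{lem:reg_z0_mod4_congruence} (or its shuffle consequences) applies, so that everything collapses to Deligne-type pieces modulo $2$ in every depth stratum $\le d$. Once a uniform recipe for $u_{\Bbbk}$ is given for each non-Deligne $\Bbbk$, iterating the inclusion $\modx(k,d)\subset\modx^{\mathrm{D}}(k,d)+2\,\modx(k,d)$ and invoking Nakayama closes the argument.
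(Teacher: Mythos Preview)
Your overall strategy---Nakayama over $\mathbb{Z}_{(2)}$, explicit test words $u_{\Bbbk}\in\mathcal{B}_{k,d}$ whose confluence relation forces the reduction, induction on depth---is exactly the paper's. What you are missing is the structural idea that makes the ``main obstacle'' tractable: the paper does \emph{not} go directly from $\modx(k,d)$ to $\modx^{\mathrm{D}}(k,d)$ modulo $2\modx(k,d)$. Instead it passes through an intermediate module
\[
Y(k,d)=\vspan_{\mathbb{Z}_{(2)}}\{\word^{-}(k_1,\dots,k_r)\bmod\iconf\},
\]
the span of words built only from $e_0,e_{-1}$ (i.e.\ indices with $k_1,\dots,k_{r-1}>0$, $k_r<0$ but no parity restriction). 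The two steps use \emph{different} letter sets and different mechanisms:
\begin{itemize}
\item[(1)] $X(k,d)\subset Y(k,d)+2X(k,d)$: take $u'$ with letters in $\{e_0,e_{-1},e_z\}$; then every $\partial_c$ contributing to $\varphi_{\otimes}$ has $c\in\{0,-1\}$, so the right tensor factor lands in $\mathcal{A}_{k,d}^0(\{0,-1\})$, while the left factor sits in $\tilde{\mathcal{A}}^0\subset 2\mathcal{A}^0$ whenever it has positive degree. Lemma~\ref{lem:reg_z0_mod4_congruence} is not used here.
\item[(2)] $Y(k,d)\subset X^{\mathrm{D}}(k,d)+2X(k,d)+\tfrac12 X(k,d-1)$: take $u$ with letters in $\{e_0,e_{-1},e_{-z^2}\}$, carefully placed so that $e_{-1}$ and $e_{-z^2}$ are never adjacent; this is where Lemma~\ref{lem:reg_z0_mod4_congruence} is actually applied, together with a finer analysis (the auxiliary operators $\bar\partial_c,\bar{\bar\partial}_c$ and Lemma~\ref{lem:main_step2_2}) that tracks things modulo $4$, not just $2$.
\end{itemize}

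Note the $\tfrac12 X(k,d-1)$ term in Step~(2): the paper does \emph{not} obtain your target inclusion $\modx(k,d)\subset\modx^{\mathrm{D}}(k,d)+2\modx(k,d)$ directly, but rather
\[
\modx(k,d)\subset\modx^{\mathrm{D}}(k,d)+\modx(k,d-1)+2\modx(k,d),
\]
applies Nakayama to kill the $2\modx(k,d)$, and \emph{then} iterates on $d$. Your strong induction on $(k,d)$ would absorb this, but you should be aware that the lower-depth leakage is unavoidable with these test words and is the reason the $\modword$ normalisation (the factor $2^d$) is needed.

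In short: your proposal is a correct outline, but the ``uniform recipe for $u_{\Bbbk}$'' you flag as the obstacle is resolved in the paper by splitting the non-Deligne conditions into a sign condition (handled by the letter $e_z$) and a parity condition (handled by the letter $e_{-z^2}$), and treating them in separate passes.
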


As a restatement of this theorem, we obtain the following.
\begin{thm}
\label{thm:main_general_zeta}Let $k,d\geq0$, $R$ a $\mathbb{Q}$-vector
space, and $Z:\mathcal{A}_{k,d}^{0}(\{0,1,-1\})\to R$ a linear map
such that $Z(\iconf)=\{0\}$. Then for $\Bbbk\in\indset(k,d)$, $Z(\modword(\Bbbk))$
is a $\mathbb{Z}_{(2)}$-linear combination of $\{Z(\modword(\Bbbk'))\mid\Bbbk'\in\indset^{\mathrm{D}}(k,d)\}$.
Especially, for any $\Bbbk\in\indset(k,d)$, $\modzeta(\Bbbk)$ is
a $\mathbb{Z}_{(2)}$-linear combination of $\{\modzeta(\Bbbk')\mid\Bbbk'\in\indset^{\mathrm{D}}(k,d)\}$.
\end{thm}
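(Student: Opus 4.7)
The containment $\modx^{\mathrm{D}}(k,d) \subset \modx(k,d)$ is immediate from the inclusion $\indset^{\mathrm{D}}(k,d) \subset \indset(k,d)$, so the substance of the proof is the reverse inclusion $\modx(k,d) \subset \modx^{\mathrm{D}}(k,d)$. My plan is to argue by induction on the pair $(k,d)$ in lexicographic order. For the inductive step, given $\Bbbk \in \indset(k,d) \setminus \indset^{\mathrm{D}}(k,d)$, the goal is to exhibit a $\mathbb{Z}_{(2)}$-linear expression for $\modword(\Bbbk) \bmod \iconf$ in terms of classes $\modword(\Bbbk') \bmod \iconf$ with $\Bbbk' \in \indset^{\mathrm{D}}(k,d)$, modulo terms of strictly smaller weight or depth that are handled by the induction hypothesis.

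The tool for each reduction step is a carefully chosen auxiliary element $u \in \mathcal{B}_{k,d}$: the confluence relation $u|_{z\to 1} \equiv \varphi(u) \pmod{\iconf}$ of Theorem \ref{thm:Confluecne_relations_for_Euler_sums} gives an identity between words in $\mathcal{A}^{0}(\{0,1,-1\})$. Because the specializations at $z=1$ send $e_z \mapsto e_1$ and $e_{-z^2} \mapsto e_{-1}$, while $\varphi$ is built out of the regularized limit $\reg_{z\to 0}$ of Definition \ref{def:reg_limit_Euler}, a judicious encoding of the ``bad'' index $\Bbbk$ using $e_z$ at the final position and $e_{-z^2}$ at intermediate positions will yield a relation in which $\modword(\Bbbk)$ appears as a distinguished term on the $u|_{z\to 1}$ side, while $\varphi(u)$ decomposes—via $\reg_{z\to 0} = \dist \shuffle \wp$ at the relevant level—into $\indset^{\mathrm{D}}$-indexed admissible words plus terms of strictly lower depth. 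Iterating this procedure to eliminate in turn each of the three obstructions to membership in $\indset^{\mathrm{D}}$ (namely $k_r>0$, some $k_i<0$ for $i<r$, or some $|k_i|$ with $i\geq 2$ being even) reduces an arbitrary $\Bbbk$ to the target shape.

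The principal obstacle, and the technical heart of the argument, is the denominator control needed to keep coefficients in $\mathbb{Z}_{(2)}$ rather than $\mathbb{Q}$. This is precisely the role played by Lemma \ref{lem:reg_z0_mod4_congruence}: its congruence modulo $4\mathcal{A}^{0}_{k,d}(\{0,1,-1\})$ pins down the depth-$d$ ``leading term'' of $\reg_{z\to 0}$ on a word of the form $e_{-z^2}e_0^{k_1-1}\cdots e_{-z^2}e_0^{k_d-1}$ as exactly $2 e_{-1} e_0^{k-1}$ (together with a $\{1\}^k$-correction and strictly lower-depth contributions). Consequently the coefficient of $\modword(\Bbbk)$ in the relation I solve is an odd integer times $2$, and the factor $2^{\depth(\Bbbk)}$ built into the definition of $\modword$ absorbs exactly the powers of $2$ that would otherwise show up in the denominators after inversion. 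The delicate part will be organizing the bookkeeping across successive eliminations—tracking the interaction of the shuffle-regularization $\shreg$, the distribution map $\dist$, and the map $\wp$—so that each stage preserves membership in $\mathbb{Z}_{(2)} \otimes \modx^{\mathrm{D}}(k,d)$ and the induction closes.
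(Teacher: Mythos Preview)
Your proposal correctly identifies the ingredients --- the confluence relations as the source of identities, the congruence of Lemma~\ref{lem:reg_z0_mod4_congruence} for $2$-adic control, and the $2^{\depth}$ rescaling --- but it has a genuine gap in the inductive scheme, and the description of the auxiliary elements does not match what is actually needed.

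The paper does not use a single family of lifts mixing $e_z$ and $e_{-z^2}$. It proceeds by a \emph{two-step} reduction through the intermediate module
\[
Y(k,d)=\vspan_{\mathbb{Z}_{(2)}}\{\word(\Bbbk)\bmod\iconf\mid\Bbbk\in\indset(k,d),\ k_1,\dots,k_{r-1}>0,\ k_r<0\}.
\]
Step~1 lifts an arbitrary $u\in\mathcal{A}^0_{k,d}(\{0,1,-1\})$ to $u'\in\mathcal{B}\cap\mathbb{Z}\langle e_0,e_{-1},e_z\rangle$ by replacing \emph{every} $e_1$ by $e_z$; the resulting confluence relation gives $X(k,d)\subset Y(k,d)+2X(k,d)$. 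Step~2 lifts $\word^-(\Bbbk)\in\mathcal{A}^0(\{0,-1\})$ to $\mathcal{B}\cap\mathbb{Z}\langle e_0,e_{-1},e_{-z^2}\rangle$ by replacing a carefully chosen \emph{tail} of the $e_{-1}$'s by $e_{-z^2}$; an analysis via the truncated operators $\bar\partial_c,\bar{\bar\partial}_c$ together with Lemma~\ref{lem:reg_z0_mod4_congruence} gives $Y(k,d)\subset X^{\mathrm{D}}(k,d)+2X(k,d)+\tfrac12 X(k,d-1)$. Your ``$e_z$ at the final position, $e_{-z^2}$ at intermediate positions'' is neither of these two lifts.

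The essential point your plan misses is that combining the two steps only yields
\[
\modx(k,d)\subset \modx^{\mathrm{D}}(k,d)+\modx(k,d-1)+2\,\modx(k,d),
\]
with a residual $2\,\modx(k,d)$ at the \emph{same} level $(k,d)$. Your induction on $(k,d)$, which you say handles ``terms of strictly smaller weight or depth'', cannot absorb this term. The paper removes it by Nakayama's lemma over $\mathbb{Z}_{(2)}$ (using that $\modx(k,d)$ is a finitely generated $\mathbb{Z}_{(2)}$-module and $2$ lies in the maximal ideal). Your remark that the factor $2^{\depth}$ ``absorbs exactly the powers of $2$'' conflates two separate phenomena: the rescaling converts $\tfrac12 X(k,d-1)$ into $\modx(k,d-1)$, making the depth-drop term integral, but it does nothing to the same-depth term $2\,\modx(k,d)$ --- Nakayama (or, equivalently, the lower-unitriangular-mod-$2$ matrix argument made explicit in Theorem~\ref{thm:main_explicit}) is indispensable there.
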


The proof of Theorem \ref{thm:main_main} proceeds as follows. Note
that $\vspan_{\mathbb{Z}}\{\word(\Bbbk)\mid\Bbbk\in\indset(k,d)\}=\mathcal{A}_{k,d}^{0}(\{0,1,-1\}).$
In order to reduce $\modx(k,d)$ to $\modx^{\mathrm{D}}(k,d)$, we
introduce an intermediate space
\begin{align*}
Y(k,d) & \coloneqq\vspan_{\mathbb{Z}_{(2)}}\{\word(k_{1},\dots,k_{r})\bmod\iconf\mid(k_{1},\dots,k_{r})\in\indset(k,d),k_{1},\dots,k_{r-1}>0,k_{r}<0\}\\
 & =\vspan_{\mathbb{Z}_{(2)}}\{u\bmod\iconf\mid u\in\mathcal{A}_{k,d}^{0}(\{0,-1\})\}
\end{align*}
and consider the two-step reduction, firstly $X(k,d)$ to $Y(k,d)$
and secondly $Y(k,d)$ to $X^{\mathrm{D}}(k,d)$ (modulo $2$ and
lower depth). More precisely, we will first prove
\[
X(k,d)\subset Y(k,d)+2X(k,d)
\]
in the next section (Lemma \ref{lem:main_step1}) and then in the
next next section, we will prove
\[
Y(k,d)\subset X^{\mathrm{D}}(k,d)+2X(k,d)+\frac{1}{2}X(k,d-1)
\]
(Lemma \ref{lem:main_step2}) thereby derive Theorem \ref{thm:main_main}
by combining the two inclusions above.

\subsection{\label{subsec:proof_step1}Step 1: the proof of $X(k,d)\subset Y(k,d)+2X(k,d)$}
\begin{lem}
\label{lem:main_step1}We have
\[
X(k,d)\subset Y(k,d)+2X(k,d).
\]
\end{lem}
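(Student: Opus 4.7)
The plan is to exhibit, for each generator $w = e_{a_1}\cdots e_{a_k}$ of $X(k,d)$ (i.e., for each admissible word in $\mathcal{A}_{k,d}^{0}(\{0,1,-1\})$), a lift $\tilde{w}\in\mathcal{B}_{k,d}$ whose confluence relation forces $w$ into $Y(k,d)+2X(k,d)$. First I would take $\tilde{w}$ to be obtained from $w$ by the single substitution $e_1\rightsquigarrow e_z$, keeping $e_0$ and $e_{-1}$ unchanged. Admissibility of $w$ ($a_1\neq 0$, $a_k\neq 1$) guarantees $\tilde{w}\in\mathcal{B}_{k,d}$, since after substitution the first letter still lies in $\{z,-1\}$ and the last in $\{0,-1\}$, in particular $\neq z$. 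By construction $\tilde{w}|_{z\to 1}=w$, so the confluence relation gives $w\equiv\varphi(\tilde{w})\pmod{\mathcal{I}_{\mathrm{CF}}}$.

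Next I would expand $\varphi(\tilde{w})$ using $\varphi_\otimes$ and treat the sum over $l$ in two regimes. For $l<k$, property \eqref{eq:reg_z0_kd} puts $\mathrm{reg}_{z\to 0}(\partial_{c_1}\cdots\partial_{c_l}\tilde{w})$ in $\tilde{\mathcal{A}}_{k-l,\bullet}^{0}=2\mathcal{A}_{k-l,\bullet}^{0}$, so its shuffle with $\mathrm{reg}_\shuffle(e_{c_1}\cdots e_{c_l})$ (of total weight $k$ and total depth $\leq d$ by the depth behavior of $\partial_c$ and the fact that shuffle is additive in depth) already lies in $2\mathcal{A}_{k,d}^{0}\subset 2X(k,d)$. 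This reduces the problem to analyzing the top layer $l=k$, where $\partial_{c_1}\cdots\partial_{c_k}\tilde{w}$ is an integer scalar and $\mathrm{reg}_{z\to 0}$ acts as the identity on it.

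The crucial claim I would then prove is that for any word $u\in\mathcal{B}$ with all letters in $\{0,-1,z\}$, one has $\partial_1 u=0$. Inspecting Definition~\ref{def:diff_operator_words}, this follows from a short enumeration: for $a,b\in\{0,-1,z\}$ with $a\neq b$, the difference $a-b\in\{\pm 1,\pm z,\pm(z+1)\}$ specializes at $z=1$ to one of $\pm 1,\pm 2$, which is never zero. Since $a_0=0$ and $a_{r+1}=z$ lie in this set, every ratio $(a_i-a_{i+1})/(a_i-a_{i-1})$ has order $0$ at $z=1$. As each $\partial_c$ preserves the property of having letters in $\{0,-1,z\}$, the same holds for any intermediate word in the iterated derivative, so any composition $\partial_{c_1}\cdots\partial_{c_k}\tilde{w}$ involving a $\partial_1$ vanishes.

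Consequently the $l=k$ contribution only runs over $\vec{c}\in\{0,-1\}^{k}$, and $\mathrm{reg}_\shuffle(e_{c_1}\cdots e_{c_k})$ lies in $\mathcal{A}_{k,d''}^{0}(\{0,-1\})$ with $d''\leq\sum|c_i|\leq d$ (the last inequality again by the depth property of $\partial_c$), which is exactly $Y(k,d)$. Combining the two regimes yields $\varphi(\tilde{w})\in Y(k,d)+2X(k,d)$ modulo $\mathcal{I}_{\mathrm{CF}}$, and since the $w$'s generate $X(k,d)$ over $\mathbb{Z}_{(2)}$, the desired inclusion follows. The main obstacle is the vanishing claim $\partial_1\tilde{w}=0$ of Step~3; once this case check is in hand, the rest is essentially bookkeeping about depth and parity built into $\varphi$.
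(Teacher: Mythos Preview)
Your argument is correct and follows essentially the same approach as the paper. Both lift $w$ to $\tilde w\in\mathcal{B}_{k,d}\cap\mathbb{Z}\langle e_0,e_{-1},e_z\rangle$ via $e_1\mapsto e_z$, use the factor of $2$ from $\tilde{\mathcal{A}}^0$ to dispose of all terms with $l<k$ derivatives, and observe that the top layer lands in $\mathcal{A}^0_{k,d}(\{0,-1\})$; the paper phrases your ``crucial claim'' $\partial_1\tilde w=0$ simply as ``the right side of the tensor is $\mathcal{A}_{k-l,d''}(\{0,-1\})$ since $u'\in\mathbb{Z}\langle e_0,e_{-1},e_z\rangle$'', which is the same computation you spell out.
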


\begin{proof}
It suffices to prove
\begin{equation}
\left(u\bmod\iconf\right)\in Y(k,d)+2X(k,d)\label{eq:main_e1}
\end{equation}
for $u\coloneqq e_{a_{1}}\cdots e_{a_{k}}\in\mathcal{A}_{k,d}^{0}(\{0,1,-1\})$.
We will prove (\ref{eq:main_e1}) using the element $\restr{u'}{z\to1}-\varphi(u')\in\iconf$
for
\[
u'\coloneqq e_{b_{1}}\cdots e_{b_{k}}\in\mathcal{B}(k,d)\cap\mathbb{Z}\left\langle e_{0},e_{-1},e_{z}\right\rangle 
\]
where
\[
b_{j}\coloneqq\begin{cases}
a_{j} & a_{j}=0,-1\\
z & a_{j}=1.
\end{cases}
\]
By definition, we have
\begin{equation}
\restr{u'}{z\to1}=u.\label{eq:main_e4}
\end{equation}
Recall that $\varphi=(\shuffle)\circ({\rm id}\otimes\shreg)\circ\varphi_{\otimes}$.
By definition, we can write 
\[
\varphi_{\otimes}(u')=\sum_{l=0}^{k}w_{l}
\]
where
\[
w_{l}\in\sum_{d'+d''=d}\tilde{\mathcal{A}}_{l,d'}^{0}(\{0,1,-1\})\otimes\mathcal{A}_{k-l,d''}(\{0,-1\}).
\]
Note that the right side of the tensor symbol is $\mathcal{A}_{k-l,d''}(\{0,-1\})$
rather than $\mathcal{A}_{k-l,d''}(\{0,1,-1\})$ since $u'\in\mathbb{Z}\left\langle e_{0},e_{-1},e_{z}\right\rangle $.
For $w_{0}$, we have
\begin{equation}
(\shuffle)\circ({\rm id}\otimes\shreg)(w_{0})\in\mathcal{A}_{k,d}^{0}(\{0,-1\})\label{eq:main_e3}
\end{equation}
by definition, while for $w_{l}\:(l>0)$, we have
\begin{equation}
(\shuffle)\circ({\rm id}\otimes\shreg)(w_{l})\in2\mathcal{A}_{k,d}^{0}(\{0,1,-1\})\ \ \ (l>0)\label{eq:main_e2}
\end{equation}
by the definition of $\tilde{\mathcal{A}}_{l,d'}^{0}(\{0,1,-1\})$.
Therefore, by (\ref{eq:main_e3}) and (\ref{eq:main_e2}), we find
that
\begin{equation}
\varphi(u')\in\mathcal{A}_{k,d}^{0}(\{0,-1\})+2\mathcal{A}_{k,d}^{0}(\{0,1,-1\}).\label{eq:main_e5}
\end{equation}
By (\ref{eq:main_e4}) and (\ref{eq:main_e5}), we have
\begin{align*}
(u\bmod\iconf) & =(\restr{u'}{z\to1}\bmod\iconf)\\
 & =(\varphi(u')\bmod\iconf)\\
 & \in Y(k,d)+2X(k,d),
\end{align*}
which proves (\ref{eq:main_e1}).
\end{proof}

\subsection{\label{subsec:proof_step2}Step 2: the proofs of $Y(k,d)\subset X^{\mathrm{D}}(k,d)+2X(k,d)+\frac{1}{2}X(k,d-1)$
and the main theorem}
\begin{defn}
For $k\geq0$ and $d\geq0$, we define subspaces $M_{k,d}$ and $N_{k,d}$
of $\tilde{\mathcal{A}}^{0}(\{0,1,-1\})\otimes\mathcal{A}(\{0,1,-1\})$
by
\begin{align*}
M_{k,d} & \coloneqq\sum_{\substack{k'+k''=k\\
d'+d''=d
}
}\tilde{\mathcal{A}}_{k',d'}^{0}(\{0,1,-1\})\otimes\mathcal{A}_{k'',d''}(\{0,1,-1\})\\
N_{k,d} & \coloneqq\sum_{\substack{k'+k''=k\\
d'+d''=d\\
k'>0
}
}\tilde{\mathcal{A}}_{k',d'}^{0}(\{0,1,-1\})\otimes\mathcal{A}_{k'',d''}(\{0,1,-1\})=2\sum_{\substack{k'+k''=k\\
d'+d''=d\\
k'>0
}
}\mathcal{A}_{k',d'}^{0}(\{0,1,-1\})\otimes\mathcal{A}_{k'',d''}(\{0,1,-1\}).
\end{align*}
\end{defn}

We introduce ``cutting-off lower depth version'' $\bar{\partial}_{c}$
(resp. $\bar{\varphi}_{\otimes}$) and ``mod $2$ version'' $\bar{\bar{\partial}}_{c}$
(resp. $\bar{\bar{\varphi}}_{\otimes}$) of $\partial_{c}$ (resp.
$\varphi_{\otimes}$) as follows. Note that $\partial_{c}$ can also
be written as 
\[
\partial_{c}(e_{a_{1}}\cdots e_{a_{k}})\coloneqq\sum_{i=1}^{k}e_{a_{1}}\cdots e_{a_{i-1}}e_{a_{i+1}}\cdots e_{a_{k}}\times\sum_{p\in\{1,-1\}}p\cdot{\rm ord}_{z-c}(a_{i+p}-a_{i}),
\]
where we have put ${\rm ord}_{z-c}(0)\coloneqq0$. 
\begin{defn}
For $c\in\{0,1,-1\}$, we define linear maps $\bar{\partial}_{c},\bar{\bar{\partial}}_{c}:\mathcal{B}\to\mathcal{B}$
by
\begin{align*}
\bar{\partial}_{c}(e_{a_{1}}\cdots e_{a_{k}}) & \coloneqq\sum_{i=1}^{k}e_{a_{1}}\cdots e_{a_{i-1}}e_{a_{i+1}}\cdots e_{a_{k}}\times\sum_{p\in\{1,-1\}}\begin{cases}
0 & c=0\text{ and }a_{i}\neq0\\
p\cdot{\rm ord}_{z-c}(a_{i+p}-a_{i}) & c\neq0\text{ or }a_{i}=0
\end{cases}\\
\bar{\bar{\partial}}_{c}(e_{a_{1}}\cdots e_{a_{k}}) & \coloneqq\sum_{i=1}^{k}e_{a_{1}}\cdots e_{a_{i-1}}e_{a_{i+1}}\cdots e_{a_{k}}\times\sum_{p\in\{1,-1\}}\begin{cases}
0 & c=0\text{ and }a_{i}\neq0\\
p\cdot\left({\rm ord}_{z-c}(a_{i+p}-a_{i})\bmod2\right) & c\neq0\text{ or }a_{i}=0
\end{cases}
\end{align*}
with $a_{0}=0$ and $a_{k+1}=z$. Here, $\left(x\bmod2\right)$ is
considered to be an element of $\{0,1\}$. By definition, $\partial_{c}=\bar{\partial}_{c}=\bar{\bar{\partial}}_{c}$
if $|c|=1$.
\end{defn}

\begin{defn}
We define $\bar{\varphi}_{\otimes},\bar{\bar{\varphi}}_{\otimes}:\mathcal{B}\to\tilde{\mathcal{A}}^{0}(\{0,1,-1\})\otimes\mathcal{A}(\{0,1,-1\})$
by
\begin{align*}
\bar{\varphi}_{\otimes}(u) & =\sum_{l=0}^{\infty}\sum_{c_{1},\dots,c_{l}\in\{0,1,-1\}}\reg_{z\to0}(\bar{\partial}_{c_{1}}\cdots\bar{\partial}_{c_{l}}u)\otimes e_{c_{1}}\dots e_{c_{l}},\\
\bar{\bar{\varphi}}_{\otimes}(u) & =\sum_{l=0}^{\infty}\sum_{c_{1},\dots,c_{l}\in\{0,1,-1\}}\reg_{z\to0}(\bar{\bar{\partial}}_{c_{1}}\cdots\bar{\bar{\partial}}_{c_{l}}u)\otimes e_{c_{1}}\dots e_{c_{l}}.
\end{align*}
\end{defn}

By definition, 
\begin{align*}
\bar{\partial}_{c}(u) & \equiv\partial_{c}(u)\pmod{\mathcal{B}_{k-1,d-1-|c|}}\\
\bar{\bar{\partial}}_{c}(u) & \equiv\partial_{c}(u)\pmod{\mathcal{B}_{k-1,d-1-|c|}+2\mathcal{B}_{k-1,d-|c|}}
\end{align*}
for $u\in\mathcal{B}_{k,d}$. Thus we have
\begin{align}
\bar{\varphi}_{\otimes}(u) & \equiv\varphi_{\otimes}(u)\pmod{M_{k,d-1}}\label{eq:main_phibar}\\
\bar{\bar{\varphi}}_{\otimes}(u) & \equiv\varphi_{\otimes}(u)\pmod{2M_{k,d}+M_{k,d-1}}\label{eq:main_phibarbar}
\end{align}
for $u\in\mathcal{B}_{k,d}$.
\begin{lem}
\label{lem:main_step2_1}For all $u\in\mathcal{B}_{k,d}\cap\mathbb{Z}\left\langle e_{0},e_{-1},e_{-z^{2}}\right\rangle $,
we have
\[
\varphi_{\otimes}(u)\equiv1\otimes\restr u{z\to1}\pmod{N_{k,d}+2M_{k,d}+M_{k,d-1}}.
\]
\end{lem}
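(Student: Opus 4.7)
The plan is to peel off the target subspace $N_{k,d}+2M_{k,d}+M_{k,d-1}$ in three stages. First, by Equation (\ref{eq:main_phibarbar}) we have $\varphi_{\otimes}(u)\equiv\bar{\bar{\varphi}}_{\otimes}(u)\pmod{2M_{k,d}+M_{k,d-1}}$, so we may replace $\varphi_{\otimes}$ by its mod-$2$/mod-lower-depth approximant. Second, for each $l<k$ the summand $\reg_{z\to0}(\bar{\bar{\partial}}_{c_1}\cdots\bar{\bar{\partial}}_{c_l}u)\otimes e_{c_1}\cdots e_{c_l}$ has its left tensor factor lying in $\tilde{\mathcal{A}}^0_{k-l,\ast}(\{0,1,-1\})=2\mathcal{A}^0_{k-l,\ast}(\{0,1,-1\})$ (since $k-l>0$), hence belongs to $N_{k,d}$. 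Consequently only the $l=k$ piece of $\bar{\bar{\varphi}}_{\otimes}(u)$ contributes modulo $N_{k,d}+2M_{k,d}+M_{k,d-1}$, and the claim reduces to proving
\[
\Phi(u)\coloneqq\sum_{c_1,\ldots,c_k\in\{0,1,-1\}}\bigl(\bar{\bar{\partial}}_{c_1}\cdots\bar{\bar{\partial}}_{c_k}u\bigr)\otimes e_{c_1}\cdots e_{c_k}\;\equiv\;1\otimes\restr{u}{z\to1}\pmod{2\mathbb{Z}\otimes\mathcal{A}_{k,d}(\{0,1,-1\})},
\]
where $\bar{\bar{\partial}}_{c_1}\cdots\bar{\bar{\partial}}_{c_k}u\in\mathbb{Z}$ is a scalar regarded as an element of the left factor.

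Since $\bar{\bar{\partial}}_0$ removes a zero letter while $\bar{\bar{\partial}}_{\pm1}$ removes a non-zero letter, every non-vanishing removal sequence contains exactly $d$ non-zero colours, so $\Phi(u)$ automatically sits in depth $d$ and only a mod-$2$ congruence is needed. I would first dispose of the base case $u\in\mathbb{Z}\langle e_0,e_{-1}\rangle$ by a short direct computation of the mod-$2$ orders at $z=0,\pm1$ of differences of elements of $\{0,-1,z\}$: this shows that on words in $\{e_0,e_{-1}\}$ the operator $\bar{\bar{\partial}}_c$ strips a trailing letter when that letter equals $e_c$ (for $c\in\{0,-1\}$) and vanishes otherwise, so iterating gives $\Phi(u)=1\otimes u=1\otimes\restr{u}{z\to1}$ exactly. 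For general $u\in\mathbb{Z}\langle e_0,e_{-1},e_{-z^2}\rangle$ the plan is to induct on the number of occurrences of $e_{-z^2}$, showing that swapping a single $e_{-z^2}$ to $e_{-1}$ changes $\Phi(u)$ only by an element of $2\mathbb{Z}\otimes\mathcal{A}_{k,d}$. The key inputs are the mod-$2$ order tables: at $z=1$ the only pair in $\{0,-1,-z^2,z\}$ whose difference has odd order is $\{-1,-z^2\}$; at $z=-1$ the ``mismatched'' pairs are precisely the two-element subsets of $\{-1,-z^2,z\}$; and at $z=0$ the swap alters no coefficient. Combined with the identity $\bar{\bar{\partial}}_c(e_{-1})\equiv\bar{\bar{\partial}}_c(e_{-z^2})\pmod 2$ on length-one words, these allow the swap-induced corrections to telescope out modulo $2$.

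The main obstacle is this inductive step. Although the mod-$2$ order table localises the change from the swap to at most two adjacent positions in any removal sequence, one must still track how these local shifts propagate through the full iterated application $\bar{\bar{\partial}}_{c_1}\cdots\bar{\bar{\partial}}_{c_k}$ and verify that, after summation over $(c_1,\ldots,c_k)$, the aggregate change is divisible by $2$. The required bookkeeping amounts to identifying, for each removal sequence, a matching ``parallel'' sequence (differing only in the colour used to remove the swapped position or one of its neighbours) whose contribution cancels the given one mod $2$; producing this pairing explicitly is the technical heart of the proof.
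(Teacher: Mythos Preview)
Your first two reductions are exactly right and match the paper: pass to $\bar{\bar{\varphi}}_{\otimes}$ via (\ref{eq:main_phibarbar}), then discard all summands with $l<k$ into $N_{k,d}$. The issue is the third stage, where you aim only for a mod-$2$ congruence and propose an induction on the number of $e_{-z^{2}}$'s, admitting that the required bookkeeping is unfinished.

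The paper instead proves the \emph{exact} equality
\[
\sum_{c_{1},\dots,c_{k}\in\{0,1,-1\}}\bar{\bar{\partial}}_{c_{1}}\cdots\bar{\bar{\partial}}_{c_{k}}u\otimes e_{c_{1}}\cdots e_{c_{k}}=1\otimes\restr u{z\to1}
\]
by a direct induction on $k$. The key computation is that for a monomial $u=e_{a_{1}}\cdots e_{a_{k}}$ with $a_{i}\in\{0,-1,-z^{2}\}$ one has
\[
\bar{\bar{\partial}}_{c}u=|c|\cdot u'+\delta_{c,a'_{k}}\,e_{a_{1}}\cdots e_{a_{k-1}},\qquad u'=\sum_{\substack{1\leq i\leq k-1\\ \{a_{i},a_{i+1}\}=\{-1,-z^{2}\}}}e_{a_{1}}\cdots e_{a_{i-1}}(e_{a_{i+1}}-e_{a_{i}})e_{a_{i+2}}\cdots e_{a_{k}},
\]
where $a'_{k}=\restr{a_{k}}{z\to1}$. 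The crucial observation you are missing is that $\restr{u'}{z\to1}=0$, because each summand of $u'$ contains a factor $e_{-z^{2}}-e_{-1}$ which becomes $e_{-1}-e_{-1}=0$ under $z\to1$. Hence, applying the induction hypothesis to $\bar{\bar{\partial}}_{c}u$ (which has degree $k-1$) gives $1\otimes\restr{(|c|u'+\delta_{c,a'_{k}}e_{a_{1}}\cdots e_{a_{k-1}})}{z\to1}=1\otimes\delta_{c,a'_{k}}e_{a'_{1}}\cdots e_{a'_{k-1}}$, and summing against $e_{c}$ over $c$ yields $1\otimes\restr u{z\to1}$ exactly. No mod-$2$ analysis or swap-pairing is needed; the ``troublesome'' contributions already cancel after substituting $z=1$.
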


\begin{proof}
Since 
\[
\varphi_{\otimes}(u)\equiv\bar{\bar{\varphi}}_{\otimes}(u)\pmod{2M_{k,d}+M_{k,d-1}}
\]
by (\ref{eq:main_phibarbar}) and 
\[
\bar{\bar{\varphi}}_{\otimes}(u)\equiv\sum_{c_{1},\dots,c_{k}\in\{0,1,-1\}}\bar{\bar{\partial}}_{c_{1}}\cdots\bar{\bar{\partial}}_{c_{k}}u\otimes e_{c_{1}}\cdots e_{c_{k}}\pmod{N_{k,d}}
\]
by definition of $N_{k,d}$ and $\bar{\bar{\varphi}}_{\otimes}$,
the claim is equivalent to
\[
\sum_{c_{1},\dots,c_{k}\in\{0,1,-1\}}\bar{\bar{\partial}}_{c_{1}}\cdots\bar{\bar{\partial}}_{c_{k}}u\otimes e_{c_{1}}\cdots e_{c_{k}}\equiv1\otimes\restr u{z\to1}\pmod{N_{k,d}+2M_{k,d}+M_{k,d-1}}
\]
for $u\in\mathcal{B}_{k,d}\cap\mathbb{Z}\left\langle e_{0},e_{-1},e_{-z^{2}}\right\rangle $.
In fact, we shall prove a more precise equality
\begin{equation}
\sum_{c_{1},\dots,c_{k}\in\{0,1,-1\}}\bar{\bar{\partial}}_{c_{1}}\cdots\bar{\bar{\partial}}_{c_{k}}u\otimes e_{c_{1}}\cdots e_{c_{k}}=1\otimes\restr u{z\to1}\label{eq:main_t1}
\end{equation}
by induction on $k$. The $k=0$ case is obvious. Assume that $k>0$.
It is sufficient to consider the case where $u$ is a monomial. Define
$a_{1},\dots,a_{k}\in\{0,-1,-z^{2}\}$ and $a'_{1},\dots,a'_{k}\in\{0,-1\}$
by
\begin{align*}
e_{a_{1}}\cdots e_{a_{k}} & =u,\\
e_{a'_{1}}\cdots e_{a'_{k}} & =\restr u{z\to1}.
\end{align*}
By definition of $\bar{\bar{\partial}}_{c}$, we have
\[
\bar{\bar{\partial}}_{c}u=|c|\cdot u'+\delta_{c,a'_{k}}e_{a_{1}}\cdots e_{a_{k-1}}
\]
where
\[
u'=\sum_{\substack{1\leq i\leq k-1\\
\{a_{i},a_{i+1}\}=\{-1,-z^{2}\}
}
}e_{a_{1}}\cdots e_{a_{i-1}}(e_{a_{i+1}}-e_{a_{i}})e_{a_{i+2}}\cdots e_{a_{k}}.
\]
Thus, by induction hypothesis, 
\begin{align*}
 & \sum_{c_{1},\dots,c_{k}\in\{0,1,-1\}}\bar{\bar{\partial}}_{c_{1}}\cdots\bar{\bar{\partial}}_{c_{k}}u\otimes e_{c_{1}}\cdots e_{c_{k}}\\
 & =\sum_{c\in\{0,1,-1\}}\sum_{c_{1},\dots,c_{k-1}\in\{0,1,-1\}}\bar{\bar{\partial}}_{c_{1}}\cdots\bar{\bar{\partial}}_{c_{k-1}}\left(|c|\cdot u'+\delta_{c,a'_{k}}e_{a_{1}}\cdots e_{a_{k-1}}\right)\otimes e_{c_{1}}\cdots e_{c_{k-1}}e_{c}\\
 & =\sum_{c\in\{0,1,-1\}}1\otimes\restr{\left(|c|\cdot u'+\delta_{c,a'_{k}}e_{a_{1}}\cdots e_{a_{k-1}}\right)}{z\to1}e_{c}\\
 & =\sum_{c\in\{0,1,-1\}}1\otimes\delta_{c,a'_{k}}e_{a'_{1}}\cdots e_{a'_{k-1}}e_{c}\\
 & =1\otimes\restr u{z\to1},
\end{align*}
which proves (\ref{eq:main_t1}).
\end{proof}
\begin{lem}
\label{lem:main_step2_2}Let $d\geq1$, $k_{1},\dots,k_{d}\geq1$,
and $b_{1},\dots,b_{d}\in\{-1,-z^{2}\}$ such that $b_{1}=-1$ or
$k_{1}=1$. Put
\[
u\coloneqq e_{b_{1}}e_{0}^{k_{1}-1}\cdots e_{b_{d}}e_{0}^{k_{d}-1}\in\mathcal{B}_{k,d}
\]
where $k\coloneqq k_{1}+\cdots+k_{d}$. Assume that $e_{-1}$ and
$e_{-z^{2}}$ are not adjacent in $u$, i.e., $k_{i}>1$ for all $i$
such that $b_{i}\neq b_{i+1}$. Then we have
\[
\varphi_{\otimes}(u)\equiv1\otimes\biggl(2\sum_{\substack{1\leq i<j\leq d\\
b_{i}\neq b_{i+1}\\
k_{j}\in1+2\mathbb{Z}
}
}\word^{-}(\Bbbk^{(i,j)})+\Bigl(1+2\sum_{\substack{1\leq j\leq d\\
b_{j}=-z^{2}\\
k_{j}\in2\mathbb{Z}
}
}1\Bigr)\word^{-}(\Bbbk)\biggr)\pmod{2N_{k,d}+4M_{k,d}+M_{k,d-1}}
\]
where we put
\[
\Bbbk\coloneqq(k_{1},\dots,k_{d}),
\]
\[
\Bbbk^{(i,j)}\coloneqq(k_{1},\dots,k_{i-1},k_{i}-1,k_{i+1},\dots,k_{j-1},k_{j}+1,k_{j+1},\dots,k_{d}),
\]
and
\[
\word^{-}(l_{1},\dots,l_{d})\coloneqq\word(l_{1},\dots,l_{d-1},-l_{d})=e_{-1}e_{0}^{l_{1}-1}\cdots e_{-1}e_{0}^{l_{d}-1}
\]
for positive integers $l_{1},\dots,l_{d}$.
\end{lem}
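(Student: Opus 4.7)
The plan is to sharpen Lemma~\ref{lem:main_step2_1} by one extra order of 2-adic precision. Since $u \in \mathbb{Z}\langle e_0, e_{-1}, e_{-z^2}\rangle$ with $\restr{u}{z\to 1} = \word^-(\Bbbk)$, that lemma already gives $\varphi_{\otimes}(u) \equiv 1 \otimes \word^-(\Bbbk) \pmod{N_{k,d} + 2M_{k,d} + M_{k,d-1}}$. The present statement refines this to $\pmod{2N_{k,d} + 4M_{k,d} + M_{k,d-1}}$ and prescribes the explicit correction. The task is therefore to identify exactly which terms of $\varphi_{\otimes}(u)$ live in $N_{k,d} + 2M_{k,d}$ but not in the finer target, and to match them with the claimed sum.

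First I would reduce to $\bar{\bar{\varphi}}_{\otimes}(u) = \sum_{l,\,c_1,\dots,c_l}\reg_{z\to 0}(\bar{\bar{\partial}}_{c_1}\cdots\bar{\bar{\partial}}_{c_l}u)\otimes e_{c_1}\cdots e_{c_l}$ via the congruence (\ref{eq:main_phibarbar}), working modulo $2M_{k,d} + M_{k,d-1}$. For the specific shape of $u$, a direct parity analysis of $\bar{\bar{\partial}}_c$ shows that most local contributions vanish: differences $a_{i+p} - a_i$ between letters in $\{0, -1, -z^2\}$ have $\mathrm{ord}$ at $0$ and $\pm 1$ that are even or zero (using $\mathrm{ord}_z(-z^2) = 2$ and the non-adjacency of $e_{-1}$ and $e_{-z^2}$). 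The only surviving local moves are the boundary derivative at $i = k$ (which, when fully iterated, reproduces the leading $1\otimes \word^-(\Bbbk)$ as in Lemma~\ref{lem:main_step2_1}) and the $\bar{\bar{\partial}}_0$ that removes an $e_0$ flanking a $-z^2$-block, whose orbit under further differentiation yields the $\Bbbk^{(i,j)}$-type index shifts.

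Second, I would evaluate $\reg_{z\to 0}$ on each partially differentiated word using the shuffle decomposition $\mathcal{B}''\otimes\mathcal{B}'''\simeq\mathcal{B}'$ from Definition~\ref{def:reg_limit_Euler}. Up to lower depth, each such word factors into an $e_{-1}$-part shuffled with a pure $e_{-z^2},e_0$-part. The decisive input here is Lemma~\ref{lem:reg_z0_mod4_congruence}, which computes $\reg_{z\to 0}$ of the pure $e_{-z^2},e_0$-part modulo $4$ as $2e_{-1}e_0^{l-1}$ when the residual weight $l$ is even, plus a $2(e_1+e_{-1})^{l-1}e_0$ correction exactly in the $\{1\}^l$ case. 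Combining these ingredients, I expect the coefficient $2$ in front of $\word^-(\Bbbk^{(i,j)})$ to come from an interface move $\bar{\bar{\partial}}_0$ at the $i$-th boundary, paired with an even-weight residual terminating at the $j$-th block, while the coefficient in front of $\word^-(\Bbbk)$ enumerates the solitary $-z^2$-blocks with even $k_j$.

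The main obstacle will be the combinatorial bookkeeping: one must show that every remaining term of $\bar{\bar{\varphi}}_{\otimes}(u)$ lies in $2N_{k,d} + 4M_{k,d} + M_{k,d-1}$. Concretely this splits into showing that the $l < k$ partial contributions gain an extra factor of $2$ beyond the doubling already built into $\tilde{\mathcal{A}}^0_{k',d'} = 2\mathcal{A}^0_{k',d'}$ (using the parity constraints of Lemma~\ref{lem:reg_z0_mod4_congruence}), and that the non-$\{1\}^l$ cases produce only the $2e_{-1}e_0^{l-1}$ summand, which accounts precisely for the $\Bbbk^{(i,j)}$ correction with no unwanted extras. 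The hypothesis $b_1=-1$ or $k_1=1$ should be what prevents an additional left-boundary contribution. I would structure the argument as an induction on the number of $e_{-z^2}$ letters in $u$, with the mod-$4$ formula of Lemma~\ref{lem:reg_z0_mod4_congruence} serving as the base case.
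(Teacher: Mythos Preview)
Your reduction step has a genuine gap. You propose to pass to $\bar{\bar{\varphi}}_{\otimes}(u)$ via \eqref{eq:main_phibarbar}, i.e.\ work modulo $2M_{k,d}+M_{k,d-1}$; but the target modulus is $2N_{k,d}+4M_{k,d}+M_{k,d-1}$, which is strictly finer. The $\Bbbk^{(i,j)}$-corrections you are trying to recover sit in $1\otimes 2\mathcal{A}_{k,d}^{0}(\{0,1,-1\})\subset 2M_{k,d}$ but \emph{not} in $2N_{k,d}+4M_{k,d}+M_{k,d-1}$, so \eqref{eq:main_phibarbar} literally annihilates them. Concretely, for the words in question the operator $\bar{\bar{\partial}}_{c}$ does nothing except remove the last letter: an interior $e_{0}$ adjacent to $e_{-z^{2}}$ has $\mathrm{ord}_{z}(-z^{2})=2\equiv 0\pmod{2}$, so the ``$\bar{\bar{\partial}}_{0}$ that removes an $e_{0}$ flanking a $-z^{2}$-block'' that you invoke simply does not exist. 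Hence $\bar{\bar{\varphi}}_{\otimes}(u)$ equals $\sum_{s}\reg_{z\to 0}(e_{a_{1}}\cdots e_{a_{s}})\otimes e_{a'_{s+1}}\cdots e_{a'_{k}}$ and contains no $\Bbbk^{(i,j)}$-type shifts at all.

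The paper avoids this by using \eqref{eq:main_phibar} instead, reducing only to $\bar{\varphi}_{\otimes}(u)$ modulo $M_{k,d-1}$, and then expanding $\bar{\partial}_{c}=\bar{\bar{\partial}}_{c}+2D_{c}$ step by step. This splits $\bar{\varphi}_{\otimes}(u)=2F_{1}+F_{2}$, where $F_{2}=\bar{\bar{\varphi}}_{\otimes}(u)$ is your sum and supplies the $\word^{-}(\Bbbk)$ term (here Lemma~\ref{lem:reg_z0_mod4_congruence} enters, and the hypothesis $b_{1}=-1$ or $k_{1}=1$ kills a residual $2e_{-1}e_{0}^{s-1}\otimes\cdots$ contribution). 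The crucial piece $F_{1}$ comes entirely from the $2D_{0}$-insertions; one rewrites it as $\sum_{u=u'u''}\bar{\varphi}_{\otimes}(D_{0}(u'))\cdot(1\otimes\restr{e_{0}u''}{z\to 1})$, applies Lemma~\ref{lem:main_step2_1} to each $\bar{\varphi}_{\otimes}(D_{0}(u'))$, and reads off the $\word^{-}(\Bbbk^{(i,j)})$-terms directly. No induction on the number of $e_{-z^{2}}$ letters is needed.
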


\begin{proof}
Define $a_{1},\dots,a_{k}\in\{0,-1,-z^{2}\}$ and $a'_{1},\dots,a'_{k}\in\{0,-1\}$
by
\begin{align*}
e_{a_{1}}\cdots e_{a_{k}} & =u,\\
e_{a'_{1}}\cdots e_{a'_{k}} & =\restr u{z\to1}.
\end{align*}
By (\ref{eq:main_phibar}), we have
\begin{equation}
\varphi_{\otimes}(u)\equiv\bar{\varphi}_{\otimes}(u)\pmod{M_{k,d-1}}.\label{eq:main_phi_phibar}
\end{equation}
The difference of $\bar{\partial}_{c}$ and $\bar{\bar{\partial}}_{c}$
is expressed as
\[
\bar{\partial}_{c}-\bar{\bar{\partial}}_{c}=2D_{c}
\]
where $D_{c}$ is a linear map defined by $D_{\pm1}\coloneqq0$ and
\[
D_{0}(e_{x_{1}}\cdots e_{x_{l}})\coloneqq\sum_{i=1}^{l}e_{x_{1}}\cdots e_{x_{i-1}}e_{x_{i+1}}\cdots e_{x_{l}}\times\sum_{p\in\{1,-1\}}p\cdot\begin{cases}
1 & (x_{i},x_{i+p})=(0,-z^{2})\\
0 & {\rm otherwise}.
\end{cases}
\]
Thus
\begin{align}
\bar{\varphi}_{\otimes}(u) & =\sum_{l=0}^{k}\sum_{c_{1},\dots,c_{l}\in\{0,1,-1\}}\reg_{z\to0}(\bar{\partial}_{c_{1}}\cdots\bar{\partial}_{c_{l}}u)\otimes e_{c_{1}}\dots e_{c_{l}}\nonumber \\
 & =\sum_{s=0}^{k}\sum_{c_{s+1},\dots,c_{k}\in\{0,1,-1\}}\reg_{z\to0}(\bar{\partial}_{c_{s+1}}\cdots\bar{\partial}_{c_{k}}u)\otimes e_{c_{s+1}}\dots e_{c_{k}}\ \ \ (s=k-l)\nonumber \\
 & =2F_{1}+F_{2}\label{eq:main_e8}
\end{align}
where
\[
F_{1}=\sum_{0\leq s<t\leq k}\sum_{c_{s+1},\dots,c_{k}\in\{0,1,-1\}}\reg_{z\to0}(\bar{\partial}_{c_{s+1}}\cdots\bar{\partial}_{c_{t-1}}D_{c_{t}}\bar{\bar{\partial}}_{c_{t+1}}\cdots\bar{\bar{\partial}}_{c_{k}}u)\otimes e_{c_{s+1}}\dots e_{c_{k}}
\]
and
\[
F_{2}=\sum_{s=0}^{k}\sum_{c_{s+1},\dots,c_{k}\in\{0,1,-1\}}\reg_{z\to0}(\bar{\bar{\partial}}_{c_{s+1}}\cdots\bar{\bar{\partial}}_{c_{k}}u)\otimes e_{c_{s+1}}\dots e_{c_{k}},
\]
which is obtained by expanding as 
\begin{align*}
\bar{\partial}_{c_{s+1}}\cdots\bar{\partial}_{c_{k}} & =2\bar{\partial}_{c_{s+1}}\cdots\bar{\partial}_{c_{k-1}}D_{c_{k}}+\bar{\partial}_{c_{s+1}}\cdots\bar{\partial}_{c_{k-1}}\bar{\bar{\partial}}_{c_{k}}\\
 & =2\bar{\partial}_{c_{s+1}}\cdots\bar{\partial}_{c_{k-1}}D_{c_{k}}+2\bar{\partial}_{c_{s+1}}\cdots\bar{\partial}_{c_{k-2}}D_{c_{k-1}}\bar{\bar{\partial}}_{c_{k}}+\bar{\partial}_{c_{s+1}}\cdots\bar{\partial}_{c_{k-2}}\bar{\bar{\partial}}_{c_{k-1}}\bar{\bar{\partial}}_{c_{k}}\\
 & =\cdots.
\end{align*}
Here, we can easily check that
\[
\bar{\bar{\partial}}_{c_{t+1}}\cdots\bar{\bar{\partial}}_{c_{k}}u=\begin{cases}
e_{a_{1}}\cdots e_{a_{t}} & c_{j}=a'_{j}\text{ for all }t+1\leq j\leq k\\
0 & {\rm otherwise}
\end{cases}
\]
by the assumption that $e_{-1}$ and $e_{-z^{2}}$ are not adjacent
in $u$. Thus we have
\begin{align}
F_{1} & =\sum_{0\leq s<t\leq k}\sum_{c_{s+1},\dots,c_{t}\in\{0,1,-1\}}\reg_{z\to0}(\bar{\partial}_{c_{s+1}}\cdots\bar{\partial}_{c_{t-1}}D_{c_{t}}(e_{a_{1}}\cdots e_{a_{t}}))\otimes e_{c_{s+1}}\dots e_{c_{t}}e_{a'_{t+1}}\cdots e_{a'_{k}}\nonumber \\
 & =\sum_{0\leq s<t\leq k}\left(\sum_{c_{s+1},\dots,c_{t-1}\in\{0,1,-1\}}\reg_{z\to0}(\bar{\partial}_{c_{s+1}}\cdots\bar{\partial}_{c_{t-1}}D_{0}(e_{a_{1}}\cdots e_{a_{t}}))\otimes e_{c_{s+1}}\dots e_{c_{t-1}}\right)\times\left(1\otimes e_{0}e_{a'_{t+1}}\cdots e_{a'_{k}}\right)\nonumber \\
 & =\sum_{0<t\leq k}\bar{\varphi}_{\otimes}(D_{0}(e_{a_{1}}\cdots e_{a_{t}}))\times\left(1\otimes e_{0}e_{a'_{t+1}}\cdots e_{a'_{k}}\right)\nonumber \\
 & =\sum_{\substack{u=u'u''\\
u',u'':{\rm monomial}
}
}\bar{\varphi}_{\otimes}(D_{0}(u'))\times\left(1\otimes\restr{e_{0}u''}{z\rightarrow1}\right).\label{eq:main_e9}
\end{align}
By Lemma \ref{lem:main_step2_1}, we have
\begin{align}
\sum_{\substack{u=u'u''\\
u',u'':{\rm monomial}
}
}\bar{\varphi}_{\otimes}(D_{0}u')\times\left(1\otimes\restr{e_{0}u''}{z\rightarrow1}\right) & \equiv1\otimes\biggl(\sum_{\substack{u=u'u''\\
u',u'':{\rm monomial}
}
}\restr{\left(D_{0}(u')e_{0}u''\right)}{z\to1}\biggr)\pmod{N_{k,d}+2M_{k,d}+M_{k,d-1}}.\label{eq:main_e10}
\end{align}
Furthermore, $\sum_{\substack{u=u'u''\\
u',u'':{\rm monomial}
}
}\restr{\left(D_{0}(u')e_{0}u''\right)}{z\to1}=\sum_{1\leq j\leq d}G_{j}$ where
\begin{align*}
G_{j} & =\sum_{s=0}^{k_{j}-1}\restr{\left(D_{0}(e_{b_{1}}e_{0}^{k_{1}-1}\cdots e_{b_{j-1}}e_{0}^{k_{j-1}-1}e_{b_{j}}e_{0}^{s})\right)}{z\to1}e_{0}(e_{0}^{k_{j}-1-s}e_{-1}e_{0}^{k_{j+1}-1}\cdots e_{-1}e_{0}^{k_{d}-1})\\
 & =\sum_{s=0}^{k_{j}-1}\sum_{\substack{1\leq i<j\\
k_{i}\neq1
}
}\left(\delta_{b_{i+1},-z^{2}}-\delta_{b_{i},-z^{2}}\right)\\
 & \ \ \ \ \times\restr{\left(e_{b_{1}}e_{0}^{k_{1}-1}\cdots e_{b_{i}}e_{0}^{k_{i}-2}\cdots e_{b_{j-1}}e_{0}^{k_{j-1}-1}e_{b_{j}}e_{0}^{s}\right)}{z\to1}e_{0}^{k_{j}-s}e_{-1}e_{0}^{k_{j+1}-1}\cdots e_{-1}e_{0}^{k_{d}-1}\\
 & \ \ -\sum_{s=1}^{k_{j}-1}\delta_{b_{j},-z^{2}}\restr{\left(e_{b_{1}}e_{0}^{k_{1}-1}\cdots e_{b_{j-1}}e_{0}^{k_{j-1}-1}e_{b_{j}}e_{0}^{s-1}\right)}{z\to1}e_{0}^{k_{j}-s}e_{-1}e_{0}^{k_{j+1}-1}\cdots e_{-1}e_{0}^{k_{d}-1}\\
 & =k_{j}\sum_{\substack{1\leq i<j\\
k_{i}\neq1
}
}\left(\delta_{b_{i+1},-z^{2}}-\delta_{b_{i},-z^{2}}\right)\word^{-}(\Bbbk^{(i,j)})-(k_{j}-1)\delta_{b_{j},-z^{2}}\word^{-}(\Bbbk).
\end{align*}
It follows that
\begin{align}
\sum_{\substack{u=u'u''\\
u',u'':{\rm monomial}
}
}\restr{\left(D_{0}(u')e_{0}u''\right)}{z\to1} & \equiv\sum_{\substack{1\leq i<j\leq d\\
b_{i}\neq b_{i+1}\\
k_{j}\in1+2\mathbb{Z}
}
}\word^{-}(\Bbbk^{(i,j)})+\Bigl(\sum_{\substack{1\leq j\leq d\\
b_{j}=-z^{2}\\
k_{j}\in2\mathbb{Z}
}
}1\Bigr)\word^{-}(\Bbbk)\mod{2\mathcal{A}_{k,d}(\{0,1,-1\})}.\label{eq:main_e11}
\end{align}
By (\ref{eq:main_e9}), (\ref{eq:main_e10}) and (\ref{eq:main_e11}),
we have
\begin{equation}
F_{1}\equiv1\otimes\biggl(\sum_{\substack{1\leq i<j\leq d\\
b_{i}\neq b_{i+1}\\
k_{j}\in1+2\mathbb{Z}
}
}\word^{-}(\Bbbk^{(i,j)})+\sum_{\substack{1\leq j\leq d\\
b_{j}=-z^{2}\\
k_{j}\in2\mathbb{Z}
}
}\word^{-}(\Bbbk)\biggr)\pmod{N_{k,d}+2M_{k,d}+M_{k,d-1}}.\label{eq:main_e12}
\end{equation}
Similarly, we have
\begin{align*}
F_{2} & =\sum_{s=0}^{k}\sum_{c_{s+1},\dots,c_{k}\in\{0,1,-1\}}\reg_{z\to0}(\bar{\bar{\partial}}_{c_{s+1}}\cdots\bar{\bar{\partial}}_{c_{k}}u)\otimes e_{c_{s+1}}\dots e_{c_{k}}\\
 & =\sum_{s=0}^{k}\reg_{z\to0}(e_{a_{1}}\cdots e_{a_{s}})\otimes e_{a'_{s+1}}\dots e_{a'_{k}}.
\end{align*}
Here, $\reg_{z\to0}(e_{a_{1}}\cdots e_{a_{s}})=0$ except when $a_{1},\dots,a_{s}\in\{0,-z^{2}\}$.
On the other hand, if $a_{1},\dots,a_{s}\in\{0,-z^{2}\}$ and $s>0$,
we have
\[
\reg_{z\to0}(e_{a_{1}}\cdots e_{a_{s}})\equiv\begin{cases}
2e_{-1}e_{0}^{s-1} & \text{\ensuremath{s} is even}\\
0 & \text{\ensuremath{s} is odd}
\end{cases}+\begin{cases}
2(e_{1}+e_{-1})^{s-1}e_{0} & a_{1}=\cdots=a_{s}=-z^{2}\\
0 & \text{otherwise}
\end{cases}
\]
modulo $4\mathcal{A}_{s,d'}^{0}(\{0,1,-1\})$ with $d'=\#\{1\leq j\leq s\mid a_{j}=-z^{2}\}$
by Lemma \ref{lem:reg_z0_mod4_congruence}. Furthermore, 
\[
2e_{-1}e_{0}^{s-1}\in\mathcal{A}_{s,d'-1}^{0}(\{0,1,-1\})
\]
if $d'>1$, and if $a_{1}=\cdots=a_{s}=-z^{2}$ then
\[
2(e_{1}+e_{-1})^{s-1}e_{0}\in\mathcal{A}_{s,d'-1}^{0}(\{0,1,-1\})
\]
since $d'=s$ in this case. Thus for any $a_{1},\dots,a_{s}\in\{0,-z^{2}\}$
\[
\reg_{z\to0}(e_{a_{1}}\cdots e_{a_{s}})\otimes e_{a'_{s+1}}\dots e_{a'_{k}}
\]
is congruent to
\[
\begin{cases}
2e_{-1}e_{0}^{s-1}\otimes e_{a'_{s+1}}\dots e_{a'_{k}} & s\leq k_{1}\text{ and \ensuremath{s} is even}\\
0 & \text{otherwise}
\end{cases}
\]
modulo $2N_{k,d}+M_{k,d-1}$. Hence
\begin{align}
F_{2} & \equiv1\otimes e_{a'_{1}}\cdots e_{a'_{k}}+\delta_{b_{1},-z^{2}}\sum_{\substack{2\leq s\leq k_{1}\\
s:{\rm even}
}
}2e_{-1}e_{0}^{s-1}\otimes e_{a'_{s+1}}\dots e_{a'_{k}}\nonumber \\
 & \equiv1\otimes\word^{-}(\Bbbk)+2\delta_{b_{1},-z^{2}}\sum_{\substack{2\leq s\leq k_{1}\\
s:{\rm even}
}
}e_{-1}e_{0}^{s-1}\otimes e_{0}^{k_{1}-s}\word^{-}(k_{2},\dots,k_{d})\label{eq:main_e13}
\end{align}
modulo $2N_{k,d}+M_{k,d-1}$. Finally, putting (\ref{eq:main_phi_phibar}),
(\ref{eq:main_e8}), (\ref{eq:main_e12}) and (\ref{eq:main_e13})
together, we get
\begin{align*}
\varphi_{\otimes}(u) & \equiv1\otimes\biggl(2\sum_{\substack{1\leq i<j\leq d\\
b_{i}\neq b_{i+1}\\
k_{j}\in1+2\mathbb{Z}
}
}\word^{-}(\Bbbk^{(i,j)})+(1+2\sum_{\substack{1\leq j\leq d\\
b_{j}=-z^{2}\\
k_{j}\in2\mathbb{Z}
}
}1)\word^{-}(\Bbbk)\biggr)\\
 & \ \ \ +2\delta_{b_{1},-z^{2}}\sum_{\substack{2\leq s\leq k_{1}\\
s:{\rm even}
}
}e_{-1}e_{0}^{s-1}\otimes e_{0}^{k_{1}-s}\word^{-}(k_{2},\dots,k_{d})\pmod{2N_{k,d}+4M_{k,d}+M_{k,d-1}},
\end{align*}
and since the second term
\[
2\delta_{b_{1},-z^{2}}\sum_{\substack{2\leq s\leq k_{1}\\
s:{\rm even}
}
}e_{-1}e_{0}^{s-1}\otimes e_{0}^{k_{1}-s}\word^{-}(k_{2},\dots,k_{d})
\]
vanishes by the assumption $b_{1}=-1$ or $k_{1}=1$, this proves
the lemma.
\end{proof}
\begin{lem}
\label{lem:main_step2}We have
\[
Y(k,d)\subset X^{\mathrm{D}}(k,d)+2X(k,d)+\frac{1}{2}X(k,d-1).
\]
\end{lem}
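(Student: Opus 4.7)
The plan is, for each $\Bbbk = (k_1,\dots,k_d)$ with positive entries summing to $k$, to express $\word^-(\Bbbk) \bmod \iconf$ as an element of $X^{\mathrm{D}}(k,d) + 2X(k,d) + \tfrac12 X(k,d-1)$ by applying Lemma \ref{lem:main_step2_2} to a suitable witness $u \in \mathcal{B}_{k,d}$ with $\restr{u}{z\to 1} = \word^-(\Bbbk)$. Applying $(\shuffle)\circ(\mathrm{id}\otimes\shreg)$ to the congruence of Lemma \ref{lem:main_step2_2} pushes the modulus $2N_{k,d}+4M_{k,d}+M_{k,d-1}$ into $4X(k,d)+X(k,d-1)$ inside $\hconf$, so that dividing both sides by $2$ in the $\mathbb{Q}$-vector space $\hconf$ yields, for each admissible $b=(b_1,\dots,b_d)\in\{-1,-z^2\}^d$, a relation
\[
\alpha(b)\,\word^-(\Bbbk) + \sum_{(i,j)\in S(b)} \word^-(\Bbbk^{(i,j)}) \;\in\; 2X(k,d) + \tfrac12 X(k,d-1),
\]
where $\alpha(b)=\#\{j:b_j=-z^2,\ k_j\text{ even}\}$ and $S(b)=\{(i,j):i<j,\ b_i\neq b_{i+1},\ k_j\text{ odd}\}$. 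The aim is to choose $b$ so that $\alpha(b)$ is a unit of $\mathbb{Z}_{(2)}$ and every $\Bbbk^{(i,j)}$ is strictly simpler than $\Bbbk$ in a suitable well-ordering.

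The induction is on the pair $(k_1,\ d-\min B(\Bbbk))$ in lexicographic order, where $B(\Bbbk)=\{j\in\{2,\dots,d\}:k_j\text{ even}\}$ with the convention $\min\emptyset=+\infty$. The base case $B(\Bbbk)=\emptyset$ is exactly $\Bbbk\in\indset^{\mathrm{D}}(k,d)$, where nothing is to prove. For the inductive step, set $j_0:=\min B(\Bbbk)\in\{2,\dots,d\}$ and take $b_i=-z^2$ for $i_0\le i\le j_0$ and $b_i=-1$ otherwise, where $i_0:=1$ if $k_1=1$ and $i_0:=2$ if $k_1>1$. The non-adjacency hypothesis of Lemma \ref{lem:main_step2_2} then holds automatically: the only possible boundary positions are $i=1$ (absent when $i_0=1$, controlled by $k_1>1$ when $i_0=2$) and $i=j_0$ (controlled by $k_{j_0}\ge 2$ since $k_{j_0}$ is even), while positions strictly inside the $-z^2$ block carry no adjacency constraint. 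Moreover $\alpha(b)=1$: by minimality of $j_0$, every $k_i$ with $2\le i<j_0$ is odd, and in the case $i_0=1$ the entry $k_1=1$ is also odd, so only $i=j_0$ contributes to $\alpha(b)$.

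Every $\Bbbk^{(i,j)}$ appearing in the sum has strictly smaller lexicographic parameter than $\Bbbk$. If $i=1$ (a left-boundary term, possible only when $i_0=2$), then $k_1$ is replaced by $k_1-1\ge 1$, so the first coordinate drops. If $i=j_0$ (a right-boundary term, possible only when $j_0<d$), then $k_1$ is unchanged, but $B(\Bbbk^{(j_0,j)})=(B(\Bbbk)\setminus\{j_0\})\cup\{j\}$ has all elements strictly greater than $j_0$, so $\min B$ strictly increases and $d-\min B$ strictly decreases. Applying the inductive hypothesis to each $\word^-(\Bbbk^{(i,j)})$ and solving the displayed relation for $\word^-(\Bbbk)$ (using that the leading coefficient $\alpha(b)=1$ is a unit in $\mathbb{Z}_{(2)}$) places $\word^-(\Bbbk)$ in $X^{\mathrm{D}}(k,d)+2X(k,d)+\tfrac12 X(k,d-1)$, completing the induction; combined with Lemma \ref{lem:main_step1}, this yields Theorem \ref{thm:main_main}.

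The main obstacle is the choice of inductive invariant. Since left-boundary terms $\Bbbk^{(1,j)}$ can introduce new even entries at arbitrarily small positions $j\ge 2$, an induction on $\min B$ alone fails outright: $\min B$ can strictly decrease under a reduction. The device of prepending $k_1$ in lex order succeeds precisely because left-boundary moves always decrement $k_1$, dominating any deterioration of $\min B$, whereas right-boundary moves preserve $k_1$ and push $\min B$ strictly rightward; the two kinds of moves can thus be combined within a single well-founded induction, and verifying their interaction with the non-adjacency condition (handled by the two-case split on $k_1$) is the one bookkeeping step that requires care.
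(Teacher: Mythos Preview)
Your proof is correct and follows the same overall scheme as the paper's: apply Lemma~\ref{lem:main_step2_2} with a choice of $b$ for which the coefficient $\alpha(b)=1$, deduce a relation $\word^-(\Bbbk)+\sum\word^-(\Bbbk^{(i,j)})\in 2X(k,d)+\tfrac12 X(k,d-1)$ in $\hconf$, and induct. The difference lies in the choice of $b$ and the induction order. The paper locates $j_{\max}$, the \emph{largest} $j\ge 2$ with $k_j$ even, lets $i$ be minimal with $k_{i+1}=\cdots=k_{j_{\max}-1}=1$, and takes $b_l=-z^2$ for $l>i$, $b_l=-1$ for $l\le i$. This produces a \emph{single} boundary at $l=i$, and since $k_i\ge 2$ (by minimality of $i$ when $i\ge 1$; the case $i=0$ yields no $\Bbbk^{(i,j)}$ terms) every $\Bbbk^{(i,j)}$ is strictly smaller than $\Bbbk$ in the ordinary lexicographic order on $(k_1,\dots,k_d)$---so that order alone suffices for the induction. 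Your construction instead anchors the $-z^2$ block around the \emph{smallest} even index $j_0\ge 2$, which creates up to two boundaries; consequently left-boundary moves decrement $k_1$ while right-boundary moves push $\min B$ rightward, forcing you to introduce the compound invariant $(k_1,\,d-\min B)$. Both routes reach the same conclusion; the paper's is a bit more economical (one boundary, plain lex order), while yours separates more transparently the role of the hypothesis ``$b_1=-1$ or $k_1=1$'' and the parity shift at $j_0$.
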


\begin{proof}
It suffices to show
\begin{equation}
(\word^{-}(k_{1},\dots,k_{d})\bmod\iconf)\in X^{\mathrm{D}}(k,d)+2X(k,d)+\frac{1}{2}X(k,d-1)\label{eq:main_step2}
\end{equation}
for positive integers $k_{1},\dots,k_{d}$ such that $k_{1}+\cdots+k_{d}=k$
where $\word^{-}(k_{1},\dots,k_{d})$ is the notation used in Lemma
\ref{lem:main_step2_2}. Put $\Bbbk=(k_{1},\dots,k_{d})$. If $k_{j}$
is odd for all $2\leq j\leq d$, then $\word^{-}(\Bbbk)\in X^{\mathrm{D}}(k,d)$
by definition of $X^{\mathrm{D}}$. Therefore, we may assume that
$k_{j}$ is even for some $2\leq j\leq d$. Suppose that $j_{\max}$
is the maximal such $j$, so that $k_{j'}$ are odd for $j'>j_{\max}$,
and $i=i(k_{1},\dots,k_{d})$ is the minimal non-negative integer
such that $i<j_{\max}$ and $k_{i+1}=k_{i+2}=\cdots=k_{j_{\mathrm{max}}-1}=1$.
Then we put
\[
u\coloneqq e_{-1}e_{0}^{k_{1}-1}\cdots e_{-1}e_{0}^{k_{i}-1}e_{-z^{2}}e_{0}^{k_{i+1}-1}\cdots e_{-z^{2}}e_{0}^{k_{d}-1}.
\]
Notice that by the choice of $i$, $u$ satisfies the assumptions
of Lemma \ref{lem:main_step2_2}, i.e., $e_{-1}$ and $e_{-z^{2}}$
are not adjacent in $u$, and $u$ starts with $e_{-1}$ or else $k_{1}=1$
. Now, let us calculate the element $\restr u{z\to1}-\varphi(u)\in\iconf$.
By definition, 
\begin{equation}
\restr u{z\to1}=\word^{-}(\Bbbk).\label{eq:main_e14}
\end{equation}
For $\varphi(u)$, we have
\[
\varphi_{\otimes}(u)\equiv1\otimes\biggl(2\sum_{\substack{i<j\leq d\\
k_{j}\in1+2\mathbb{Z}
}
}\word^{-}(\Bbbk^{(i,j)})-\word^{-}(\Bbbk)\biggr)\pmod{2N_{k,d}+4M_{k,d}+M_{k,d-1}}
\]
by Lemma \ref{lem:main_step2_2} (we regard $\word^{-}(\Bbbk^{(i,j)})=0$
if $i=0$) and thus
\begin{equation}
\varphi(u)\equiv2\sum_{\substack{i<j\leq d\\
k_{j}\in1+2\mathbb{Z}
}
}\word^{-}(\Bbbk^{(i,j)})-\word^{-}(\Bbbk)\pmod{4\mathcal{A}_{k,d}^{0}(\{0,1,-1\})+\mathcal{A}_{k,d-1}^{0}(\{0,1,-1\})}.\label{eq:main_e15}
\end{equation}
By (\ref{eq:main_e14}) and (\ref{eq:main_e15}), we have
\[
\word^{-}(\Bbbk)\equiv2\sum_{\substack{i<j\leq d\\
k_{j}\in1+2\mathbb{Z}
}
}\word^{-}(\Bbbk^{(i,j)})-\word^{-}(\Bbbk)\pmod{\iconf+4\mathcal{A}_{k,d}^{0}(\{0,1,-1\})+\mathcal{A}_{k,d-1}^{0}(\{0,1,-1\})},
\]
by which we find that
\begin{equation}
\left(\word^{-}(\Bbbk)\bmod\iconf\right)-\sum_{\substack{i<j\leq d\\
k_{j}\in1+2\mathbb{Z}
}
}\left(\word^{-}(\Bbbk^{(i,j)})\bmod\iconf\right)\in2X(k,d)+\frac{1}{2}X(k,d-1).\label{eq:main_e16}
\end{equation}
Since $\Bbbk^{(i,j)}$ is always smaller than $\Bbbk$ in lexicographic
order, (\ref{eq:main_step2}) now follows by using (\ref{eq:main_e16})
repeatedly.
\end{proof}
\begin{proof}[Proof of Theorem \ref{thm:main_main}]
 By Lemmas \ref{lem:main_step1} and \ref{lem:main_step2}, we have
\[
X(k,d)\subset X^{\mathrm{D}}(k,d)+2X(k,d)+\frac{1}{2}X(k,d-1).
\]
Equivalently,
\[
\modx(k,d)\subset\modx^{\mathrm{D}}(k,d)+\modx(k,d-1)+2\modx(k,d).
\]
Therefore by Nakayama's Lemma, we have
\begin{equation}
\modx(k,d)\subset\modx^{\mathrm{D}}(k,d)+\modx(k,d-1).\label{eq:main_e17}
\end{equation}
By using (\ref{eq:main_e17}) repeatedly, we obtain
\[
\modx(k,d)\subset\modx^{\mathrm{D}}(k,d),
\]
which is the desired conclusion.
\end{proof}

\subsection{A minimal family of relations and explicit expansion of an Euler
sum by the basis}

By the proof of Theorem \ref{thm:main_main}, we can obtain the explicit
version of Corollary \ref{thm:main_general_zeta}. Put $\indset^{Y}(k,d)=\{(k_{1},\dots,k_{r})\in\indset(k,d)\mid k_{1},\dots,k_{r-1}>0,k_{r}<0\}$.
Fix $k,d\geq0$. Define the total order structure $\prec$ of $\indset(k,d)$
by
\begin{enumerate}
\item If $\Bbbk\in\indset^{\mathrm{D}}(k,d)$ and $\Bbbk'\notin\indset^{\mathrm{D}}(k,d)$,
then $\Bbbk\prec\Bbbk'$.
\item If $\Bbbk,\Bbbk'\notin\indset^{\mathrm{D}}(k,d)$ and $\depth(\Bbbk)\prec\depth(\Bbbk')$,
then $\Bbbk\prec\Bbbk'$.
\item If $\Bbbk,\Bbbk'\notin\indset^{\mathrm{D}}(k,d)$, $\depth(\Bbbk)=\depth(\Bbbk')$,
$\Bbbk\in\indset^{Y}(k,d)$, and $\Bbbk'\notin\indset^{Y}(k,d)$ then
$\Bbbk\prec\Bbbk'$.
\item For the pair of indices $\Bbbk,\Bbbk'$ whose order can not be determined
by (1), (2) and (3) above, we define their order according to their
lexicographical order.
\end{enumerate}
For each $\Bbbk=(k_{1},\dots,k_{r})\in\indset(k,d)\setminus\indset^{\mathrm{D}}(k,d)$,
define $f(\Bbbk)\in\mathcal{B}_{k,d}$ by
\[
f(\Bbbk)\coloneqq\begin{cases}
\restr{\modword(\Bbbk)}{1\to z} & \Bbbk\in\indset(k,d)\setminus\indset^{Y}(k,d)\\
2^{d-1}e_{-1}e_{0}^{\left|k_{1}\right|-1}\cdots e_{-1}e_{0}^{\left|k_{i}\right|-1}e_{-z^{2}}e_{0}^{\left|k_{i+1}\right|-1}\cdots e_{-z^{2}}e_{0}^{\left|k_{d}\right|-1} & \Bbbk\in\indset^{Y}(k,d)\setminus\indset^{\mathrm{D}}(k,d)
\end{cases}
\]
where the map $u\mapsto\restr u{1\to z}$ is the ring homomorphism
defined by $\restr{e_{0}}{1\to z}\coloneqq e_{0}$, $\restr{e_{-1}}{1\to z}\coloneqq e_{-1}$,
$\restr{e_{1}}{1\to z}\coloneqq e_{z}$ and $i=i(\Bbbk)$ is as in
the proof of Lemma \ref{lem:main_step2}. For $\Bbbk\in\indset(k,d)\setminus\indset^{\mathrm{D}}(k,d)$
and $\Bbbk'\in\indset(k,d)$, define $c_{\Bbbk,\Bbbk'}\in\mathbb{Z}$
as the coefficient of $\modword(\Bbbk')$ in $\restr{f(\Bbbk)}{z\to1}-\varphi(f(\Bbbk))\in\iconf$,
i.e., 
\[
\restr{f(\Bbbk)}{z\to1}-\varphi(f(\Bbbk))=\sum_{\Bbbk'\in\indset(k,d)}c_{\Bbbk,\Bbbk'}\modword(\Bbbk').
\]
Then we have $c_{\Bbbk,\Bbbk}\equiv1\pmod{2}$ and $c_{\Bbbk,\Bbbk'}\equiv0\pmod{2}$
if $\Bbbk<\Bbbk'$ (see the explicit reduction process in the proof
of Lemmas \ref{lem:main_step1} and \ref{lem:main_step2}). Therefore
the matrix 
\[
C\coloneqq(c_{\Bbbk,\Bbbk'})_{\Bbbk,\Bbbk'\in\indset(k,d)\setminus\indset^{\mathrm{D}}(k,d)}
\]
is congruent to a lower unitriangular matrix modulo $2$, and hence
invertible. Put 
\[
C'\coloneqq(c_{\Bbbk,\Bbbk'})_{\Bbbk\in\indset(k,d)\setminus\indset^{\mathrm{D}}(k,d),\Bbbk'\in\indset^{\mathrm{D}}(k,d)}.
\]
Then we get the following theorem.
\begin{thm}
\label{thm:main_explicit}Let $k,d\geq0$, $R$ a $\mathbb{Q}$-vector
space, and $Z:\mathcal{A}_{k,d}^{0}(\{0,1,-1\})\to R$ a linear map
such that $Z(\iconf)=\{0\}$. Let $\bm{\alpha}=(\alpha_{\Bbbk,\Bbbk'})_{\Bbbk\in\indset(k,d)\setminus\indset^{\mathrm{D}}(k,d),\Bbbk'\in\indset^{\mathrm{D}}(k,d)}$
be a matrix defined by 
\[
\bm{\alpha}=-C^{-1}C'.
\]
Then, for $\Bbbk\in\indset(k,d)\setminus\indset^{\mathrm{D}}(k,d)$,
we have
\[
Z(\modword(\Bbbk))=\sum_{\Bbbk'\in\indset^{\mathrm{D}}(k,d)}\alpha_{\Bbbk,\Bbbk'}Z(\modword(\Bbbk')).
\]
In particular,
\[
\modzeta(\Bbbk)=\sum_{\Bbbk'\in\indset^{\mathrm{D}}(k,d)}\alpha_{\Bbbk,\Bbbk'}\modzeta(\Bbbk').
\]
\end{thm}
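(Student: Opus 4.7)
The plan is to observe that the theorem reduces to an exercise in linear algebra over $\mathbb{Z}_{(2)}$ once we combine the explicit confluence relations $\restr{f(\Bbbk)}{z\to1}-\varphi(f(\Bbbk))\in\iconf$ constructed just before the statement with the hypothesis $Z(\iconf)=\{0\}$. First I would fix $k,d\geq 0$ and, for each $\Bbbk\in\indset(k,d)\setminus\indset^{\mathrm{D}}(k,d)$, apply $Z$ to the corresponding confluence relation. Since $Z$ annihilates $\iconf$, this yields
\[
\sum_{\Bbbk'\in\indset(k,d)\setminus\indset^{\mathrm{D}}(k,d)} c_{\Bbbk,\Bbbk'}\, Z(\modword(\Bbbk'))
\;+\;\sum_{\Bbbk'\in\indset^{\mathrm{D}}(k,d)} c_{\Bbbk,\Bbbk'}\, Z(\modword(\Bbbk'))\;=\;0,
\]
one linear equation for each $\Bbbk\in\indset(k,d)\setminus\indset^{\mathrm{D}}(k,d)$. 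Collecting them into vectors $\vec{Z}_1=(Z(\modword(\Bbbk)))_{\Bbbk\notin\indset^{\mathrm{D}}}$ and $\vec{Z}_2=(Z(\modword(\Bbbk)))_{\Bbbk\in\indset^{\mathrm{D}}}$, this becomes the single matrix equation $C\vec{Z}_1+C'\vec{Z}_2=0$.

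Next I would invoke the invertibility of $C$. The key point is already stated in the paragraph defining $C$: with respect to the total order $\prec$, one has $c_{\Bbbk,\Bbbk}\equiv 1\pmod 2$ and $c_{\Bbbk,\Bbbk'}\equiv 0\pmod 2$ whenever $\Bbbk\prec\Bbbk'$. Hence $C$ is congruent modulo $2$ to a lower unitriangular matrix, so $\det C\in 1+2\mathbb{Z}_{(2)}$ is a unit in $\mathbb{Z}_{(2)}$, and in particular $C$ is invertible over $\mathbb{Q}$. I would check (or simply cite from the constructions in Lemmas \ref{lem:main_step1} and \ref{lem:main_step2}, where each reduction step replaces a word by a $\mathbb{Z}_{(2)}$-combination of strictly $\prec$-smaller words modulo $\iconf$) that these triangularity statements indeed hold with the ordering $\prec$ as defined; this is the only place where one needs to verify that the ad hoc choice of $f(\Bbbk)$ really produces a relation whose leading term, in the $\prec$-sense, is $\modword(\Bbbk)$ itself.

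Finally, solving the matrix equation gives $\vec{Z}_1=-C^{-1}C'\,\vec{Z}_2=\bm{\alpha}\vec{Z}_2$, which is exactly the asserted identity
\[
Z(\modword(\Bbbk))=\sum_{\Bbbk'\in\indset^{\mathrm{D}}(k,d)}\alpha_{\Bbbk,\Bbbk'}\,Z(\modword(\Bbbk')).
\]
The statement for Euler sums follows immediately by taking $Z$ to be the restriction of $L_1$ to $\mathcal{A}^0_{k,d}(\{0,1,-1\})$, since Theorem \ref{thm:Confluecne_relations_for_Euler_sums} guarantees $L_1(\iconf)=\{0\}$, and since $L_1(\modword(\Bbbk))=\modzeta(\Bbbk)$ by the iterated integral representation. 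The main (and essentially only) obstacle is the bookkeeping in verifying the mod-$2$ triangularity of $C$ with respect to $\prec$; once that is in hand, everything else is formal. As a byproduct the denominators of the $\alpha_{\Bbbk,\Bbbk'}$ lie in $\mathbb{Z}_{(2)}$, recovering the oddness claim of Theorem \ref{thm:main_explicit_intro}.
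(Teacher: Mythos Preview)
Your proposal is correct and follows exactly the paper's approach: the paper itself does not give a separate proof of this theorem but simply states ``Then we get the following theorem'' after establishing the invertibility of $C$ via the mod-$2$ lower-unitriangularity (with the same reference to Lemmas~\ref{lem:main_step1} and~\ref{lem:main_step2} for the triangularity check). Your write-up merely spells out the linear algebra that the paper leaves implicit.
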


\begin{rem}
Furthermore, by the motivicity of the confluence relations proved
in later sections and by the dimensional bound given by the motivic
theory, we can show that
\[
\bigcup_{k=0}^{\infty}\{\restr{f(\Bbbk)}{z\to1}-\varphi(f(\Bbbk))\mid\Bbbk\in\indset(k,\infty)\setminus\indset^{\mathrm{D}}(k,\infty)\}
\]
forms a $\mathbb{Q}$-basis of $\iconf$. Nevertheless, the authors
are not sure if this fact may be derived without the motivic theory
at the moment.
\end{rem}

\begin{example}
Let us consider the case $(k,d)=(3,3)$. To save space, we write $\bar{n}$
for $-n$ in an index $(k_{1},k_{2},...,k_{r})$ of $\indset(k,d)$,
e.g. $(-1,2,-3)$ is denoted as $(\bar{1},2,\bar{3})$. Then
\[
\indset^{\mathrm{D}}(k,d)=\{(\bar{3})\prec(1,1,\bar{1})\prec(2,\bar{1})\},
\]
\[
\indset(k,d)\setminus\indset^{\mathrm{D}}(k,d)=\{(3)\prec(1,\bar{2})\prec(\bar{2},\bar{1})\prec(\bar{1},\bar{2})\prec(\bar{1},2)\prec(1,2)\prec(\bar{1},\bar{1},\bar{1})\prec(\bar{1},1,\bar{1})\prec(1,\bar{1},\bar{1})\}
\]
 and the matrices $C$, $C'$ are given by
\[
C=\left(\begin{array}{rrr}
-4 & 0 & 0\\
5 & 0 & 0\\
-2 & 0 & -2\\
4 & 0 & 1\\
-2 & 0 & -1\\
0 & 0 & 0\\
0 & -1 & 2\\
0 & -1 & 0\\
-4 & -1 & 2
\end{array}\right),\ C'=\left(\begin{array}{rrrrrrrrr}
-3 & 0 & 0 & 0 & 0 & 0 & 0 & 0 & 0\\
4 & 1 & 0 & 0 & 0 & 0 & 0 & 0 & 0\\
0 & -3 & -1 & -2 & -2 & 0 & 0 & 0 & 0\\
0 & 3 & 2 & 3 & 2 & 0 & 0 & 0 & 0\\
0 & -4 & -2 & -2 & -1 & 0 & 0 & 0 & 0\\
0 & -2 & 0 & -2 & -2 & -1 & 0 & 0 & 0\\
0 & 0 & 0 & 0 & 0 & 0 & 1 & 0 & 0\\
0 & 2 & 0 & 0 & 0 & 0 & 0 & 1 & 0\\
0 & 2 & 0 & 0 & 0 & 0 & 0 & 0 & 1
\end{array}\right).
\]
The confluence relations of $f(\Bbbk)$'s say
\[
C\left(\begin{array}{c}
\modzeta(\bar{3})\\
\modzeta(1,1,\bar{1})\\
\modzeta(2,\bar{1})
\end{array}\right)+C'\left(\begin{array}{c}
\modzeta(3)\\
\modzeta(1,\bar{2})\\
\modzeta(\bar{2},\bar{1})\\
\modzeta(\bar{1},\bar{2})\\
\modzeta(\bar{1},2)\\
\modzeta(1,2)\\
\modzeta(\bar{1},\bar{1},\bar{1})\\
\modzeta(\bar{1},1,\bar{1})\\
\modzeta(1,\bar{1},\bar{1})
\end{array}\right)=0.
\]
Notice that the matrix $C'$ is indeed congruent to a lower unitriangular
matrix modulo $2$ and hence invertible. More explicitly, $\det(C')=-3$
and
\[
(C')^{-1}=\left(\begin{array}{rrrrrrrrr}
-\frac{1}{3} & 0 & 0 & 0 & 0 & 0 & 0 & 0 & 0\\
\frac{4}{3} & 1 & 0 & 0 & 0 & 0 & 0 & 0 & 0\\
-\frac{20}{3} & -5 & -1 & -2 & -2 & 0 & 0 & 0 & 0\\
\frac{20}{3} & 5 & 2 & 3 & 2 & 0 & 0 & 0 & 0\\
-\frac{16}{3} & -4 & -2 & -2 & -1 & 0 & 0 & 0 & 0\\
-\frac{16}{3} & -4 & 0 & -2 & -2 & -1 & 0 & 0 & 0\\
0 & 0 & 0 & 0 & 0 & 0 & 1 & 0 & 0\\
-\frac{8}{3} & -2 & 0 & 0 & 0 & 0 & 0 & 1 & 0\\
-\frac{8}{3} & -2 & 0 & 0 & 0 & 0 & 0 & 0 & 1
\end{array}\right).
\]
Thus
\[
\left(\begin{array}{c}
\modzeta(3)\\
\modzeta(1,\bar{2})\\
\modzeta(\bar{2},\bar{1})\\
\modzeta(\bar{1},\bar{2})\\
\modzeta(\bar{1},2)\\
\modzeta(1,2)\\
\modzeta(\bar{1},\bar{1},\bar{1})\\
\modzeta(\bar{1},1,\bar{1})\\
\modzeta(1,\bar{1},\bar{1})
\end{array}\right)=-(C')^{-1}C\left(\begin{array}{c}
\modzeta(\bar{3})\\
\modzeta(1,1,\bar{1})\\
\modzeta(2,\bar{1})
\end{array}\right)=\left(\begin{array}{rrr}
-\frac{4}{3} & 0 & 0\\
\frac{1}{3} & 0 & 0\\
\frac{1}{3} & 0 & -2\\
-\frac{7}{3} & 0 & 3\\
\frac{2}{3} & 0 & -3\\
\frac{8}{3} & 0 & 0\\
0 & 1 & -2\\
-\frac{2}{3} & 1 & 0\\
\frac{10}{3} & 1 & -2
\end{array}\right)\left(\begin{array}{c}
\modzeta(\bar{3})\\
\modzeta(1,1,\bar{1})\\
\modzeta(2,\bar{1})
\end{array}\right).
\]

The determinant of the $C'$ clearly gives an upper bound of the LCM
of the denominators of the coefficients $\alpha_{\Bbbk,\Bbbk'}$,
and they actually coincide in the example above. However, it is not
the case in general. For example, if $(k,d)=(7,3)$ then the LCM is
$3^{3}\cdot5\cdot7^{2}\cdot17\cdot31\cdot61$ whereas the determinant
of $C'$ is $-3^{30}\cdot5^{18}\cdot7^{5}\cdot17\cdot31^{6}\cdot41\cdot61\cdot107\cdot176779$.
It would be interesting to seek for a better upper bound of the denominator
of the coefficients $\alpha_{\Bbbk,\Bbbk'}$.
\end{example}

\pagebreak{}

\part{\label{part:Motivicity}Motivicity of confluence relations}

In this part, we will prove the motivicity of the confluence relations
i.e., $\iconf\subset\ker L^{\mathfrak{m}}$. Basically, there are
two possible approaches to this problem.

The first approach is the following. The confluence relations for
(real-valued) Euler sums are proved by using the differential formula
for iterated integrals. These differential formula can be proved by
the combination of simple properties, and so the confluence relations
can be proved by a combination of the rules in \cite{Kontsevich-Zagier};
additivity, change of variables, Newton-Leibniz formula. Thus one
can possibly prove the lift of confluence relations to the identity
between the periods of Nori motives. However, in the proof of confluence
relations, very complicated integrals of the rational functions whose
poles lie in the boundaries of the domains of integration appear,
so it does not fit to the definition of formal periods which requires
the regularity of the differential forms. One might be able to avoid
such ``singular'' integrals by blowing-up the singular locus, however,
the authors are not successful in this approach so far.

The second approach is to use Brown's lemma. We employ this approach
in this article.

First, we generalize the confluence relations to a more general setting.
Recall that the confluence relations are relations by considering
the limit of the expansion of iterated integrals $I(p_{0}(z);p_{1}(z),\dots,p_{k}(z);p_{k+1}(z))$
where $p_{1}(z),\dots,p_{k}(z)\in\{0,-1,z,-z^{2}\}$, $p_{0}(z)=0$,
$p_{k+1}(z)=z$. In this section, we will treat a more general case
where $p_{0}(z),\dots,p_{k+1}(z)\in\mathbb{Q}(z)$ with some additional
conditions. To define the confluence relation, we need a map which
we call the evaluation map. The proof proceeds by the following steps:
\begin{enumerate}
\item Define the general evaluation map.
\begin{enumerate}
\item Introduce the notions of motivic iterated integrals with tangential
vectors containing an indeterminate, and prove their fundamental properties.
\item Calculate limits of (complex-valued) iterated integrals for simple
case.
\item Construct the ``evaluation map'' which is a motivic lift of the limiting
map in (b)
\item Extend the definition of the evaluation map to general case.
\end{enumerate}
\item Prove fundamental properties of the general evaluation map.
\item Calculate the motivic coproduct of confluence relations combinatorially
with the help of the fundamental properties of general evaluation
map, by which we prove the motivicity of confluence relations.
\end{enumerate}
Except for the very last section, the contents of Part \ref{part:Motivicity}
is completely independent of Part \ref{part:Confluence_Euler_sum}
where the theorems are stated in a very general framework. For this
reason, we employ slightly different setup from that in Part \ref{part:Confluence_Euler_sum},
which is closely related to Goncharov's setup. Also, the notations
used in Part \ref{part:Motivicity} except in Section \ref{sec:Proof-of-Theorem}
are completely unrelated from those in Part \ref{part:Confluence_Euler_sum}
and we do not care about possible overlaps. 

\section{\label{sec:Part2_preliminaries}Preliminaries}

\subsection{Tangential base points}

Let $F=\mathbb{Q}$ or $\mathbb{C}$, and $t$ be a standard coordinate
of $\mathbb{P}^{1}(F)$. For $x\in\mathbb{P}^{1}(F)$ and $v\in\mathbb{Q}^{\times}$,
we denote by $\overrightarrow{v}_{x}$ the tangential base point at
$x$ with the tangential vector 
\[
\overrightarrow{v}\coloneqq\begin{cases}
v\frac{d}{dt} & x\neq\infty\\
v\frac{d}{d(\frac{1}{t})} & x=\infty.
\end{cases}
\]
For $X\subset\mathbb{P}^{1}(\mathbb{Q})$, let
\[
T^{\times}X\coloneqq\{\overrightarrow{v}_{a}\mid v\in\mathbb{Q}^{\times},a\in X\},
\]
be the set of tangential base points. For tangential base point $x=\overrightarrow{v}_{a}$,
we put $\bar{x}=a$ and ${\rm vec}(x)=v$.

\subsection{Motivic iterated integrals and Goncharov's coaction formula.}

Let $\mathcal{H}:=\bigoplus_{k\geq0}\mathcal{H}_{k}$ be the graded
ring of effective motivic periods of mixed Tate motives over $\mathbb{Q}$.
For $k\geq0$, $a_{0},a_{k+1}\in T^{\times}\mathbb{P}^{1}(\mathbb{Q})$,
$a_{1},\ldots,a_{k}\in\mathbb{P}^{1}(\mathbb{Q})$, and a path $\gamma$
from $a_{0}$ to $a_{k+1}$ on $\mathbb{P}^{1}(\mathbb{Q})\setminus\{\infty,a_{1},\dots,a_{k}\}$,
we denote by
\[
I_{\gamma}^{\mathfrak{m}}(a_{0};a_{1},\dots,a_{k};a_{k+1})\in\mathcal{H}_{k}
\]
the associated \emph{motivic iterated integral}. Here we understand
$I_{\gamma}^{\mathfrak{m}}(a_{0};a_{1},\dots,a_{k};a_{k+1})=0$ if
one of $a_{1},\dots,a_{k}$ is $\infty$. We define $\mathfrak{A}=\bigoplus_{k=0}^{\infty}\mathfrak{A}_{k}$
by $\mathfrak{A}\coloneqq\mathcal{H}/\mu\mathcal{H}$ where $\mu\in\mathcal{H}_{1}$
is the motivic $2\pi\sqrt{-1}$., i.e., $\mu=I_{\gamma}^{\mathfrak{m}}(1;0;1)$
where $\gamma:[0,1]\to\mathbb{C}^{\times};t\mapsto\exp(2\pi it)$
is a unit circle. Furthermore, define $\mathfrak{L}=\bigoplus_{k=1}^{\infty}\mathfrak{L}_{k}$
by $\mathfrak{L}\coloneqq\mathfrak{A}_{>0}/\left(\mathfrak{A}_{>0}\cdot\mathfrak{A}_{>0}\right)$.
We denote by
\[
I^{\mathfrak{a}}(a_{0};a_{1},\dots,a_{k};a_{k+1})\in\mathfrak{A}_{k}
\]
and
\[
I^{\mathfrak{l}}(a_{0};a_{1},\dots,a_{k};a_{k+1})\in\mathfrak{L}_{k}
\]
the images of $I_{\gamma}^{\mathfrak{m}}(a_{0};a_{1},\dots,a_{k};a_{k+1})$
in $\mathfrak{A}_{k}$ and $\mathfrak{L}_{k}$ respectively, which
are independent of the choice of $\gamma$. It is known that $\mathfrak{A}$
is equipped with the graded Hopf algebra structure, whose coproduct
is denoted by
\[
\Delta:\mathfrak{A}\to\mathfrak{A}\otimes\mathfrak{A},
\]
and $\mathcal{H}$ becomes a graded $\mathfrak{A}$-comodule, whose
coaction is also denoted by
\[
\Delta:\mathcal{H}\to\mathfrak{A}\otimes\mathcal{H}.
\]
If the tangential base points appear in the central argument ($=$
not as the endpoints) of iterated integral symbol, we interpret them
by the obvious way, e.g., for $a_{0},\dots,a_{k+1}\in T^{\times}\mathbb{P}^{1}(\mathbb{Q})$,
the symbol $I(a_{0};a_{1},\dots,a_{k};a_{k+1})$ means $I(a_{0};\overline{a_{1}},\dots,\overline{a_{k}};a_{k+1})$.
We give a coproduct formula for the motivic iterated integral which
slightly generalizes\footnote{In our formula, $\infty$ is allowed to be the endpoints of the integral.
The authors have not checked in detail, but probably our proof of
Proposition \ref{prop:coproduct} is essentially just a trace of known
proof for the case of non-infinite endpoints.} a formula given by Goncharov \cite{GonSym} and Brown \cite{Bro_mix}. 
\begin{prop}
\label{prop:coproduct}Let $k\geq0$, $a_{0},a_{k+1}\in T^{\times}\mathbb{P}^{1}(\mathbb{Q})$,
$a_{1},\dots,a_{k}\in T^{\times}\mathbb{Q}$, and $\gamma$ a path
from $a_{0}$ to $a_{k+1}$. If $\#\{a_{1},\dots,a_{k}\}=\#\{\overline{a_{1}},\dots,\overline{a_{k}}\}$
(i.e., the same point has the same tangential vector) then we have
\begin{multline*}
\Delta I_{\gamma}^{\mathfrak{m}}(a_{0};a_{1},\dots,a_{k};a_{k+1})\\
\coloneqq\sum_{r=0}^{k}\sum_{0=i_{0}<i_{1}<\cdots<i_{r}<i_{r+1}=k+1}\prod_{j=0}^{r}I^{\mathfrak{a}}(a_{i_{j}};a_{i_{j}+1},\dots,a_{i_{j+1}-1};a_{i_{j+1}})\otimes I_{\gamma}^{\mathfrak{m}}(a_{i_{0}};a_{i_{1}},\dots,a_{i_{r}};a_{i_{r+1}}).
\end{multline*}
\end{prop}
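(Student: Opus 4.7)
The plan is to reduce the statement to the classical Goncharov--Brown coproduct formula, which is already known when all of $\overline{a_0},\ldots,\overline{a_{k+1}}$ are finite, via a suitable M\"obius change of variables. The only new feature of the stated version is that the endpoints $a_0$ or $a_{k+1}$ may be tangential base points at $\infty$, whereas the interior points $a_1,\ldots,a_k\in T^{\times}\mathbb{Q}$ are already at finite locations.

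First I would pick $\phi\in PGL_2(\mathbb{Q})$ with $\phi^{-1}(\infty)\notin\{\overline{a_0},\overline{a_1},\ldots,\overline{a_{k+1}}\}$, and set $b_i\coloneqq\phi_*(a_i)$, where $\phi_*$ transports a tangential base point by the derivative of $\phi$. Then $\overline{b_0},\ldots,\overline{b_{k+1}}$ all lie in $\mathbb{Q}$. Since $\phi$ is a bijection of $\mathbb{P}^1(\mathbb{Q})$ and $\phi_*$ a bijection on fibers of the tangent bundle, the hypothesis $\#\{a_1,\ldots,a_k\}=\#\{\overline{a_1},\ldots,\overline{a_k}\}$ transfers to $\#\{b_1,\ldots,b_k\}=\#\{\overline{b_1},\ldots,\overline{b_k}\}$, so the known version of the proposition is applicable to the $b_i$'s.

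Next I would invoke the $PGL_2(\mathbb{Q})$-invariance of motivic iterated integrals and of their images in $\mathfrak{A}$, which gives
\[
I^{\mathfrak{m}}_{\gamma}(a_0;a_1,\ldots,a_k;a_{k+1})=I^{\mathfrak{m}}_{\phi\circ\gamma}(b_0;b_1,\ldots,b_k;b_{k+1}),
\]
together with the analogous identity at the level of $\mathfrak{A}$ for every sub-integral $I^{\mathfrak{a}}(a_{i_j};\ldots;a_{i_{j+1}})$ that appears on the right-hand side of the claimed formula. Applying $\Delta$ to the left-hand side, using the known coproduct formula for the $b$-version on the right (which is now admissible because all $\overline{b_i}$ are finite), and then transporting each tensor factor back via $\phi^{-1}$, yields the desired formula for the $a_i$'s. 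Note that the interior points of every sub-integral produced by the Goncharov--Brown formula come from $\{a_1,\ldots,a_k\}\subset T^{\times}\mathbb{Q}$, so no tangential base point at $\infty$ ever appears in an interior position, and every term stays well-defined.

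The main obstacle is setting up the $PGL_2(\mathbb{Q})$-invariance cleanly in the presence of tangential base points at $\infty$: one must check that the convention $\overrightarrow{v}_{\infty}=v\frac{d}{d(1/t)}$ is compatible with the motivic iterated integral and with the quotient $\mathcal{H}\to\mathfrak{A}$. This follows from the functoriality of the motivic fundamental groupoid with respect to automorphisms of $\mathbb{P}^1$, once tangential base points are viewed as rational points of the punctured tangent bundle. Beyond this point, the proof is essentially formal, matching each summand in the coproduct formula bijectively with its $\phi$-transform.
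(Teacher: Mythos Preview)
Your approach has a genuine gap, and it is not where you expect it. The issue is not the tangential base points at $\infty$ (functoriality of the fundamental groupoid handles those without difficulty) but the \emph{interior} arguments: for non-affine $\phi\in PGL_2(\mathbb{Q})$ the identity
\[
I^{\mathfrak{m}}_{\gamma}(a_0;a_1,\ldots,a_k;a_{k+1})=I^{\mathfrak{m}}_{\phi\circ\gamma}(b_0;b_1,\ldots,b_k;b_{k+1})
\]
is simply false. In the paper's convention the symbol $I^{\mathfrak{m}}(-;a_1,\ldots,a_k;-)$ records the forms $\frac{dt}{t-\overline{a_j}}$, and under $s=\phi(t)$ one finds
\[
\frac{dt}{t-\overline{a_j}}=\frac{ds}{s-\overline{b_j}}-\frac{ds}{s-c},\qquad c\coloneqq\phi(\infty),
\]
so the transported integral equals $I^{\mathfrak{m}}_{\phi\circ\gamma}(b_0;e_{b_1}-e_c,\ldots,e_{b_k}-e_c;b_{k+1})$, a signed sum of $2^k$ terms in the paper's notation. (Sanity check: $I(2;0;3)=\log\tfrac32$, but with $\phi(t)=\tfrac1{1-t}$ one gets $I(\phi(2);\phi(0);\phi(3))=I(-1;1;-\tfrac12)=\log\tfrac34$; only after subtracting the $c=\phi(\infty)=0$ contribution does one recover $\log\tfrac32$.) Since an affine map cannot move $\infty$ to a finite point, you cannot avoid this correction and still carry out your reduction.

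One could in principle push through: expand by multilinearity, apply the known formula to each of the $2^k$ terms on $\mathbb{P}^1\setminus\{\infty,c,\overline{b_1},\ldots,\overline{b_k}\}$, and resum. But the sub-interval structure of the coproduct mixes the letters $b_j$ and $c$, and reorganising the result back into the claimed formula for the $a_j$'s is not the bijective matching you envisage; it is a nontrivial combinatorial identity, at least as much work as a direct argument.

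The paper sidesteps all of this by working directly with four structural properties of the $\mathcal{U}$-coaction on $\mathcal{O}(\pi_1^{\mathrm{dR}}(X,x,y))$: compatibility with the period map, with groupoid composition, with the constant-term map, and with local monodromy at a puncture. These hold for arbitrary tangential base points, including those at $\infty$, so no coordinate change is needed; the formula is then read off by iterated deconcatenation along a chain of intermediate base points.
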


\begin{proof}
For any $x,y\in T^{\times}\mathbb{P}^{1}(\mathbb{Q})$, we denote
by $\pio xy$ the de Rham fundamental torsor of paths on $X=\mathbb{P}^{1}(\mathbb{Q})\setminus\{\infty,\overline{a_{1}},\dots,\overline{a_{k}}\}$
from $x$ to $y$. Then the affine ring $\mathcal{O}(\pio xy)$ is
identified with the free non-commutative ring $\mathbb{Q}\left\langle e_{s}\mid s\in\{a_{1},\dots,a_{k}\}\right\rangle $
equipped with the shuffle product. Here each $e_{s}$ corresponds
to the $1$-form $\frac{dt}{t-\overline{s}}$. Let $\mathcal{U}$
be the prounipotent part of the motivic Galois group of mixed Tate
motives over $\mathbb{Q}$. Then $\mathfrak{A}$ can be identified
with $\mathcal{O}(\mathcal{U})$. For each $x,y\in T^{\times}\mathbb{P}^{1}(\mathbb{Q})$
and a path $\gamma$ from $x$ to $y$ on $X$, we denote by $L_{\gamma}:\mathcal{O}(\pio xy)\to\mathcal{H}$
the linear map defined by $L_{\gamma}(e_{b_{1}}\cdots e_{b_{n}})=I_{\gamma}^{\mathfrak{m}}(x;b_{1},\dots,b_{n};y)$.
The action of the motivic Galois group
\[
\mathcal{U}\times\pio xy\to\pio xy
\]
gives rise to the coaction
\[
\delta_{x,y}:\mathcal{O}(\pio xy)\to\mathfrak{A}\otimes\mathcal{O}(\pio xy).
\]
Let us recall the following properties of $\delta_{x,y}$:
\begin{enumerate}
\item For any $x,y\in T^{\times}\mathbb{P}^{1}(\mathbb{Q})$ and a path
$\gamma$ from $x$ to $y$ on $X$, the following diagram is commutative.
\[
\xymatrix{\mathcal{O}(\pio xy)\ar[r]\sp(0.45){\delta_{x,y}}\ar^{L_{\gamma}}[d] & \mathfrak{A}\otimes\mathcal{O}(\pio xy)\ar^{{\rm id}\otimes L_{\gamma}}[d]\\
\mathcal{H}\ar^{\Delta}[r] & \mathfrak{A}\otimes\mathcal{H}
}
\]
\item The action of $\mathcal{U}$ preserves the groupoid structure of $(\pio xy)_{x,y\in T^{\times}\mathbb{P}^{1}(\mathbb{Q})}$,
i.e., for any $x,y,z\in T^{\times}\mathbb{P}^{1}(\mathbb{Q})$, the
following diagram commutes.
\[
\xymatrix{\mathcal{O}(\pio xz)\ar^{\delta_{x,y}}[rr]\ar^{{\rm dec}}[d] &  & \mathfrak{A}\otimes\mathcal{O}(\pio xz)\ar^{\mathrm{id}\otimes{\rm dec}}[rd]\\
\mathcal{O}(\pio xy)\otimes\mathcal{O}(\pio yz)\ar[rr]\sb(0.4){\delta_{x,y}\otimes\delta_{y,z}} &  & \mathfrak{A}\otimes\mathcal{O}(\pio xy)\otimes\mathfrak{A}\otimes\mathcal{O}(\pio yz)\ar[r]_{\lambda} & \mathfrak{A}\otimes\mathcal{O}(\pio xy)\otimes\mathcal{O}(\pio yz)
}
\]
Here, ${\rm dec}$ is the deconcatenation map and $\lambda$ is the
linear map given by $\lambda(a\otimes b\otimes c\otimes d)=ac\otimes b\otimes d$
\item Let ${\rm const}:\mathcal{O}(\pio xy)=\mathbb{Q}\left\langle e_{s}\mid s\in\{a_{1},\dots,a_{k}\}\right\rangle \to\mathbb{Q}$
be the map which sends $u$ to the constant term of $u$. Then
\[
({\rm id}\otimes{\rm const})\circ\delta_{x,y}(e_{s_{1}}\cdots e_{s_{n}})=I^{\mathfrak{a}}(x;s_{1},\dots,s_{n};y)
\]
for any $e_{s_{1}}\cdots e_{s_{n}}\in\mathcal{O}(\pio xy)$.
\item Let $x\in\{a_{1},\dots,a_{k}\}$ and $p:\mathcal{O}(\pio xx)\to\mathbb{Q}\left\langle e_{x}\right\rangle $
be the projection map which sends $e_{x}$ to $e_{x}$ and $e_{s}$
to $0$ for $s\neq x$. Then the following diagram commutes.
\[
\xymatrix{\mathcal{O}(\pio xx)\ar[r]\sp(0.45){\delta_{x,x}}\ar[d]_{p} & \mathfrak{A}\otimes\mathcal{O}(\pio xx)\ar[d]^{{\rm id}\otimes p}\\
\mathbb{Q}\left\langle e_{x}\right\rangle \ar[r]\sb(0.4){v\mapsto1\otimes v} & \mathfrak{A}\otimes\mathbb{Q}\left\langle e_{x}\right\rangle 
}
\]
\end{enumerate}
Put $w=e_{a_{1}}\cdots e_{a_{k}}$. Then by (1), we have
\[
\Delta I_{\gamma}^{\mathfrak{m}}(a_{0};a_{1},\dots,a_{k};a_{k+1})=\Delta L_{\gamma}(w)=\sum_{\substack{u\in\mathcal{O}(\pio{a_{0}}{a_{k+1}})\\
u:{\rm word}
}
}c(u)\otimes L_{\gamma}(u)
\]
where $c(u)$ is given by
\[
\delta_{a_{0},a_{k+1}}(w)=\sum_{\substack{u\in\mathcal{O}(\pio{a_{0}}{a_{k+1}})\\
u:{\rm word}
}
}c(u)\otimes u.
\]
Let us calculate $c(u)$. Put $u=e_{b_{1}}\cdots e_{b_{n}}$. By using
(2) repeatedly, we have the commutative diagram:
\[
\xymatrix{\mathcal{O}(\pio{a_{0}}{a_{k+1}})\ar[r]\sp(0.45){\delta_{a_{0},a_{k+1}}}\ar^{{\rm dec}}[d] & \mathfrak{A}\otimes\mathcal{O}(\pio{a_{0}}{a_{k+1}})\ar^{{\rm id}\otimes{\rm dec}}[d]\\
R\ar^{\delta'}[r] & \mathfrak{A}\otimes R
}
\]
where 
\[
R=\bigotimes_{j=0}^{2n}\mathcal{O}(\pio{x_{j}}{x_{j+1}}),
\]
$(x_{0},\dots,x_{2n+1})=(a_{0},\{b_{1}\}^{2},\{b_{2}\}^{2},\cdots,\{b_{n}\}^{2},a_{k+1})$,
${\rm dec}$ is the deconcatenation coproduct, and $\delta'$ is the
map induced from $\delta_{x_{0},x_{1}},\dots,\delta_{x_{2n},x_{2n+1}}$.
Furthermore, let $f:\mathcal{O}(\pio{a_{0}}{a_{k+1}})\to\mathbb{Q}$
be the map which takes the coefficient of $u$ of the element of $\mathcal{O}(\pio{a_{0}}{a_{k+1}})$,
and $f':R\to\mathbb{Q}$ be the map which takes the coefficient of
$1\otimes e_{b_{1}}\otimes\cdots\otimes1\otimes e_{b_{n}}\otimes1$
of the element of $R$. Then we have $f=f'\circ{\rm dec}$. Thus we
get the following commutative diagram.
\[
\xymatrix{\mathcal{O}(\pio{a_{0}}{a_{k+1}})\ar[r]\sp(0.45){\delta_{a_{0},a_{k+1}}}\ar^{{\rm dec}}[d] & \mathfrak{A}\otimes\mathcal{O}(\pio{a_{0}}{a_{k+1}})\ar_{{\rm id}\otimes{\rm dec}}[d]\ar[r]\sp(0.7){{\rm id}\otimes f} & \mathfrak{A}\\
R\ar^{\delta'}[r] & \mathfrak{A}\otimes R\ar_{{\rm id}\otimes f'}[ru]
}
\]
Thus we have
\begin{align*}
c(u) & =({\rm id}\otimes f)\circ\delta_{a_{0},a_{k+1}}(e_{a_{1}}\cdots e_{a_{k}})\\
 & =(g_{0}\otimes g_{1}\otimes\cdots\otimes g_{2n})\circ{\rm dec}(e_{a_{1}}\cdots e_{a_{k}})
\end{align*}
where $g_{j}:\mathcal{O}(\pio{x_{j}}{x_{j+1}})\to\mathfrak{A}$ is
the composite map
\[
\mathcal{O}(\pio{x_{j}}{x_{j+1}})\xrightarrow{\delta_{x_{j},x_{j+1}}}\mathfrak{A}\otimes\mathcal{O}(\pio{x_{j}}{x_{j+1}})\xrightarrow{{\rm id}\otimes f_{j}}\mathfrak{A}
\]
where $f_{j}$ is the map which takes the coefficient of $1$ (resp.
$e_{b_{(j+1)/2}}$) of the element of $\mathcal{O}(\pio{x_{j}}{x_{j+1}})$
for an even (resp. odd) $j$. Then we have
\[
g_{2j}(e_{s_{1}}\cdots e_{s_{m}})=I^{\mathfrak{a}}(x_{2j};s_{1},\dots,s_{m};x_{2j+1})
\]
by (3) and 
\[
g_{2j-1}(e_{s_{1}}\cdots e_{s_{m}})=\begin{cases}
1 & e_{s_{1}}\cdots e_{s_{m}}=e_{b_{j}}\\
0 & e_{s_{1}}\cdots e_{s_{m}}\neq e_{b_{j}}
\end{cases}
\]
by (4). Thus we have
\[
c(u)=\sum_{\substack{0=i_{0}<i_{1}<\cdots<i_{n}<i_{n+1}=k+1\\
a_{i_{1}}=b_{1},\dots,a_{i_{n}}=b_{n}
}
}\prod_{j=0}^{n}I^{\mathfrak{a}}(a_{i_{j}};a_{i_{j}+1},\dots,a_{i_{j+1}-1};a_{i_{j+1}}).
\]
This completes the proof of the proposition.
\end{proof}
For $r>0$, Brown's infinitesimal coaction 
\[
D_{r}:\mathcal{H}\to\mathfrak{L}_{r}\otimes\mathcal{H}
\]
is defined as the composite map
\[
\mathcal{H}\xrightarrow{\Delta}\mathfrak{A}\otimes\mathcal{H}\to\mathfrak{A}_{r}\otimes\mathcal{H}\to\mathfrak{L}_{r}\otimes\mathcal{H}.
\]
We put $D=\bigoplus_{r=1}^{\infty}D_{r}$. Let $\compmap:\mathcal{H}\longrightarrow\mathbb{C}$
be the period map. The following criterion is well-known (see \cite[Lemma 2.7]{Bro_mix}).
\begin{lem}
\label{lem:Brown}Let $u\in\mathcal{H}_{k}$. If $\compmap(u)=0$
and $D_{r}(u)=0$ for all $r<k$ then $u=0$.
\end{lem}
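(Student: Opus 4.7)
The plan is to reduce the lemma to the structural identity
\[
\ker\Bigl(\bigoplus_{r=1}^{k-1} D_r\Bigr) \cap \mathcal{H}_k \;=\; \mathbb{Q}\cdot\mu^k,
\]
because once this is known, any $u$ satisfying the hypotheses must be of the form $c\mu^k$ with $c\in\mathbb{Q}$, and the period condition $\compmap(u)=0$ then forces $c=0$ since $\compmap(\mu^k)=(2\pi\sqrt{-1})^k\neq 0$.

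To establish this structural identity I would argue by induction on $k$. The case $k=0$ is immediate because $\mathcal{H}_0=\mathbb{Q}$. For $k\ge 1$, the inclusion $\mathbb{Q}\mu^k\subset\ker D_{<k}$ is straightforward: by the very definition $\mathfrak{A}=\mathcal{H}/\mu\mathcal{H}$, the image of $\mu$ in $\mathfrak{A}$ is zero, so $\mu$ is \emph{coaction-primitive} in the sense that $\Delta\mu=1\otimes\mu$, whence $\Delta\mu^k = 1\otimes\mu^k$ and hence $D_r(\mu^k)=0$ for every $r>0$. For the reverse inclusion, given $u\in\mathcal{H}_k$ with $D_ru=0$ for $1\le r<k$, the natural move is to project $u$ to its class $[u]\in\mathfrak{A}_k$ via $\mathcal{H}\to\mathfrak{A}$; the hypothesis then translates into the statement that the reduced coproduct of $[u]$ in $\mathfrak{A}\otimes\mathfrak{A}$ has no non-trivial contribution in $\mathfrak{L}_r\otimes\mathfrak{A}_{k-r}$ for any $0<r<k$.

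The main obstacle will then be the purely $\mathfrak{A}$-sided assertion that $[u]=0$ in $\mathfrak{A}_k$ for $k\ge 1$. This is the heart of the matter, and it is a consequence of the structure theorem, due to Brown (building on Deligne--Goncharov), that the Lie algebra of the pro-unipotent motivic Galois group $\mathcal{U}$ of $\mathcal{MT}(\mathbb{Z})$ is freely generated in odd degrees $\ge 3$; dually, $\mathfrak{A}$ is a graded-cofree conilpotent coalgebra cogenerated by $\mathfrak{L}=\mathfrak{A}^{>0}/(\mathfrak{A}^{>0})^2$, and cofreeness forces an element of positive degree whose image under every projection to $\mathfrak{L}_r\otimes\mathfrak{A}_{k-r}$ ($0<r<k$) vanishes to itself be zero. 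Once $[u]=0$ is granted, I can write $u=\mu\cdot v$ with $v\in\mathcal{H}_{k-1}$; a brief verification that $D_r(\mu\cdot v)=(\mathrm{id}\otimes\mu)\,D_r(v)$ (using $\Delta\mu=1\otimes\mu$ and the comodule axiom) lets me apply the inductive hypothesis to $v$, yielding $v\in\mathbb{Q}\mu^{k-1}$ and hence $u\in\mathbb{Q}\mu^k$, which completes the induction and thereby the proof.
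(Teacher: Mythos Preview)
The paper does not give its own proof of this lemma; it simply refers to Brown.  Your attempted proof, however, has a genuine gap at the step where you claim $[u]=0$ in $\mathfrak{A}_k$.

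You assert that cofreeness of $\mathfrak{A}$ forces any $a\in\mathfrak{A}_k$ with $D_r(a)=0$ for all $0<r<k$ to vanish.  This is false: cofreeness yields injectivity of the \emph{full} map $D=\bigoplus_{r\ge 1}D_r$ on $\mathfrak{A}_{>0}$, not of its truncation $D_{<k}$.  The omitted top component $D_k|_{\mathfrak{A}_k}$ is precisely the projection $\mathfrak{A}_k\to\mathfrak{L}_k$, and any primitive element of $\mathfrak{A}_k$ (one with $\bar\Delta=0$) lands nontrivially only there.  Concretely, $\zeta^{\mathfrak{a}}(3)\in\mathfrak{A}_3$ is primitive, so $D_1(\zeta^{\mathfrak{a}}(3))=D_2(\zeta^{\mathfrak{a}}(3))=0$ while $\zeta^{\mathfrak{a}}(3)\ne 0$; lifting, $\zeta^{\mathfrak{m}}(3)\in\ker(D_{<3})\cap\mathcal{H}_3$ but $\zeta^{\mathfrak{m}}(3)\notin\mathbb{Q}\mu^3$.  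Thus your claimed identity $\ker(D_{<k})\cap\mathcal{H}_k=\mathbb{Q}\mu^k$ is wrong, and the inductive reduction built on it collapses.

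What actually happens is that $\ker(D_{<k})\cap\mathcal{H}_k$ consists of $\mathbb{Q}\mu^k$ together with lifts of the ``new'' generators in degree $k$ (dually, $(\mathfrak{u}^{\mathrm{ab}})_k^{*}$).  The period hypothesis $\compmap(u)=0$ must then be used more substantively to eliminate these: for odd $k\ge 3$ one needs the $\mathbb{Q}$-linear independence of $\zeta(k)$ and $(2\pi i)^k$ (immediate: one is real nonzero, the other purely imaginary), and for $k=1$ the $\mathbb{Q}$-linear independence of $2\pi i$ and the $\log p$'s.  (A minor aside: the paper works with $\mathcal{MT}(\mathbb{Q})$, not $\mathcal{MT}(\mathbb{Z})$ as you wrote, so $\mathfrak{L}_1$ is infinite-dimensional; this does not affect the main issue above.)
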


As a corollary of Proposition \ref{prop:coproduct}, we obtain the
following explicit formula for $D_{r}$ of motivic iterated integrals.
\begin{cor}
Let $k\geq0$, $a_{0},a_{k+1}\in T^{\times}\mathbb{P}^{1}(\mathbb{Q})$,
$a_{1},\dots,a_{k}\in T^{\times}\mathbb{Q}$, and $\gamma$ a path
from $a_{0}$ to $a_{k+1}$. If $\#\{a_{1},\dots,a_{k}\}=\#\{\overline{a_{1}},\dots,\overline{a_{k}}\}$
then we have
\[
D_{r}(I_{\gamma}^{\mathfrak{m}}(a_{0};a_{1},\dots,a_{k};a_{k+1}))=\sum_{i=0}^{k-r}I^{\mathfrak{l}}(a_{i};a_{i+1},\dots,a_{i+r};a_{i+r+1})\otimes I_{\gamma}^{\mathfrak{m}}(a_{0};a_{1},\dots,a_{i},a_{i+r+1},\dots,a_{k};a_{k+1}).
\]
\end{cor}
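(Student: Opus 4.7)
The plan is to deduce the formula directly from the coproduct formula in Proposition \ref{prop:coproduct} by projecting to the degree-$r$ part of $\mathfrak{A}$ and then passing to the quotient $\mathfrak{L}_r = \mathfrak{A}_r/(\mathfrak{A}_{>0}\cdot\mathfrak{A}_{>0})$.

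First I would apply Proposition \ref{prop:coproduct} to write $\Delta I_\gamma^{\mathfrak{m}}(a_0;a_1,\dots,a_k;a_{k+1})$ as the sum over $0=i_0<i_1<\cdots<i_n<i_{n+1}=k+1$ (where I rename the Proposition's index $r$ to $n$ to avoid clashing with our target $D_r$). In each summand, the right-hand tensor factor $I_\gamma^{\mathfrak{m}}(a_{i_0};a_{i_1},\dots,a_{i_n};a_{i_{n+1}})$ has weight $n$, and the left-hand factor is a product of $n+1$ elements of $\mathfrak{A}$ whose weights $(i_{j+1}-i_j-1)_{j=0}^n$ sum to $k-n$. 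Since $D_r$ is the composite of $\Delta$ with the projection onto $\mathfrak{A}_r\otimes\mathcal{H}$ followed by the quotient to $\mathfrak{L}_r$, only the summands with $n=k-r$ can survive the first projection.

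Next I would pass to $\mathfrak{L}_r$. Any product of two or more elements of $\mathfrak{A}_{>0}$ is killed, so among the surviving summands we need exactly one of the $k-r+1$ left-hand factors to have positive weight (and therefore weight $r$), with the remaining $k-r$ factors all of weight $0$. A weight-$0$ factor $I^{\mathfrak{a}}(a_{i_j};;a_{i_{j+1}})$ is an empty motivic iterated integral, hence equals $1$ in $\mathfrak{A}_0$, by the standard convention that also covers the case of tangential base points at $\infty$. Imposing $i_{j+1}=i_j+1$ for all but one index $j$ forces the tuple $(i_0,i_1,\dots,i_{n+1})$ to have the shape
\[
(0,1,\dots,i,\,i+r+1,\,i+r+2,\dots,k+1)
\]
for a unique $i\in\{0,1,\dots,k-r\}$, and the unique non-trivial left factor is $I^{\mathfrak{a}}(a_i;a_{i+1},\dots,a_{i+r};a_{i+r+1})$, whose image in $\mathfrak{L}_r$ is $I^{\mathfrak{l}}(a_i;a_{i+1},\dots,a_{i+r};a_{i+r+1})$.

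Collecting these contributions over $i=0,1,\dots,k-r$ yields exactly the stated formula. There is no real obstacle here beyond the bookkeeping; the only point one needs to be comfortable with is the convention that empty motivic iterated integrals between arbitrary (possibly tangential, possibly infinite) base points are equal to $1$, which is already built into the definitions used in Proposition \ref{prop:coproduct}.
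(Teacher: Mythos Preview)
Your argument is correct and is precisely the standard derivation the paper has in mind: the corollary is stated without proof in the paper, introduced simply as an immediate consequence of Proposition~\ref{prop:coproduct}, and your bookkeeping (restricting to $n=k-r$, killing products in $\mathfrak{A}_{>0}\cdot\mathfrak{A}_{>0}$, identifying the surviving tuples) is exactly how one unpacks that implication.
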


\subsection{Motivic iterated integrals with extended tangential base points }

For a subset $X\subset\mathbb{P}^{1}(\mathbb{C})$ of $\mathbb{P}^{1}(\mathbb{C})$
such that $\#(\mathbb{P}^{1}(\mathbb{C})\setminus X)<\infty$, $a,a'\in\mathbb{P}^{1}(\mathbb{C})$,
$v,v'\in\mathbb{R}^{\times}$ and $c,c'\in\mathbb{R}_{>0}$, there
exists a canonical one-to-one correspondence
\[
\Phi:\pi_{1}(X,\overrightarrow{v}_{a},\overrightarrow{v'}_{a'})\simeq\pi_{1}(X,\overrightarrow{cv}_{a},\overrightarrow{c'v'}_{a'}).
\]
If there is no risk of confusion, we identify $\pi_{1}(X,\overrightarrow{v}_{a},\overrightarrow{v'}_{a'})$
with $\pi_{1}(X,\overrightarrow{cv}_{a},\overrightarrow{c'v'}_{a'})$,
and we simply write $\Phi(\gamma)$ as $\gamma$. Let $T$ be an indeterminate
and $\widehat{\mathbb{Q}^{\times}}\coloneqq\{\left.v\exp(mT)\right|v\in\mathbb{Q}^{\times},m\in\mathbb{Z}\}$.
For $X\subset\mathbb{P}^{1}(\mathbb{C})$, we define the set of \emph{extended
tangential base} points by
\[
\widehat{T^{\times}}X\coloneqq\{(x,v)\mid v\in\widehat{\mathbb{Q}^{\times}},x\in X\}
\]
and denote its element $(x,v)$ as $\overrightarrow{v}_{x}$ just
like a usual tangential base point. For $s\in\mathbb{Q}_{>0}$, we
define
\[
\iota_{s}:\widehat{T^{\times}}X\to T^{\times}X
\]
by 
\[
\iota_{s}(\overrightarrow{v\exp(mT)}_{a})=\overrightarrow{vs^{m}}_{a}.
\]
We define a path from $x\in\widehat{T^{\times}}\mathbb{P}^{1}(\mathbb{C})$
to $y\in\widehat{T^{\times}}\mathbb{P}^{1}(\mathbb{C})$ to be the
path from $\iota_{1}(x)$ to $\iota_{1}(y)$. Note that this is equivalent
to the path from $\iota_{s}(x)$ to $\iota_{s'}(y)$ with arbitrary
choice of $s,s'\in\mathbb{Q}_{>0}$ by the aforementioned identification
$\Phi$ of the fundamental torsors. We put $\log^{\mathfrak{m}}(x)\coloneqq I_{{\rm dch}}^{\mathfrak{m}}(1;0;x)$
for $x\in\mathbb{Q}_{>0}$ where the ${\rm dch}$ is the straight
path,
\begin{defn}
For $k\geq0$ and $a_{0},a_{k+1}\in\widehat{T^{\times}}\mathbb{P}^{1}(\mathbb{Q})$,
$a_{1},\dots,a_{k}\in\mathbb{Q}$, and a path $\gamma$ from $a_{0}$
to $a_{k+1}$ on $\mathbb{P}^{1}(\mathbb{Q})\setminus\{\infty,a_{1},\dots,a_{k}\}$,
we denote by
\[
I_{\gamma}^{\mathfrak{m}}(a_{0};a_{1},\dots,a_{k};a_{k+1})
\]
the unique polynomial $p(T)\in\mathcal{H}[T]$ such that
\[
p(\log^{\mathfrak{m}}(s))=I_{\gamma}^{\mathfrak{m}}(\iota_{s}(a_{0});a_{1},\dots,a_{k};\iota_{s}(a_{k+1}))
\]
for $s\in\mathbb{Q}_{>0}$.
\end{defn}

The definition above requires the existence as well as the uniqueness
of $p(T)$, which can be proved as follows.
\begin{proof}[Proof of the existence and the uniqueness of $p(T)$]
The uniqueness is obvious since $\mathcal{H}$ is an integral domain.
So let us prove the existence by an explicit construction. We put
$\overrightarrow{v\exp(mT)}_{x}=a_{0}$ and $\overrightarrow{v'\exp(m'T)}_{y}=a_{k+1}$.
Then $\iota_{s}(a_{0})=\overrightarrow{vs^{m}}_{x}$ and $\iota_{s}(a_{k+1})=\overrightarrow{v's^{m'}}_{y}$.
The path composition formula gives
\[
I_{\gamma}^{\mathfrak{m}}(\overrightarrow{vs^{m}}_{x};a_{1},\dots,a_{k};\overrightarrow{v's^{m'}}_{y})\coloneqq\sum_{0\leq i\leq j\leq k}c_{i}c_{j}'I_{\gamma}^{\mathfrak{m}}(\overrightarrow{v}_{x};a_{i+1},\dots,a_{j};\overrightarrow{v'}_{y})
\]
for $s,s'\in\mathbb{Q}_{>0}$ where
\begin{align*}
c_{i} & =I_{{\rm triv}}^{\mathfrak{m}}(\overrightarrow{vs^{m}}_{x};a_{1},\dots,a_{i};\overrightarrow{v}_{x})=\frac{\log^{\mathfrak{m}}(1/s^{m})^{i}}{i!}\prod_{l=1}^{i}{\rm res}(a_{l},x)\\
c_{j}' & =I_{{\rm triv}}^{\mathfrak{m}}(\overrightarrow{v'}_{y};a_{j+1},\dots,a_{k};\overrightarrow{v's^{m'}}_{y})=\frac{\log^{\mathfrak{m}}(s^{m'})^{k-j}}{(k-j)!}\prod_{l=j+1}^{k}{\rm res}(a_{l},y)
\end{align*}
where ${\rm triv}$ is the trivial path, and 
\[
{\rm res}(a,b):=\left(\text{residue of \ensuremath{\frac{dt}{t-a}} at \ensuremath{t=b}}\right)=\begin{cases}
-1 & b=\infty\\
1 & a=b\in\mathbb{Q}\\
0 & a\neq b\in\mathbb{Q}.
\end{cases}
\]
Thus 
\[
p(T)=\sum_{0\leq i\leq j\leq k}\widetilde{c_{i}}\widetilde{c_{j}}'I_{\gamma}^{\mathfrak{m}}(\overrightarrow{v}_{x};a_{i+1},\dots,a_{j};\overrightarrow{v'}_{y})\in\mathcal{H}[T]
\]
with
\[
\widetilde{c_{i}}\coloneqq\frac{(-mT)^{i}}{i!}\prod_{l=1}^{i}{\rm res}(a_{l},x),\ \ \ \widetilde{c_{j}}'\coloneqq\frac{(m'T)^{k-j}}{(k-j)!}\prod_{l=j+1}^{k}{\rm res}(a_{l},y)
\]
satisfies the condition
\[
p(\log^{\mathfrak{m}}(s))=I_{\gamma}^{\mathfrak{m}}(\iota_{s}(a_{0});a_{1},\dots,a_{k};\iota_{s}(a_{k+1})).\qedhere
\]
\end{proof}
Note that the shuffle product formula, the path decomposition formula,
and the coproduct formula are naturally generalized to the motivic
iterated integrals with extended tangential base-points above. By
regarding the weight of $T$ as $1$ and defining
\[
\Delta(T)\coloneqq1\otimes T+T\otimes1,
\]
one can equip $\mathfrak{A}^{(T)}\coloneqq\mathfrak{A}[T]$ with a
structure of graded Hopf algebra, and $\mathcal{H}[T]$ with a structure
of graded $\mathfrak{A}^{(T)}$-comodule. We thus extend $D_{r}$
to
\[
\mathcal{H}[T]\to\mathfrak{L}_{r}^{(T)}\otimes\mathcal{H}[T]
\]
by the composite map
\[
\mathcal{H}[T]\xrightarrow{\Delta}\mathfrak{A}^{(T)}\otimes\mathcal{H}[T]\to\mathfrak{A}_{r}^{(T)}\otimes\mathcal{H}[T]\to\mathfrak{L}_{r}^{(T)}\otimes\mathcal{H}[T]
\]
where $\mathfrak{L}^{(T)}=\mathfrak{A}_{>0}^{(T)}/\left(\mathfrak{A}_{>0}^{(T)}\cdot\mathfrak{A}_{>0}^{(T)}\right)\simeq\mathfrak{L}\oplus\mathbb{Q}T$.
\begin{prop}
\label{prop:coproduct_extended}Let $k\geq0$, $a_{0},a_{k+1}\in\widehat{T^{\times}}\mathbb{P}^{1}(\mathbb{Q})$,
$a_{1},\dots,a_{k}\in\widehat{T^{\times}}\mathbb{Q}$, and $\gamma$
a path from $a_{0}$ to $a_{k+1}$. If $\#\{a_{1},\dots,a_{k}\}=\#\{\overline{a_{1}},\dots,\overline{a_{k}}\}$
then we have
\[
\Delta I_{\gamma}^{\mathfrak{m}}(a_{0};a_{1},\dots,a_{k};a_{k+1})\coloneqq\sum_{r=0}^{k}\sum_{0=i_{0}<i_{1}<\cdots<i_{r+1}=k+1}\prod_{j=0}^{r}I^{\mathfrak{a}}(a_{i_{j}};a_{i_{j}+1},\dots,a_{i_{j+1}-1};a_{i_{j+1}})\otimes I_{\gamma}^{\mathfrak{m}}(a_{i_{0}};a_{i_{1}},\dots,a_{i_{r}};a_{i_{r+1}}).
\]
In particular, 
\[
D_{r}(I_{\gamma}^{\mathfrak{m}}(a_{0};a_{1},\dots,a_{k};a_{k+1}))=\sum_{i=0}^{k-r}I^{\mathfrak{l}}(a_{i};a_{i+1},\dots,a_{i+r};a_{i+r+1})\otimes I_{\gamma}^{\mathfrak{m}}(a_{0};a_{1},\dots,a_{i},a_{i+r+1},\dots,a_{k};a_{k+1}).
\]
\end{prop}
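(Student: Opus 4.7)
The plan is to derive the extended coproduct formula from the ordinary one (Proposition \ref{prop:coproduct}) combined with the explicit polynomial presentation of $p(T)$ obtained in the existence proof preceding this proposition. That presentation writes
\[
I_{\gamma}^{\mathfrak{m}}(a_{0};a_{1},\ldots,a_{k};a_{k+1})=\sum_{0\leq i\leq j\leq k}\widetilde{c_{i}}\,\widetilde{c_{j}}'\,I_{\gamma}^{\mathfrak{m}}(\overrightarrow{v}_{x};a_{i+1},\ldots,a_{j};\overrightarrow{v'}_{y}),
\]
where $\widetilde{c_{i}},\widetilde{c_{j}}'\in\mathbb{Q}[T]$ are explicit monomials and the inner iterated integral now has \emph{ordinary} tangential endpoints. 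First I would apply $\Delta$ to this presentation: use Proposition \ref{prop:coproduct} on the inner iterated integral, and use the binomial theorem together with $\Delta(T)=T\otimes 1+1\otimes T$ on the $T$-monomial prefactors. In parallel, I would expand the claimed right-hand side by applying the same polynomial presentation to each of the factors $I^{\mathfrak{a}}(a_{0};\ldots;a_{i_{1}})$, $I^{\mathfrak{a}}(a_{i_{r}};\ldots;a_{k+1})$, and $I_{\gamma}^{\mathfrak{m}}(a_{0};a_{i_{1}},\ldots,a_{i_{r}};a_{k+1})$ that contain the extended endpoints.

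After these expansions, the identity reduces to a combinatorial matching of two parametrizations of the same sum. On one side one sums over quadruples $(i,j,\text{subdivision of }(a_{i+1},\ldots,a_{j}),\text{binomial splits of }\widetilde{c_{i}},\widetilde{c_{j}}')$; on the other, over pairs $(0=i_{0}<\cdots<i_{r+1}=k+1,\text{analogous splits of the three endpoint factors})$. The bijection sends the subdivision $(i_{0},\ldots,i_{r+1})$ together with the choice of which subintervals get absorbed into the ``$x$-trivial-path'' prefactor and the ``$y$-trivial-path'' postfactor into the pair $(i,j)$, and sends the further decomposition of the endpoint factors into the binomial splits of $\widetilde{c_{i}}$ and $\widetilde{c_{j}}'$.

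The main obstacle is this bookkeeping: it is purely formal but involves juggling several layers of indices. A useful sanity check (and an alternative concluding step) is to specialize $T\mapsto\log^{\mathfrak{m}}(s)$ on the $\mathcal{H}$-factor and $T\mapsto\log^{\mathfrak{a}}(s)$ on the $\mathfrak{A}$-factor, for varying $s\in\mathbb{Q}_{>0}$: because $\Delta(\log^{\mathfrak{m}}(s))=\log^{\mathfrak{a}}(s)\otimes 1+1\otimes\log^{\mathfrak{m}}(s)$ mirrors $\Delta(T)$ exactly, this specialization is compatible with $\Delta$, and both sides of the claimed formula reduce to Proposition \ref{prop:coproduct} applied to $I_{\gamma}^{\mathfrak{m}}(\iota_{s}(a_{0});a_{1},\ldots,a_{k};\iota_{s}(a_{k+1}))$, which holds by hypothesis. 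The ``in particular'' statement for $D_{r}$ then follows by projecting the first tensor factor onto $\mathfrak{L}_{r}^{(T)}$: this kills every subdivision that produces a product of two or more positive-weight pieces on the $\mathfrak{A}$-side, leaving only the single-block interior contributions $(i_{0},i_{1})=(i,i+r+1)$ with trivial endpoint factors $I^{\mathfrak{a}}(a_{0};\,;a_{i_{1}})=I^{\mathfrak{a}}(a_{i_{r}};\,;a_{k+1})=1$.
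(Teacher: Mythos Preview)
Your main approach—expand via the explicit polynomial presentation of $p(T)$, apply Proposition~\ref{prop:coproduct} to the ordinary-endpoint pieces and $\Delta(T)=T\otimes1+1\otimes T$ to the $T$-monomials, then match the two parametrizations combinatorially—is correct and is exactly what the paper's one-line proof (``It follows from Proposition~\ref{prop:coproduct} and the definition of the motivic iterated integrals with extended tangential base-points'') is pointing to; the paper simply omits the bookkeeping you spell out.

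One caveat on your alternative concluding step. After substituting $T\mapsto\log^{\mathfrak a}(s)$ on the $\mathfrak A$-factor and $T\mapsto\log^{\mathfrak m}(s)$ on the $\mathcal H$-factor, both sides do reduce to the ordinary coproduct formula, as you say. But the identity you want lives in $\mathfrak{A}[T]\otimes_{\mathbb Q}\mathcal{H}[T]$, which is a polynomial ring in \emph{two} independent $T$-variables, while these specializations only sweep out the one-parameter diagonal $(T_1,T_2)=(\log^{\mathfrak a}s\otimes1,\,1\otimes\log^{\mathfrak m}s)$. The passage from ``equal for all $s$'' back to the full two-variable polynomial identity is not as immediate as the one-variable uniqueness invoked when defining the extended integrals (there one works in $\mathcal H[T]$, a single-variable polynomial ring over an integral domain). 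It can be justified, but it needs an extra argument you have not supplied. So keep the specialization as the sanity check you first called it, and let the combinatorial matching carry the actual proof. Your derivation of the $D_r$ formula from the full coproduct is fine in spirit; just note that the surviving ``single block'' $[i,i+r+1]$ can sit anywhere, so there are many trivial unit factors $I^{\mathfrak a}(a_j;\,;a_{j+1})=1$, not only at the two ends.
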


\begin{proof}
It follows from Proposition \ref{prop:coproduct} and the definition
of the motivic iterated integrals with extended tangential base-points.
\end{proof}

\subsection{\label{subsec:Goncharov's_Hopf_algebra}Goncharov's Hopf algebra
of formal iterated integrals}

Let $S$ be a set. In this section we review the Hopf algebra $\mathcal{I}_{\bullet}(S)$
of formal iterated integrals introduced by Goncharov \cite{GonSym}.
Following Goncharov, we define the commutative graded Hopf algebra
$\mathcal{I}_{\bullet}(S)$ over $\mathbb{Q}$ as follows. As a commutative
algebra, $\mathcal{I}_{\bullet}(S)$ is generated by the formal symbols
\[
\mathbb{I}(s_{0};s_{1},\ldots,s_{m};s_{m+1})\quad\left(m\geq0,\,s_{0},\ldots,s_{m+1}\in S\right)
\]
whose degree is defined to be homogeneous of $\deg\mathbb{I}(s_{0};s_{1},\ldots,s_{m};s_{m+1})\coloneqq m$,
with the following relations among the generators:
\begin{enumerate}
\item The unit: for $a,b\in S$, $\mathbb{I}(a;b)\coloneqq\mathbb{I}(a;\emptyset;b)\coloneqq1.$
\item The shuffle product formula: for $m,n\geq0$ and $a,b,s_{1},\ldots,s_{m+n}\in S$,
\begin{align*}
\mathbb{I}(a;s_{1},\ldots,s_{m};b)\mathbb{I}(a;s_{m+1},\ldots,s_{m+n};b) & =\mathbb{I}(a;(s_{1},\ldots,s_{m})\shuffle(s_{m+1},\ldots,s_{m+n});b)\\
 & \coloneqq\sum_{\sigma\in\sum_{n,m}}\mathbb{I}(a;s_{\sigma(1)},\ldots,s_{\sigma(n+m)};b)
\end{align*}
 where $\sum_{n,m}\subset\mathfrak{S}_{n+m}$ denotes the set of all
shuffles of $(1,\ldots,m)$ and $(m+1,\ldots,m+n)$.
\item The path composition formula: for $m\geq0$ and $a,b,c,s_{1},\ldots,s_{m}\in S$,
\[
\mathbb{I}(a;s_{1},\ldots,s_{m};c)=\sum_{i=0}^{m}\mathbb{I}(a;s_{1},\ldots,s_{i};b)\mathbb{I}(b;s_{i+1},\ldots,s_{m};c).
\]
\item For $m>0$ and $a,s_{1},\ldots,s_{m}\in S$, $\mathbb{I}(a;s_{1},\ldots,s_{m};a)=0.$
\end{enumerate}
Next, define the $\mathbb{Q}$-linear map $\Delta^{(S)}:\mathcal{I}_{\bullet}(S)\rightarrow\mathcal{I}_{\bullet}(S)\otimes\mathcal{I}_{\bullet}(S)$
by 
\[
\Delta^{(S)}\mathbb{I}(s_{0};s_{1},\ldots,s_{m};s_{m+1})\coloneqq\sum_{\substack{k\geq0\\
0=i_{0}<\cdots<i_{k+1}=m+1
}
}\mathbb{I}(s_{0};s_{i_{1}},\ldots,s_{i_{k}};s_{m+1})\otimes\prod_{p=0}^{k}\mathbb{I}(s_{i_{p}};s_{i_{p}+1},\ldots,s_{i_{p+1}-1};s_{i_{p+1}})
\]
Then, $\Delta^{(S)}$ is co-associative. Moreover, Goncharov proved
the following.
\begin{thm}[{\cite[Proposition 2.2]{GonSym}}]
\label{thm:Goncharov_Hopf_alg}$\Delta^{(S)}$ is a well-defined
coproduct on $\mathcal{I}_{\bullet}(S)$, which provides $\mathcal{I}_{\bullet}(S)$
with the structure of a commutative, graded Hopf algebra. 
\end{thm}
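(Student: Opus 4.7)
The plan is to verify three things: (i) the formula for $\Delta^{(S)}$, initially defined on the free polynomial algebra on the symbols $\mathbb{I}(s_0;s_1,\ldots,s_m;s_{m+1})$, descends to the quotient $\mathcal{I}_\bullet(S)$; (ii) together with the projection $\epsilon\colon\mathcal{I}_\bullet(S)\to\mathcal{I}_0(S)=\mathbb{Q}$ as counit, this makes $\mathcal{I}_\bullet(S)$ a graded commutative bialgebra; and (iii) an antipode exists. Commutativity and gradedness are built into the construction, and coassociativity of $\Delta^{(S)}$ is already asserted in the excerpt, so the heart of the proof is (i). Once (i) holds, $\Delta^{(S)}$ is automatically an algebra homomorphism, (ii) follows, and (iii) is standard for connected graded bialgebras.

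For (i), one must check that $\Delta^{(S)}$ respects each of the four defining relations (1)--(4). Relation (1) is immediate. Relation (4) holds because in the formula for $\Delta^{(S)}\mathbb{I}(a;s_1,\ldots,s_m;a)$ with $m>0$, every term with $k>0$ has left factor $\mathbb{I}(a;s_{i_1},\ldots,s_{i_k};a)$ vanishing by (4), while the $k=0$ term is $1\otimes\mathbb{I}(a;s_1,\ldots,s_m;a)$, whose right factor is itself the vanishing generator; hence $\Delta^{(S)}$ of a zero generator already lies in the relation ideal. Relation (3) (path composition) follows from regrouping the summands in $\Delta^{(S)}\mathbb{I}(a;s_1,\ldots,s_m;c)$ according to whether the inserted intermediate point $b$ is selected as one of the $s_{i_j}$; applying path composition itself inside each resulting sub-iterated integral then reconstructs the desired identity modulo the ideal generated by all the relations.

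The main obstacle is the shuffle compatibility (2), namely
\[
\Delta^{(S)}\bigl(\mathbb{I}(a;s_1,\ldots,s_m;b)\,\mathbb{I}(a;s_{m+1},\ldots,s_{m+n};b)\bigr)\equiv\Delta^{(S)}\,\mathbb{I}\bigl(a;(s_1,\ldots,s_m)\shuffle(s_{m+1},\ldots,s_{m+n});b\bigr)
\]
modulo the defining relations. I would handle this combinatorially via Goncharov's polygon picture: each coproduct summand is encoded by a subset of internal vertices together with the induced consecutive-block partition. An explicit bijection matches a pair (one subset from each of the two polygons indexing the factors $\mathbb{I}(a;s_1,\ldots,s_m;b)$ and $\mathbb{I}(a;s_{m+1},\ldots,s_{m+n};b)$) together with a shuffle interleaving the remaining points, with a subset of the interleaved polygon on the right-hand side. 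After this matching, the identity on the left tensor factor is exactly the shuffle relation applied to the two "big" selected subsequences, while the identity on the right tensor factor reduces to independent shuffle relations on each consecutive small block. Carrying out this bookkeeping cleanly is the technically delicate part.

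Once (i) is complete, $\mathcal{I}_\bullet(S)$ is a connected graded commutative bialgebra, since $\mathcal{I}_0(S)=\mathbb{Q}$ by relations (1) and (4). The antipode then exists and is unique by the standard inductive construction on degree (solve $S * \mathrm{id}=\epsilon\circ\eta$ recursively using the reduced coproduct), finishing the proof that $\mathcal{I}_\bullet(S)$ is a commutative graded Hopf algebra.
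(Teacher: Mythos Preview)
The paper does not give its own proof of this theorem: it is stated as a citation of \cite[Proposition 2.2]{GonSym} and no argument is supplied. Your sketch is a faithful outline of Goncharov's original proof---checking that $\Delta^{(S)}$ respects the four defining relations (with the shuffle compatibility handled via the polygon combinatorics), then invoking the standard fact that a connected graded bialgebra admits a unique antipode---so there is nothing to compare beyond noting that you have reconstructed the cited argument rather than anything done in this paper.
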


For a graded algebra $A$, we denote by $A_{r}$ its homogeneous degree
$r$ part, and define the ``linearized version'' $A^{\mathfrak{l}}$
to be 
\[
A^{\mathfrak{l}}\coloneqq A_{>0}/(A_{>0}\times A_{>0})
\]
where $A_{>0}\coloneqq\bigoplus_{r>0}A_{r}$. Under these notations,
we denote by $x^{\mathfrak{l}}\in A^{\mathfrak{l}}$ the image of
$x\in A_{>0}$ under the quotient map, and for $r>0$, define the
infinitesimal version\emph{ $D_{r}^{(S)}:\mathcal{I}_{\bullet}(S)\rightarrow\mathcal{I}_{\bullet}(S)_{r}^{\mathfrak{l}}\otimes\mathcal{I}_{\bullet}(S)$}
of $\Delta^{(S)}$ by the composition
\[
\mathcal{I}_{\bullet}(S)\xrightarrow{\Delta^{(S)}}\mathcal{I}_{\bullet}(S)\otimes\mathcal{I}_{\bullet}(S)\xrightarrow{\mathrm{pr}_{r}\otimes\mathrm{id}}\mathcal{I}_{\bullet}(S)_{r}\otimes\mathcal{I}_{\bullet}(S)\xrightarrow{(x\mapsto x^{\mathfrak{l}})\otimes\mathrm{id}}\mathcal{I}_{\bullet}(S)_{r}^{\mathfrak{l}}\otimes\mathcal{I}_{\bullet}(S)
\]
where $\mathrm{pr}_{r}$ is the projection to the degree $r$ part.

For a graded $\mathbb{Q}$-algebra homomorphism $\varphi:\mathcal{I}_{\bullet}(S)\to\mathcal{H}[T]$
and a sequence $\bm{s}=(s_{0};s_{1},\dots,s_{m};s_{m+1})$ of the
elements of $S$, the condition
\[
D_{r}(\varphi(\mathbb{I}(s_{0};s_{1},\ldots,s_{m};s_{m+1}))=\sum_{i=0}^{m-r}\varphi^{\mathfrak{l}}(\mathbb{I}(s_{i};s_{i+1},\dots,s_{i+r};s_{i+r+1}))\otimes\varphi(\mathbb{I}(s_{0};s_{1},\dots,s_{i},s_{i+r+1},\dots,s_{m};s_{m+1}))
\]
where $\varphi^{\mathfrak{l}}$ is the composite map $\mathcal{I}_{\bullet}(S)\xrightarrow{\varphi}\mathcal{H}[T]\to\mathfrak{L}^{(T)}$,
is equivalent to
\[
D_{r}(\varphi(x))=(\varphi^{\mathfrak{l}}\otimes\varphi)\circ D_{r}^{(S)}(x)
\]
where $x\coloneqq\mathbb{I}(\bm{s})$.
\begin{lem}
\label{lem:compatibility_of_Brown_with_prod}Let $\varphi:\mathcal{I}_{\bullet}(S)\to\mathcal{H}[T]$
be a homomorphism of graded $\mathbb{Q}$-algebras and $x,y$ elements
of $\mathcal{I}_{\bullet}(S)$. If $D_{r}(\varphi(x))=(\varphi^{\mathfrak{l}}\otimes\varphi)\circ D_{r}^{(S)}(x)$
and $D_{r}(\varphi(y))=(\varphi^{\mathfrak{l}}\otimes\varphi)\circ D_{r}^{(S)}(y)$
then
\[
D_{r}(\varphi(xy))=(\varphi^{\mathfrak{l}}\otimes\varphi)\circ D_{r}^{(S)}(xy).
\]
\end{lem}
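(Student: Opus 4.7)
The strategy is to establish a Leibniz-type formula for both $D_r$ on $\mathcal{H}[T]$ and $D_r^{(S)}$ on $\mathcal{I}_\bullet(S)$, and then combine these with the fact that $\varphi$ is a graded $\mathbb{Q}$-algebra homomorphism.

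First I would verify that the coaction $\Delta : \mathcal{H}[T] \to \mathfrak{A}^{(T)}\otimes\mathcal{H}[T]$, being an algebra homomorphism, satisfies $\Delta(fg)=\Delta(f)\Delta(g)$. Decomposing each side by the degree in the first tensor factor and projecting via $\mathfrak{A}^{(T)}_r \twoheadrightarrow \mathfrak{L}^{(T)}_r$, any cross-term of bidegree $(i,j)$ with $i,j>0$ and $i+j=r$ is killed since $\mathfrak{A}^{(T)}_{>0}\cdot\mathfrak{A}^{(T)}_{>0}$ maps to zero in $\mathfrak{L}^{(T)}_r$ by the very definition of $\mathfrak{L}^{(T)}$. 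Only the terms in which one of $i, j$ vanishes survive, and since the degree-$0$ part of $\Delta(f)$ equals $1\otimes f$, one obtains
\[
D_r(fg) \;=\; D_r(f)\cdot(1\otimes g) \;+\; (1\otimes f)\cdot D_r(g),
\]
where the product on $\mathfrak{L}^{(T)}_r\otimes\mathcal{H}[T]$ is the tensor-product multiplication $(\ell\otimes h)(\ell'\otimes h')=\ell\ell'\otimes hh'$ (with $\ell\cdot 1 = \ell$). The identical argument applied to Goncharov's Hopf algebra $\mathcal{I}_\bullet(S)$ (Theorem \ref{thm:Goncharov_Hopf_alg}) yields the analogous identity for $D_r^{(S)}$.

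Finally, expanding $D_r^{(S)}(xy)$ via the Leibniz rule and applying $\varphi^{\mathfrak{l}}\otimes\varphi$, and using that $\varphi$ is multiplicative, I would obtain
\[
(\varphi^{\mathfrak{l}}\otimes\varphi) \circ D_r^{(S)}(xy) \;=\; \bigl((\varphi^{\mathfrak{l}}\otimes\varphi) D_r^{(S)}(x)\bigr)\cdot(1\otimes\varphi(y)) + (1\otimes\varphi(x))\cdot\bigl((\varphi^{\mathfrak{l}}\otimes\varphi) D_r^{(S)}(y)\bigr).
\]
By the hypothesis, the right-hand side equals $D_r(\varphi(x))\cdot(1\otimes\varphi(y)) + (1\otimes\varphi(x))\cdot D_r(\varphi(y))$, which by the Leibniz rule for $D_r$ is precisely $D_r(\varphi(x)\varphi(y))=D_r(\varphi(xy))$. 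No real obstacle is anticipated: everything is a formal consequence of the Hopf algebra and comodule axioms, and the only book-keeping required is to confirm that the component-wise product on $\mathfrak{L}^{(T)}_r\otimes\mathcal{H}[T]$ and the various quotient maps interact as described.
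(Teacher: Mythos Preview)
Your proof is correct and follows essentially the same approach as the paper: both arguments reduce to the Leibniz rule for $D_r$ and $D_r^{(S)}$ combined with the multiplicativity of $\varphi$ and the hypothesis. You supply more detail than the paper does on why the Leibniz rule holds (the vanishing of cross-terms in $\mathfrak{L}^{(T)}$), whereas the paper simply uses it as a known identity.
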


\begin{proof}
It follows from the following calculation.
\begin{align*}
D_{r}(\varphi(xy)) & =D_{r}(\varphi(x)\cdot\varphi(y))\\
 & =D_{r}(\varphi(x))\cdot(1\otimes\varphi(y))+D_{r}(\varphi(y))\cdot(1\otimes\varphi(x))\\
 & =\left((\varphi^{\mathfrak{l}}\otimes\varphi)\circ D_{r}^{(S)}(x)\right)\cdot(1\otimes\varphi(y))+\left((\varphi^{\mathfrak{l}}\otimes\varphi)\circ D_{r}^{(S)}(y)\right)\cdot(1\otimes\varphi(x))\\
 & =\left(\varphi^{\mathfrak{l}}\otimes\varphi\right)(D_{r}^{(S)}(x)\cdot(1\otimes y))+\left(\varphi^{\mathfrak{l}}\otimes\varphi\right)(D_{r}^{(S)}(x)\cdot(1\otimes y))\\
 & =\left(\varphi^{\mathfrak{l}}\otimes\varphi\right)\circ D_{r}^{(S)}(xy).\qedhere
\end{align*}
\end{proof}

\section{\label{sec:Evaluation-map}The motivic counterpart of regularized
limit of iterated integrals}

For $X\subset\mathbb{Q}(z)$ we define the set of tangential base
points with a variable $z$ by
\[
T_{z}^{\times}X\coloneqq\{(x,v)\mid v\in\mathbb{Q}(z)^{\times},x\in X\},
\]
and denote its element $(x,v)$ as $\overrightarrow{v}_{x}$ just
like a usual tangential base point. For $p=\overrightarrow{v}_{x}\in T_{z}^{\times}X$,
we continue to use the notations $\bar{p}=x$ and ${\rm vec}(p)=v$.

Let $z>0$ be a real variable which is assumed to be sufficiently
small. For $a(z),b(z)\in T_{z}^{\times}\mathbb{Q}(z)$ and $X\subset\mathbb{Q}(z)$,
a ``path $\gamma(z)$ from $a(z)$ to $b(z)$ on $\mathbb{C}\setminus X$''
is defined as follows. For a fixed $z$, $\gamma(z)$ is a homotopy
class of paths from $a(z)$ to $b(z)$ on $\mathbb{C}\setminus\{p(z)\mid p\in X\}$
and when $z$ varies $z\mapsto\gamma(z)$ is a continuous function. 

Now, let $k\geq0$, $p_{0}=p_{0}(z),\dots,p_{k+1}=p_{k+1}(z)\in T_{z}^{\times}\mathbb{Q}(z)$
and $\gamma=\gamma(z)$ be a path from $p_{0}(z)$ to $p_{k+1}(z)$
on $\mathbb{C}\setminus\{\overline{p_{1}(z)},\ldots,\overline{p_{k}(z)}\}$.
Then,
\[
\rho(z)=I_{\gamma(z)}(p_{0}(z);p_{1}(z),\dots,p_{k}(z);p_{k+1}(z))
\]
becomes a function of $z$. Then, we can show that there exists a
polynomial $P(T)\in\mathbb{C}[T]$ such that
\[
\rho(z)=P(\log z)+O(z(\log z)^{M})\ \ \ (z\to+0)
\]
for sufficiently large $M>0$. Such a $P(T)$ is denoted as
\[
\Reg_{z\rightarrow+0}^{(T)}\left(I_{\gamma(z)}(p_{0}(z);p_{1}(z),\dots,p_{k}(z);p_{k+1}(z))\right).
\]
The purpose of this section is to construct an element of $\mathcal{H}[T]$
denote by
\begin{equation}
J_{\gamma(z)}(p_{0}(z);p_{1}(z),\dots,p_{k}(z);p_{k+1}(z))\label{eq:eval_map}
\end{equation}
such that
\[
\compmap\left(J_{\gamma(z)}(p_{0}(z);p_{1}(z),\dots,p_{k}(z);p_{k+1}(z))\right)=P(T)
\]
where $\compmap:\mathcal{H}[T]\longrightarrow\mathbb{R}[T]$ is the
natural extension of the period map $\compmap:\mathcal{H}\longrightarrow\mathbb{C}$,
based on the idea explained in \cite[Section 3.3.3]{Panzer}, and
investigate its properties. As the quantity $J_{\gamma}(p_{0};p_{1},\ldots,p_{k};p_{k+1})$
is the motivic counterpart of the regularized limit $\Reg_{z\rightarrow+0}^{(T)}\left(I_{\gamma(z)}(p_{0}(z);p_{1}(z),\dots,p_{k}(z);p_{k+1}(z))\right)$
of a complex-valued iterated integral, those objects should satisfy
the same algebraic properties. From this perspective, we define $J_{\gamma}(p_{0};p_{1},\ldots,p_{k};p_{k+1})$
by reducing it to special cases. First of all, we introduce the simplest
path called the \emph{upper path} as follows.
\begin{defn}
We call $\gamma$ the upper path if $\gamma$ lies on the set $\{s\in\mathbb{C}\mid\Im(s)\geq0\}$,
i.e., the upper half plane including the real axis. Note that once
two points $p,q\in\mathbb{R}$ and $Y\subset\mathbb{R}$ are given,
the upper path from $p$ to $q$ on $\mathbb{C}\setminus Y$ is unique
up to the homotopy equivalence and so we denote it by ``$\mathrm{up}$''. 
\end{defn}

The most general definition of $J_{\gamma}(p_{0};p_{1},\ldots,p_{k};p_{k+1})$
(for a general sequence $p_{0},p_{1},\ldots,,p_{k+1}\in T_{z}^{\times}\mathbb{Q}(z)$
and a general path $\gamma$) is not given by a simple formula, and
so we break down the definition into several steps. Here we give an
overview of the contents of this Section. Throughout (sub-)sections
\ref{subsec:Behavior-at-inf}--\ref{subsec:path-composition-for-J},
we specify ourselves to the case where $\gamma$ is the upper path.
In Section \ref{subsec:Behavior-at-inf}, we discuss the behavior
of the complex-valued iterated integral $I_{\mathrm{up}}$ with a
tangential point at infinity, and in Section \ref{subsec:calc-for-regular-seq},
we show that $\Reg_{z\rightarrow+0}^{(T)}\left(I_{\gamma(z)}(p_{0}(z);p_{1}(z),\dots,p_{k}(z);p_{k+1}(z))\right)$
can be calculated simply by ``substitution of $z=0$'' if the sequence
$(p_{0},p_{1},\dots,,p_{k+1})$ satisfy a certain regularity condition,
and thus define $J_{\mathrm{up}}(p_{0};p_{1},\ldots,p_{k};p_{k+1})$
for a regular sequence $(p_{0},p_{1},\dots,,p_{k+1})$. Then by certain
decomposition techniques using the shuffle product formula, we define
$J_{\mathrm{up}}(p_{0};p_{1},\ldots,p_{k};p_{k+1})$ for a general
admissible sequence with different endpoints (Section \ref{subsec:calc-for-adm-seq}),
a general sequence with different endpoints (Section \ref{subsec:calc-for-nonadm-seq}),
and a general sequence with the same endpoints (Section \ref{subsec:calc-for-equal-endpts}).
We then prove that the quantity $J_{\mathrm{up}}(p_{0};p_{1},\ldots,p_{k};p_{k+1})$
that we have defined satisfy the path composition formula (Section
\ref{subsec:path-composition-for-J}), and by extending this property
for general path we define $J_{\gamma}(p_{0};p_{1},\ldots,p_{k};p_{k+1})$
for arbitrary path $\gamma$ (Section \ref{subsec:limit-for-any-path}).
Furthermore, we shall prove that $J_{\gamma}(p_{0};p_{1},\ldots,p_{k};p_{k+1})$
thus defined enjoys the infinitesimal coaction just as the motivic
iterated integrals do (Section \ref{subsec:inf-coaction-for-J}).

\subsection{\label{subsec:Behavior-at-inf}The behavior of iterated integrals
at infinity}
\begin{prop}
\label{prop:behavior_at_infinity}For $a_{1},\ldots,a_{k+1}\in T^{\times}\mathbb{Q}$,
\[
I_{\mathrm{up}}(y^{-1};a_{1},\ldots,a_{k};a_{k+1})=I_{\mathrm{up}}(\overrightarrow{y}_{\infty};a_{1},\ldots,a_{k};a_{k+1})+O(y(\log y)^{k+1})
\]
as $y$ goes to $+0$.
\end{prop}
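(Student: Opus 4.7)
The plan is to translate the asymptotic behavior near the tangential base point $\overrightarrow{y}_\infty$ at infinity into the (well-known) asymptotic behavior near a tangential base point at the finite point $0$, via the M\"obius change of variables $s=1/t$.

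First, I would fix a real number $R > \max_{1\leq i\leq k}|\overline{a_i}|$ independent of $y$, and for $y>0$ small enough that $y^{-1}>R$, invoke the path composition formula
\[
I_{\mathrm{up}}(\star;a_1,\ldots,a_k;a_{k+1}) = \sum_{j=0}^{k} I_{\mathrm{up}}(\star;a_1,\ldots,a_j;R)\cdot I_{\mathrm{up}}(R;a_{j+1},\ldots,a_k;a_{k+1})
\]
for both $\star=y^{-1}$ and $\star=\overrightarrow{y}_\infty$. Since the second factor does not depend on $y$, the problem reduces to showing, for each $0\leq j\leq k$,
\[
I_{\mathrm{up}}(y^{-1};a_1,\ldots,a_j;R) - I_{\mathrm{up}}(\overrightarrow{y}_\infty;a_1,\ldots,a_j;R) = O\bigl(y(\log y)^{j+1}\bigr).
\]

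Second, I would apply the change of variables $s=1/t$ on the real segment from $y^{-1}$ to $R$. Under this substitution the one-form $dt/(t-\overline{a_i})$ transforms to $-ds/s$ if $\overline{a_i}=0$ and to $-ds/s + ds/(s-1/\overline{a_i})$ otherwise; the endpoints become $s=y$ and $s=1/R$; and the tangential base point $\overrightarrow{y}_\infty = y\cdot d/d(1/t)$ at $t=\infty$ corresponds precisely to $\overrightarrow{y}_0 = y\cdot d/ds$ at $s=0$ (since $d/d(1/t) = d/ds$). By the choice of $R$, the segment $(y,1/R)$ contains none of the new poles $1/\overline{a_l}$. Expanding the transformed integrands by distributivity in each slot, we obtain identities
\[
I_{\mathrm{up}}(\star;a_1,\ldots,a_j;R) = \sum_{\mathbf{b}} c_{\mathbf{b}}\cdot I_{\mathrm{up}}(\star';b_1,\ldots,b_j;1/R),
\]
where $\star'=y$ if $\star=y^{-1}$ and $\star'=\overrightarrow{y}_0$ if $\star=\overrightarrow{y}_\infty$, the sum runs over tuples $\mathbf{b}=(b_1,\ldots,b_j)$ with $b_i\in\{0\}\cup\{1/\overline{a_l}\}$, and $c_{\mathbf{b}}\in\{\pm 1\}$ are signs independent of $y$.

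Third, subtracting the two resulting expressions reduces everything to showing the classical estimate
\[
I_{\mathrm{up}}(y;b_1,\ldots,b_j;1/R) - I_{\mathrm{up}}(\overrightarrow{y}_0;b_1,\ldots,b_j;1/R) = O\bigl(y(\log y)^{j+1}\bigr)
\]
at the finite tangential base point. By the definition of the tangential iterated integral via shuffle regularization, the function $s\mapsto I_{\mathrm{up}}(s;b_1,\ldots,b_j;1/R)$ admits an asymptotic expansion of the form $Q(\log s) + O(s(\log s)^{j+1})$ as $s\to 0^+$, where $Q$ is a polynomial of degree at most $j$; a direct rescaling argument (comparing tangent vectors of size $1$ and of size $y$) identifies $Q(\log y)$ with $I_{\mathrm{up}}(\overrightarrow{y}_0;b_1,\ldots,b_j;1/R)$, and substituting $s=y$ yields the claim.

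The main technical obstacle lies in the bookkeeping of the second step: the careful treatment of the case where some $\overline{a_i}=0$ (so only the form $-ds/s$ appears), the orientation of the integration path under the orientation-reversing map $t\mapsto 1/t$ on $\mathbb{R}_{>0}$, and the verification that the tangential base point at infinity indeed matches $\overrightarrow{y}_0$ at the level of iterated integrals and not merely base points. A secondary point is the justification of the standard asymptotic expansion in the third step, which can be established by induction on $j$ using the differential equation $\frac{d}{ds}I_{\mathrm{up}}(s;b_1,\ldots,b_j;1/R) = \frac{1}{s-b_1}I_{\mathrm{up}}(s;b_2,\ldots,b_j;1/R)$ together with integration from a base point.
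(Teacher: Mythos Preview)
Your argument is correct, but it takes a longer route than the paper's. The paper proceeds directly, without any path decomposition at an intermediate point $R$ and without the M\"obius change of variables. It simply posits the asymptotic expansion $I_{\mathrm{up}}(s^{-1};a_1,\ldots,a_k;a_{k+1}) = Q(\log s) + O(s(\log s)^{k+1})$ as $s\to 0^+$, and then uses the two-parameter rescaling $s=y\epsilon$ to identify $Q(\log y)$ with $P(0)=I_{\mathrm{up}}(\overrightarrow{y}_\infty;a_1,\ldots,a_k;a_{k+1})$, where $P$ is the polynomial governing the $\epsilon\to 0^+$ expansion for fixed $y$. This is exactly the ``direct rescaling argument'' you invoke in your third step, but the paper applies it immediately at infinity rather than first transporting to the origin.

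In other words, your first two steps (splitting at $R$ and the substitution $s=1/t$) serve only to reduce to a situation where you then apply the very idea that already suffices on its own. What your approach buys is an explicit reduction to the better-known picture at a finite base point, which may feel more concrete; what the paper's approach buys is brevity, since the existence of the log-polynomial expansion and the scaling behavior of tangential integrals hold equally well at $\infty$.
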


\begin{proof}
Let $P(T)$ be a polynomial such that $I_{\mathrm{up}}((y\epsilon)^{-1};a_{1},\ldots,a_{k};a_{k+1})=P(\log\epsilon)+O(\epsilon(\log\epsilon)^{k+1})$
for $\epsilon\rightarrow+0$. By definition 
\[
I_{\mathrm{up}}(\overrightarrow{y}_{\infty};a_{1},\ldots,a_{k};a_{k+1})=P(0).
\]
Let $Q(T)$ be a polynomial such that $I_{\mathrm{up}}(s^{-1};a_{1},\ldots,a_{k};a_{k+1})=Q(\log s)+O(s(\log s)^{k+1})$
for $s\rightarrow+0$. Then, by putting $s=y\epsilon$, 
\begin{align*}
Q(\log y+\log\epsilon) & =I_{\mathrm{up}}((y\epsilon)^{-1},a_{1},\ldots,a_{k};a_{k+1})+O(\epsilon(\log\epsilon)^{k+1})\\
 & =P(\log\epsilon)+O(\epsilon(\log\epsilon)^{k+1})
\end{align*}
for $\epsilon\rightarrow+0$. Thus as a polynomial $P(T)=Q(T+\log y)$.
Now that $Q(\log y)=P(0)=I_{\mathrm{up}}(\overrightarrow{y}_{\infty},a_{1},\ldots,a_{k};a_{k+1})$,
it readily follows that 
\begin{align*}
I_{\mathrm{up}}(y^{-1};a_{1},\ldots,a_{k};a_{k+1}) & =I_{\mathrm{up}}(\overrightarrow{y}_{\infty};a_{1},\ldots,a_{k};a_{k+1})+O(y(\log y)^{k+1}).\qedhere
\end{align*}
\end{proof}
Now consider the case where $y=y(z)=cz^{m}(1+p(z))$ with $c\in\mathbb{Q}^{\times}$,
$m\in\mathbb{Z}_{>0}$ and $p(z)\in\mathbb{Q}(z)$ with $p(0)=0$
in Proposition \ref{prop:behavior_at_infinity}. In this case, we
have
\begin{align*}
I_{\mathrm{up}}(y(z)^{-1};a_{1},\ldots,a_{k};a_{k+1}) & =I_{\mathrm{up}}(\overrightarrow{y(z)}_{\infty};a_{1},\ldots,a_{k};a_{k+1})+O(z(\log z)^{k+1})\\
 & =I_{\mathrm{up}}(\overrightarrow{cz^{m}}_{\infty};a_{1},\ldots,a_{k};a_{k+1})+O(z(\log z)^{k+1})\\
 & =\restr{I_{\mathrm{up}}(\overrightarrow{ce^{mT}}_{\infty};a_{1},\ldots,a_{k};a_{k+1})}{T=\log z}+O(z(\log z)^{k+1}).
\end{align*}
Thus we find that $\Reg_{z\rightarrow+0}^{(T)}I_{\mathrm{up}}(y(z)^{-1};a_{1},\ldots,a_{k};a_{k+1})=I_{\mathrm{up}}(\overrightarrow{ce^{mT}}_{\infty};a_{1},\ldots,a_{k};a_{k+1})$,
which leads us to the following definition of the limit as $z\rightarrow+0$
of a tangential base point with a variable.
\begin{defn}
For $p=\overrightarrow{v}_{x}\in T_{z}^{\times}\mathbb{Q}(z)$, we
define $p[0]\in\widehat{T^{\times}}\mathbb{P}^{1}(\mathbb{Q})$ by
$\overline{p[0]}\coloneqq x(0)$ and
\[
{\rm vec}(p[0])\coloneqq\begin{cases}
\hat{v} & \text{if }x(0)\neq\infty\\
\hat{x}^{-1} & \text{if }x(0)=\infty.
\end{cases}
\]
Here, for $x\in\mathbb{Q}(z)$, we set $\hat{x}:=x'(0)e^{mT}\in\mathbb{Q}[[T]]$
if $x=x'z^{m}$ with $m=\mathrm{ord}_{z}x\in\mathbb{Z}$.
\end{defn}

By this notation, the above identity can be written simply as 
\[
\Reg_{z\rightarrow+0}^{(T)}I_{\mathrm{up}}(p(z);a_{1},\dots,a_{k};a_{k+1})=I_{\mathrm{up}}(p[0];a_{1},\dots,a_{k};a_{k+1})
\]
if $p(0)=\infty$ and $a_{1},\ldots,a_{k+1}\in T^{\times}\mathbb{Q}$.
In fact, we can show more generally that 
\begin{equation}
\Reg_{z\rightarrow+0}^{(T)}I_{\mathrm{up}}(p(z);a_{1},\dots,a_{k};q(z))=I_{\mathrm{up}}(p[0];a_{1},\dots,a_{k};q[0])\label{eq:calc_Reg_const}
\end{equation}
for $a_{1},\dots,a_{k}\in\mathbb{Q}$ (here, we don't even need the
conditions $p(0)\neq a_{1}$ and $q(0)\neq a_{k}$).

\subsection{\label{subsec:calc-for-regular-seq}The calculation of the regularized
limit for regular sequences}

For $k\geq0$, the sequence $(p_{0},\dots,p_{k+1})\in\mathbb{Q}(z)^{k+2}$
of length $k+2$ is called empty if $k=0$ and admissible if $k=0$
or $p_{0}\neq p_{1}$ and $p_{k}\neq p_{k+1}$.
\begin{defn}
Let $k>0$ and $p_{0},\dots,p_{k+1}\in\mathbb{Q}(z)$. We say that
the admissible sequence $(p_{0},\dots,p_{k+1})$ is \emph{very regular}
if and only if $p_{0}\neq p_{k+1}$, $p_{0}(0)\neq p_{1}(0)$, $p_{k}(0)\neq p_{k+1}(0)$,
and for $t\in\{1,\dots,k\}$ neither $p_{0}(0)=p_{t}(0)=\infty$ nor
$p_{k+1}(0)=p_{t}(0)=\infty$ occurs.
\end{defn}

We say that $(p_{0},\dots,p_{k+1})\in\left(T_{z}^{\times}\mathbb{Q}(z)\right)^{k+2}$
is very regular (resp. admissible) if $(\overline{p_{0}},\dots,\overline{p_{k+1}})\in\mathbb{Q}(z)^{k+2}$
is very regular (resp. admissible). 
\begin{lem}
Let $k>0$ and $p_{0},\dots,p_{k+1}\in T_{z}^{\times}\mathbb{Q}(z)$.
If $(p_{0},\dots,p_{k+1})$ is very regular then
\[
\Reg_{z\rightarrow+0}^{(T)}\left(I_{\mathrm{up}}(p_{0}(z);p_{1}(z),\dots,p_{k}(z);p_{k+1}(z))\right)=I_{\mathrm{up}}(p_{0}[0];p_{1}[0],\dots,p_{k}[0];p_{k+1}[0]).
\]
\end{lem}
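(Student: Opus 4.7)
The plan is to extend equation~\eqref{eq:calc_Reg_const}, which handles the case of constant interior points $a_{1},\ldots,a_{k}\in\mathbb{Q}$, to the case of $z$-dependent interior points $p_{1}(z),\ldots,p_{k}(z)$ allowed by the very regular hypothesis. The key mechanism will be a continuity argument: for $z>0$ small, the upper path $\mathrm{up}$ may be chosen so as to avoid every locus $\{\overline{p_{j}(z)}\mid j=1,\ldots,k\}$ within a $z$-uniform neighborhood, so that the integrand $\prod_{j=1}^{k}\frac{dt_{j}}{t_{j}-\overline{p_{j}(z)}}$ stays uniformly bounded on the integration simplex and converges uniformly as $z\to+0$. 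The very regular hypothesis is used precisely to ensure that near $z=0$ the endpoints do not collide with any interior point (including at $\infty$).

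I first treat the case in which $\overline{p_{0}(0)}$ and $\overline{p_{k+1}(0)}$ are both finite. By the preceding continuity remark, the function $z\mapsto I_{\mathrm{up}}(p_{0}(z);p_{1}(z),\ldots,p_{k}(z);p_{k+1}(z))$ extends continuously to $z=0$ (after absorbing the endpoint-regularization polynomial in $T=\log z$ contributed by the tangential vectors $\mathrm{vec}(p_{0}(z))$ and $\mathrm{vec}(p_{k+1}(z))$). For interior indices $j$ with $\overline{p_{j}(0)}$ finite, the form $\frac{dt_{j}}{t_{j}-\overline{p_{j}(z)}}$ converges to $\frac{dt_{j}}{t_{j}-\overline{p_{j}(0)}}$; for $\overline{p_{j}(0)}=\infty$, the form converges uniformly to zero on compact subsets, so the whole integral vanishes in the limit, matching the convention $I(\ldots;\infty;\ldots)=0$ applied to $\overline{p_{j}[0]}=\infty$ on the right-hand side. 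This identifies the regularized limit with $I_{\mathrm{up}}(p_{0}[0];p_{1}[0],\ldots,p_{k}[0];p_{k+1}[0])$.

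Next I handle the case of endpoints at infinity. Suppose $\overline{p_{0}(0)}=\infty$; by very regularity every interior $\overline{p_{j}(0)}$ is then finite. Writing $\overline{p_{0}(z)}^{-1}=cz^{m}(1+O(z))$, I apply Proposition~\ref{prop:behavior_at_infinity} together with the computation immediately preceding equation~\eqref{eq:calc_Reg_const} to replace $p_{0}(z)$ by the extended tangential basepoint $p_{0}[0]=\overrightarrow{ce^{mT}}_{\infty}$ at the cost of an $O(z(\log z)^{N})$ error, which vanishes under $\Reg_{z\to+0}^{(T)}$. The case $\overline{p_{k+1}(0)}=\infty$ is treated symmetrically, and the finite-endpoint step then completes the argument.

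The main obstacle I anticipate is making the uniform-convergence argument in the first step fully rigorous when several interior points cluster together or escape to $\infty$ at different rates. The cleanest way to overcome this appears to be via the shuffle product and path decomposition, breaking the iterated integral into pieces in which at most one \emph{bad} behavior occurs per factor, thereby reducing to elementary convergence statements; alternatively, one can perform a change of variable sending a cluster at $\infty$ to the origin and then apply standard bounds on a compact simplex.
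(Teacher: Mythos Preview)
Your approach is essentially the same as the paper's, which dispatches the lemma in one sentence: ``The proof easily follows from equation~\eqref{eq:calc_Reg_const} and the theorem of dominated convergence (c.f.\ \cite[Lemma 3.3.35]{Panzer}).'' Your case split (finite endpoints via dominated convergence of the integrand; infinite endpoints via Proposition~\ref{prop:behavior_at_infinity} reducing to~\eqref{eq:calc_Reg_const}) is exactly the intended content of that sentence.

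One remark on your final paragraph: the obstacle you anticipate does not actually arise here. The very regular hypothesis is tailored so that a direct dominated-convergence argument succeeds without any shuffle or path decomposition. If $\overline{p_{0}(0)}$ and $\overline{p_{k+1}(0)}$ are both finite, the integration simplex stays in a fixed compact set, and interior points either converge to a finite limit or escape to $\infty$ (in which case the corresponding form tends to zero uniformly on that compact set); clustering of interior points among themselves is harmless since the forms $\frac{dt}{t-\overline{p_{j}(z)}}$ are individually bounded on the simplex regardless of their mutual proximity. If an endpoint is at $\infty$, very regularity forces all interior $\overline{p_{j}(0)}$ to be finite, and then your reduction via Proposition~\ref{prop:behavior_at_infinity} applies cleanly. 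The shuffle decomposition you mention is precisely the tool the paper reserves for the \emph{next} stage (Section~\ref{subsec:calc-for-adm-seq}), where the sequence is merely admissible rather than very regular.
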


The proof easily follows from equation (\ref{eq:calc_Reg_const})
and the theorem of dominated convergence (c.f. \cite[Lemma 3.3.35]{Panzer}).

Now, let us consider the affine transformation of an iterated integral.
For $p=(x,v)\in T_{z}^{\times}\mathbb{Q}(z)$, we define a multiplication
by $\alpha\in\mathbb{Q}(z)^{\times}$ by $\alpha\cdot p\coloneqq(\alpha x,\alpha v)\in T_{z}^{\times}\mathbb{Q}(z)$
and an addition by $y\in\mathbb{Q}(z)$ by $p+y\coloneqq(x+y,v)\in T_{z}^{\times}\mathbb{Q}(z)$. 
\begin{defn}
Let $k\geq0$ and $p_{0},\dots,p_{k+1},q_{0},\dots,q_{k+1}\in M$
where $M=T_{z}^{\times}\mathbb{Q}(z)$ or $\mathbb{Q}(z)$. We say
that $(p_{0},\dots,p_{k+1})$ and $(q_{0},\dots,q_{k+1})$ are \emph{affine-equivalent}
if there exists $m\in\mathbb{Z}$ and $y\in\mathbb{Q}(z)$ such that
$q_{i}=z^{m}\cdot p_{i}+y$ for all $0\leq i\leq k+1$. 
\end{defn}

Note that
\[
I_{{\rm up}}(p_{0};p_{1},\dots,p_{k};p_{k+1})=I_{{\rm up}}(q_{0};q_{1},\dots,q_{k};q_{k+1})
\]
if $(p_{0},\dots,p_{k+1})$ and $(q_{0},\dots,q_{k+1})$ are affine-equivalent.
\begin{defn}
Let $k\geq0$ and $p_{0},\dots,p_{k+1}\in\mathbb{Q}(z)$. We say that
the admissible sequence $(p_{0},\dots,p_{k+1})$ is \emph{regular}
if it is affine-equivalent to a very regular sequence. 
\end{defn}

Here again, we say that $(p_{0},\dots,p_{k+1})\in\left(T_{z}^{\times}\mathbb{Q}(z)\right)^{k+2}$
is regular if $(\overline{p_{0}},\dots,\overline{p_{k+1}})\in\mathbb{Q}(z)^{k+2}$
is regular.

For $p,q\in\mathbb{Q}(z)$, define a distance\footnote{The number ``$2$'' in the definition of $d(p,q)$ is not important.
One may use any distance function where $2$ is replace by some real
number $\varepsilon>1$, and the following argument goes just parallelly.} of $p$ and $q$ by
\[
d(p,q)=\begin{cases}
2^{-{\rm ord}_{z=0}(p-q)} & p\neq q\\
0 & p=q.
\end{cases}
\]
Now we can classify all regular sequences to three patterns. For
$m\geq1$, we denote by $\mathcal{R}_{m}$ the set of non-empty admissible
sequences $(p_{0},\dots,p_{k+1})$ of rational functions such that
$d(p_{0},p_{i})\leq d(p_{0},p_{1})=2^{-m}d(p_{0},p_{k+1})\neq0$ for
$1\leq i\leq k$. For $m\geq1$, we denote by $\mathcal{R}_{-m}$
the set of non-empty admissible sequences $(p_{0},\dots,p_{k+1})$
of rational functions such that $d(p_{k+1},p_{i})\leq d(p_{k+1},p_{k})=2^{-m}d(p_{0},p_{k+1})\neq0$
for $1\leq i\leq k$. We denote by $\mathcal{R}_{0}$ the set of non-empty
admissible sequences $(p_{0},\dots,p_{k+1})$ of rational functions
such that $d(p_{0},p_{1}),d(p_{k},p_{k+1})\geq d(p_{0},p_{k+1})\neq0$.
\begin{lem}
The set of regular sequences of rational functions can be decomposed
as $\bigsqcup_{n\in\mathbb{Z}}\mathcal{R}_{n}$.
\end{lem}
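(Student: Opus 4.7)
The proof consists of verifying the pairwise disjointness of $\{\mathcal{R}_n\}_{n\in\mathbb{Z}}$ and then the covering of all regular sequences. The key observation that I would exploit throughout is that $d$ is an \emph{ultrametric} on $\mathbb{Q}(z)$: the inequality $\mathrm{ord}_{z=0}(f+g)\geq\min(\mathrm{ord}_{z=0}(f),\mathrm{ord}_{z=0}(g))$, with equality whenever the two orders differ, translates to $d(p,q)\leq\max(d(p,r),d(r,q))$, with equality whenever $d(p,r)\neq d(r,q)$.

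For the disjointness, I would associate to each non-empty admissible sequence $(p_{0},\dots,p_{k+1})$ with $p_{0}\neq p_{k+1}$ the pair of ratios $\alpha\coloneqq d(p_{0},p_{1})/d(p_{0},p_{k+1})$ and $\beta\coloneqq d(p_{k},p_{k+1})/d(p_{0},p_{k+1})$. Unpacking the definitions, for $(p_{0},\dots,p_{k+1})\in\mathcal{R}_{m}$ with $m\geq 1$ one has $\alpha=2^{-m}$; combining this with ultrametric-equality applied to $p_{0},p_{i},p_{k+1}$ (using $d(p_{0},p_{i})\leq d(p_{0},p_{1})<d(p_{0},p_{k+1})$) also gives $d(p_{i},p_{k+1})=d(p_{0},p_{k+1})$ for all $1\leq i\leq k$, whence $\beta=1$. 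Symmetrically $\mathcal{R}_{-m}$ ($m\geq 1$) forces $(\alpha,\beta)=(1,2^{-m})$, and $\mathcal{R}_{0}$ forces $\alpha,\beta\geq 1$; these values separate the different $\mathcal{R}_{n}$'s.

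For the covering statement I would first observe that affine equivalence $q_{i}=z^{m}p_{i}+y$ multiplies every distance $d(p_{i},p_{j})$ by the common factor $2^{-m}$ (since $q_{i}-q_{j}=z^{m}(p_{i}-p_{j})$), preserving the ratios $\alpha,\beta$ and the inequalities defining each $\mathcal{R}_{n}$; thus membership is affine-invariant, and it suffices to verify the assertion for very regular sequences. For such $(p_{0},\dots,p_{k+1})$, I would split into cases according to the specializations $p_{0}(0),p_{k+1}(0)\in\mathbb{P}^{1}(\mathbb{Q})$. When both are finite, $d(p_{0},p_{1})=d(p_{k},p_{k+1})=1\geq d(p_{0},p_{k+1})$, placing the sequence in $\mathcal{R}_{0}$. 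When exactly one---say $p_{k+1}(0)$---equals $\infty$, very regularity forces $p_{t}(0)\in\mathbb{Q}$ for every $1\leq t\leq k$, giving $d(p_{0},p_{i})\leq 1=d(p_{0},p_{1})$ for such $i$ while $d(p_{0},p_{k+1})=2^{m}$ with $m\coloneqq-\mathrm{ord}_{z=0}(p_{k+1})>0$, putting it in $\mathcal{R}_{m}$ (and the symmetric case $p_{0}(0)=\infty$ in $\mathcal{R}_{-m}$). Finally when both endpoints specialize to $\infty$, write $p_{0}\sim az^{-m_{0}}$ and $p_{k+1}\sim bz^{-m_{k+1}}$ with $m_{0},m_{k+1}>0$; very regularity again forces every $p_{t}(0)$ ($1\leq t\leq k$) to be finite, whence $d(p_{0},p_{i})=2^{m_{0}}$ and $d(p_{k+1},p_{i})=2^{m_{k+1}}$ for those $i$, and comparing $m_{0}$ with $m_{k+1}$ (and $a$ with $b$ in the coincident case) places the sequence in the appropriate $\mathcal{R}_{m_{k+1}-m_{0}}$, or in $\mathcal{R}_{0}$ in the cancellation subcase $m_{0}=m_{k+1}$, $a=b$.

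The principal obstacle is only the bookkeeping in the last case together with a consistent application of the strict-ultrametric equalities; neither is deep, but they must be tracked carefully to confirm not just the values of $(\alpha,\beta)$ but also the auxiliary inequalities $d(p_{0},p_{i})\leq d(p_{0},p_{1})$ (respectively $d(p_{k+1},p_{i})\leq d(p_{k+1},p_{k})$) appearing in the definitions of $\mathcal{R}_{\pm m}$.
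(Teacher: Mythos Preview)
Your argument establishes the disjointness of the $\mathcal{R}_n$'s and the inclusion $\{\text{regular sequences}\}\subseteq\bigsqcup_{n}\mathcal{R}_n$, but the lemma asserts an \emph{equality}: you must also prove the converse inclusion, namely that every element of each $\mathcal{R}_n$ is regular (i.e., affine-equivalent to a very regular sequence). The sets $\mathcal{R}_n$ are defined purely in terms of the ultrametric $d$ on admissible sequences, with no regularity hypothesis built in, so this direction is not automatic and your outline does not address it. The paper handles it by normalization: for $\bm{p}\in\mathcal{R}_m$ with $m>0$, after an affine transformation one may assume $p_0=0$ and $d(p_0,p_1)=1$, whence $p_0(0)=0$, $p_1(0)\neq 0$, $p_i(0)\neq\infty$ for $1\leq i\leq k$, and $p_{k+1}(0)=\infty$, which is very regular; the cases $\mathcal{R}_{-m}$ and $\mathcal{R}_0$ are analogous. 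This converse is short but essential to the statement.

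A minor imprecision in what you do cover: in the case where both $p_0(0)$ and $p_{k+1}(0)$ are finite you claim $d(p_0,p_1)=1$, but very regularity only gives $p_0(0)\neq p_1(0)$, not that $p_1(0)$ is finite. The correct assertion is $d(p_0,p_1)\geq 1$ (and likewise $d(p_k,p_{k+1})\geq 1$), which is still sufficient for membership in $\mathcal{R}_0$, so your conclusion there stands.
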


\begin{proof}
First let us check disjointness of $(\mathcal{R}_{n})_{n\in\mathbb{Z}}$.
Put $\bm{p}=(p_{0};p_{1},\dots,p_{k};p_{k+1})$. Then $(\mathcal{R}_{n})_{n\in\mathbb{Z}}$
are disjoint since
\begin{itemize}
\item if $\bm{p}\in\bigsqcup_{m=1}^{\infty}\mathcal{R}_{m}$ then $d(p_{0},p_{1})<d(p_{0},p_{k+1})$
and $d(p_{k},p_{k+1})=d(p_{0},p_{k+1})$;
\item if $\bm{p}\in\bigsqcup_{m=1}^{\infty}\mathcal{R}_{-m}$ then $d(p_{0},p_{1})=d(p_{0},p_{k+1})$
and $d(p_{k},p_{k+1})<d(p_{0},p_{k+1})$;
\item if $\bm{p}\in\mathcal{R}_{0}$ then $d(p_{0},p_{1})\geq d(p_{0},p_{k+1})$
and $d(p_{k},p_{k+1})\geq d(p_{0},p_{k+1})$.
\end{itemize}
Let us show $\bm{p}\in\bigsqcup_{n\in\mathbb{Z}}\mathcal{R}_{n}$
for all regular sequence $\bm{p}=(p_{0};p_{1},\dots,p_{k};p_{k+1})$.
Since each $\mathcal{R}_{n}$ is invariant under the affine transformation,
we can assume that $\bm{p}$ is very regular. Note that if $p_{0}(0)=p_{k+1}(0)=\infty$
then $(zp_{0};zp_{1},\dots,zp_{k};zp_{k+1})$ is also very regular.
Thus, by multiplying $z^{m}$ with some positive $m$, we can assume,
without loss of generality, that at least one of $p_{0}(0)$ or $p_{k+1}(0)$
is finite. If $p_{0}(0)\neq\infty$ and $p_{k+1}(0)=\infty$ then
we have $d(p_{0},p_{k+1})>1$, $d(p_{0},p_{1})=1$, and $d(p_{0},p_{i})\leq1$
for $1\leq i\leq k$, and thus $\bm{p}\in\bigsqcup_{m=1}^{\infty}\mathcal{R}_{m}$.
Similarly, if $p_{0}(0)=\infty$ and $p_{k+1}(0)\neq\infty$ then
$\bm{p}\in\bigsqcup_{m=1}^{\infty}\mathcal{R}_{-m}$. If $p_{0}(0)\neq\infty$
and $p_{k+1}(0)\neq\infty$ then $d(p_{0},p_{k+1})\leq1$, $d(p_{0},p_{1})\geq1$
and $d(p_{k},p_{k+1})\geq1$, and thus $\bm{p}\in\mathcal{R}_{0}$.

Conversely, let us show that any $\bm{p}\in\bigsqcup_{n\in\mathbb{Z}}\mathcal{R}_{n}$
is a regular sequence. Let $m>0$. Let us show $\bm{p}\in\mathcal{R}_{m}$
is a regular sequence. Since $p_{0}\neq p_{1}$, we may assume without
loss of generality that $p_{0}=0$ and $d(p_{0},p_{1})=1$. Then,
by definition, $d(p_{0},p_{k+1})=2^{m}$ and $d(p_{0},p_{i})\leq1$
for all $1\leq i\leq k$. Thus $p_{0}(0)=0$, $p_{1}(0)\neq0$, and
$p_{i}(0)\neq\infty$ for all $1\leq i\leq k$ and $p_{k+1}(0)=\infty$.
Thus $\bm{p}$ is very regular. Similarly, $\bm{p}\in\mathcal{R}_{-m}$
is also a regular sequence. Now, assume that $\bm{p}\in\mathcal{R}_{0}$.
Since $p_{0}\neq p_{k+1}$, we may assume without loss of generality
that $p_{0}=0$ and $d(p_{0},p_{k+1})=1$. Since $\bm{p}\in\mathcal{R}_{0}$,
we have $d(p_{0},p_{1})\geq1$ and $d(p_{k},p_{k+1})\geq1$. Thus
$p_{0}(0)=0\neq\infty$, $p_{k+1}(0)\neq\infty$, $p_{0}(0)\neq p_{1}(0)$,
and $p_{k}(0)\neq p_{k+1}(0)$, and hence $\bm{p}$ is very regular.
This completes the proof of the lemma.
\end{proof}
\begin{defn}
\label{def:J_for_regular_sequence}Let $k>0$ and $p_{0},\dots,p_{k+1}\in T_{z}^{\times}\mathbb{Q}(z)$.
Assume that $(p_{0},\dots,p_{k+1})$ is regular\emph{.} Then we define
$J_{\mathrm{up}}(p_{0};p_{1},\dots,p_{k};p_{k+1})\in\mathcal{H}[T]$
by\emph{
\[
J_{\mathrm{up}}(p_{0};p_{1},\dots,p_{k};p_{k+1})\coloneqq I_{\mathrm{up}}^{\mathfrak{m}}(p_{0}'[0];p_{1}'[0],\dots,p_{k}'[0];p_{k+1}'[0])
\]
}where $(p_{0}',\dots,p_{k+1}')$ is a very regular sequence which
is affine-equivalent to $(p_{0},\dots,p_{k+1})$.
\end{defn}

Let us check well-definedness of Definition \ref{def:J_for_regular_sequence}.
i.e.,
\[
I_{\mathrm{up}}^{\mathfrak{m}}(p_{0}[0];p_{1}[0],\dots,p_{k}[0];p_{k+1}[0])=I_{\mathrm{up}}^{\mathfrak{m}}(p_{0}'[0];p_{1}'[0],\dots,p_{k}'[0];p_{k+1}'[0])
\]
for affine-equivalent very regular sequences $(p_{0},\dots,p_{k+1}),(p_{0}',\dots,p_{k+1}')\in(T_{z}^{\times}\mathbb{Q}(z))^{k+2}$
for $k>0$.
\begin{lem}
\label{lem:shifting_invariance}Let $k>0$, $p_{0},\dots,p_{k+1}\in T_{z}^{\times}\mathbb{Q}(z)$,
and $y\in\mathbb{Q}(z)$. Put $p_{i}'=p_{i}+y$ for $0\leq i\leq k+1$.
If $(p_{0},\dots,p_{k+1})$ and $(p_{0}',\dots,p_{k+1}')$ are very
regular sequences then
\[
I_{\mathrm{up}}^{\mathfrak{m}}(p_{0}[0];p_{1}[0],\dots,p_{k}[0];p_{k+1}[0])=I_{\mathrm{up}}^{\mathfrak{m}}(p_{0}'[0];p_{1}'[0],\dots,p_{k}'[0];p_{k+1}'[0]).
\]
\end{lem}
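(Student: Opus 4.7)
\emph{The crux} of the argument is to recognise that, when $y(0)$ is finite, the passage $p_{i}\mapsto p_{i}+y$ reduces on the $[0]$-limits to the translation $t\mapsto t+y(0)$ of $\mathbb{P}^{1}(\mathbb{Q})$, so that the desired identity becomes an instance of the translation invariance of motivic iterated integrals. The proof therefore naturally splits according to whether $y(0)\in\mathbb{Q}$ is finite or $y(0)=\infty$.

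\emph{The case $y(0)\in\mathbb{Q}$.} Writing $p_{i}=\overrightarrow{v_{i}}_{x_{i}}$, I would verify by a direct computation, case-split on whether $x_{i}(0)$ is finite or $\infty$, that $(p_{i}+y)[0]$ is the image of $p_{i}[0]$ under the translation $\tau\colon t\mapsto t+y(0)$. Concretely, when $x_{i}(0)\neq\infty$, the base point shifts from $x_{i}(0)$ to $x_{i}(0)+y(0)$ while the tangent vector $\hat{v_{i}}$ remains unchanged; when $x_{i}(0)=\infty$, the order $\mathrm{ord}_{z}\,x_{i}$ is strictly negative, so adding the regular-at-$0$ element $y$ preserves both this order and its leading coefficient, yielding $\widehat{x_{i}+y}=\hat{x_{i}}$ and hence $(p_{i}+y)[0]=p_{i}[0]$, which matches $\tau$ fixing $\infty$ together with its tangent vector. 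The identity of the two motivic iterated integrals then follows at once from the translation invariance of $I_{\mathrm{up}}^{\mathfrak{m}}$ under $t\mapsto t+c$ for $c\in\mathbb{Q}$.

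\emph{The case $y(0)=\infty$} is the main obstacle. Writing $y=y_{0}z^{-m}+O(z^{-m+1})$ with $m\geq1$ and $y_{0}\in\mathbb{Q}^{\times}$, generically $(x_{i}+y)(0)=\infty$ for every $i$, which would force $(p_{0}+y)(0)=(p_{1}+y)(0)=\infty$ and thereby violate the very regularity of the shifted sequence. Demanding that both sequences be very regular therefore imposes strong constraints: the pole of $y$ at $z=0$ must be cancelled exactly by $x_{0}$ and by $x_{k+1}$, while all interior $x_{t}$ must remain regular at $z=0$. Under these constraints, I would carry out a reduction by invoking Proposition \ref{prop:behavior_at_infinity} together with the path-composition formula, separating the boundary contributions at $(p_{0}+y)[0]$ and $(p_{k+1}+y)[0]$ from a middle portion whose endpoints are regular at $0$, thereby bringing the problem back to an instance already resolved by the first case. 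The delicate part will be tracking the tangent-vector data at $\infty$, which carries non-trivial $T$-dependence, through these decompositions, and verifying that the bookkeeping is consistent on both sides of the desired equality.
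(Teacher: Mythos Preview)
Your treatment of the case $y(0)\in\mathbb{Q}$ is correct and matches the paper's argument: the passage $p_i\mapsto p_i+y$ becomes the translation $t\mapsto t+y(0)$ on the $[0]$-limits, and translation invariance of $I_{\mathrm{up}}^{\mathfrak{m}}$ finishes the job.

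In the case $y(0)=\infty$, however, you have missed the key simplification and are proposing something far more complicated than needed. You correctly identify (up to the symmetry $y\leftrightarrow -y$, which you should mention to cover both sub-cases) the forced configuration: $\overline{p_0}(0)=\overline{p_{k+1}}(0)=\infty$, all interior $\overline{p_t}(0)\in\mathbb{Q}$, and hence all interior $\overline{p_t'}(0)=\infty$ while $\overline{p_0'}(0),\overline{p_{k+1}'}(0)\in\mathbb{Q}$. But from here the paper simply observes that \emph{both sides of the desired identity vanish}. The primed side is zero because every interior argument $p_t'[0]$ lies at $\infty$, and by convention $I^{\mathfrak{m}}(\,\cdot\,;\ldots,\infty,\ldots;\,\cdot\,)=0$. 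The unprimed side is zero because the constraints force $\overline{p_0}$ and $\overline{p_{k+1}}$ to have the same order and the same leading coefficient at $z=0$ (namely those of $-y$), so $p_0[0]=p_{k+1}[0]$ as extended tangential base points at $\infty$, and the upper-path integral from a point to itself vanishes for $k>0$.

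Your proposed route via Proposition~\ref{prop:behavior_at_infinity} and path composition is therefore unnecessary, and as you yourself acknowledge, the tangent-vector bookkeeping along that route is not worked out. Once you see that both sides are zero, there is nothing to reduce.
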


\begin{proof}
The case $y(0)\neq\infty$ is obvious since
\[
I_{\mathrm{up}}^{\mathfrak{m}}(p_{0}'[0];p_{1}'[0],\dots,p_{k}'[0];p_{k+1}'[0])=I_{\mathrm{up}}^{\mathfrak{m}}(p_{0}[0]+y(0),p_{1}[0]+y(0),\dots,p_{k}[0]+y(0);p_{k+1}[0]+y(0)).
\]
Assume that $y(0)=\infty$. Then $\overline{p_{0}}(0)=\infty$ or
$\overline{p_{0}'}(0)=\infty$. By symmetry, we can assume that $\overline{p_{0}}(0)=\infty$.
Since $(p_{0},\dots,p_{k+1})$ is very regular, $\overline{p_{i}}(0)\neq\infty$
for all $1\leq i\leq k$. Thus $\overline{p_{i}'}(0)=\infty$ for
all $1\leq i\leq k$. Since $(p_{0}',\dots,p_{k+1}')$ is very regular,
we have
\[
\overline{p_{0}'}(0)\neq\infty,\ \overline{p_{k+1}'}(0)\neq\infty.
\]
By $\overline{p_{0}'}(0)\neq\infty,\ \overline{p_{k+1}'}(0)\neq\infty$,
we can easily check $p_{0}[0]=p_{k+1}[0]$. Therefore, 
\begin{align*}
I_{\mathrm{up}}^{\mathfrak{m}}(p_{0}'[0];p_{1}'[0],\dots,p_{k}'[0];p_{k+1}'[0]) & =I_{\mathrm{up}}^{\mathfrak{m}}(p_{0}'[0];\infty,\dots,\infty;p_{k+1}'[0])=0\\
I_{\mathrm{up}}^{\mathfrak{m}}(p_{0}[0];p_{1}[0],\dots,p_{k}[0];p_{k+1}[0]) & =I_{\mathrm{up}}^{\mathfrak{m}}(p_{0}[0];p_{1}[0],\dots,p_{k}[0];p_{0}[0])=0.
\end{align*}
\end{proof}
\begin{lem}
Definition \ref{def:J_for_regular_sequence} does not depend on the
choice of very regular sequences.
\end{lem}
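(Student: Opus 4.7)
My proof plan. Two very regular sequences $(p_0',\dots,p_{k+1}')$ and $(p_0'',\dots,p_{k+1}'')$ affine-equivalent to a common regular sequence $(p_0,\dots,p_{k+1})$ are themselves affine-equivalent, so there exist $m\in\mathbb{Z}$ and $y\in\mathbb{Q}(z)$ with $p_i''=z^m p_i'+y$ for all $i$. Factoring this transformation as a shift by $y$ followed by a scaling by $z^m$, the shift part is handled by Lemma \ref{lem:shifting_invariance}. The problem therefore reduces to proving the scaling invariance
\[
I_{\mathrm{up}}^{\mathfrak{m}}(p_0[0];p_1[0],\dots,p_k[0];p_{k+1}[0]) = I_{\mathrm{up}}^{\mathfrak{m}}((zp_0)[0];(zp_1)[0],\dots,(zp_k)[0];(zp_{k+1})[0])
\]
under the hypothesis that both $(p_0,\dots,p_{k+1})$ and $(zp_0,\dots,zp_{k+1})$ are very regular; iteration then handles arbitrary $m$.

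A short case analysis of the very-regularity conditions shows that $\bar p_0(0)=\bar p_{k+1}(0)=\infty$, since otherwise multiplying by $z$ would send both an endpoint value and a neighboring middle-point value to $0$, contradicting condition (2) or (3) for the scaled sequence; consequently, condition (4) for the original sequence forces $\bar p_i(0)\in\mathbb{Q}$ for $1\leq i\leq k$. In particular, one has explicit descriptions of the extended tangential base points $p_i[0]$ and $(zp_i)[0]$ in $\widehat{T^{\times}}\mathbb{P}^1(\mathbb{Q})$.

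The equality of the two motivic iterated integrals is then established by induction on the weight $k$ via Brown's criterion (Lemma \ref{lem:Brown}), applied coefficient-wise in $\mathcal{H}[T]$ after using the characterization of $I^{\mathfrak{m}}$ with extended tangential base points as a polynomial in $T$. The period equality follows from the change of variable $t\mapsto zt$, which gives the identity
\[
I_{\mathrm{up}}(p_0(z);p_1(z),\dots,p_k(z);p_{k+1}(z)) = I_{\mathrm{up}}(zp_0(z);zp_1(z),\dots,zp_k(z);zp_{k+1}(z))
\]
for the underlying complex iterated integrals, so their regularized limits $\mathrm{Reg}_{z\to+0}^{(T)}$ coincide. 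For the infinitesimal coactions $D_r$ with $1\leq r<k$, we apply Proposition \ref{prop:coproduct_extended} on both sides: each expansion is a sum over sub-sequences $(p_{i_j}[0])$ and $((zp_{i_j})[0])$ respectively, and corresponding terms pair up sub-sequences which are related by the same scaling $z\cdot$ and hence are mutually affine-equivalent. After transforming each such pair into very regular sub-sequences by local applications of Lemma \ref{lem:shifting_invariance}, the induction hypothesis gives termwise equality of $D_r(\mathrm{LHS})$ and $D_r(\mathrm{RHS})$, and Brown's criterion concludes equality in weight $k$.

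The principal obstacle is the bookkeeping in the $D_r$ step: one must verify that every pair of sub-sequences arising in the coproduct expansion can be brought, by the shifts of Lemma \ref{lem:shifting_invariance}, into a pair of very regular representatives in a common affine-equivalence class, so that the induction hypothesis applies termwise. This amounts to a careful tracking of which indices $i$ satisfy $\bar p_i(0)=\infty$ within each sub-sequence, and exhibiting the corresponding local affine adjustment.
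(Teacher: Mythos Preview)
Your reduction has two genuine gaps. First, factoring $p_i''=z^m p_i'+y$ as ``shift then scale'' and invoking Lemma~\ref{lem:shifting_invariance} for the shift requires the intermediate sequence to be very regular, which you do not verify; likewise, iterating a single $z$-scaling $m$ times requires every intermediate $(z^j p_0',\dots,z^j p_{k+1}')$ to be very regular. The paper handles this more carefully: it first analyses the endpoint configuration of the \emph{unprimed} sequence, and only in the relevant case argues that $(z^m p_0,\dots,z^m p_{k+1})$ is automatically very regular, whereupon Lemma~\ref{lem:shifting_invariance} disposes of the shift by $y$.

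Second, and more seriously, your case analysis is incorrect: the configuration $\bar p_0(0),\bar p_{k+1}(0)\in\mathbb{Q}$ is \emph{not} excluded. For instance $(p_0,p_1,p_2)=(0,1/z,1)$ is very regular and so is $(zp_0,zp_1,zp_2)=(0,1,z)$, yet both endpoint values of the first sequence are finite. Your argument assumed the neighbouring middle point has finite value at $0$, but here $\bar p_1(0)=\infty$, so scaling does not collapse it onto the endpoint. The paper treats this finite--finite case by showing that both motivic iterated integrals vanish (one side has $\infty$ among the middle arguments, the other has coinciding endpoint base points); the mixed finite--infinite case is shown to be contradictory; and only in the $\infty$--$\infty$ case is a genuine equality needed, which the paper proves by an explicit computation with $\log^{\mathfrak m}$ and a M\"obius transformation rather than by induction.

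Your Brown's-criterion approach for that last case may be salvageable in principle, but as written the $D_r$ step is not justified: the subsequences $(p_i,\dots,p_{i+r+1})$ appearing in Proposition~\ref{prop:coproduct_extended} are in general not very regular (consecutive middle points may share a value at $z=0$), so the induction hypothesis does not apply termwise without a further shuffle-regularization that you have not supplied. The paper's direct computation sidesteps this entirely.
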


\begin{proof}
Recall that the affine-equivalence of two sequence $(p_{0},\dots,p_{k+1}),(p_{0}',\dots,p_{k+1}')\in T_{z}^{\times}\mathbb{Q}(z)$
means that there exists $m\in\mathbb{Z},y\in\mathbb{Q}(z)$ such that
$p_{i}'=z^{m}p_{i}+y$ for $0\leq i\leq k+1$. By symmetry, we may
assume that $m\geq0$. As we have already treated the case for $m=0$,
we assume that $m>0.$ So let $(p_{0},\dots,p_{k+1}),(p_{0}',\dots,p_{k+1}')\in T_{z}^{\times}\mathbb{Q}(z)$
be two very-regular sequences such that $p_{i}'=z^{m}p_{i}+y$ for
$0\leq i\leq k+1$ with some $m>0$ and $y\in\mathbb{Q}(z)$. If suffices
to prove that 
\[
I_{\mathrm{up}}^{\mathfrak{m}}(p_{0}[0];p_{1}[0],\dots,p_{k}[0];p_{k+1}[0])=I_{\mathrm{up}}^{\mathfrak{m}}(p_{0}'[0];p_{1}'[0],\dots,p_{k}'[0];p_{k+1}'[0]).
\]
Let us first consider the case $\overline{p_{0}}(0)\neq\infty$ and
$\overline{p_{k+1}}(0)\neq\infty$. Then $p_{0}'[0]=p_{k+1}'[0]$.
Since $\overline{p_{0}'}(0)\neq\overline{p_{1}'}(0)$, $\overline{p_{1}}(0)=\infty$.
Thus
\[
I_{\mathrm{up}}^{\mathfrak{m}}(p_{0}[0];p_{1}[0],\dots,p_{k}[0];p_{k+1}[0])=0=I_{\mathrm{up}}^{\mathfrak{m}}(p_{0}'[0];p_{1}'[0],\dots,p_{k}'[0];p_{k+1}'[0]).
\]

Next, let us consider the case $\overline{p_{0}}(0)\neq\infty$ and
$\overline{p_{k+1}}(0)=\infty$. Then $\overline{p_{1}}(0)\neq\infty$
by the very-regularity of $(p_{0},\dots,p_{k+1})$. Thus $p_{0}'[0]=p_{1}'[0]$,
which contradicts with the very-regularity of $(p_{0}',\dots,p_{k+1}')$.
By the same reasoning, $\overline{p_{0}}(0)=\infty$ and $\overline{p_{k+1}}(0)\neq\infty$
contradicts with the assumption.

Finally, let us consider the case $\overline{p_{0}}(0)=\infty$ and
$\overline{p_{k+1}}(0)=\infty$. Since $(p_{0},\dots,p_{k+1})$ is
very regular, $\overline{p_{1}}(0),\dots,\overline{p_{k}}(0)\in\mathbb{Q}$.
Thus if $(z^{m}p_{0},\dots,z^{m}p_{k+1})$ is not very regular then
$(\overline{z^{m}p_{0}},\dots,\overline{z^{m}p_{k+1}})|_{z=0}=(0,\ldots,0)$
and thus $(z^{m}p_{0}+y,\dots,z^{m}p_{k+1}+y)$ is also not very regular.
This in turn says that if $(p_{0}',\dots,p_{k+1}')$ is very regular,
$(z^{m}p_{0},\dots,z^{m}p_{k+1})$ is already very regular and so
by Lemma \ref{lem:shifting_invariance}, it is sufficient to consider
the case $y=0$. Put $n_{1}\coloneqq-{\rm ord}_{z=0}(p_{0})$, $a_{1}\coloneqq\lim_{z\to0}(p_{0}z^{n_{1}})$,
$n_{2}\coloneqq-{\rm ord}_{z=0}(p_{k+1})$, $a_{2}\coloneqq\lim_{z\to0}(p_{k+1}z^{n_{2}})$.
Then $(z^{m}p_{0},\dots,z^{m}p_{k+1})$ is very regular iff $m\leq\min(n_{1},n_{2})$.
For example, if $n_{1}=m<n_{2}$ and $a_{2}/a_{1}>0$, then
\begin{align*}
I_{\mathrm{up}}^{\mathfrak{m}}(p_{0}[0];p_{1}[0],\dots,p_{k}[0];p_{k+1}[0]) & =I_{\mathrm{up}}^{\mathfrak{m}}(\overrightarrow{a_{1}^{-1}e^{-mT}}_{\infty};\overline{p_{1}}(0),\dots,\overline{p_{k}}(0);\overrightarrow{a_{2}^{-1}e^{-n_{2}T}}_{\infty})\\
 & =\frac{\left(\log^{\mathfrak{m}}(\frac{a_{2}}{a_{1}})+(n_{2}-m)T\right)^{k}}{k!}
\end{align*}
since the residue of $\frac{dt}{t-p_{i}}$ at $\infty$ is $-1$,
and
\begin{align*}
I_{\mathrm{up}}^{\mathfrak{m}}(p_{0}'[0];p_{1}'[0],\dots,p_{k}'[0];p_{k+1}'[0]) & =I_{\mathrm{up}}^{\mathfrak{m}}(a_{1};\{0\}^{k};\overrightarrow{a_{2}^{-1}e^{-(n_{2}-m)T}}_{\infty})\\
 & =(-1)^{k}I_{\mathrm{up}}^{\mathfrak{m}}(a_{1}^{-1};\{0\}^{k};\overrightarrow{a_{2}^{-1}e^{-(n_{2}-m)T}}_{0})\\
 & =(-1)^{k}I_{\mathrm{up}}^{\mathfrak{m}}((a_{2}/a_{1})e^{(n_{2}-m)T};\{0\}^{k};\overrightarrow{1}_{0})\\
 & =I_{\mathrm{up}}^{\mathfrak{m}}(\overrightarrow{1}_{0};\{0\}^{k};(a_{2}/a_{1})e^{(n_{2}-m)T})\\
 & =\frac{\left(\log^{\mathfrak{m}}(\frac{a_{2}}{a_{1}})+(n_{2}-m)T\right)^{k}}{k!}.
\end{align*}
Here, we use M\"{o}bius transformation $t\mapsto t^{-1}$ for the
second equality. The other cases also follow from similar calculations.
\end{proof}

\subsection{\label{subsec:calc-for-adm-seq}The calculation of the regularized
limit for admissible sequences}

Put $\Sigma:=\bigsqcup_{k\geq0}T_{z}^{\times}\mathbb{Q}(z)^{k}$ and
for a subset $I$ of $\Sigma$, we define $\mathbb{Q}I$ to be the
formal $\mathbb{Q}$-span of $I$. Fix $p,q\in T_{z}^{\times}\mathbb{Q}(z)$
such that $\overline{p}\neq\overline{q}$. We put
\[
I_{{\rm adm}}^{(p,q)}=I_{{\rm adm}}\coloneqq\{(p_{1},\dots,p_{k})\in\Sigma\mid(p,p_{1},\dots,p_{k},q)\text{ is admissible}\}.
\]
For $n\in\mathbb{Z}$, we put 
\[
I_{n}\coloneqq\{(p_{1},\dots,p_{k})\in\Sigma\mid k=0\text{ or }(p,p_{1},\dots,p_{k},q)\in\mathcal{R}_{n}\}.
\]
Then the map
\begin{equation}
\shuffle:\bigotimes_{n\in\mathbb{Z}}\mathbb{Q}I_{{\rm n}}\to\mathbb{Q}I_{{\rm adm}}\label{eq:shu_decom_for_admseq}
\end{equation}
is isomorphic. Thus, we can calculate
\[
\Reg_{z\rightarrow+0}^{(T)}\left(I_{{\rm up}}(p_{0}(z);p_{1}(z),\dots,p_{k}(z);p_{k+1}(z))\right)
\]
for any admissible sequence $(p_{0},\dots,p_{k+1})$. 
\begin{example}
\label{exa:calc_of_limit_for_admseq}Let us calculate
\[
\Reg_{z\rightarrow+0}^{(T)}\left(I_{{\rm up}}(0;z,1;2)\right).
\]
Then sequence $(0;z,1;2)$ is not regular, but by (\ref{eq:shu_decom_for_admseq}),
we can write
\[
(z,1)=(z)\shuffle(1)-(1,z).
\]
Thus
\[
I_{{\rm up}}(0;z,1;2)=I_{{\rm up}}(0;z;2)I_{{\rm up}}(0;1;2)-I_{{\rm up}}(0;1,z;2)
\]
where the sequences
\[
(0,z,2)\in\mathcal{R}_{1},\ (0,1,2)\in\mathcal{R}_{0},\ (0,1,z,2)\in\mathcal{R}_{0}
\]
are all regular. Since
\begin{align*}
\Reg_{z\rightarrow+0}^{(T)}I_{{\rm up}}(0;z;2) & =\Reg_{z\rightarrow+0}^{(T)}I_{{\rm up}}(0;1;\frac{2}{z})=I_{{\rm up}}(0;1;\overrightarrow{\exp(T)/2}_{\infty})\\
\Reg_{z\rightarrow+0}^{(T)}I_{{\rm up}}(0;1;2) & =I_{{\rm up}}(0;1;2)\\
\Reg_{z\rightarrow+0}^{(T)}I_{{\rm up}}(0;1,z;2) & =I_{{\rm up}}(0;1,0;2),
\end{align*}
we obtain
\[
\Reg_{z\rightarrow+0}^{(T)}\left(I_{{\rm up}}(0;z,1;2)\right)=I_{{\rm up}}(0;1;\overrightarrow{\exp(T)/2}_{\infty})I_{{\rm up}}(0;1;2)-I_{{\rm up}}(0;1,0;2).
\]
\end{example}

Based on this calculation, we define $J_{\mathrm{up}}(;;)$ for admissible
sequences as follows.
\begin{defn}
We define a map $F_{p,q}:\mathbb{Q}I_{{\rm adm}}\to\mathcal{H}[T]$
by the composition
\[
\mathbb{Q}I_{{\rm adm}}\xrightarrow{(\shuffle)^{-1}}\bigotimes_{n\in\mathbb{Z}}\mathbb{Q}I_{{\rm n}}\xrightarrow{(\bm{p}_{1}\otimes\cdots\otimes\bm{p}_{r})\mapsto\prod_{i=1}^{r}J_{\mathrm{up}}(p;\bm{p}_{i};q)}\mathcal{H}[T]
\]
where $J_{\mathrm{up}}(p;\bm{p}_{i};q)$ is defined in Definition
\ref{def:J_for_regular_sequence} since $(p,\bm{p}_{i},q)$'s are
regular sequences.
\end{defn}

\begin{defn}
For $(p_{1},\dots,p_{k})\in I_{{\rm adm}}$, we define
\[
J_{\mathrm{up}}(p;p_{1},\dots,p_{k};q)\coloneqq F_{p,q}((p_{1},\dots,p_{k}))\in\mathcal{H}[T].
\]
\end{defn}

\begin{example}
$J_{\mathrm{up}}(0;z,1;2)$ is obtained by replacing (complex-valued)
iterated integrals with the corresponding motivic iterated integral
in the result of Example \ref{exa:calc_of_limit_for_admseq}, i.e.,
\[
J_{\mathrm{up}}(0;z,1;2)=I_{{\rm up}}^{\mathfrak{m}}(0;1;\overrightarrow{\exp(T)/2}_{\infty})I_{{\rm up}}^{\mathfrak{m}}(0;1;2)-I_{{\rm up}}^{\mathfrak{m}}(0;1,0;2).
\]
\end{example}

\subsection{\label{subsec:calc-for-nonadm-seq}The calculation of the regularized
limit for non-admissible sequences}

Fix $p,q\in T_{z}^{\times}\mathbb{Q}(z)$ such that $\overline{p}\neq\overline{q}$.
For $r\in T_{z}^{\times}\mathbb{Q}(z)$, we put $I_{r}\coloneqq\{(r_{1},\dots,r_{k})\in(T_{z}^{\times}\mathbb{Q}(z))^{k}\mid k\geq0,\overline{r_{1}}=\cdots=\overline{r_{k}}=\overline{r}\}$.
Then
\begin{equation}
\shuffle:\mathbb{Q}I_{p}\otimes\mathbb{Q}I_{q}\otimes\mathbb{Q}I_{{\rm adm}}^{(p,q)}\simeq\mathbb{Q}\Sigma.\label{eq:sh_decom_for_nonadm}
\end{equation}
Now for $(p_{1},\dots,p_{k})\in I_{p}\cup I_{q}$, we have
\[
\Reg_{z\rightarrow+0}^{(T)}\left(I_{{\rm up}}(p(z);p_{1}(z),\dots,p_{k}(z);q(z))\right)=I_{{\rm up}}(p'[0];p_{1}'[0],\dots,p_{k}'[0];q'[0])
\]
where $(p',p_{1}',\dots,p_{k}',q')$ is a sequence which is affine-equivalent
to $(p,p_{1},\ldots,p_{k},q)$ satisfying $\{\overline{p}(0),\overline{q}(0),\infty\}=3$.
Therefore, by the isomorphism (\ref{eq:sh_decom_for_nonadm}) we can
calculate $\Reg_{z\rightarrow+0}^{(T)}\left(I_{{\rm up}}(p(z);p_{1}(z),\dots,p_{k}(z);q(z))\right)$
for $(p_{1},\dots,p_{k})\in\Sigma$.

Keeping this calculation in mind, we define 
\[
J_{\mathrm{up}}(p;p_{1},\dots,p_{k};q)\in\mathcal{H}[T]
\]
for $(p_{1},\dots,p_{k})\in\Sigma$ as follows. For $(p_{1},\dots,p_{k})\in I_{p}\cup I_{q}$,
we set
\[
J_{\mathrm{up}}(p;p_{1},\dots,p_{k};q)\coloneqq I_{\mathrm{up}}^{\mathfrak{m}}(p'[0];p_{1}'[0],\dots,p_{k}'[0];q'[0])
\]
where $(p',p_{1}',\dots,p_{k}',q')$ is a sequence which is affine-equivalent
to $(p,p_{1},\ldots,p_{k},q)$ satisfying $\{\overline{p}(0),\overline{q}(0),\infty\}=3$.
Hence, by (\ref{eq:sh_decom_for_nonadm}), we employ the following
definition.
\begin{defn}
\label{def:J_for_the_differnet_endpoints_general}We define $J_{\mathrm{up}}(p;p_{1},\dots,p_{k};q)$
for $(p_{1},\dots,p_{k})\in\Sigma$ as the composed map
\[
\mathbb{Q}\Sigma\xrightarrow{(\shuffle)^{-1}}\mathbb{Q}I_{p}\otimes\mathbb{Q}I_{q}\otimes\mathbb{Q}I_{{\rm adm}}^{(p,q)}\xrightarrow{(\bm{p}_{1}\otimes\bm{p}_{2}\otimes\bm{p}_{3})\mapsto\prod_{i=1}^{3}J_{\mathrm{up}}(p;\bm{p}_{i};q)}\mathcal{H}[T].
\]
\end{defn}

\subsection{\label{subsec:calc-for-equal-endpts}The case of trivial closed paths}

Fix $p,q\in T_{z}^{\times}\mathbb{Q}(z)$ such that $\overline{p}=\overline{q}$.
In this case, one can show for $(p_{1},\dots,p_{k})\in\Sigma$ that
\[
\Reg_{z\rightarrow+0}^{(T)}\left(I_{{\rm up}}(p(z);p_{1}(z),\dots,p_{k}(z);q(z))\right)\coloneqq\begin{cases}
I_{\mathrm{up}}(p'[0];p_{1}'[0],\dots,p_{k}'[0];q'[0]) & \overline{p}=\overline{p_{1}}=\cdots=\overline{p_{k}}\;(=\overline{q})\\
0 & {\rm otherwise}.
\end{cases}
\]
where $(p',p_{1}',\dots,p_{k}',q')$ is a sequence which is affine-equivalent
to $(p,p_{1},\dots,p_{k},q)$ satisfying $\overline{p'}(0)\neq\infty$.
Noting this, we define the value of $J_{\mathrm{up}}$ with the same
endpoints as follows.
\begin{defn}
\label{def:J_for_the_same_endpoints}With the same $(p',p_{1}',\dots,p_{k}',q')$
as above, we define
\[
J_{\mathrm{up}}(p;p_{1},\dots,p_{k};q)\coloneqq\begin{cases}
I_{\mathrm{up}}^{\mathfrak{m}}(p'[0];p_{1}'[0],\dots,p_{k}'[0];q'[0]) & \overline{p}=\overline{p_{1}}=\cdots=\overline{p_{k}}\;(=\overline{q})\\
0 & {\rm otherwise}.
\end{cases}
\]
\end{defn}

\subsection{\label{subsec:path-composition-for-J}The path composition formula
for $J_{\mathrm{up}}$}

In this section we will show that the ring-homomorphism $\mathcal{I}_{\bullet}(T_{z}^{\times}\mathbb{Q}(z))\longrightarrow\mathcal{H}[T]$
which maps $\mathbb{I}(p_{0};p_{1},\ldots,p_{k};p_{k+1})$ to $J_{\mathrm{up}}(p_{0};p_{1},\ldots,p_{k};p_{k+1})$
is well-defined. In other words, $J$ satisfies the same relations
as the defining relations (1)--(4) of $\mathbb{I}$ stated in Section
\ref{subsec:Goncharov's_Hopf_algebra}. Out of the four relations,
(1), (2) and (4) are trivial by definition. Therefore, we shall prove
the property (3) for $J$. Firstly, we state the following two lemmas
which are straightforward by definition.
\begin{lem}
\label{lem:J_vanish}$J_{\mathrm{up}}(p_{0};p_{1},\dots,p_{k};p_{k+1})=0$
if there exists $1\leq i\leq k$ such that $d(p_{0},p_{k+1})<d(p_{0},p_{i})$.
\end{lem}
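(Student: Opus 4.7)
The plan is to descend through the layered definitions of $J_{\mathrm{up}}$, tracking where the offending index $i$ must land at each step. Note first that the hypothesis $d(p_0,p_i)>d(p_0,p_{k+1})\geq 0$ forces $\overline{p_i}\neq\overline{p_0}$, and since $\overline{p_i}=\overline{p_{k+1}}$ would entail $d(p_0,p_i)=d(p_0,p_{k+1})$, we also get $\overline{p_i}\neq\overline{p_{k+1}}$. When $\overline{p_0}=\overline{p_{k+1}}$, the result is immediate from Definition \ref{def:J_for_the_same_endpoints}: since $\overline{p_i}\neq\overline{p_0}$, we fall into the \emph{otherwise} branch returning $0$.

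When $\overline{p_0}\neq\overline{p_{k+1}}$, I would first apply Definition \ref{def:J_for_the_differnet_endpoints_general}: the shuffle isomorphism \eqref{eq:sh_decom_for_nonadm} writes $(p_1,\dots,p_k)$ as a $\mathbb{Q}$-linear sum of shuffles $\bm{a}\shuffle\bm{b}\shuffle\bm{c}$ with $\bm{a}\in I_p$, $\bm{b}\in I_q$, $\bm{c}\in I_{\mathrm{adm}}^{(p,q)}$, and in each summand $p_i$ must sit inside $\bm{c}$ (since $\overline{p_i}$ is neither $\overline{p_0}$ nor $\overline{p_{k+1}}$). Next, applying the shuffle isomorphism \eqref{eq:shu_decom_for_admseq} to expand $J_{\mathrm{up}}(p_0;\bm{c};p_{k+1})$ as a sum over products of $J_{\mathrm{up}}$-values on regular sequences, I observe that in each resulting summand $p_i$ ends up in exactly one piece $\bm{q}$ with $(p_0,\bm{q},p_{k+1})\in\mathcal{R}_n$ for some $n$, and the bad inequality $d(p_0,p_i)>d(p_0,p_{k+1})$ is inherited verbatim for this sub-sequence. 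It thus suffices to show $J_{\mathrm{up}}(p_0;\bm{q};p_{k+1})=0$ for any regular sequence satisfying the hypothesis.

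The defining inequalities of the regularity classes immediately exclude $n\neq 0$: for $n\geq 1$ one has $d(p_0,p_i)\leq d(p_0,p_1)=2^{-n}d(p_0,p_{k+1})<d(p_0,p_{k+1})$, and for $n\leq -1$ the ultrametric inequality gives $d(p_0,p_i)\leq\max(d(p_0,p_{k+1}),d(p_{k+1},p_i))=d(p_0,p_{k+1})$, both contradicting the hypothesis. For $(p_0,\bm{q},p_{k+1})\in\mathcal{R}_0$ I would produce an explicit very regular representative: with $m\coloneqq -\mathrm{ord}_{z=0}(\overline{p_0}-\overline{p_{k+1}})$ and $y\coloneqq -z^{m}\overline{p_0}$, the affine transformation $p\mapsto z^{m}p+y$ sends $p_0$ to $0$, sends $p_{k+1}$ to a point finite and nonzero at $z=0$, and sends the offending $p_i$ to one with $\overline{p_i'}(0)=\infty$, because the hypothesis forces the order of $z^{m}(\overline{p_0}-\overline{p_i})$ at $z=0$ to be strictly negative. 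The $\mathcal{R}_0$-inequalities $d(p_0,p_1),d(p_k,p_{k+1})\geq d(p_0,p_{k+1})$ supply precisely the non-degeneracy needed for very regularity at the two ends, and Definition \ref{def:J_for_regular_sequence} then expresses $J_{\mathrm{up}}$ as a motivic iterated integral with $\infty$ as an interior argument, which vanishes by convention. The main technical hurdle is this final step—carefully verifying the complete list of very regular conditions for the constructed representative under the non-Archimedean hypotheses, and keeping track of $p_i$ through the two nested shuffle decompositions to guarantee it always ends up in an $\mathcal{R}_0$-factor.
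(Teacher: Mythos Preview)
Your proposal is correct and is exactly the unpacking of the layered definitions that the paper has in mind; indeed the paper does not spell out a proof at all, stating only that the lemma is ``straightforward by definition''. Your careful tracking of the offending entry $p_i$ through the two shuffle decompositions \eqref{eq:sh_decom_for_nonadm} and \eqref{eq:shu_decom_for_admseq}, the exclusion of $\mathcal{R}_n$ for $n\neq 0$ via the ultrametric inequality, and the explicit very regular representative in the $\mathcal{R}_0$ case sending $p_i$ to $\infty$ constitute precisely the verification that the authors left to the reader.
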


\begin{lem}
\label{lem:J_ignorable_difference}If $d(q,q')<d(q,p_{1})$ then
\[
J_{\mathrm{up}}(q;p_{1},\dots,p_{k};p_{k+1})=J_{\mathrm{up}}(q';p_{1},\dots,p_{k};p_{k+1}).
\]
\end{lem}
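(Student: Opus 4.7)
The plan is to derive the lemma by a case analysis that reduces (via the shuffle decompositions used in the definition of $J_{\mathrm{up}}$) to the case of a regular sequence, and then verifies the equality by unpacking the affine-equivalent very regular sequence construction.

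The key input is the ultrametric property of $d$. From $d(q,q')<d(q,p_{1})$ I would first derive the following two consequences: (a) $d(q',p_{i})=d(q,p_{i})$ for every $i\in\{1,\dots,k\}$ with $d(q,p_{i})>d(q,q')$, and in particular $d(q',p_{1})=d(q,p_{1})$, so the admissibility at the left end is preserved; (b) $\overline{q}$ and $\overline{q'}$ agree as formal power series in $z$ up to order strictly greater than $\mathrm{ord}_{z=0}(\overline{q}-\overline{p_{1}})$, so after any affine transformation that resolves the singularity at the left endpoint, they have identical ``$[0]$-values.''

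Next I would carry out the reduction in the following order. First, apply the shuffle decomposition (\ref{eq:sh_decom_for_nonadm}) to both $J_{\mathrm{up}}(q;p_{1},\dots,p_{k};p_{k+1})$ and $J_{\mathrm{up}}(q';p_{1},\dots,p_{k};p_{k+1})$, which splits off the non-admissible parts of the sequence living in $I_{q}\cup I_{p_{k+1}}$ (respectively $I_{q'}\cup I_{p_{k+1}}$). Using consequence (b), these factors are the same under $q\leftrightarrow q'$, up to a small separate check. Then apply (\ref{eq:shu_decom_for_admseq}) to the remaining admissible factor to reduce to each regularity class $\mathcal{R}_{n}$; since consequence (a) says the pairwise distances with the interior $p_{i}$ are unchanged, the two decompositions match class by class. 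Finally, in the regular case, the definition of $J_{\mathrm{up}}$ is a motivic iterated integral evaluated at $p_{i}'[0]$ for some affine-equivalent very regular sequence; one directly checks that the affine transformation can be chosen so that both $q$ and $q'$ produce the same value of $q[0]$ (resp.\ $q'[0]$), because $\overline{q}-\overline{q'}$ vanishes at $z=0$ to higher order than the affine scale required to resolve the singularity at the left endpoint.

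The main obstacle will be the case where $\overline{q}=\overline{p_{k+1}}$ but $\overline{q'}\neq\overline{p_{k+1}}$ (or the symmetric case), because then one side is defined by Definition \ref{def:J_for_the_same_endpoints} (trivial closed paths) and the other by Definition \ref{def:J_for_the_differnet_endpoints_general}. I would handle this by showing that both sides vanish: on the $q$-side, Definition \ref{def:J_for_the_same_endpoints} forces $J_{\mathrm{up}}(q;p_{1},\dots,p_{k};p_{k+1})=0$ because some $\overline{p_{i}}\neq\overline{q}$ (e.g., $i=1$); on the $q'$-side, the sequence $(q',p_{1},\dots,p_{k},p_{k+1})$ satisfies $d(q',p_{k+1})=d(q,q')<d(q,p_{1})=d(q',p_{1})$, so Lemma \ref{lem:J_vanish} gives $J_{\mathrm{up}}(q';p_{1},\dots,p_{k};p_{k+1})=0$. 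Apart from this endpoint-collision case, the remainder of the argument is a bookkeeping exercise of the definitions.
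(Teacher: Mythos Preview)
Your approach is correct and is exactly the unwinding of the definitions that the paper has in mind when it calls the lemma ``straightforward by definition'' (the paper gives no proof at all). One point worth making explicit, since it is the hinge of your first reduction step: in the shuffle decomposition (\ref{eq:sh_decom_for_nonadm}), the reason the $I_{q}$- and $I_{q'}$-components of the two decompositions agree is not consequence~(b) directly but rather that both components are the empty sequence. Indeed, $d(q,p_{1})>0$ forces $\overline{p_{1}}\neq\overline{q}$ (and likewise $\overline{p_{1}}\neq\overline{q'}$), so the sequence is already left-admissible and the shuffle inverse puts nothing into $I_{q}$ (resp.\ $I_{q'}$). Without this remark one could worry that some $p_{i}$ with $\overline{p_{i}}=\overline{q}\neq\overline{q'}$ would be absorbed into $I_{q}$ on one side but not the other, making the two decompositions genuinely different. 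Once this is said, your remaining steps (matching the $\mathcal{R}_{n}$-classes via the preserved distances, then checking that the affine transform sends $q$ and $q'$ to the same $[0]$-value because $\overline{q}-\overline{q'}$ vanishes to higher order than the scaling) go through cleanly, and your handling of the endpoint-collision case via Lemma~\ref{lem:J_vanish} is correct.
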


\begin{prop}
\label{prop:path_composition_formula_J}We have
\[
\sum_{i=0}^{k}J_{\mathrm{up}}(p_{0};p_{1},\dots,p_{i};q)J_{\mathrm{up}}(q;p_{i+1},\dots,p_{k};p_{k+1})=J_{\mathrm{up}}(p_{0};p_{1},\dots,p_{k};p_{k+1}).
\]
\end{prop}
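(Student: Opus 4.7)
The strategy is to recognize the identity as the motivic lift of the trivial fact that, on the complex side, $\sum_{i=0}^{k}\Reg_{z\to+0}^{(T)}I_{\mathrm{up}}(p_{0};\dots,p_{i};q)\cdot\Reg_{z\to+0}^{(T)}I_{\mathrm{up}}(q;p_{i+1},\dots;p_{k+1})=\Reg_{z\to+0}^{(T)}I_{\mathrm{up}}(p_{0};\dots;p_{k+1})$. This holds pointwise because $I_{\mathrm{up}}$ satisfies path composition and $\Reg_{z\to+0}^{(T)}$ is a ring homomorphism on the algebra of functions having a $\log z$-polynomial asymptotic. Consequently the proof is essentially bookkeeping: one must verify that the motivic lift $J_{\mathrm{up}}$, which was constructed piecewise through Sections \ref{subsec:calc-for-regular-seq}--\ref{subsec:calc-for-equal-endpts}, respects this structure on each branch of the construction.

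The plan is to argue by a case analysis matching the hierarchy of definitions. First, if $\overline{p_{0}}=\overline{p_{k+1}}$, Definition \ref{def:J_for_the_same_endpoints} makes the RHS vanish unless all $\overline{p_{i}}$ coincide, in which case both sides reduce to an ordinary motivic identity after affine normalization, split according to whether $\overline{q}$ equals the common base or not; the remaining sub-cases are handled using the vanishing criterion of Lemma \ref{lem:J_vanish}. When $\overline{p_{0}}\neq\overline{p_{k+1}}$, apply the shuffle decomposition (\ref{eq:sh_decom_for_nonadm}) to isolate the $I_{p_{0}}$- and $I_{p_{k+1}}$-substrings in $(p_{1},\ldots,p_{k})$. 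Both sides are multiplicative with respect to this decomposition (the RHS by the very construction in Definition \ref{def:J_for_the_differnet_endpoints_general}, the LHS by combining the shuffle product formula with a standard path-composition rearrangement), so the problem reduces to the admissible case. A further application of the decomposition (\ref{eq:shu_decom_for_admseq}) into regularity classes $\mathcal{R}_{n}$ then reduces to the case where $(p_{0},p_{1},\ldots,p_{k},p_{k+1})$ is regular in the sense of Definition \ref{def:J_for_regular_sequence}.

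In the regular case, $J_{\mathrm{up}}(p_{0};p_{1},\ldots,p_{k};p_{k+1})$ is literally equal to a motivic iterated integral $I_{\mathrm{up}}^{\mathfrak{m}}(p_{0}'[0];p_{1}'[0],\ldots,p_{k}'[0];p_{k+1}'[0])$ for any affine-equivalent very regular sequence. The desired identity then follows from the ordinary path composition formula for motivic iterated integrals with extended tangential base points, which is built into the groupoid structure used in Proposition \ref{prop:coproduct_extended}. The technical subtlety is that the inserted point $q$ need not belong to the same regularity class as $(p_{0},p_{k+1})$; this is resolved by combining Lemma \ref{lem:J_vanish} (terms for which $d(p_{0},q)$ or $d(p_{k+1},q)$ exceeds $d(p_{0},p_{k+1})$ produce only vanishing contributions on the LHS) with Lemma \ref{lem:J_ignorable_difference} (which permits one to replace $q$ by a convenient proxy whose distance from $q$ is negligible relative to the neighbouring $p_{i}$).

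The principal obstacle is precisely this last bookkeeping step: checking that, after the two rounds of shuffle decomposition and after replacing $q$ by an appropriate proxy, the three-way split of path composition on the motivic side lines up with the way $J_{\mathrm{up}}$ was defined piecewise. Once the cases are organized so that $\overline{q}\in\{\overline{p_{0}},\overline{p_{k+1}}\}$ is treated via Definition \ref{def:J_for_the_same_endpoints} and $\overline{q}\notin\{\overline{p_{0}},\overline{p_{k+1}}\}$ via Definitions \ref{def:J_for_regular_sequence} and \ref{def:J_for_the_differnet_endpoints_general}, the matching of coefficients is combinatorial and routine, and the proposition follows.
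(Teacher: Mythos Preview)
The overall scaffolding of your proposal---reducing to the regular case via shuffle decompositions and then invoking path composition for motivic iterated integrals---matches the paper's approach. However, there is a genuine gap in the treatment of the regular case, and the claim that the final matching is ``combinatorial and routine'' is precisely where the paper's proof is most delicate.

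First, your use of Lemma \ref{lem:J_vanish} for the case $d(p_{0},q)>d(p_{0},p_{k+1})$ is incorrect: the hypothesis of that lemma requires some \emph{interior} point $p_{j}$ with $d(p_{0},p_{j})>d(p_{0},p_{k+1})$, and when $q$ is the far endpoint the interior points lie in the small cluster, so the individual terms $J_{\mathrm{up}}(p_{0};p_{1},\dots,p_{i};q)\,J_{\mathrm{up}}(q;p_{i+1},\dots;p_{k+1})$ do \emph{not} vanish. The paper handles this case by a different device: it inserts $p_{0}$ as an auxiliary intermediate point and uses the antipode-type identity $\sum_{i\le j}(-1)^{j-i}J_{\mathrm{up}}(p_{0};(p_{1},\dots,p_{i})\shuffle(p_{j},\dots,p_{i+1});q)=\delta_{j,0}$ to reduce to the case $d(p_{0},q)\le d(p_{0},p_{k+1})$.

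Second, and more seriously, even once $(p_{0},\dots,p_{k+1})$ is very regular and $q$ is in the right range, the sub-sequences $(p_{0},\dots,p_{i},q)$ and $(q,p_{i+1},\dots,p_{k+1})$ are typically \emph{not} very regular when $q$ clusters with some $p_{j}$'s in the ultrametric sense (the paper's ``$q\in U$'' case). Lemma \ref{lem:J_ignorable_difference} cannot help here: it permits replacing $q$ only by a proxy $q'$ with $d(q,q')<d(q,p_{1})$, so it cannot move $q$ out of the cluster. The paper resolves this by \emph{induction on $k$} together with a non-trivial combinatorial operator $\Theta$ that shuffle-regularizes the offending sub-sequences; only after applying the induction hypothesis to the strictly shorter pieces do both sides reduce to genuine motivic path compositions at $z=0$. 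Your proposal contains neither the induction nor this regularization step, so as it stands it does not close.
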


\begin{proof}
We prove the claim by induction on $k$. Since the case $k=0$ is
obvious, we can assume that $k>0$. There are two possibilities about
the configurations of $p_{0},p_{k+1},q$: the case $d(p_{0},p_{k+1})<d(p_{0},q)$
and $d(p_{0},p_{k+1})\geq d(p_{0},q)$. In fact, the first case follows
from the second case since if we assume that the claim holds for all
second cases then
\begin{align*}
 & \sum_{i=0}^{k}J_{\mathrm{up}}(p_{0};p_{1},\dots,p_{i};q)J_{\mathrm{up}}(q;p_{i+1},\dots,p_{k};p_{k+1})\\
 & =\sum_{i=0}^{k}\sum_{j=i}^{k}J_{\mathrm{up}}(p_{0};p_{1},\dots,p_{i};q)J_{\mathrm{up}}(q;p_{i+1},\dots,p_{j};p_{0})J_{\mathrm{up}}(p_{0};p_{j+1},\dots,p_{k};p_{k+1})\\
 & =\sum_{j=0}^{k}\left(\sum_{i=0}^{j}(-1)^{j-i}J_{\mathrm{up}}(p_{0};(p_{1},\dots,p_{i})\shuffle(p_{j},\dots,p_{i+1});q)\right)J_{\mathrm{up}}(p_{0};p_{j+1},\dots,p_{k};p_{k+1})\\
 & =\sum_{j=0}^{k}\delta_{j,0}J_{\mathrm{up}}(p_{0};p_{j+1},\dots,p_{k};p_{k+1})\\
 & =J_{\mathrm{up}}(p_{0};p_{1},\dots,p_{k};p_{k+1}).
\end{align*}
Thus it is enough to only consider the case $d(p_{0},p_{k+1})\geq d(p_{0},q)$.
Without loss of generality, we may assume that the sequence $(p_{0},\dots,,p_{k+1})$
is very regular since the general case follows from these special
cases, the shuffle product formula for $J_{\mathrm{up}}(;;)$, and
the compatibility of the shuffle product and the deconcatenation coproduct.

If $p_{i}(0)=\infty$ for some $1\leq i\leq k$ then $p_{0}(0)\neq\infty$
and $p_{k+1}(0)\neq\infty$. Furthermore, by assumption $d(p_{0},p_{k+1})\geq d(p_{0},q)$,
$q(0)\neq\infty$. Thus both hand sides of the claim vanish.

Thus we can assume that $p_{i}(0)\neq\infty$ for $1\leq i\leq k$.
If $p_{0}(0)=\infty$, $d(p_{0},p_{i})=d(p_{0},p_{1})$ and if $p_{0}(0)\neq\infty$,
$d(p_{0},p_{i})\leq d(p_{0},p_{1})=1$ for $1\leq i\leq k$. Therefore,
$d(p_{0},p_{i})\leq d(p_{0},p_{1})$ and by the same argument $d(p_{k+1},p_{i})\leq d(p_{k+1},p_{k})$
for $1\leq i\leq k$. If $d(p_{0},q)<d(p_{0},p_{1})$ then the claim
holds since we have
\begin{align*}
 & \sum_{i=0}^{k}J_{\mathrm{up}}(p_{0};p_{1},\dots,p_{i};q)J_{\mathrm{up}}(q;p_{i+1},\dots,p_{k};p_{k+1})\\
 & =\sum_{i=0}^{k}\delta_{i,0}J_{\mathrm{up}}(q;p_{i+1},\dots,p_{k};p_{k+1})\\
 & =J_{\mathrm{up}}(q;p_{1},\dots,p_{k};p_{k+1})\\
 & =J_{\mathrm{up}}(p_{0};p_{1},\dots,p_{k};p_{k+1})
\end{align*}
by Lemmas \ref{lem:J_vanish} (the first equality) and \ref{lem:J_ignorable_difference}
(the last equality). Thus we can assume that
\[
d(p_{0},p_{1})\leq d(p_{0},q).
\]
Similarly, we can also assume that 
\[
d(p_{k+1},p_{k})\leq d(q,p_{k+1}).
\]
Now recall that the conditions $d(p_{0},p_{i})\leq d(p_{0},p_{1})\leq d(p_{0},q)\leq d(p_{0},p_{k+1})$
and $d(p_{k+1},p_{i})\leq d(p_{k+1},p_{k})\leq d(p_{k+1},q)\leq d(p_{0},p_{k+1})$
are invariant under affine-equivalence. Our proof uses only these
conditions and does not use the very-regularity condition on the sequence
$(p_{0},\ldots,p_{k+1})$, and thus we pass from one sequence to another
affine-equivalent sequence at our convenience hereafter in the proof.
Put $a\coloneqq\min\{d(p_{0},p_{1}),d(p_{k},p_{k+1})\}$ and $U\coloneqq\{p\in T_{z}^{\times}\mathbb{Q}(z)\mid d(p,p_{1})\leq a\}$.
Then $p_{1},\dots,p_{k}\in U$. If $\overline{p_{1}}=\cdots=\overline{p_{k}}$
then the claim follows by a straight forward calculation. Thus, hereafter
we assume that $\{\overline{p_{1}},\dots,\overline{p_{k}}\}>1$. Note
that this condition implies $a\neq0$ and $\max\{d(q,p_{j})\mid1\leq j\leq k\}\neq0$.
Now we split the case into two cases, the case $q\notin U$ and the
case $q\in U$.

Let us first discuss the case $q\notin U$. By affine-equivalence
we may pass to the case where $0\in U$ and $a=1$ without loss of
generality. Now that $p_{1},\dots,p_{k}\in U$ we have $\overline{p_{1}}(0),\dots,\overline{p_{k}}(0)\in\mathbb{Q}$,
and by definition of $a$, we also have $\overline{p_{0}}(0)\neq\overline{p_{1}}(0)$,
$\overline{p_{k}}(0)\neq\overline{p_{k+1}}(0)$. Moreover, since $q\notin U$
we have $\overline{q}(0)=\infty$. Thus, by definition of $J$ for
a very regular sequence and the path composition formula for extended
motivic iterated integral, 
\begin{align*}
 & \sum_{i=0}^{k}J_{\mathrm{up}}(p_{0};p_{1},\dots,p_{i};q)J_{\mathrm{up}}(q;p_{i+1},\dots,p_{k};p_{k+1})\\
 & =\sum_{i=0}^{k}I_{\mathrm{up}}^{\mathfrak{m}}(p_{0}[0];p_{1}(0),\dots,p_{i}(0);q[0])I_{\mathrm{up}}^{\mathfrak{m}}(q[0];p_{i+1}(0),\dots,p_{k}(0);p_{k+1}[0])\\
 & =I_{\mathrm{up}}^{\mathfrak{m}}(p_{0}[0];p_{1}(0),\dots,p_{k}(0);p_{k+1}[0])\\
 & =J_{\mathrm{up}}(p_{0};p_{1},\dots,p_{k};p_{k+1}),
\end{align*}
which proves the claim for the case $q\notin U$.

To discuss the case $q\in U$, we introduce the quantity denoted by
$\Theta$ as follows. For a set $X$, we denote by $\Sigma_{X}$ the
set of (possibly empty) sequences of the elements of $X$, and for
$I\subset\Sigma_{X}$, we define $\mathbb{Q}I$ to be the formal $\mathbb{Q}$-span
of the elements in $I$. For $\bm{Q}=(Q_{1},\dots,Q_{r}),\bm{Q}'=(Q_{1}',\dots,Q_{s}'),\bm{Q}''=(Q_{1}'',\dots,Q_{t}'')\in\Sigma_{X}$,
define $\Theta(\bm{Q};\bm{Q}'';\bm{Q}')$ as the unique element of
$\mathbb{Q}\Sigma_{X}$ satisfying the property
\[
\sum_{i=0}^{r}\sum_{j=0}^{s}(Q_{1},\dots,Q_{i})\shuffle\Theta(Q_{i+1},\dots,Q_{r};\bm{Q}'';Q_{1}',\dots,Q_{j}')\shuffle(Q_{j+1}',\dots,Q_{s}')=(\bm{Q},\bm{Q}'',\bm{Q'}).
\]
We can check that $\Theta(\bm{Q};\bm{Q}'';\bm{Q}')$ is an element
of $\mathbb{Q}I$ where $I\subset\Sigma_{X}$ is the set of the sequences
$(R_{1},\dots,R_{m})$ such that
\begin{itemize}
\item $(R_{1},\dots,R_{m})$ is a permutation of $(Q_{1},\dots Q_{r},Q_{1}'',\dots,Q_{t}'',Q_{1}',\dots,Q_{s}')$,
\item $R_{1}\in\{Q_{1}'',Q_{1}',\dots,Q_{s}'\}$ if $t>0$ and $R_{1}\in\{Q_{1}',\dots,Q_{s}'\}$
if $t=0$,
\item $R_{m}\in\{Q_{t}'',Q_{1},\dots,Q_{r}\}$ if $t>0$ and $R_{m}\in\{Q_{1},\dots,Q_{r}\}$
if $t=0$.
\end{itemize}
Now let us discuss the case $q\in U$. By an affine transformation,
we can pass to the case where $\max\{d(q,p_{j})\mid1\leq j\leq k\}=1$
and $\overline{q}(0)\in\mathbb{Q}$ without loss of generality. Note
that $a\geq1$ since $a\geq\max\{d(q,p_{j})\mid1\leq j\leq k\}=1$.
Recall that by assumption, $\overline{p_{1}}(0),\ldots,\overline{p_{k}}(0)\in\mathbb{Q}$
and $\overline{p_{0}}(0)\neq\overline{p_{1}}(0)$, $\overline{p_{k}}(0)\neq\overline{p_{k+1}}(0)$.
Thus the sequence $(p_{0},\ldots,p_{k+1})$ is very regular, but the
sequences $(p_{0},\ldots,p_{i},q)$ or $(q,p_{i+1},\ldots,p_{k+1})$
may not be very regular because $d(q,p_{i})$ or $d(q,p_{i+1})$ may
be smaller than $1$. Put 
\[
\Gamma\coloneqq\{1\leq j\leq k\mid d(q,p_{j})<1\}.
\]
For each $i\in\{0,\dots,k\}$, let $l(i)$ be the minimum integer
between $0$ and $i$ such that $\{l(i)+1,\dots,i\}\subset\Gamma$,
and $r(i)$ be the maximal integer between $i$ and $k$ such that
$\{i+1,\dots,r(i)\}\subset\Gamma$. By definition of $\Theta$ and
the shuffle product formula for $J_{\mathrm{up}}$, we have
\[
J_{\mathrm{up}}(p_{0};p_{1},\dots,p_{i};q)=\sum_{l(i)\leq i'\leq i}J_{\mathrm{up}}\left(p_{0};\Theta(\emptyset;p_{1},\dots,p_{l(i)};p_{l(i)+1},\dots,p_{i'});q\right)J_{\mathrm{up}}(p_{0};p_{i'+1},\dots,p_{i};q)
\]
and
\[
J_{\mathrm{up}}(q;p_{i+1},\dots,p_{k};p_{k+1})=\sum_{i\leq i''\leq r(i)}J_{\mathrm{up}}\left(q;\Theta(p_{i''+1},\dots,p_{r(i)};p_{r(i)+1},\dots,p_{i''};\emptyset);p_{k+1}\right)J_{\mathrm{up}}(q;p_{i+1},\dots,p_{i''};p_{k+1}).
\]
Thus, we have
\begin{align}
 & \sum_{i=0}^{k}J_{\mathrm{up}}(p_{0};p_{1},\dots,p_{i};q)J_{\mathrm{up}}(q;p_{i+1},\dots,p_{k};p_{k+1})\nonumber \\
 & =\sum_{\substack{0\leq i'\leq i\leq i''\leq k\\
\{i'+1,\dots,i''\}\subset\Gamma
}
}A_{i'}B_{i''}J_{\mathrm{up}}(p_{0};p_{i'+1},\dots,p_{i};q)J_{\mathrm{up}}(q;p_{i+1},\dots,p_{i''};p_{k+1})\label{eq:path_comp_calc1}
\end{align}
where 
\begin{align*}
A_{i'} & \coloneqq J_{\mathrm{up}}\left(p_{0};\Theta(\emptyset;p_{1},\dots,p_{l(i')};p_{l(i')+1},\dots,p_{i'});q\right)\\
B_{i''} & \coloneqq J_{\mathrm{up}}\left(q;\Theta(p_{i''+1},\dots,p_{r(i'')};p_{r(i'')+1},\dots,p_{k};\emptyset);p_{k+1}\right).
\end{align*}
Since $\max\{d(q,p_{j})\mid1\leq j\leq k\}=1$, there exists $1\leq j\leq k$
such that $j\notin\Gamma$ and so the term $(i',i'')=(0,k)$ cannot
happen. Thus, we may apply the induction hypothesis and (\ref{eq:path_comp_calc1})
equals
\[
\sum_{\substack{0\leq i'\leq i''\leq k\\
\{i'+1,\dots,i''\}\subset\Gamma
}
}A_{i'}B_{i''}C_{i',i''}
\]
where $C_{i',i''}:=J_{\mathrm{up}}(p_{0};p_{i'+1},\dots,p_{i''};p_{k+1}).$
Then for any $j\in\Gamma$, we have 
\[
d(p_{0},p_{j})\geq1
\]
since $d(p_{0},q)\geq d(p_{0},p_{1})\geq a\geq1$ and $d(q,p_{j})<1$.
Hence $\overline{p_{0}}(0)\neq\overline{p_{j}}(0)$ for $l(i')+1\leq j\leq i'$.
Note that we also have $\overline{p_{0}}(0)\neq\overline{p_{1}}(0)$,
$\overline{p_{1}}(0),\dots,\overline{p_{i'}}(0)\in\mathbb{Q}$ and
$\overline{p_{l(i')}}(0)\neq\overline{q}(0)$. Thus for any sequence
$(R_{1},\ldots,R_{m})\in\varSigma_{T_{z}^{\times}\mathbb{Q}(z)}$
that appear in $\Theta(\emptyset;p_{1},\dots,p_{l(i')};p_{l(i')+1},\dots,p_{i'})$,
the sequence $(p_{0},R_{1},\ldots,R_{m},q)$ is very regular. Therefore,
\[
A_{i'}=I_{\mathrm{up}}^{\mathfrak{m}}(p_{0}[0];\Theta(\emptyset;p_{1}(0),\dots,p_{l(i')}(0);p_{l(i')+1}(0),\dots,p_{i'}(0));q[0]).
\]
Similarly, we have
\[
B_{i''}=I^{\mathfrak{m}}\left(q[0];\Theta(p_{i''+1}(0),\dots,p_{r(i'')}(0);p_{r(i'')+1}(0),\dots,p_{k}(0);\emptyset);p_{k+1}[0]\right).
\]
Furthermore, since $i'+1,i''\in\Gamma$ we have $d(p_{0},p_{i'+1}),d(p_{i''},p_{k+1})\geq1$
and thus $(p_{0},p_{i'+1},\dots,p_{i''},p_{k+1})$ is very regular.
Therefore we have
\begin{align*}
C_{i',i''} & =I_{\mathrm{up}}^{\mathfrak{m}}(p_{0}[0];p_{i'+1}(0),\dots,p_{i''}(0);p_{k+1}[0])\\
 & =\sum_{i'\leq i\leq i''}I_{\mathrm{up}}^{\mathfrak{m}}(p_{0}[0];p_{i'+1}(0),\dots,p_{i}(0);q[0])\cdot I_{\mathrm{up}}^{\mathfrak{m}}(q[0];p_{i+1}(0),\dots,p_{i''}(0);p_{k+1}[0]).
\end{align*}
It now readily follows that
\begin{align*}
\sum_{\substack{0\leq i'\leq i''\leq k\\
\{i'+1,\dots,i''\}\subset\Gamma
}
}A_{i'}B_{i''}C_{i',i''} & =\sum_{i=0}^{k}I_{\mathrm{up}}^{\mathfrak{m}}(p_{0}[0];p_{1}(0),\dots,p_{i}(0);q[0])\cdot I_{\mathrm{up}}^{\mathfrak{m}}(q[0];p_{i+1}(0),\dots,p_{k}(0);p_{k+1}[0])\\
 & =I_{\mathrm{up}}^{\mathfrak{m}}(p_{0}[0];p_{1}(0),\dots,p_{k}(0);p_{k+1}[0])\\
 & =J_{\mathrm{up}}(p_{0};p_{1},\dots,p_{k};p_{k+1}),
\end{align*}
which proves the claim for the case $q\in U$.
\end{proof}
By Proposition \ref{prop:path_composition_formula_J}, we arrive at
our desired consequence:
\begin{lem}
\label{lem:Ring_homomorphism_J_up} \sloppy The ring homomorphism
from $\mathcal{I}_{\bullet}(T_{z}^{\times}\mathbb{Q}(z))$ to $\mathcal{H}[T]$
which maps $\mathbb{I}(p_{0};p_{1},\ldots,p_{k};p_{k+1})$ to $J_{\mathrm{up}}(p_{0};p_{1},\ldots,p_{k};p_{k+1})$
is well-defined.
\end{lem}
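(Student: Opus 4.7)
The plan is to show that the values $J_{\mathrm{up}}(p_0; p_1, \ldots, p_k; p_{k+1})$ satisfy each of the four defining relations (1)--(4) of Goncharov's algebra $\mathcal{I}_{\bullet}(T_z^{\times}\mathbb{Q}(z))$ listed in Section \ref{subsec:Goncharov's_Hopf_algebra}, so that the assignment descends to a ring homomorphism. Property (3), the path composition formula, has already been established as Proposition \ref{prop:path_composition_formula_J}, so only (1), (2), and (4) remain; all three should follow mechanically from the case-by-case construction of $J_{\mathrm{up}}$ in Sections \ref{subsec:calc-for-regular-seq}--\ref{subsec:calc-for-equal-endpts}.

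For the unit axiom (1), I would trace the empty-sequence case through each of the definitions. When $\bar{p_0} \neq \bar{p_{k+1}}$ the empty sequence $(p_0, p_{k+1})$ is trivially regular, and Definition \ref{def:J_for_regular_sequence} yields $I_{\mathrm{up}}^{\mathfrak{m}}(p_0'[0]; \emptyset; p_{k+1}'[0]) = 1$; when $\bar{p_0} = \bar{p_{k+1}}$, Definition \ref{def:J_for_the_same_endpoints} applies with its side condition $\bar{p} = \bar{p_1} = \cdots = \bar{p_k}$ vacuously satisfied, again returning $1$.

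For the shuffle product formula (2), the key observation is that the extensions of $J_{\mathrm{up}}$ from regular sequences to general sequences proceed \emph{via} shuffle decompositions, concretely the isomorphisms (\ref{eq:shu_decom_for_admseq}) and (\ref{eq:sh_decom_for_nonadm}) together with Definitions \ref{def:J_for_the_differnet_endpoints_general} and \ref{def:J_for_the_same_endpoints}. Hence the shuffle relation for $J_{\mathrm{up}}$ reduces to the shuffle relation for the motivic iterated integrals $I_{\mathrm{up}}^{\mathfrak{m}}$ appearing at the base of these decompositions, which is a standard property of motivic iterated integrals with (extended) tangential base points.

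For the vanishing relation (4), I need $J_{\mathrm{up}}(a; s_1, \ldots, s_m; a) = 0$ for $m > 0$. Since $\bar{a} = \bar{a}$, this case is governed by Definition \ref{def:J_for_the_same_endpoints}: if some $\bar{s_i} \neq \bar{a}$ the value is $0$ by the second case of that definition; otherwise an affine transformation bringing $\bar{a}$ to a finite point reduces $J_{\mathrm{up}}$ to $I_{\mathrm{up}}^{\mathfrak{m}}(a'[0]; s_1'[0], \ldots, s_m'[0]; a'[0])$, which vanishes because the tangential endpoints agree and this is exactly the relation $\mathbb{I}(a; s_1, \ldots, s_m; a) = 0$ for motivic iterated integrals. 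The only genuine obstacle in the whole construction was the path composition formula itself, already dispatched in Proposition \ref{prop:path_composition_formula_J}; the remaining verifications are unwinding of Definitions \ref{def:J_for_regular_sequence}, \ref{def:J_for_the_differnet_endpoints_general}, and \ref{def:J_for_the_same_endpoints}.
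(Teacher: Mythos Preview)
Your proposal is correct and follows exactly the paper's approach: the paper explicitly states at the start of Section \ref{subsec:path-composition-for-J} that relations (1), (2), and (4) are ``trivial by definition'' and that only (3) requires work, which is then supplied by Proposition \ref{prop:path_composition_formula_J}. Your write-up simply spells out in more detail why (1), (2), and (4) are immediate from Definitions \ref{def:J_for_regular_sequence}, \ref{def:J_for_the_differnet_endpoints_general}, and \ref{def:J_for_the_same_endpoints}, which is fine.
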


\subsection{\label{subsec:limit-for-any-path}The regularized limit for an arbitrary
path}

Now we define $J_{\gamma}(p_{0};p_{1},\ldots,p_{k};p_{k+1})$ for
a general path $\gamma$. For a path $\gamma$ from $p_{0}(z)$ to
$p_{k+1}(z)$, we define $J_{\gamma}(p_{0};p_{1},\ldots,p_{k};p_{k+1})\in\mathcal{H}[T]$
to be the unique element satisfying the following properties.
\begin{enumerate}
\item If $\gamma$ is the upper path, $J_{\gamma}(p_{0};p_{1},\ldots,p_{k};p_{k+1})\coloneqq J_{\mathrm{up}}(p_{0};p_{1},\ldots,p_{k};p_{k+1})$.
\item If $p_{0}(z)=p_{k+1}(z)$ ($=p(z)$) and $\gamma$ is a path that
encircles the point $\overline{p}(z)$ counter-clockwisely $r\in\mathbb{Z}$
times, then 
\[
J_{\gamma}(p;p_{1},\ldots,p_{k};p)\coloneqq\begin{cases}
\frac{(r\mu)^{k}}{k!} & \text{ if }\overline{p_{1}}=\cdots=\overline{p_{k}}=\overline{p}\\
0 & \text{ otherwise.}
\end{cases}
\]
\item For any paths $\gamma$ from $p_{0}(z)$ to $q(z)$ and $\gamma'$
from $q(z)$ to $p_{k+1}(z)$, 
\[
J_{\gamma\gamma'}(p_{0};p_{1},\ldots,p_{k};p_{k+1})=\sum_{i=0}^{k}J_{\gamma}(p_{0};p_{1},\ldots,p_{i};q)J_{\gamma'}(q;p_{i+1},\ldots,p_{k};p_{k+1})
\]
where $\gamma\gamma'$ denotes the composed path of $\gamma$ and
$\gamma'$ from $p_{0}(z)$ to $p_{k+1}(z)$.
\end{enumerate}
Note that the existence and the uniqueness of such $J_{\gamma}$ follows
from the path composition formula for upper paths (Proposition \ref{prop:path_composition_formula_J})
because any path can be decomposed into upper paths and the paths
that encircle a point (the paths of the type (2) above).

Finally, let us check that the $J_{\gamma}$ thus defined can also
be related with Goncharov's Hopf algebra $\mathcal{I}_{\bullet}(S)$.
Let $X$ be a finite subset of $T_{z}^{\times}\mathbb{Q}(z)$ and
put $\overline{X}\coloneqq\{\overline{p}\mid p\in X\}\subset\mathbb{Q}(z)$.
Fix an element $q$ of $T_{z}^{\times}\mathbb{Q}(z)$ and let $S$
be the set of pair $(p,\gamma)$ of an element of $p\in X$ and a
path $\gamma$ from $q$ to $p$ on $\mathbb{C}\setminus\overline{X}$.
For $\bm{p}=(p,\gamma)\in S$, we denote by $\check{\bm{p}}$ its
first component $p$ and by ${\rm path}(\bm{p})$ its second component
$\gamma$. Then, by Lemma \ref{lem:Ring_homomorphism_J_up}, we obtain
the following proposition.
\begin{prop}
\label{prop:Ring_hom_J_gamma}\sloppy Let $S$ be as above. Then
the ring homomorphism from $\mathcal{I}_{\bullet}(S)$ to $\mathcal{H}[T]$
which maps $\mathbb{I}(\boldsymbol{p_{0}};\boldsymbol{p_{1}},\dots,\boldsymbol{p_{k}};\boldsymbol{p_{k+1}})$
to $J_{\gamma}(\check{\boldsymbol{p_{0}}};\check{\boldsymbol{p_{1}}},\dots,\check{\boldsymbol{p_{k}}};\check{\boldsymbol{p_{k+1}}})$
where $\gamma$ is the path from $\check{\boldsymbol{p_{0}}}$ to
$\check{\boldsymbol{p_{k+1}}}$ given by ${\rm path}(\boldsymbol{p_{0}})^{-1}\cdot{\rm path}(\boldsymbol{p_{k+1}})$
is well-defined.
\end{prop}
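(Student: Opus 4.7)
The plan is to verify that all four defining relations of $\mathcal{I}_{\bullet}(S)$ from Section \ref{subsec:Goncharov's_Hopf_algebra} (the unit relation, the shuffle product, the path composition formula, and the trivial-loop vanishing) are satisfied by the assignment $\mathbb{I}(\bm{p_0};\bm{p_1},\dots,\bm{p_k};\bm{p_{k+1}})\mapsto J_{\gamma}(\check{\bm{p_0}};\check{\bm{p_1}},\dots,\check{\bm{p_k}};\check{\bm{p_{k+1}}})$ with $\gamma=\mathrm{path}(\bm{p_0})^{-1}\cdot\mathrm{path}(\bm{p_{k+1}})$. The unit relation is immediate from the $k=0$ cases of the definition of $J_\gamma$. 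The path composition relation follows directly from property (3) in the definition of $J_\gamma$ given at the start of Section \ref{subsec:limit-for-any-path}, because the path associated to $\mathbb{I}(\bm{p_0};\ldots;\bm{p_{k+1}})$ factors through any intermediate $\bm{b}\in S$ as $(\mathrm{path}(\bm{p_0})^{-1}\mathrm{path}(\bm{b}))\cdot(\mathrm{path}(\bm{b})^{-1}\mathrm{path}(\bm{p_{k+1}}))$, exactly matching the paths assigned to the two factors on the right-hand side. The trivial-loop vanishing follows from property (2) of $J_\gamma$ applied to the trivial loop $\mathrm{path}(\bm{a})^{-1}\mathrm{path}(\bm{a})$, which has winding number $r=0$.

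The main step is to verify the shuffle product relation. Any path $\gamma$ on $\mathbb{C}\setminus\overline{X}$ can be written as a finite composition $\gamma=\gamma_1\gamma_2\cdots\gamma_n$ in which each $\gamma_i$ is either an upper path or a small loop encircling a single puncture. Using property (3) of $J_\gamma$ iteratively, one expresses $J_\gamma(\check{\bm{a}};\bm{s};\check{\bm{b}})$ as a sum of products of the $J_{\gamma_i}$'s indexed by ordered decompositions of the argument word $\bm{s}$ into $n$ consecutive sub-words. A standard Hopf-algebraic observation---the compatibility of the shuffle product with the deconcatenation coproduct on words---then shows that if the shuffle formula holds for each $\gamma_i$, it also holds for the composition $\gamma$. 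It therefore suffices to check the shuffle formula for the two base cases: upper paths and loops. The upper-path case is precisely Lemma \ref{lem:Ring_homomorphism_J_up}. For a loop $\gamma$ encircling $\overline{p}$ with winding number $r\in\mathbb{Z}$, the quantity $J_\gamma(p;p_1,\dots,p_k;p)$ vanishes unless $\overline{p_1}=\cdots=\overline{p_k}=\overline{p}$, in which case it equals $\frac{(r\mu)^k}{k!}$, and the shuffle formula reduces to the elementary identity $\frac{(r\mu)^k}{k!}\cdot\frac{(r\mu)^l}{l!}=\binom{k+l}{k}\frac{(r\mu)^{k+l}}{(k+l)!}$.

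The main obstacle is organizing the inductive Hopf-algebraic argument of the previous paragraph cleanly. This amounts to an induction on the number of pieces $n$ in the decomposition of $\gamma$, exploiting the fact that in the shuffle algebra of words the shuffle product of two words of lengths $k$ and $l$ decomposes consistently with splitting each word into prefixes and suffixes; once this framework is in place, the inductive step is essentially a rearrangement of a double sum. Combining this reduction with the two base cases (upper paths via Lemma \ref{lem:Ring_homomorphism_J_up}, loops via direct computation) and the three remaining relations checked in the first paragraph, the proposition follows.
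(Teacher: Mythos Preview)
Your proof is correct and follows essentially the same approach as the paper, which simply states that the proposition follows from Lemma \ref{lem:Ring_homomorphism_J_up}. You have spelled out in detail what the paper leaves implicit: reducing the general path to upper paths and loops via the defining property (3) of $J_\gamma$, invoking Lemma \ref{lem:Ring_homomorphism_J_up} for the upper-path base case, handling loops by the explicit formula in property (2), and noting the shuffle/deconcatenation compatibility that makes the inductive reduction work.
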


\subsection{\label{subsec:inf-coaction-for-J}The infinitesimal coaction of $J(;;)$}

For $p_{0},\dots,p_{k+1}\in T_{z}^{\times}\mathbb{Q}(z)$, we denote
by $J^{\mathfrak{l}}(p_{0};p_{1},\dots,p_{k};p_{k+1})$ the image
of $J_{\gamma}(p_{0};p_{1},\dots,p_{k};p_{k+1})$ in $\mathfrak{L}^{(T)}$
(note that the image does not depend on the choice of $\gamma$).
In this section our goal is to prove the following proposition.
\begin{prop}
\label{prop:Brown_J_gamma}Let $k\geq l>0$, $p_{0},\ldots,p_{k+1}\in T_{z}^{\times}\mathbb{Q}(z)$
and $\gamma$ a path from $p_{0}$ to $p_{k+1}$ on $\mathbb{C}\setminus\{p_{1},\ldots,p_{k}\}$.
Assume that $\#\{p_{1},\dots,p_{k}\}=\#\{\overline{p_{1}},\dots,\overline{p_{k}}\}$.
Then we have
\begin{align*}
 & D_{l}(J_{\gamma}(p_{0};p_{1},\dots,p_{k};p_{k+1}))\\
 & =\sum_{i=0}^{k-l}J^{\mathfrak{l}}(p_{i};p_{i+1},\dots,p_{i+l};p_{i+l+1})\otimes J_{\gamma}(p_{0};p_{1},\dots,p_{i},p_{i+l+1},\dots,p_{k};p_{k+1}).
\end{align*}
\end{prop}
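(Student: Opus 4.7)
I would realize $J_\gamma$ as the graded ring homomorphism $\Phi\colon\mathcal{I}_\bullet(S)\to\mathcal{H}[T]$ furnished by Proposition \ref{prop:Ring_hom_J_gamma} (with $S$ the set of $(p,\gamma')$ for $p$ in a finite subset of $T_z^\times\mathbb{Q}(z)$ containing $p_0,\ldots,p_{k+1}$), and prove the equivalent compatibility $D_l\circ\Phi=(\Phi^{\mathfrak{l}}\otimes\Phi)\circ D_l^{(S)}$. Lemma \ref{lem:compatibility_of_Brown_with_prod} shows that the set of $x\in\mathcal{I}_\bullet(S)$ on which this compatibility holds is closed under products, so it suffices to verify it on a class of elements whose products, together with the defining relations of $\mathcal{I}_\bullet(S)$, express arbitrary symbols $\mathbb{I}(\bm{p}_0;\bm{p}_1,\ldots,\bm{p}_k;\bm{p}_{k+1})$.

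The reduction proceeds in two steps. First, using the path-composition property (3) of $J_\gamma$ from Section \ref{subsec:limit-for-any-path} (which corresponds to the path-composition relation in $\mathcal{I}_\bullet(S)$), I decompose $\gamma$ as a concatenation of upper paths and small loops encircling individual punctures. By the closure-under-products, this reduces the problem to the two base cases $\gamma=\mathrm{up}$ and $\gamma$ a small encircling loop. The loop case is immediate: for a loop of winding number $r$ around $\bar{p}_0=\bar{p}_{k+1}$, either $J_\gamma=0$ or $J_\gamma=(r\mu)^k/k!$, and since $\mu$ projects to $0$ in $\mathfrak{A}$ (and hence in $\mathfrak{L}^{(T)}$), both $D_l$ of the left-hand side and every factor $J^{\mathfrak{l}}(p_i;p_{i+1},\ldots,p_{i+l+1})$ on the right-hand side vanish.

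Second, for $\gamma=\mathrm{up}$, the shuffle decompositions \eqref{eq:shu_decom_for_admseq} and \eqref{eq:sh_decom_for_nonadm} from Sections \ref{subsec:calc-for-adm-seq} and \ref{subsec:calc-for-nonadm-seq}, used in reverse, present $J_{\mathrm{up}}(p_0;p_1,\ldots,p_k;p_{k+1})$ as a polynomial in $J_{\mathrm{up}}$'s of regular sequences and $J_{\mathrm{up}}$'s of sequences with identical endpoints. Applying Lemma \ref{lem:compatibility_of_Brown_with_prod} once more, I reduce to checking the compatibility on these two base cases. In both cases, Definitions \ref{def:J_for_regular_sequence} and \ref{def:J_for_the_same_endpoints} identify $J_{\mathrm{up}}$ with $I_{\mathrm{up}}^{\mathfrak{m}}$ of an affine-equivalent very regular sequence with endpoints in $\widehat{T^\times}\mathbb{P}^1(\mathbb{Q})$, for which the infinitesimal coaction is exactly the content of Proposition \ref{prop:coproduct_extended}; this completes the base case.

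The main obstacle will be the combinatorial bookkeeping in the upper-path step: after performing the shuffle decompositions and expanding $D_l$ across products via the Leibniz rule, one must check that the resulting sum reassembles into the right-hand side of the stated formula, indexed over single sub-words of length $l+2$. In particular, the hypothesis $\#\{p_1,\ldots,p_k\}=\#\{\bar{p}_1,\ldots,\bar{p}_k\}$ has to be tracked carefully across every sub-sequence appearing in the decomposition so that Proposition \ref{prop:coproduct_extended} remains applicable at each primitive step, and the cross-terms produced by the shuffle expansion must be seen to collapse correctly using the very-regularity conventions underlying Definition \ref{def:J_for_regular_sequence}.
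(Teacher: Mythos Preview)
Your overall architecture matches the paper's: reduce to the upper path via Proposition~\ref{prop:Ring_hom_J_gamma} and Lemma~\ref{lem:compatibility_of_Brown_with_prod}, then use the shuffle decompositions to reduce to (very) regular sequences and invoke Proposition~\ref{prop:coproduct_extended}. However, your final step contains a genuine gap, not merely ``combinatorial bookkeeping''.

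For a very regular $(p_0,\dots,p_{k+1})$ you correctly identify $J_{\mathrm{up}}(p_0;\ldots;p_{k+1})=I_{\mathrm{up}}^{\mathfrak{m}}(p_0'[0];p_1'(0),\ldots,p_k'(0);p_{k+1}'[0])$, and Proposition~\ref{prop:coproduct_extended} then computes $D_l$ of this as
\[
\sum_{i} I^{\mathfrak{l}}\bigl(p_i'[0];p_{i+1}'(0),\ldots,p_{i+l}'(0);p_{i+l+1}'[0]\bigr)\otimes I_{\mathrm{up}}^{\mathfrak{m}}(p_0'[0];\ldots;p_{k+1}'[0]).
\]
But the target formula has $J^{\mathfrak{l}}(p_i;p_{i+1},\ldots;p_{i+l+1})$ and $J_{\mathrm{up}}(p_0;\ldots,p_i,p_{i+l+1},\ldots;p_{k+1})$ in place of these $I$-terms, and the two are \emph{not} equal term by term. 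The subsequence $(p_i,\ldots,p_{i+l+1})$ is typically not very regular (indeed need not even be admissible): for instance when $p_i'(0)=p_{i+1}'(0)=\cdots=p_{i+l+1}'(0)$ the quantity $J^{\mathfrak{l}}(p_i;\ldots;p_{i+l+1})$ is computed via a further rescaling and shuffle decomposition, and bears no direct relation to $I^{\mathfrak{l}}(p_i'[0];\ldots;p_{i+l+1}'[0])$ for that single $i$. Likewise the truncated sequence on the right tensor factor may fail to be very regular when $i=0$ or $i=k-l$.

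This is precisely the content of Lemmas~\ref{lem:i_l_J_eval}, \ref{lem:motiv_sum} and~\ref{lem:motiv_sum2} in the paper: Lemma~\ref{lem:i_l_J_eval} shows $J^{\mathfrak{l}}=I^{\mathfrak{l}}$ on subsequences for which not all limit values collapse to one endpoint, while Lemma~\ref{lem:motiv_sum2} (proved by an induction on the scale at which the $p_j$'s separate, using Lemma~\ref{lem:motiv_sum}) shows that over a maximal run where they do collapse, the \emph{sum} of the $J^{\mathfrak{l}}$'s equals the corresponding sum of $I^{\mathfrak{l}}$'s. The proof of Lemma~\ref{lem:motiv_copro_pre1} then partitions $\{0,\ldots,k-l\}$ into blocks according to this collapse pattern and matches the two sides block by block. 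Without these lemmas your base case is unproved; Proposition~\ref{prop:coproduct_extended} alone does not suffice.
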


Thanks to Lemma \ref{lem:compatibility_of_Brown_with_prod} and Proposition
\ref{prop:Ring_hom_J_gamma}, Proposition \ref{prop:Brown_J_gamma}
can be reduced to the case where $\gamma$ is the upper path. Therefore,
in what follows, we shall prove Proposition \ref{prop:Brown_J_gamma}
for the case when $\gamma$ is the upper path.
\begin{lem}
\label{lem:i_l_J_eval}Let $p_{0},\dots,p_{k+1}\in T_{z}^{\times}\mathbb{Q}(z)$.
Suppose that $p_{1}(0),\ldots,p_{k}(0)\in\mathbb{Q}$. If neither
$p_{0}(0)=\cdots=p_{k}(0)$ nor $p_{1}(0)=\cdots=p_{k+1}(0)$ holds,
\[
J^{\mathfrak{l}}(p_{0};p_{1},\ldots,p_{k};p_{k+1})=I^{\mathfrak{l}}(p_{0}[0];p_{1}(0),\ldots,p_{k}(0);p_{k+1}[0]).
\]
\end{lem}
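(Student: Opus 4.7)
The plan is to unwind the layered construction of $J_{\mathrm{up}}$ from Subsections \ref{subsec:calc-for-regular-seq}--\ref{subsec:calc-for-equal-endpts} and exploit the fact that $\mathfrak{L}^{(T)}$ is the quotient of $\mathfrak{A}_{>0}^{(T)}$ by products: consequently, the shuffle-product terms appearing throughout the construction of $J$ collapse, leaving only a ``primitive'' piece that should coincide with the simple substitution $p_i \mapsto p_i(0)$. First I would reduce to the upper path: by the path composition formula of Subsection \ref{subsec:limit-for-any-path}, different choices of $\gamma$ yield $J_\gamma$-values differing by sums of products containing loop factors $J_\alpha(q;\cdots;q) = (r\mu)^m/m!$ of weight $m \ge 1$; such products vanish in $\mathfrak{L}^{(T)}$ for $m \ge 2$, and the non-degeneracy hypothesis rules out the boundary cases where a single weight-$1$ loop factor would survive. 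Hence I may work with $\gamma = \mathrm{up}$ throughout.

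Second, for a very regular sequence, Definition \ref{def:J_for_regular_sequence} gives the identity directly in $\mathcal{H}[T]$, since the hypothesis $p_i(0) \in \mathbb{Q}$ ensures $p_i[0] = p_i(0)$ for $1 \le i \le k$. A general regular sequence in some $\mathcal{R}_n$ reduces to the very regular case via the affine invariance of motivic iterated integrals, with Proposition \ref{prop:behavior_at_infinity} handling the cases in which $p_0[0]$ or $p_{k+1}[0]$ becomes an extended tangential basepoint at infinity.

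Third, for an admissible but non-regular sequence, I would introduce two $\mathbb{Q}$-linear maps $\varphi, \psi : \mathbb{Q} I_{\mathrm{adm}}^{(p_0, p_{k+1})} \to \mathfrak{L}^{(T)}$ by $\varphi(\bm{p}) = J^{\mathfrak{l}}_{\mathrm{up}}(p_0; \bm{p}; p_{k+1})$ and $\psi(\bm{p}) = I^{\mathfrak{l}}(p_0[0]; \bm{p}(0); p_{k+1}[0])$. Both annihilate shuffle products of two or more nontrivial factors --- $\varphi$ by its construction as an algebra homomorphism via Lemma \ref{lem:Ring_homomorphism_J_up}, and $\psi$ by the shuffle relation for motivic iterated integrals --- so, via the isomorphism $\shuffle: \bigotimes_n \mathbb{Q}I_n \simeq \mathbb{Q}I_{\mathrm{adm}}^{(p_0, p_{k+1})}$ of Subsection \ref{subsec:calc-for-adm-seq}, they both factor through $\bigoplus_n \mathbb{Q}I_n$. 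On each single-block piece $\mathbb{Q}I_n$ (which corresponds to a regular sequence) the two maps agree by the regular-case analysis above, so $\varphi = \psi$ globally. Non-admissible and same-endpoint sequences then reduce to the admissible case using the further shuffle decompositions in Definitions \ref{def:J_for_the_differnet_endpoints_general} and \ref{def:J_for_the_same_endpoints}: the extra shuffle factors again yield products that die in $\mathfrak{L}^{(T)}$, and the non-degeneracy hypothesis guarantees that the surviving admissible piece is the one carrying the information.

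The main obstacle is the bookkeeping required to match $\varphi$ and $\psi$ on the single-block pieces $\mathbb{Q}I_n$: it requires carefully tracking how the affine transformation to the very-regular representative interacts with the substitution $p \mapsto p(0)$, particularly for $|n| \ge 1$ where the transformed endpoints $p_0'[0]$ or $p_{k+1}'[0]$ become extended tangential basepoints at infinity with nontrivial $T$-dependence. A secondary subtlety is confirming that the non-degeneracy hypothesis is genuinely preserved under passage to the admissible piece in each of the auxiliary shuffle decompositions, so that no accidental cancellation occurs.
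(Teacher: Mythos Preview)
Your overall strategy—pass to $\mathfrak{L}^{(T)}$ so that shuffle products vanish and only a ``primitive'' piece survives—matches the paper's. But the specific shuffle decomposition you choose, namely $\shuffle:\bigotimes_{n}\mathbb{Q}I_{n}\simeq\mathbb{Q}I_{\mathrm{adm}}$ from Subsection~\ref{subsec:calc-for-adm-seq}, does not do the job, and your step~2 (the regular case) contains a genuine gap.

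The problem is the claim that for a regular sequence in $\mathcal{R}_{n}$ the two maps $\varphi$ and $\psi$ agree ``via the affine invariance of motivic iterated integrals''. Affine invariance is built into the definition of $J$, but it does \emph{not} preserve $\psi$. If $(p_{0},\dots,p_{k+1})\in\mathcal{R}_{m}$ with $m\geq 1$ and the very regular representative is obtained by dividing by $z^{m}$, then the middle entries $p_{i}(0)$ on the $\psi$-side all collapse to the same point under this transformation, so $I^{\mathfrak{l}}(p_{0}[0];p_{1}(0),\dots,p_{k}(0);p_{k+1}[0])$ is not recovered. Concretely, take $p_{0}=0$, $p_{1}=z$, $p_{2}=2$; then $(0,z,2)\in\mathcal{R}_{1}$, the very regular representative is $(0,1,2/z)$, and
\[
\varphi((z))=J^{\mathfrak{l}}(0;z;2)=I^{\mathfrak{l}}(0;1;\overrightarrow{e^{T}/2}_{\infty}),\qquad
\psi((z))=I^{\mathfrak{l}}(0;0;2),
\]
which differ by a nonzero multiple of $T$ in $\mathfrak{L}^{(T)}$. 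Thus $\varphi\neq\psi$ on $I_{1}$, and your step~3 collapses. (This particular sequence violates the non-degeneracy hypothesis, but that is exactly the point: your decomposition produces single-block pieces that no longer satisfy the hypothesis, and on those pieces the two maps disagree.)

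The paper sidesteps this by using a \emph{different} shuffle decomposition, one based on the evaluated values $p_{i}(0)$ rather than on the $\mathcal{R}_{n}$-structure. After disposing of the cases $\overline{p_{0}}(0)=\overline{p_{k+1}}(0)$ (where both sides vanish by Lemma~\ref{lem:J_vanish}), it writes $(p_{1},\dots,p_{k})=\sum_{j}c_{j}\,P_{j}\shuffle P_{j}'\shuffle Q_{j}$ with $P_{j}$ consisting of entries with $p_{i}(0)=p_{0}(0)$, $P_{j}'$ of entries with $p_{i}(0)=p_{k+1}(0)$, and $Q_{j}$ of sequences whose first entry does not evaluate to $p_{0}(0)$ and whose last does not evaluate to $p_{k+1}(0)$. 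The non-degeneracy hypothesis is used precisely to ensure that neither $P_{j}$ nor $P_{j}'$ can have full length $k$, so in $\mathfrak{L}$ only the terms with $\mathrm{len}(Q_{j})=k$ survive. For those, $(p_{0},Q_{j},p_{k+1})$ is already very regular (no affine transformation needed), so $J=I^{\mathfrak{m}}$ directly by Definition~\ref{def:J_for_regular_sequence}, and the same decomposition applied to the $I^{\mathfrak{m}}$-side yields the identical surviving terms.
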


\begin{proof}
If $\overline{p_{0}}(0)=\overline{p_{k+1}}(0)\in\mathbb{Q}$, then
there exists $1\leq j\leq k$ such that $\overline{p_{j}}(0)\neq\overline{p_{0}}(0)$
by the assumption of the lemma, and thus 
\[
J_{\mathrm{up}}(p_{0};p_{1},\ldots,p_{k};p_{k+1})=0=I_{\mathrm{up}}^{\mathfrak{m}}(p_{0}[0];p_{1}(0),\ldots,p_{k}(0);p_{k+1}[0]).
\]
by Lemma \ref{lem:J_vanish}. If $\overline{p_{0}}(0)=\overline{p_{k+1}}(0)=\infty$,
then 
\[
J_{\mathrm{up}}(p_{0};p_{1},\ldots,p_{k};p_{k+1})=I_{\mathrm{up}}^{\mathfrak{m}}(p_{0}[0];p_{1}(0),\ldots,p_{k}(0);p_{k+1}[0])
\]
by definition. Thus, we may assume that $\overline{p_{0}}(0)\neq\overline{p_{k+1}}(0)$.
Put 
\[
M\coloneqq\left\{ \left.(q_{1},\dots,q_{l})\right|l\geq0,\:q_{1},\dots,q_{l}\in\{p_{1},\dots,p_{k}\}\right\} 
\]
and
\begin{align*}
\mathcal{P} & =\{(q_{1},\dots,q_{l})\in M\mid l\geq0,p_{0}(0)=q_{1}(0)=\cdots=q_{l}(0)\},\\
\mathcal{P}' & =\{(q_{1},\dots,q_{l})\in M\mid l\geq0,q_{1}(0)=\cdots=q_{l}(0)=p_{k+1}(0)\},\\
\mathcal{Q} & =\{(q_{1},\dots,q_{l})\in M\mid l\geq0,q_{1}(0)\neq p_{0}(0),q_{l}(0)\neq q_{k+1}(0)\}.
\end{align*}
Now that $(p_{1},\dots,p_{k})$ can be expressed as
\[
(p_{1},\dots,p_{k})=\sum_{j}c_{j}P_{j}\shuffle P_{j}'\shuffle Q_{j}
\]
with $c_{j}\in\mathbb{Z}$, $P_{j}\in\mathcal{P},P_{j}'\in\mathcal{P}',Q_{j}\in\mathcal{Q}$
(N.B. the length of $P_{j}$ nor $P_{j}'$ never happens to be $k$
by the assumption of the lemma). Then by definition
\[
J_{\mathrm{up}}(p_{0};p_{1},\dots,p_{k};p_{k+1})=\sum_{j}c_{j}J_{\mathrm{up}}(p_{0};P_{j};p_{k+1})J_{\mathrm{up}}(p_{0};P_{j}';p_{k+1})J_{\mathrm{up}}(p_{0};Q_{j};p_{k+1}),
\]
and thus
\begin{align*}
J^{\mathfrak{l}}(p_{0};p_{1},\dots,p_{k};p_{k+1}) & =\sum_{\mathrm{len}(Q_{j})=k}c_{j}J^{\mathfrak{l}}(p_{0};Q_{j};p_{k+1})\\
 & =\sum_{\mathrm{len}(Q_{j})=k}c_{j}I^{\mathfrak{l}}(p_{0}[0];Q_{j}(0);p_{k+1}[0]).
\end{align*}
On the other hand, since
\begin{multline*}
I_{\mathrm{up}}^{\mathfrak{m}}(p_{0}[0];p_{1}(0),\ldots,p_{k}(0);p_{k+1}[0])\\
=\sum_{j}c_{j}I_{\mathrm{up}}^{\mathfrak{m}}(p_{0}[0];P_{j}(0);p_{k+1}[0])\cdot I_{\mathrm{up}}^{\mathfrak{m}}(p_{0}[0];P_{j}'(0);p_{k+1}[0])\cdot I_{\mathrm{up}}^{\mathfrak{m}}(p_{0}[0];Q_{j}(0);p_{k+1}[0]),
\end{multline*}
we have
\[
I^{\mathfrak{l}}(p_{0}[0];p_{1}(0),\ldots,p_{k}(0);p_{k+1}[0])=\sum_{\mathrm{len}(Q_{j})=k}c_{j}I^{\mathfrak{l}}(p_{0}[0];Q_{j}(0);p_{k+1}[0]).
\]
Thus the lemma is proved.
\end{proof}
\begin{lem}
\label{lem:motiv_sum}Let $k\geq0$ and $b_{0},\dots,b_{k+1}\in\widehat{T^{\times}}\mathbb{P}^{1}(\mathbb{Q})$.
Assume that $\overline{b_{0}}=\overline{b_{k+1}}=\infty$ and $\overline{b_{1}},\dots,\overline{b_{k}}\in\mathbb{Q}$,
and $\#\{b_{1},\dots,b_{k}\}=\#\{\overline{b_{1}},\dots,\overline{b_{k}}\}$.
Then for $0<l\leq k$, we have
\[
\sum_{i=0}^{k-l}I^{\mathfrak{l}}(b_{i};b_{i+1},\dots,b_{i+l};b_{i+l+1})=I^{\mathfrak{l}}(b_{0};\{x\}^{l};b_{k+1})
\]
where $x\in\mathbb{Q}$ is any element.
\end{lem}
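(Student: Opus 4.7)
The statement splits naturally into two regimes, $l = 1$ and $l \geq 2$, and in both cases the basic computational tool is the M\"obius change of variable $t \mapsto s = 1/t$, which sends an extended tangential base point $\overrightarrow{v e^{mT}}_\infty$ at infinity to $\overrightarrow{v e^{mT}}_0$ at zero and converts each form $dt/(t - b_i)$ into $-ds/s + ds/(s - 1/b_i)$. First I would dispatch the right-hand side: by the shuffle identity $I^{\mathfrak{m}}(b_0; \{x\}^l; b_{k+1}) = \frac{1}{l!}\bigl(I^{\mathfrak{m}}(b_0; x; b_{k+1})\bigr)^l$, the RHS for $l \geq 2$ is a product of $l$ weight-$1$ elements and so vanishes in $\mathfrak{L}_l^{(T)}$, while for $l = 1$ a short M\"obius calculation evaluates $I^{\mathfrak{m}}(b_0; x; b_{k+1}) = \log^{\mathfrak{m}}(v_0/v_{k+1}) + (m_0 - m_{k+1})T$ (modulo $2\pi i$-multiples, which are trivial in $\mathfrak{A}$), visibly independent of $x$.

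For $l = 1$ the LHS is a direct sum of weight-$1$ logarithms: the interior terms are $\log^{\mathfrak{l}}(b_{i+2} - b_{i+1}) - \log^{\mathfrak{l}}(b_i - b_{i+1})$ (noting $\log^{\mathfrak{l}}(-z) = \log^{\mathfrak{l}}(z)$ in $\mathfrak{L}$) and telescope to $\log^{\mathfrak{l}}(b_k - b_{k-1}) - \log^{\mathfrak{l}}(b_2 - b_1)$; the two boundary terms $I^{\mathfrak{l}}(b_0; b_1; b_2)$ and $I^{\mathfrak{l}}(b_{k-1}; b_k; b_{k+1})$ are evaluated via M\"obius to equal $\log^{\mathfrak{l}}(v_0(b_2 - b_1)) + m_0 T$ and $-\log^{\mathfrak{l}}(v_{k+1}(b_k - b_{k-1})) - m_{k+1} T$ respectively. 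Summing yields exactly the RHS.

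For $l \geq 2$, the central ingredient is a \emph{key case}: if $b_0, b_{l+1}$ are both extended tangential at infinity and $b_1, \ldots, b_l \in \mathbb{Q}$ are distinct, then $I^{\mathfrak{l}}(b_0; b_1, \ldots, b_l; b_{l+1}) = 0$ in $\mathfrak{L}_l^{(T)}$; this handles the base case $k = l$ of the Lemma. The proof again uses the M\"obius transformation: expanding the product of the $l$ transformed forms $\bigl(-ds/s + ds/(s - 1/b_i)\bigr)$ gives $2^l$ summands, of which the $2^l - 1$ summands containing at least one factor $ds/(s - 1/b_j)$ (which is regular at $s = 0$) contribute zero to the short-segment iterated integral from $\overrightarrow{v_0 e^{m_0 T}}_0$ to $\overrightarrow{v_{l+1} e^{m_{l+1} T}}_0$, while the remaining $(-ds/s)^l$ term gives $\frac{1}{l!}\bigl(\log^{\mathfrak{m}}(v_0/v_{l+1}) + (m_0 - m_{l+1})T\bigr)^l$, a product of weight-$1$ elements that vanishes in $\mathfrak{L}_l^{(T)}$.

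To derive the general case $k > l$ from the key case, I would use the path-composition identity $I^{\mathfrak{l}}(a; w; c) = I^{\mathfrak{l}}(a; w; b) + I^{\mathfrak{l}}(b; w; c)$ valid in $\mathfrak{L}$ (the product corrections drop modulo $(\mathfrak{A}_{>0}^{(T)})^2$), applied to each summand $I^{\mathfrak{l}}(b_i; b_{i+1}, \ldots, b_{i+l}; b_{i+l+1})$ of $\sigma_l$ with intermediate points $b_0$ and $b_{k+1}$, together with repeated invocation of the key-case identity $I^{\mathfrak{l}}(b_0; b_{i+1}, \ldots, b_{i+l}; b_{k+1}) = 0$. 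The main obstacle will be that naive rearrangement returns the original sum, so the reduction must be organized as an induction on $k$: comparing $\sigma_l$ for the length-$k$ sequence with $\sigma_l$ for a length-$(k-1)$ sequence obtained by deleting a single middle point, the difference consists of a small number of terms which, after one further path-composition step, collapse to key-case expressions and therefore vanish.
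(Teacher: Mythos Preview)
Your approach is genuinely different from the paper's, and the induction step for $l\geq 2$ has a real gap. The paper does \emph{not} split into $l=1$ versus $l\geq 2$ and does not use induction on $k$. Instead it picks a path $\gamma$ from $b_0$ to $b_{k+1}$ lying in a small neighbourhood of $\infty$, so that (since every $\frac{dt}{t-\overline{b_j}}$ has residue $-1$ at $\infty$) one has $I_\gamma^{\mathfrak m}(b_0;b_1,\dots,b_k;b_{k+1})=\tfrac{1}{k!}\,I_\gamma^{\mathfrak m}(b_0;x;b_{k+1})^k$ for any fixed $x\in\mathbb Q$. Computing $D_l$ of this element first from the power expression and then from the general coproduct formula (Proposition~\ref{prop:coproduct_extended}) gives two equalities whose right tensor factor is the same nonzero element $I_\gamma^{\mathfrak m}(b_0;\{x\}^{k-l};b_{k+1})$; comparing the left factors yields the lemma in one stroke for all $l$.

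The problem with your argument is the inductive reduction from $k$ to $k-1$. Carrying it out (say by deleting $b_k$) gives, after one path-composition step,
\[
\sigma_l^{(k)}-\sigma_l^{(k-1)}
= I^{\mathfrak l}(b_{k-l};b_{k-l+1},\dots,b_k;b_{k+1})
- I^{\mathfrak l}(b_k;b_{k-l},\dots,b_{k-1};b_{k+1}),
\]
and applying your key-case identity $I^{\mathfrak l}(b_0;\cdot;b_{k+1})=0$ only lets you trade $b_{k+1}$ for $b_0$ in either term. You are left with two expressions of the form $I^{\mathfrak l}(\text{finite};\text{finite}^l;\infty)$ whose middle sequences differ; these are \emph{not} key-case expressions (only one endpoint is at $\infty$), and no further path-composition or reversal reduces them to such. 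So the claim that ``after one further path-composition step, [the residual terms] collapse to key-case expressions'' is not justified, and the induction does not close with the tools you have listed. The paper's coaction-comparison trick sidesteps this entirely by never having to match individual terms.
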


\begin{proof}
Let $\gamma$ be any path from $b_{0}$ to $b_{k+1}$ contained in
a sufficiently small neighborhood of $\infty$ such that $I_{\gamma}^{\mathfrak{m}}(b_{0};x;b_{k+1})\neq0$.
Noting
\[
I_{\gamma}^{\mathfrak{m}}(b_{0};b_{1},\dots,b_{k};b_{k+1})=\frac{I_{\gamma}^{\mathfrak{m}}(b_{0};\{x\};b_{k+1})^{k}}{k!},
\]
its coaction is easily computed as 
\begin{align*}
\Delta I_{\gamma}^{\mathfrak{m}}(b_{0};b_{1},\dots,b_{k};b_{k+1}) & =\frac{\left(\Delta I_{\gamma}^{\mathfrak{m}}(b_{0};\{x\};b_{k+1})\right)^{k}}{k!}\\
 & =\frac{\left(1\otimes I_{\gamma}^{\mathfrak{m}}(b_{0};x;b_{k+1})+I^{\mathfrak{l}}(b_{0};x;b_{k+1})\otimes1\right)^{k}}{k!}\\
 & =\sum_{l+l'=k}\frac{I^{\mathfrak{l}}(b_{0};x;b_{k+1})^{l}}{l!}\otimes\frac{I_{\gamma}^{\mathfrak{m}}(b_{0};x;b_{k+1})^{l'}}{l'!}\\
 & =\sum_{l+l'=k}I^{\mathfrak{l}}(b_{0};\{x\}^{l};b_{k+1})\otimes I_{\gamma}^{\mathfrak{m}}(b_{0};\{x\}^{l'};b_{k+1}),
\end{align*}
and thus we have
\begin{equation}
D_{l}(I_{\gamma}^{\mathfrak{m}}(b_{0};b_{1},\dots,b_{k};b_{k+1}))=I^{\mathfrak{l}}(b_{0};\{x\}^{l};b_{k+1})\otimes I_{\gamma}^{\mathfrak{m}}(b_{0};\{x\}^{k-l};b_{k+1}).\label{eq:motiv_ev1}
\end{equation}
On the other hand, by Proposition \ref{prop:coproduct_extended},
we have
\begin{equation}
D_{l}(I_{\gamma}^{\mathfrak{m}}(b_{0};b_{1},\dots,b_{k};b_{k+1}))=\sum_{i=0}^{k-l}I^{\mathfrak{l}}(b_{i};b_{i+1},\dots,b_{i+l};b_{i+l+1})\otimes I_{\gamma}^{\mathfrak{m}}(b_{0};\{x\}^{k-l};b_{k+1}).\label{eq:motiv_ev2}
\end{equation}
The lemma now readily follows by comparing (\ref{eq:motiv_ev1}) and
(\ref{eq:motiv_ev2}).
\end{proof}
\begin{lem}
\label{lem:motiv_sum2}Let $k\geq l\geq1$, $x\in\mathbb{Q}$, and
$p_{0},\dots,p_{k+1}\in T_{z}^{\times}\mathbb{Q}(z)$. Assume that
$p_{0}(0)\neq x=p_{1}(0)=\cdots=p_{k}(0)\neq p_{k+1}(0)$ and $\#\{p_{1},\dots,p_{k}\}=\#\{\overline{p_{1}},\dots,\overline{p_{k}}\}$.
Then,
\[
\sum_{i=0}^{k-l}J^{\mathfrak{l}}(p_{i};p_{i+1},\ldots,p_{i+l};p_{i+l+1})=I^{\mathfrak{l}}(p_{0}[0];\{x\}^{l};p_{k+1}[0]).
\]
\end{lem}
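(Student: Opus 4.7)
Plan: The plan is to unfold each $J^{\mathfrak{l}}$ term on the LHS and apply Lemma~\ref{lem:motiv_sum} as the key combinatorial identity. First, $J_{\mathrm{up}}$ is translation-invariant---this is immediate from Definitions~\ref{def:J_for_regular_sequence}--\ref{def:J_for_the_same_endpoints} being built on affine-equivalence of sequences---so replacing every $p_i$ by $p_i - x$ reduces the problem to the case $x = 0$. Under this reduction $\overline{p_j}(0) = 0$ for all $1 \le j \le k$, while $\overline{p_0}(0), \overline{p_{k+1}}(0) \in \mathbb{Q}^{\times}$.

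For each $i \in \{0, 1, \ldots, k-l\}$, the injectivity hypothesis $\#\{p_1,\ldots,p_k\} = \#\{\overline{p_1},\ldots,\overline{p_k}\}$ ensures that $\overline{p_{i+1}}, \ldots, \overline{p_{i+l}}$ are pairwise distinct rational functions, distinct from $\overline{p_i}$ and $\overline{p_{i+l+1}}$ as elements of $\mathbb{Q}(z)$. Hence the subsequence $(p_{i+1}, \ldots, p_{i+l})$ lies in $I_{\mathrm{adm}}^{(p_i, p_{i+l+1})}$, and Definition~\ref{def:J_for_the_differnet_endpoints_general} gives $J_{\mathrm{up}}(p_i; p_{i+1}, \ldots, p_{i+l}; p_{i+l+1}) = F_{p_i, p_{i+l+1}}((p_{i+1}, \ldots, p_{i+l}))$. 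Expanding this via the shuffle decomposition $\mathbb{Q}I_{\mathrm{adm}} \simeq \bigotimes_n \mathbb{Q}I_n$ introduced in Section~\ref{subsec:calc-for-adm-seq} produces a sum of products of $J_{\mathrm{up}}$-values on regular sub-sequences. Since products of positive-degree elements vanish in $\mathfrak{L}^{(T)}$, only the single-block terms of the decomposition survive passage to $J^{\mathfrak{l}}$; by Definition~\ref{def:J_for_regular_sequence} each surviving term is a motivic iterated integral $I^{\mathfrak{l}}$ evaluated at extended tangential base points built from $p_i[0]$, the interior values $\overline{p_{i+j}}(0) = 0$, and $p_{i+l+1}[0]$.

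Summing the resulting expressions over $i$ yields a sum of the form $\sum_i I^{\mathfrak{l}}(b_i; 0, \ldots, 0; b_{i+1})$ for certain extended tangential base points $b_i$ with $\overline{b_i} = 0$. After a M\"{o}bius transformation $t \mapsto 1/t$, which interchanges $0$ and $\infty$ and is compatible with the motivic iterated integrals, Lemma~\ref{lem:motiv_sum} applies and collapses this sum to $I^{\mathfrak{l}}(p_0[0]; \{0\}^l; p_{k+1}[0]) = I^{\mathfrak{l}}(p_0[0]; \{x\}^l; p_{k+1}[0])$. The main obstacle is the careful combinatorial bookkeeping in the middle step: one must identify precisely which single-block contributions from the shuffle decomposition in Definition~\ref{def:J_for_the_differnet_endpoints_general} survive in $\mathfrak{L}^{(T)}$ and verify that, after summing over $i$, the resulting collection of extended-tangential-base-point motivic iterated integrals matches exactly the hypotheses of Lemma~\ref{lem:motiv_sum}.
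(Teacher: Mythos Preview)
There is a genuine gap in the middle step. Your claim that the sum collapses to $\sum_i I^{\mathfrak{l}}(b_i;\{0\}^l;b_{i+1})$ for a \emph{single} sequence of extended tangential base points $(b_i)$ is not justified, and in general it fails. Consider an interior window $1\le i\le k-l-1$: after translating to $x=0$ one has $\overline{p_i}(0)=\cdots=\overline{p_{i+l+1}}(0)=0$, so to evaluate $J^{\mathfrak{l}}$ you must pass to an affine-equivalent very regular sequence, i.e.\ divide by some $z^{m_i}$. After this rescaling the interior values $\overline{p_{i+j}'}(0)$ are governed by the next Taylor coefficients of the $\overline{p_{i+j}}$ and are typically \emph{not} all equal---so the resulting $I^{\mathfrak{l}}$ has nontrivial interior arguments, not $\{0\}^l$. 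Worse, adjacent windows generally require different exponents $m_i$, so there is no global sequence $(b_0,\dots,b_{k+1})$ to which Lemma~\ref{lem:motiv_sum} (even after a M\"obius transformation) can be applied in one stroke. The nested scale structure of the $p_j$'s is precisely the obstruction. (A smaller point: the hypothesis $\#\{p_1,\dots,p_k\}=\#\{\overline{p_1},\dots,\overline{p_k}\}$ only says that equal base points carry equal tangent vectors; it does \emph{not} force the $\overline{p_j}$ to be pairwise distinct, so your admissibility claim is also unjustified.)

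The paper resolves this by induction on $k$ with $l$ fixed. One performs a \emph{single} global rescaling $p_j\mapsto(p_j-q)/z^m$ where $m$ is maximal with $\overline{p_1}\equiv\cdots\equiv\overline{p_k}\pmod{z^m}$; maximality guarantees the new interior values $p_j'(0)$ are not all equal. The indices $0\le i\le k-l$ are then partitioned into four types $W,X,X',Y$ according to whether $p_i'(0)=\cdots=p_{i+l+1}'(0)$ holds at neither, one, or both ends of the window. Lemma~\ref{lem:i_l_J_eval} evaluates the generic terms ($i\in W$) directly, while each contiguous block of the form $(X,\{Y\}^s,X')$ has all interior $p_j'(0)$ equal and strictly fewer than $k$ interior points, so the induction hypothesis applies. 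Only after these two reductions are combined does one obtain a sum of the shape $\sum_i I^{\mathfrak{l}}(p_i'[0];\ldots;p_{i+l+1}'[0])$ with common interior values, and Lemma~\ref{lem:motiv_sum} finishes the argument. Your plan essentially attempts to skip this induction, but the recursive scale structure makes a one-shot computation impossible.
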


\begin{proof}
We fix $l$ and prove the claim by induction on $k$. If $p_{1}=\cdots=p_{k}$,
the claim follows by definition (Definitions \ref{def:J_for_the_differnet_endpoints_general}
and \ref{def:J_for_the_same_endpoints}). Suppose that $\sharp\left\{ p_{1},\ldots,p_{k}\right\} >1$.
Note that this implies $\#\{\overline{p_{1}},\dots,\overline{p_{k}}\}>1$
by the assumption of the lemma. Let $m\geq1$ be the maximal integer
such that 
\[
\overline{p_{1}}\equiv\cdots\equiv\overline{p_{k}}\pmod{z^{m}}
\]
and $q\in\mathbb{Q}[z]$ any polynomial such that $q\equiv\overline{p_{1}}\equiv\cdots\equiv\overline{p_{k}}\pmod{z^{m}}.$
Then, by definition, we have
\[
\sum_{i=0}^{k-l}J^{\mathfrak{l}}(p_{i};p_{i+1},\ldots,p_{i+l};p_{i+l+1})=\sum_{i=0}^{k-l}f_{i}
\]
where
\[
f_{i}:=J^{\mathfrak{l}}(p_{i}';p_{i+1}',\ldots,p_{i+l}';p_{i+l+1}'),
\]
$p_{i}'=(p_{i}-q)/z^{m}$. We decompose $\{0\leq i\leq k-l\}$ as
\[
\{0\leq i\leq k-l\}=W\sqcup X\sqcup X'\sqcup Y
\]
where
\begin{align*}
X & =\{i\mid p_{i}'(0)\neq p_{i+1}'(0)=\cdots=p_{i+l}'(0)=p_{i+l+1}'(0)\}\\
X' & =\{i\mid p_{i}'(0)=p_{i+1}'(0)=\cdots=p_{i+l}'(0)\neq p_{i+l+1}'(0)\}\\
Y & =\{i\mid p_{i}'(0)=p_{i+1}'(0)=\cdots=p_{i+l}'(0)=p_{i+l+1}'(0)\}\\
W & =\{0\leq i\leq k-l\}\setminus(X\sqcup X'\sqcup Y).
\end{align*}
Define $\lambda:\{0\leq i\leq k-l\}\to\{X,X',Y,W\}$ by setting $\lambda(i)$
to be the set where $i$ belongs to. By definition, $(\lambda(0),\dots,\lambda(k-l))$
takes the form
\[
(\{W\}^{s_{1}},X,\{Y\}^{s_{2}},X',\dots,\{W\}^{s_{2m-1}},X,\{Y\}^{s_{2m}},X',\{W\}^{s_{2m+1}})
\]
with some $m\geq0$ and $s_{1},\dots,s_{2m+1}\geq0$. On one hand,
for $i\in W$, we have
\[
f_{i}=I^{\mathfrak{l}}(p_{i}'[0];p_{i+1}'(0),\dots,p_{i+l}'(0);p_{i+l+1}'[0])
\]
by Lemma \ref{lem:i_l_J_eval}. On the other hand, for $i<i'$ such
that $(\lambda(i),\dots,\lambda(i'))=(X,\{Y\}^{s},X')$, we have,
by induction hypothesis,
\begin{align}
\sum_{j=i}^{i'}f_{j} & =I^{\mathfrak{l}}(p_{i}'[0];\{y\}^{l};p_{i'+l+1}'[0])\label{eq:sum_f_i}
\end{align}
where $y\coloneqq p_{i+1}'[0]=p_{i+2}'[0]=\cdots=p_{i'+l}'[0]$\footnote{It should be noted that if $(\lambda(0),\dots,\lambda(k-l))$ is of
the form $(X,\{Y\}^{s},X')$, the induction hypothesis cannot be applied,
but such a case never occurs since $\sharp\left\{ \overline{p_{1}'},\ldots,\overline{p_{k}'}\right\} >1$
by definition.}. Then, by the path composition formula, (\ref{eq:sum_f_i}) equals
\begin{align*}
I^{\mathfrak{l}}(p_{i}'[0];\{y\}^{l};p_{i'+l+1}'[0])= & I^{\mathfrak{l}}(p_{i}'[0];\{y\}^{l};y)+I^{\mathfrak{l}}(y;\{y\}^{l};p_{i'+l+1}'[0])\\
= & \sum_{j=i}^{i'}I^{\mathfrak{l}}(p_{j}'[0];p_{j+1}'(0),\dots,p_{j+l}'(0);p_{j+l+1}'[0])
\end{align*}
where the second equality is because $I^{\mathfrak{l}}(p_{j}'[0];p_{j+1}'(0),\dots,p_{j+l}'(0);p_{j+l+1}'[0])=0$
for $i<j<i'$. Therefore, we have
\[
\sum_{i=0}^{k-l}f_{i}=\sum_{i=0}^{k-l}I^{\mathfrak{l}}(p_{i}'[0];p_{i+1}'(0),\dots,p_{i+l}'(0);p_{i+l+1}'[0]).
\]
Now by Lemma \ref{lem:motiv_sum}, it follows that
\begin{align*}
\sum_{i=0}^{k-l}f_{i} & =I^{\mathfrak{l}}(p_{0}'[0];\{x'\}^{l};p_{k+1}'[0])\ \ \ \ (x'\text{ is any element of }\mathbb{Q})\\
 & =J^{\mathfrak{l}}(p_{0};\{q+z^{m}x'\}^{l};p_{k+1})\\
 & =I^{\mathfrak{l}}(p_{0}[0];\{x\}^{l};p_{k+1}[0]).
\end{align*}
\end{proof}
\begin{lem}
\label{lem:motiv_copro_pre1}Let $k\geq l>0$ and $p_{0},\ldots,p_{k+1}\in T_{z}^{\times}\mathbb{Q}(z)$.
Assume that $\#\{p_{1},\dots,p_{k}\}=\#\{\overline{p_{1}},\dots,\overline{p_{k}}\}$.
Then we have
\begin{align}
 & D_{l}(J_{\mathrm{up}}(p_{0};p_{1},\dots,p_{k};p_{k+1}))\label{eq:motiv_ev3}\\
 & =\sum_{i=0}^{k-l}J^{\mathfrak{l}}(p_{i};p_{i+1},\dots,p_{i+l};p_{i+l+1})\otimes J_{\mathrm{up}}(p_{0};p_{1},\dots,p_{i},p_{i+l+1},\dots,p_{k};p_{k+1}).\nonumber 
\end{align}
\end{lem}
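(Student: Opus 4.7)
The plan is to verify the coaction formula by reducing, via the shuffle-product decompositions built into the very definition of $J_{\mathrm{up}}$, to cases where $J_{\mathrm{up}}$ coincides with a motivic iterated integral (where the answer comes from Proposition \ref{prop:coproduct_extended}), handling the degenerate configurations via the supporting Lemmas \ref{lem:i_l_J_eval}, \ref{lem:motiv_sum} and \ref{lem:motiv_sum2}.

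First, by Lemma \ref{lem:Ring_homomorphism_J_up} the map $\mathbb{I}(p_0;p_1,\ldots,p_k;p_{k+1}) \mapsto J_{\mathrm{up}}(p_0;p_1,\ldots,p_k;p_{k+1})$ extends to a graded ring homomorphism $\varphi \colon \mathcal{I}_\bullet(T_z^\times \mathbb{Q}(z)) \to \mathcal{H}[T]$. The desired formula is equivalent to the assertion $D_l(\varphi(x)) = (\varphi^{\mathfrak{l}} \otimes \varphi) \circ D_l^{(S)}(x)$ for $x = \mathbb{I}(p_0;p_1,\ldots,p_k;p_{k+1})$. Moreover, Lemma \ref{lem:compatibility_of_Brown_with_prod} guarantees that this property is stable under products, so it is enough to verify it on each of the factors appearing in the shuffle-decomposition formulas from Definitions \ref{def:J_for_regular_sequence}, \ref{def:J_for_the_differnet_endpoints_general} and \ref{def:J_for_the_same_endpoints}.

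Next, I would split by the configuration of $(p_0, \ldots, p_{k+1})$. If $(p_0,\ldots,p_{k+1})$ is affine-equivalent to a very regular sequence, then $J_{\mathrm{up}}(p_0;p_1,\ldots,p_k;p_{k+1}) = I_{\mathrm{up}}^{\mathfrak{m}}(p_0'[0];p_1'[0],\ldots,p_k'[0];p_{k+1}'[0])$ for a very regular representative $(p_0',\ldots,p_{k+1}')$, and Proposition \ref{prop:coproduct_extended} directly gives the coaction formula for the right-hand side; combining this with Lemma \ref{lem:i_l_J_eval} identifies the factors $J^{\mathfrak{l}}(p_i;p_{i+1},\ldots,p_{i+l};p_{i+l+1})$ with $I^{\mathfrak{l}}(p_i[0];p_{i+1}(0),\ldots,p_{i+l}(0);p_{i+l+1}[0])$ as needed. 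For general admissible sequences with $\overline{p_0} \neq \overline{p_{k+1}}$, the shuffle-product isomorphism (\ref{eq:shu_decom_for_admseq}) expresses $\mathbb{I}(p_0;p_1,\ldots,p_k;p_{k+1})$ as a $\mathbb{Q}$-linear combination of products of $\mathbb{I}$'s associated to regular sequences in $\mathcal{R}_n$. For non-admissible cases and for $\overline{p_0} = \overline{p_{k+1}}$, the decompositions in Definitions \ref{def:J_for_the_differnet_endpoints_general} and \ref{def:J_for_the_same_endpoints} achieve a similar reduction.

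The main obstacle will be the degenerate configurations, in particular when many consecutive $p_i(0)$ coincide. In such cases, the sum $\sum_{i=0}^{k-l} J^{\mathfrak{l}}(p_i;p_{i+1},\ldots,p_{i+l};p_{i+l+1})$ is no longer directly computable from Lemma \ref{lem:i_l_J_eval}, because individual $J^{\mathfrak{l}}$-terms may vanish for trivial reasons while the telescoped total does not. This is precisely the content of Lemma \ref{lem:motiv_sum2}, which evaluates such a sum to $I^{\mathfrak{l}}(p_0[0];\{x\}^l;p_{k+1}[0])$ when $p_1(0) = \cdots = p_k(0) = x$ and $p_0(0), p_{k+1}(0) \neq x$. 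The bookkeeping step will be to match, term by term, the right-hand side of the claimed coaction formula (a sum indexed by positions in the full sequence $(p_0,\ldots,p_{k+1})$) against the result of applying $D_l$ via the Leibniz rule across a shuffle decomposition and then collapsing consecutive ``uniform'' runs using Lemma \ref{lem:motiv_sum2}. Once this combinatorial reassembly is carried out carefully, the formula follows.
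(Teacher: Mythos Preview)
Your proposal is correct and follows essentially the same route as the paper's proof: reduce (via the shuffle decompositions defining $J_{\mathrm{up}}$ together with Lemma~\ref{lem:compatibility_of_Brown_with_prod}) to the very regular case, apply Proposition~\ref{prop:coproduct_extended} there, and reconcile the $J^{\mathfrak{l}}$ factors with the $I^{\mathfrak{l}}$ factors using Lemma~\ref{lem:i_l_J_eval} for generic positions and Lemma~\ref{lem:motiv_sum2} for the runs where consecutive $p_i(0)$ coincide. The paper carries out the ``bookkeeping step'' you anticipate by an explicit $W\sqcup X\sqcup X'\sqcup Y$ partition of the index set $\{0,\dots,k-l\}$, and also notes that the second tensor factors $J_{\mathrm{up}}(p_0;\dots,p_i,p_{i+l+1},\dots;p_{k+1})$ agree with $I_{\mathrm{up}}^{\mathfrak{m}}$ except in two boundary cases where the accompanying $J^{\mathfrak{l}}$ factor already vanishes---a small point you will want to make explicit when executing the plan.
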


\begin{proof}
If is enough to only consider the case where $(p_{0},\dots,p_{k+1})$
is very regular sequence. If there exists $s\in\{1,\dots,k\}$ such
that $p_{s}(0)=\infty$ then $p_{0}(0)\neq\infty$, $p_{k+1}(0)\neq\infty$
by the definition of very regular sequence and $J_{\mathrm{up}}(p_{0};p_{1},\dots,p_{k};p_{k+1})=0$,
and
\[
J_{\mathrm{up}}(p_{i};p_{i+1},\dots,p_{i+l};p_{i+l+1})\otimes J_{\mathrm{up}}(p_{0};p_{1},\dots,p_{i},p_{i+l+1},\dots,p_{k};p_{k+1})=0
\]
for $0\leq i\leq k-l$, and thus (\ref{eq:motiv_ev3}) holds. Thus
it is enough to only consider the case where $p_{i}(0)\neq\infty$
for all $i$. First we have
\begin{align}
 & \sum_{i=0}^{k-l}J^{\mathfrak{l}}(p_{i};p_{i+1},\dots,p_{i+l};p_{i+l+1})\otimes J_{\mathrm{up}}(p_{0};p_{1},\dots,p_{i},p_{i+l+1},\dots,p_{k};p_{k+1})\nonumber \\
 & =\sum_{i=0}^{k-l}J^{\mathfrak{l}}(p_{i};p_{i+1},\dots,p_{i+l};p_{i+l+1})\otimes I_{\mathrm{up}}^{\mathfrak{m}}(p_{0}[0];p_{1}(0),\dots,p_{i}(0),p_{i+l+1}(0),\dots,p_{k}(0);p_{k+1}[0])\label{eq:infinitesimal_coation_J}
\end{align}
since
\[
J_{\mathrm{up}}(p_{0};p_{1},\dots,p_{i},p_{i+l+1},\dots,p_{k};p_{k+1})=I_{\mathrm{up}}^{\mathfrak{m}}(p_{0}[0];p_{1}(0),\dots,p_{i}(0),p_{i+l+1}(0),\dots,p_{k}(0);p_{k+1}[0])
\]
except for the case 
\[
\left(i=0\land\overline{p_{0}}(0)=\overline{p_{l+1}}(0)\right)\lor\left(i=k-l\land\overline{p_{k-l}}(0)=\overline{p_{k+1}}(0)\right),
\]
and for such exceptional cases, we have
\[
J_{\mathrm{up}}(p_{i};p_{i+1},\dots,p_{i+l};p_{i+l+1})=0.
\]

Now as before we decompose $\{0\leq i\leq k-l\}$ as
\[
\{0\leq i\leq k-l\}=W\sqcup X\sqcup X'\sqcup Y
\]
where
\begin{align*}
X & =\{i\mid p_{i}(0)\neq p_{i+1}(0)=\cdots=p_{i+l}(0)=p_{i+l+1}(0)\}\\
X' & =\{i\mid p_{i}(0)=p_{i+1}(0)=\cdots=p_{i+l}(0)\neq p_{i+l+1}(0)\}\\
Y & =\{i\mid p_{i}(0)=p_{i+1}(0)=\cdots=p_{i+l}(0)=p_{i+l+1}(0)\}\\
W & =\{0\leq i\leq k-l\}\setminus(X\sqcup X'\sqcup Y).
\end{align*}
Define $\lambda:\{0\leq i\leq k-l\}\to\{X,X',Y,W\}$ by setting $\lambda(i)$
to be the set where $i$ belongs to. By definition, $(\lambda(0),\dots,\lambda(k-l))$
takes the form
\[
(\{W\}^{s_{1}},X,\{Y\}^{s_{2}},X',\dots,\{W\}^{s_{2m-1}},X,\{Y\}^{s_{2m}},X',\{W\}^{s_{2m+1}})
\]
with some $m\geq0$ and $s_{1},\dots,s_{2m+1}\geq0$. On one hand,
for $i\in W$, we have
\[
J^{\mathfrak{l}}(p_{i};p_{i+1},\dots,p_{i+l};p_{i+l+1})=I^{\mathfrak{l}}(p_{i}[0];p_{i+1}(0),\dots,p_{i+l}(0);p_{i+l+1}[0]).
\]
by Lemma \ref{lem:i_l_J_eval}. On the other hand, for $j<j'$ such
that $(\lambda(j),\dots,\lambda(j'))=(X,\{Y\}^{s},X')$, the sequence
\[
\sigma\coloneqq(p_{1}(0),\dots,p_{i}(0),p_{i+l+1}(0),\dots,p_{k}(0))
\]
does not depend on the choice of $i\in\{j,j+1,\dots,j'\}$ since $p_{j+1}(0)=p_{j+2}(0)=\cdots=p_{j'+l}(0)$.
Thus, the partial sum
\[
\sum_{i=j}^{j'}J^{\mathfrak{l}}(p_{i};p_{i+1},\dots,p_{i+l};p_{i+l+1})\otimes I_{\mathrm{up}}^{\mathfrak{m}}(p_{0}[0];p_{1}(0),\dots,p_{i}(0),p_{i+l+1}(0),\dots,p_{k}(0);p_{k+1}[0])
\]
of equation (\ref{eq:infinitesimal_coation_J}) equals
\[
\left(\sum_{i=j}^{j'}J^{\mathfrak{l}}(p_{i};p_{i+1},\dots,p_{i+l};p_{i+l+1})\right)\otimes I_{\mathrm{up}}^{\mathfrak{m}}(p_{0}[0];\sigma;p_{k+1}[0]).
\]
By Lemma \ref{lem:motiv_sum2},
\begin{align*}
\sum_{i=j}^{j'}J^{\mathfrak{l}}(p_{i};p_{i+1},\dots,p_{i+l};p_{i+l+1}) & =I^{\mathfrak{l}}(p_{j}[0];\{p_{j+1}(0)\}^{l};p_{j'+l+1}[0])\\
 & =\sum_{i=j}^{j'}I^{\mathfrak{l}}(p_{i}[0];p_{i+1}(0),\dots,p_{i+l}(0);p_{i+l+1}[0])
\end{align*}
where the second equality follows from the path composition formula.
Putting together, we arrive at
\begin{align*}
 & \sum_{i=0}^{k-l}J^{\mathfrak{l}}(p_{i};p_{i+1},\dots,p_{i+l};p_{i+l+1})\otimes J_{\mathrm{up}}(p_{0};p_{1},\dots,p_{i},p_{i+l+1},\dots,p_{k};p_{k+1})\\
 & =\sum_{i=0}^{k-l}I^{\mathfrak{l}}(p_{i}[0];p_{i+1}(0),\dots,p_{i+l}(0);p_{i+l+1}[0])\otimes I_{\mathrm{up}}^{\mathfrak{m}}(p_{0}[0];p_{1}(0),\dots,p_{i}(0),p_{i+l+1}(0),\dots,p_{k}(0);p_{k+1}[0])\\
 & =D_{l}(I_{\mathrm{up}}^{\mathfrak{m}}(p_{0}[0];p_{1}(0),\dots,p_{k}(0);p_{k+1}[0]))\\
 & =D_{l}(J_{\mathrm{up}}((p_{0};p_{1},\dots,p_{k};p_{k+1})).
\end{align*}
This completes the proof.
\end{proof}
As explained at the beginning of this section, we can now conclude
that Proposition \ref{prop:Brown_J_gamma} is true by Lemma \ref{lem:motiv_copro_pre1}.

\section{\label{sec:Motivicity-of-confluence}Motivicity of confluence relations}

\subsection{Basic notations}

Throughout Section \ref{sec:Motivicity-of-confluence}, we fix a finite
subsets $\mathcal{P}\subset\mathbb{Q}[z]$ and $S\subset\mathbb{Q}$
such that for all distinct pair $p(z),q(z)\in\mathcal{P}$, all zeros
of $p(z)-q(z)$ lie in $S$. We also fix a base point $z_{0}\in Y\coloneqq\mathbb{C}\setminus S$
and let $\tilde{Y}$ be a universal covering space of $Y$ with the
base point $z_{0}$. For each $a\in S$, we fix $v(a)\in\mathbb{Q}^{\times}$,
and write $\widehat{a}$ for the tangential base point $\overrightarrow{v(a)}_{a}$.
Similarly, for each $p=p(z)\in\mathcal{P}$, we fix $v'(p)\in\mathbb{Q}^{\times}$,
and for $a\in\mathbb{C}$ we write $\widehat{p}(a)$ for the tangential
base point $\overrightarrow{v'(p)}_{p(a)}$. Note that $\tilde{Y}$
is defined as the set of pairs $(a,\gamma)$ where $a\in Y$ and $\gamma$
is a homotopy class of paths from $z_{0}$ to $a$ on $Y$. We denote
by $\tilde{S}$ the set of pairs $(a,\gamma)$ where $a\in S$ and
$\gamma$ is a homotopy class of paths from $z_{0}$ to $\widehat{a}$
on $Y$. For $a\in\mathbb{C}$, we put 
\[
X_{a}\coloneqq\mathbb{C}\setminus\{p(a)\mid p\in\mathcal{P}\}.
\]
Furthermore, for $x,y\in\tilde{S}$ and $a_{1},\dots,a_{k}\in S$,
we denote by
\[
I^{\mathfrak{m}}(x;a_{1},\dots,a_{k};y)
\]
the motivic iterated integral
\[
I_{\gamma}^{\mathfrak{m}}(\widehat{{\rm pr}(x)};a_{1},\dots,a_{k};\widehat{{\rm pr}(y)})\in\mathcal{H}
\]
where ${\rm pr}:\tilde{Y}\sqcup\tilde{S}\to Y\sqcup S$ is the natural
projection and $\gamma\in\pi_{1}^{{\rm top}}(Y;\widehat{{\rm pr}(x)},\widehat{{\rm pr}(y)})$
is the natural path determined by $x$ and $y$.
\begin{defn}
For $p=p(z),q=q(z)\in\mathcal{P}$, we put 
\[
\pi(p,q)\coloneqq\pi_{1}^{{\rm top}}(X_{z_{0}};\widehat{p}(z_{0}),\widehat{q}(z_{0})).
\]
\end{defn}

For each $\gamma\in\pi(p,q)$ and $\widetilde{a}=(a,\gamma')\in\tilde{Y}$,
we define 
\[
\gamma(\widetilde{a})\in\pi_{1}^{{\rm top}}(X_{a};\widehat{p}(a),\widehat{q}(a))
\]
to be the continuous deformation of $\gamma$ along $\gamma'$, which
is well-defined since for $a\in Y$ the map $\mathcal{P}\rightarrow\mathbb{C}$
given by $p\mapsto p(a)$ is injective.

\begin{defn}
We denote by $\mathscr{B}$ the commutative $\mathbb{Q}$-algebra
generated by the formal symbols
\[
\formalit{\gamma}{p_{0}}{p_{1},\dots,p_{k}}{p_{k+1}}
\]
where $k\geq0$, $p_{0},\dots,p_{k+1}\in\mathcal{P}$, and $\gamma\in\pi(p_{0},p_{k+1})$
with the relations (see also \cite[Section 2.1]{GonSym}):
\begin{enumerate}
\item the unit:
\[
\mathbb{I}_{\gamma}(p;q):=\formalit{\gamma}p{\emptyset}q=1;
\]
\item the shuffle product formula:
\[
\formalit{\gamma}p{p_{1},\dots,p_{r+1}}q\formalit{\gamma}p{p_{r+1},\dots,p_{s}}q=\formalit{\gamma}p{(p_{1},\dots,p_{r})\shuffle(p_{r+1},\dots,p_{s})}q;
\]
\item the path composition formula:
\[
\formalit{\gamma_{1}\gamma_{2}}{p_{0}}{p_{1},\dots,p_{k}}{p_{k+1}}=\sum_{i=0}^{k}\formalit{\gamma_{1}}{p_{0}}{p_{1},\dots p_{i}}q\formalit{\gamma_{1}}q{p_{i+1},\dots,p_{k}}{p_{k+1}};
\]
\item the trivial path: For $k>0$ and a trivial path $\gamma\in\pi(q,q)$,
\[
\formalit{\gamma}q{p_{1},\dots,p_{k}}q=0.
\]
\end{enumerate}
\end{defn}

Then we can view $\mathscr{B}$ as a graded ring $\bigoplus_{k=0}^{\infty}\mathcal{\mathscr{B}}_{k}$
by setting the degree of $\formalit{\gamma}{p_{0}}{p_{1},\dots,p_{k}}{p_{k+1}}$
as $k$.

\begin{defn}
We define $\mathcal{\mathscr{B}}^{\mathfrak{a}}:=\mathcal{\mathscr{B}}/I$
where $I$ is the ideal of $\mathcal{\mathscr{B}}$ generated by
\[
\{\formalit{\gamma}q{p_{1},\dots,p_{k}}q\mid k>0,\gamma\in\pi(q,q)\}.
\]
\end{defn}

Notice that the image of $\formalit{\gamma}{p_{0}}{p_{1},\dots,p_{k}}{p_{k+1}}$
in $\mathcal{\mathscr{B}}^{\mathfrak{a}}$ does not depend on the
choice of $\gamma$. Therefore, $\mathcal{\mathscr{B}}^{\mathfrak{a}}$
is naturally identified with the Goncharov's Hopf algebra $I_{\bullet}(\mathcal{P})$
as a commutative ring by the obvious correspondence $\formalit{\gamma}{p_{0}}{p_{1},\dots,p_{k}}{p_{k+1}}\mapsto\mathbb{I}(p_{0};p_{1},\dots,p_{k};p_{k+1})$.
Let $\bigoplus_{k=0}^{\infty}\mathcal{\mathscr{B}}_{k}^{\mathfrak{a}}:=\mathscr{B}^{\mathfrak{a}}$
be the grading of $\mathscr{B}^{\mathfrak{a}}$ induced by that of
$\mathscr{B}$.

\begin{defn}
\label{def:ev}For $a\in\tilde{S}$ we define a ring homomorphism
$\mathrm{ev}_{a}^{\mathfrak{m},(T)}:\mathscr{B}\to\mathcal{H}[T]$
by 
\[
\mathrm{ev}_{a}^{\mathfrak{m},(T)}\left(\formalit{\gamma}{p_{0}}{p_{1},\dots,p_{k}}{p_{k+1}}\right)\coloneqq J_{\gamma'}(p_{0}';p_{1}',\ldots,p_{k}';p_{k+1}')
\]
where $\gamma'(z)\coloneqq\gamma(a+v(\mathrm{pr}(a))z)$ and $p_{j}'(z)\coloneqq p_{j}(\mathrm{pr}(a)+v(\mathrm{pr}(a))z)$
for $0\leq j\leq k+1$, and $\mathrm{ev}_{a}^{\mathfrak{m}}$ as the
composed map $\mathscr{B}\xrightarrow{\mathrm{ev}_{a}^{\mathfrak{m},(T)}}\mathcal{H}[T]\xrightarrow{T=0}\mathcal{H}$.
Here, $a+v(\mathrm{pr}(a))z$ denotes the element of $\widetilde{Y}$
(whose projection on $Y$ is $\mathrm{pr}(a)+v(\mathrm{pr}(a))z$)
defined in a natural way for a sufficiently small $z>0$. 
\end{defn}

The well-definedness of $\mathrm{ev}_{a}^{\mathfrak{m},(T)}$ is clear
by Proposition \ref{prop:Ring_hom_J_gamma}. Note that the composite
maps $\mathscr{B}_{>0}\xrightarrow{\mathrm{ev}_{a}^{\mathfrak{m},(T)}}\mathcal{H}[T]\rightarrow\mathfrak{L}^{(T)}$
(resp. $\mathscr{B}_{>0}\xrightarrow{\mathrm{ev}_{a}^{\mathfrak{m}}}\mathcal{H}\rightarrow\mathfrak{L}$)
factors through $\mathscr{B}_{>0}^{\mathfrak{a}}$ and we thus define
$\mathrm{ev}_{a}^{\mathfrak{l},(T)}:\mathscr{B}_{>0}^{\mathfrak{a}}\rightarrow\mathfrak{L}^{(T)}$
(resp. $\mathrm{ev}_{a}^{\mathfrak{l}}:\mathscr{B}_{>0}^{\mathfrak{a}}\rightarrow\mathfrak{L}$)
as the induced map. One can check that $\mathrm{ev}_{a}^{\mathfrak{l},(T)}$
depends only on $\mathrm{pr}(a)\in S$ and does not depend on the
path to $\widehat{\mathrm{pr}(a)}$ and so we sometimes abuse the
notation $\mathrm{ev}_{a}^{\mathfrak{l},(T)}$ for $a\in S$ if there
is no risk of confusion.

\subsection{Key lemmas}

As corollaries of Proposition \ref{prop:Brown_J_gamma}, we can now
show the following key lemmas which will take crucial roles in the
proof of the motivicity of confluence relations.
\begin{lem}
\label{lem:motiv_copro2}For $p_{0},\dots,,p_{k+1}\in\mathcal{P}$,
$a\in\widetilde{S}$, $0<l\leq k$ and a path $\gamma\in\pi(p_{0},p_{k+1})$,
\[
D_{l}(\mathrm{ev}_{a}^{\mathfrak{m}}\mathbb{I}_{\gamma}(p_{0};p_{1},\dots,p_{k};p_{k+1}))=\sum_{i=0}^{k-l}\mathrm{ev}_{a}^{\mathfrak{l}}\mathbb{I}(p_{i};p_{i+1},\dots,p_{i+l};p_{i+l+1})\otimes\mathrm{ev}_{a}^{\mathfrak{m}}\mathbb{I}_{\gamma}(p_{0};p_{1},\dots,p_{i},p_{i+l+1},\dots,p_{k};p_{k+1}).
\]
\end{lem}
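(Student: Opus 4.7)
The plan is to reduce the statement to Proposition \ref{prop:Brown_J_gamma} by unfolding the definition of $\mathrm{ev}_a^{\mathfrak{m}}$ in terms of $J$, checking that the hypothesis of the proposition applies to the resulting sequence, and then specializing the variable $T$ to $0$.

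First, I would recall that by Definition \ref{def:ev}, one has
\[
\mathrm{ev}_a^{\mathfrak{m}}\mathbb{I}_\gamma(p_0;p_1,\dots,p_k;p_{k+1})=\left.J_{\gamma'}(p_0';p_1',\dots,p_k';p_{k+1}')\right|_{T=0},
\]
where $p_j'(z)=p_j(\mathrm{pr}(a)+v(\mathrm{pr}(a))z)$ with tangential vector $v'(p_j)$ and $\gamma'(z)=\gamma(a+v(\mathrm{pr}(a))z)$. Since the extension $D_l:\mathcal{H}[T]\to\mathfrak{L}^{(T)}_l\otimes\mathcal{H}[T]$ is a specialization-free operation (it is induced from the coproduct of the graded Hopf algebra $\mathfrak{A}^{(T)}=\mathfrak{A}[T]$ extended by $\Delta(T)=1\otimes T+T\otimes1$), the evaluation $T=0$ commutes with $D_l$ in the sense that
\[
D_l\bigl(\left.J_{\gamma'}\right|_{T=0}\bigr)=\left.\bigl(D_lJ_{\gamma'}\bigr)\right|_{T=0}.
\]

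Next, I would verify the hypothesis $\#\{p_1',\dots,p_k'\}=\#\{\overline{p_1'},\dots,\overline{p_k'}\}$ required by Proposition \ref{prop:Brown_J_gamma}. Since the elements of $\mathcal{P}$ are distinct polynomials and the change of variable $z\mapsto\mathrm{pr}(a)+v(\mathrm{pr}(a))z$ is a bijection on $\mathbb{Q}(z)$, distinct $p_i,p_j\in\mathcal{P}$ yield distinct rational functions $p_i',p_j'\in\mathbb{Q}(z)$; thus the number of distinct underlying points among $p_1',\dots,p_k'$ equals the number of distinct elements of $\{p_1,\dots,p_k\}\subset\mathcal{P}$, which is the number of distinct tangential base points $\{p_1',\dots,p_k'\}$ (their tangential vectors being determined by the fixed assignment $p\mapsto v'(p)$). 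Hence the hypothesis holds, and Proposition \ref{prop:Brown_J_gamma} yields
\[
D_l J_{\gamma'}(p_0';p_1',\dots,p_k';p_{k+1}')=\sum_{i=0}^{k-l}J^{\mathfrak{l}}(p_i';p_{i+1}',\dots,p_{i+l}';p_{i+l+1}')\otimes J_{\gamma'}(p_0';p_1',\dots,p_i',p_{i+l+1}',\dots,p_k';p_{k+1}').
\]

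Finally, I would specialize $T=0$ on both sides and identify each factor with the appropriate evaluation map. The second tensor factor specializes to $\mathrm{ev}_a^{\mathfrak{m}}\mathbb{I}_\gamma(p_0;p_1,\dots,p_i,p_{i+l+1},\dots,p_k;p_{k+1})$ by the same Definition \ref{def:ev}. For the first factor, recall that $\mathrm{ev}_a^{\mathfrak{l}}\mathbb{I}(p_i;p_{i+1},\dots,p_{i+l};p_{i+l+1})$ is by construction the image in $\mathfrak{L}$ of $\mathrm{ev}_a^{\mathfrak{m}}\mathbb{I}(p_i;\dots;p_{i+l+1})=\left.J_{\gamma''}(p_i';\dots;p_{i+l+1}')\right|_{T=0}$ for any path $\gamma''$, which equals the image of $J^{\mathfrak{l}}(p_i';\dots;p_{i+l+1}')\in\mathfrak{L}^{(T)}$ under $T=0$; comparing the degree decomposition $\mathfrak{L}^{(T)}_l\simeq\mathfrak{L}_l$ for $l\geq 2$ and $\mathfrak{L}^{(T)}_1\simeq\mathfrak{L}_1\oplus\mathbb{Q}T$ for $l=1$, one sees that the $T=0$ specialization recovers exactly $\mathrm{ev}_a^{\mathfrak{l}}\mathbb{I}(p_i;\dots;p_{i+l+1})$. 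Assembling these identifications gives the claimed formula.

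The only non-routine step is the verification of the tangential-vector hypothesis in Proposition \ref{prop:Brown_J_gamma} and the compatibility of the $T=0$ specialization with the various flavors of the coaction; both are essentially bookkeeping, so I do not anticipate a serious obstacle here, though one must be vigilant not to conflate the role of $T$ as a weight-one motivic period symbol with the scalar limit it represents.
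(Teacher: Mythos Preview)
Your proposal is correct and takes essentially the same approach as the paper, whose proof consists of the single sentence ``This is an immediate consequence of Proposition \ref{prop:Brown_J_gamma}.'' You have simply unpacked that sentence: verifying the distinct-tangential-vector hypothesis (which holds because distinct $p\in\mathcal{P}$ give distinct $p'$ after the affine change of variable, and the tangential vector $v'(p)$ is determined by $p$), applying the proposition, and carrying out the $T=0$ specialization on both tensor factors.
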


\begin{proof}
This is an immediate consequence of Proposition \ref{prop:Brown_J_gamma}.
\end{proof}
\begin{lem}
\label{lem:motiv_copro3}For $p_{0},\dots,,p_{k+1}\in\mathcal{P}$
and $a\in S$, we have
\begin{align*}
 & \sum_{i=1}^{k}\left({\rm ord}_{z-a}(p_{i}-p_{i+1})-{\rm ord}_{z-a}(p_{i}-p_{i-1})\right){\rm ev}_{a}^{\mathfrak{l}}\mathbb{I}(p_{0};p_{1},\dots,p_{i-1},p_{i+1},\dots,p_{k};p_{k+1})\\
 & =\left({\rm ord}_{z-a}(p_{1}-p_{k+1})-{\rm ord}_{z-a}(p_{1}-p_{0})\right){\rm ev}_{a}^{\mathfrak{l}}\mathbb{I}(p_{1};p_{2},\dots,p_{k};p_{k+1})\\
 & \ +\left({\rm ord}_{z-a}(p_{k}-p_{k+1})-{\rm ord}_{z-a}(p_{k}-p_{0})\right){\rm ev}_{a}^{\mathfrak{l}}\mathbb{I}(p_{0};p_{1},\dots,p_{k-1};p_{k})
\end{align*}
where we define ${\rm ord}_{z-a}(0)\coloneqq0$ as before.
\end{lem}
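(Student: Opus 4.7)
The plan is to extract the image in $\mathfrak{L}_{k-1}$ of the coefficient of $T$ in
\[
F := \mathrm{ev}_a^{\mathfrak{m},(T)}\bigl(\mathbb{I}_\gamma(p_0;p_1,\ldots,p_k;p_{k+1})\bigr) \in \mathcal{H}[T]
\]
in two different ways—one yielding the LHS, the other the RHS—and conclude that both equal the common target $f_1^{\mathfrak{l}} \in \mathfrak{L}_{k-1}$, where $F = \sum_{j\ge 0} f_j T^j$ with $f_j \in \mathcal{H}_{k-j}$. The linchpin for both computations is the following \emph{key observation}, which I will establish first: for distinct $p,q,r \in \mathcal{P}$, the $T$-coefficient of $\mathrm{ev}_a^{\mathfrak{l},(T)}\mathbb{I}(p;q;r) \in \mathfrak{L}_1^{(T)} = \mathfrak{L}_1 \oplus \mathbb{Q} T$ equals $\mathrm{ord}_{z-a}(r-q) - \mathrm{ord}_{z-a}(p-q)$. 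I will verify this by case analysis on the coincidence pattern of $\{p(a),q(a),r(a)\}$, unwinding the recipe for $J_{\mathrm{up}}$ on regular length-$3$ sequences from Section \ref{subsec:calc-for-regular-seq} (the shifted sequence $(p',q',r')$ is always regular since $p \neq r$); the $T$-dependence is tracked through the extended tangential basepoints at $\infty$ that are produced by the affine normalization, via the rule $\hat{x}=x'(0)e^{mT}$.

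To produce the LHS, I apply Proposition \ref{prop:Brown_J_gamma} pulled back through $\mathrm{ev}_a^{\mathfrak{m},(T)}$ with $l=1$:
\[
D_1(F)=\sum_{i=0}^{k-1}\mathrm{ev}_a^{\mathfrak{l},(T)}\mathbb{I}(p_i;p_{i+1};p_{i+2})\otimes\mathrm{ev}_a^{\mathfrak{m},(T)}\mathbb{I}_\gamma(p_0;\ldots,\widehat{p_{i+1}},\ldots;p_{k+1}).
\]
Expanding $\Delta(T)=T\otimes 1+1\otimes T$, a direct calculation shows that the ``coefficient of $T \otimes 1$'' in $D_1(F)$—namely $(\pi_T\otimes\mathrm{ev}_{T=0})(D_1(F))$, where $\pi_T:\mathfrak{L}_1^{(T)}\to\mathbb{Q}$ picks off the $T$-component—is exactly $f_1\in\mathcal{H}_{k-1}$. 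Applying the key observation to each first tensor factor and reindexing $j=i+1$, the image of this coefficient in $\mathfrak{L}_{k-1}$ is precisely the LHS of the lemma.

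To produce the RHS, I apply Lemma \ref{lem:motiv_copro2} (lifted to the $(T)$-level via Proposition \ref{prop:Brown_J_gamma}) with $l=k-1$:
\begin{align*}
D_{k-1}(F)&=\mathrm{ev}_a^{\mathfrak{l},(T)}\mathbb{I}(p_0;p_1,\ldots,p_{k-1};p_k)\otimes\mathrm{ev}_a^{\mathfrak{m},(T)}\mathbb{I}_\gamma(p_0;p_k;p_{k+1})\\
&\quad+\mathrm{ev}_a^{\mathfrak{l},(T)}\mathbb{I}(p_1;p_2,\ldots,p_k;p_{k+1})\otimes\mathrm{ev}_a^{\mathfrak{m},(T)}\mathbb{I}_\gamma(p_0;p_1;p_{k+1}).
\end{align*}
A parallel calculation (using, for $k-1\ge 2$, the identification $\mathfrak{L}_{k-1}^{(T)}=\mathfrak{L}_{k-1}$) shows that the ``coefficient of $1\otimes T$'' in $D_{k-1}(F)$ equals $D_{k-1}(f_1)$, which under the isomorphism $\mathfrak{L}_{k-1}\otimes\mathcal{H}_0\simeq\mathfrak{L}_{k-1}$ is exactly $f_1^{\mathfrak{l}}$. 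Applying the key observation to the two weight-$1$ factors $\mathrm{ev}_a^{\mathfrak{m},(T)}\mathbb{I}_\gamma(p_0;p_i;p_{k+1})$ with $i\in\{1,k\}$, the resulting expression is the RHS of the lemma, and the identity follows.

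The hard part will be the case-by-case verification of the key observation, especially when two or three of the values $p(a),q(a),r(a)$ coincide: one must carefully choose the $z^m$-scaling and $y$-shift producing a very regular configuration with extended tangential direction at $\infty$, and confirm that the $T$-contributions from $\hat{x} = x'(0) e^{mT}$ combine into the clean difference $\mathrm{ord}_{z-a}(r-q) - \mathrm{ord}_{z-a}(p-q)$. The boundary cases $k=1$ (trivially $0=0$ since the relevant $\mathbb{I}$'s have weight $0$) and $k=2$ (where $D_1=D_{k-1}$, so a single computation of $D_1(F)$ encodes both extractions at once, with the $T^2$-term of $F$ contributing a $2f_2\, T\otimes T$ summand that must be bookkept) are handled as straightforward variants.
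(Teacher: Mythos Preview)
Your proposal is correct and follows essentially the same approach as the paper's own proof: both extract the coefficient $s_1$ of $T$ in $\mathrm{ev}_{\tilde a}^{\mathfrak m,(T)}\mathbb{I}_\gamma(p_0;\dots;p_{k+1})$ in two ways, once via $D_1$ (yielding the LHS after projecting to $\mathfrak L$) and once via $D_{k-1}$ (yielding the RHS), using exactly the weight-one input that the $T$-coefficient of $\mathrm{ev}_a^{\mathfrak l,(T)}\mathbb{I}(p;q;r)$ equals $\mathrm{ord}_{z-a}(r-q)-\mathrm{ord}_{z-a}(p-q)$. The paper simply asserts this weight-one fact inline, whereas you propose to verify it by case analysis on the coincidence pattern of $p(a),q(a),r(a)$; and your explicit bookkeeping of the small cases $k=1,2$ makes precise what the paper handles implicitly via the $\delta_{1,k}$ term in its equation for $D_{k-1}$.
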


\begin{proof}
Fix a lift $\widetilde{a}\in\widetilde{S}$ of $a\in S$ and a path
$\gamma\in\pi(p_{0},p_{k+1})$ and put
\[
\mathrm{ev}_{\widetilde{a}}^{\mathfrak{m},(T)}\mathbb{I}_{\gamma}(p_{0};p_{1},\dots,p_{k};p_{k+1})=\sum_{j=0}^{k}s_{j}T^{j}\in\mathcal{H}[T]
\]
where $s_{0},\dots,s_{k}\in\mathcal{H}$ with $\deg s_{j}=k-j$. By
the Leibniz rule $D_{l}(xy)=D_{l}(x)\cdot(1\otimes y)+D_{l}(y)\cdot(1\otimes x)$,
we have
\begin{align}
D_{1}(\mathrm{ev}_{\widetilde{a}}^{\mathfrak{m},(T)}\mathbb{I}_{\gamma}(p_{0};p_{1},\dots,p_{k};p_{k+1})) & =\sum_{j=0}^{k}\left(D_{1}(s_{j})\cdot\left(1\otimes T^{j}\right)+D_{1}(T^{j})\cdot\left(1\otimes s_{j}\right)\right)\nonumber \\
 & =\sum_{j=0}^{k}D_{1}(s_{j})\cdot\left(1\otimes T^{j}\right)+\sum_{j=1}^{k}jT\otimes s_{j}T^{j-1}\label{eq:motiv_ev4}
\end{align}
and
\begin{align}
D_{k-1}(\mathrm{ev}_{\widetilde{a}}^{\mathfrak{m},(T)}\mathbb{I}_{\gamma}(p_{0};p_{1},\dots,p_{k};p_{k+1})) & =\sum_{j=0}^{k}\left(D_{k-1}(s_{j})\cdot\left(1\otimes T^{j}\right)+D_{k-1}(T^{j})\cdot\left(1\otimes s_{j}\right)\right)\nonumber \\
 & =\sum_{j=0}^{1}D_{k-1}(s_{j})\cdot\left(1\otimes T^{j}\right)+\sum_{j=k-1}^{k}D_{k-1}(T^{j})\cdot\left(1\otimes s_{j}\right)\nonumber \\
 & =\left(D_{k-1}(s_{0})+s_{1}\otimes T+\delta_{1,k}T\otimes s_{1}\right).\label{eq:motiv_ev4a}
\end{align}
since $D_{k-1}(s_{1})=s_{1}\otimes1$ and $D_{l}(T^{j})=j\delta_{1,l}T\otimes T^{j-1}$.
On the other hand, by Proposition \ref{prop:Brown_J_gamma}, we have
\begin{align}
D_{1}(\mathrm{ev}_{\widetilde{a}}^{\mathfrak{m},(T)}\mathbb{I}_{\gamma}(p_{0};p_{1},\dots,p_{k};p_{k+1})) & =\sum_{i=1}^{k}\mathrm{ev}_{a}^{\mathfrak{l},(T)}\mathbb{I}(p_{i-1};p_{i};p_{i+1})\otimes\mathrm{ev}_{\widetilde{a}}^{\mathfrak{m},(T)}\mathbb{I}_{\gamma}(p_{0};p_{1},\dots,p_{i-1},p_{i+1},\dots,p_{k};p_{k+1}).\label{eq:motiv_ev5}
\end{align}
By comparing the coefficient of $T\otimes1$ in (\ref{eq:motiv_ev4})
and (\ref{eq:motiv_ev5}), we have
\begin{equation}
s_{1}=\sum_{i=1}^{k}\left({\rm ord}_{z-a}(p_{i}-p_{i+1})-{\rm ord}_{z-a}(p_{i}-p_{i-1})\right)\mathrm{ev}_{\widetilde{a}}^{\mathfrak{m}}\mathbb{I}(p_{0};p_{1},\dots,p_{i-1},p_{i+1},\dots,p_{k};p_{k+1})\label{eq:motiv_ev6}
\end{equation}
since $\mathrm{ev}_{a}^{\mathfrak{l},(T)}\mathbb{I}(p_{i-1};p_{i};p_{i+1})=\left({\rm ord}_{z-a}(p_{i}-p_{i+1})-{\rm ord}_{z-a}(p_{i}-p_{i-1})\right)T+(\text{Const. term})$.

Similarly, by Proposition \ref{prop:Brown_J_gamma}, we have
\begin{align}
D_{k-1}(\mathrm{ev}_{\widetilde{a}}^{\mathfrak{m},(T)}\mathbb{I}_{\gamma}(p_{0};p_{1},\dots,p_{k};p_{k+1})) & =\mathrm{ev}_{a}^{\mathfrak{l},(T)}\mathbb{I}(p_{1};p_{2},\dots,p_{k};p_{k+1})\otimes\mathrm{ev}_{\widetilde{a}}^{\mathfrak{m},(T)}\mathbb{I}_{\gamma}(p_{0};p_{1};p_{k+1})\label{eq:motiv_ev5a}\\
 & \ +\mathrm{ev}_{a}^{\mathfrak{l},(T)}\mathbb{I}(p_{0};p_{1},\dots,p_{k-1};p_{k})\otimes\mathrm{ev}_{\widetilde{a}}^{\mathfrak{m},(T)}\mathbb{I}_{\gamma}(p_{0};p_{k};p_{k+1}).\nonumber 
\end{align}
By comparing the coefficient of $1\otimes T$ in (\ref{eq:motiv_ev4a})
and (\ref{eq:motiv_ev5a}), we have
\begin{align}
(\text{Image of }s_{1}\text{ in }\mathfrak{L}) & =\left({\rm ord}_{z-a}(p_{1}-p_{k+1})-{\rm ord}_{z-a}(p_{1}-p_{0})\right)\mathrm{ev}_{a}^{\mathfrak{l}}\mathbb{I}(p_{1};p_{2},\dots,p_{k};p_{k+1})\label{eq:motiv_ev6a}\\
 & \ +\left({\rm ord}_{z-a}(p_{k}-p_{k+1})-{\rm ord}_{z-a}(p_{k}-p_{0})\right)\mathrm{ev}_{a}^{\mathfrak{l}}\mathbb{I}(p_{0};p_{1},\dots,p_{k-1};p_{k}).\nonumber 
\end{align}
Now, the lemma readily follows by comparing (\ref{eq:motiv_ev6})
and (\ref{eq:motiv_ev6a}).
\end{proof}

\subsection{The definition of the confluence relation}
\begin{defn}
\label{def:diff_operator_formal}For $c\in S$, define a linear map
$\partial_{c}:\mathscr{B}\to\mathscr{B}$ by
\[
\partial_{c}(uv)=\partial_{c}(u)v+u\partial_{c}(v)
\]
and
\[
\partial_{c}(\mathbb{I}_{\gamma}(p_{0};p_{1},\dots,p_{k};p_{k+1}))=\sum_{i=1}^{k}\left({\rm ord}_{z-c}(p_{i}-p_{i+1})-{\rm ord}_{z-c}(p_{i}-p_{i-1})\right)\mathbb{I}_{\gamma}(p_{0};p_{1},\dots,\widehat{p_{i}},\dots,p_{k};p_{k+1}).
\]
\end{defn}

\begin{defn}
For $x,y\in\tilde{S}$, we define $\psi_{x,y}:\mathscr{B}\to\mathcal{H}$
by
\[
\psi_{x,y}(u)=-\evallim y(u)+\sum_{l=0}^{\infty}\sum_{c_{1},\dots,c_{l}\in S}\evallim x(\partial_{c_{1}}\cdots\partial_{c_{l}}u)\cdot I^{\mathfrak{m}}(x;c_{1},\dots,c_{l};y).
\]
\end{defn}

\begin{prop}[Confluence relation for complex-valued iterated integrals]
For $x,y\in\tilde{S}$ and $u\in\mathscr{B}$,
\[
\compmap(\psi_{x,y}(u)))=0.
\]
\end{prop}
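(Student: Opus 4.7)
The plan is to strengthen the statement by letting the upper endpoint vary: define $\psi_{x,\tilde{z}}(u)$ for every $\tilde{z}\in\tilde{Y}\cup\tilde{S}$ by the same formula, interpreting $\mathrm{ev}_{\tilde{z}}^{\mathfrak{m}}$ as the usual motivic iterated integral at non-singular $\tilde{z}\in\tilde{Y}$ and as the regularized limit (via $\mathrm{ev}^{\mathfrak{m},(T)}$) at $\tilde{z}\in\tilde{S}$. Set $F_u(\tilde{z}):=\compmap(\psi_{x,\tilde{z}}(u))$; the claim is that $F_u\equiv 0$, and the proof will proceed by induction on $\deg u$. The base case $\deg u=0$ is immediate: $u$ is a scalar and $I^{\mathfrak{m}}(x;;\tilde z)=1$, so $\psi_{x,\tilde z}(u)=-u+u=0$.

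For the induction step I will establish two properties of $F_u$: an initial-value and a differential equation. The initial-value property $F_u(x)=0$ follows because $I(x;c_1,\dots,c_l;x)=0$ for the trivial loop at $x$ once $l\geq 1$, so all terms of the defining sum with $l>0$ vanish and the $l=0$ term cancels $-\mathrm{ev}_x^{\mathfrak{m}}(u)$. The differential equation $\tfrac{d}{d\tilde z}F_u(\tilde z)\equiv 0$ on $\tilde Y$ comes from combining (i) the classical Goncharov differential formula for complex iterated integrals, which — after writing $d\log(p_i-p_j)/dz=\sum_c \mathrm{ord}_{z-c}(p_i-p_j)/(z-c)$ (legitimate since all zeros of $p_i-p_j$ lie in $S$) and using the definition of $\partial_c$ — yields
\[
\tfrac{d}{d\tilde z}\compmap(\mathrm{ev}_{\tilde z}^{\mathfrak{m}}(u))=\sum_{c\in S}\tfrac{1}{\tilde z-c}\compmap(\mathrm{ev}_{\tilde z}^{\mathfrak{m}}(\partial_c u)),
\]
first on generators $\mathbb{I}_\gamma(p_0;p_1,\dots,p_k;p_{k+1})$ and then on all of $\mathscr{B}$ by the Leibniz rule (both $\tfrac{d}{d\tilde z}\circ\compmap\circ\mathrm{ev}_{\tilde z}^{\mathfrak{m}}$ and $\partial_c$ are derivations), and (ii) the elementary $\tfrac{d}{d\tilde z}I(x;c_1,\dots,c_l;\tilde z)=\tfrac{1}{\tilde z-c_l}I(x;c_1,\dots,c_{l-1};\tilde z)$. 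After reindexing by setting $c:=c_l$, the two contributions combine to
\[
\tfrac{d}{d\tilde z}F_u(\tilde z)=\sum_{c\in S}\tfrac{1}{\tilde z-c}\,F_{\partial_c u}(\tilde z),
\]
which vanishes by the induction hypothesis applied to $\partial_c u$ (of strictly smaller degree). Since $F_u$ extends continuously across $\tilde S$ by the regularization built into $\mathrm{ev}^{\mathfrak{m},(T)}$, these two properties force $F_u\equiv 0$, and specializing to $\tilde z=y$ gives the proposition.

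The main technical obstacle is the differential formula (i) at the level of $\mathrm{ev}_{\tilde z}^{\mathfrak{m}}$: although its analogue for $L_z$ (equation \eqref{eq:B_Lz_dif}) was used in Part \ref{part:Confluence_Euler_sum} as a direct consequence of Goncharov's formula, here the evaluation map $\mathrm{ev}_{\tilde z}^{\mathfrak{m}}$ involves the various decompositions and regularizations defined in Section \ref{sec:Evaluation-map}, so one must check that the \emph{regularized} limits depend differentiably on $\tilde z$ and that passing to the regularized limit commutes with $\tfrac{d}{d\tilde z}$. For generic $\tilde z\in\tilde Y$ this is standard, and the extension to $\tilde S$ uses the explicit polynomial-in-$T$ structure of $\mathrm{ev}^{\mathfrak{m},(T)}$ (so that the specialization $T=0$ produces a well-defined continuous extension); the remainder of the argument is a clean induction in which Lemmas \ref{lem:motiv_copro2} and \ref{lem:motiv_copro3} play no essential role (those lemmas will be needed for the \emph{motivic} refinement treated subsequently).
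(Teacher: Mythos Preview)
Your proposal is correct and follows essentially the same route the paper indicates: the paper explicitly says this proposition ``is proved just in the same way as the proof of Theorem~\ref{thm:Confluecne_relations_for_Euler_sums}'', i.e.\ by the argument of Lemma~\ref{lem:conf_tensor} (induction on degree, Goncharov's differential formula rewritten via $\mathrm{ord}_{z-c}$, matching regularized initial values) followed by specialization of the free endpoint, which is exactly what you do after packaging everything into $F_u(\tilde z)$.

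One small clarification on your ``main technical obstacle'': for $\tilde z\in\tilde Y$ the quantity $\compmap(\mathrm{ev}_{\tilde z}^{\mathfrak m}(u))$ is nothing but the ordinary complex iterated integral $I_{\gamma(\tilde z)}(\widehat{p_0}(\tilde z);p_1(\tilde z),\dots,p_k(\tilde z);\widehat{p_{k+1}}(\tilde z))$ with no regularization at all, so the differential identity (i) is literally Goncharov's formula and needs no extra work. The regularization machinery of Section~\ref{sec:Evaluation-map} enters only at the singular endpoints $x,y\in\tilde S$, where one uses that $F_u$ is constant on $\tilde Y$ (since $F_u'=0$) and hence its regularized limit at $x$ equals that constant; your computation $F_u(x)=0$ then finishes the argument, exactly as in \eqref{eq:conf_e1}--\eqref{eq:conf_e3}.
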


This theorem is proved just in the same way as the proof of Theorem
\ref{thm:Confluecne_relations_for_Euler_sums} by using the general
differential formula
\begin{align*}
 & \frac{d}{dz}I_{\gamma}(\widehat{p_{0}};p_{1},\dots,p_{k};\widehat{p_{k+1}})\\
= & \sum_{\substack{1\leq i\leq k\\
p_{i}\neq p_{i+1}
}
}\frac{d\log(p_{i}-p_{i+1})}{dz}I_{\gamma}(\widehat{p_{0}};p_{1},\dots,p_{i-1},p_{i+1},\dots,p_{k};\widehat{p_{k+1}})\\
 & -\sum_{\substack{1\leq i\leq k\\
p_{i}\neq p_{i-1}
}
}\frac{d\log(p_{i}-p_{i-1})}{dz}I_{\gamma}(\widehat{p_{0}};p_{1},\dots,p_{i-1},p_{i+1},\dots,p_{k};\widehat{p_{k+1}})
\end{align*}
 for $p_{0},\dots,p_{k+1}\in\mathcal{P}$. 

The purpose of Section \ref{sec:Motivicity-of-confluence} is to prove
$\psi_{x,y}(u)=0$ for any $x,y$ and $u$. We will prove this by
induction on the degree of $u$. For convenience, we call the statement:
\[
\left[\text{For \ensuremath{x,y\in\tilde{S}}, \ensuremath{l\leq k} and \ensuremath{u\in\mathscr{B}_{l}}, }\psi_{x,y}(u)=0.\right]
\]
as ${\rm Hyp}(k)$.

\subsection{A combinatorial interpretation for $\psi_{x,y}$}

Fix a finite totally-ordered set $P$ and a map $\iota:P\to\mathcal{P}$.
We identify a strictly increasing sequence $a_{1}<\cdots<a_{k}$ of
the elements in $P$ and a subset $\{a_{1},\dots,a_{k}\}$ of $P$.
For $k\geq0$, we put
\[
B_{k}:=\{U\subset P\mid\#U=2+k\}
\]
and $B=\bigcup_{k=0}^{\infty}B_{k}$. For $S_{1},S_{2}\in B$, we
write $S_{1}\subweak S_{2}$ (resp. $S_{1}\subweakneq S_{2}$) if
and only if $S_{1}\subset S_{2}$ (resp. $S_{1}\subsetneq S_{2}$),
$\min(S_{1})=\min(S_{2})$ and $\max(S_{1})=\max(S_{2})$.
\begin{defn}
Let $U_{0},\dots,U_{m}\in B$, and $(p_{1},p_{1}'),\dots,(p_{m},p_{m}')\in P^{2}$.
For $S_{1}\subweak S_{2}$, we call the pair 
\[
((U_{0},\dots,U_{m});(p_{1},p_{1}'),\dots,(p_{m},p_{m}'))
\]
 a \emph{history }from $S_{1}$ to $S_{2}$ if the following conditions
are satisfied:
\begin{itemize}
\item $p_{i}<p_{i}'$ for all $1\leq i\leq m$,
\item $S_{1}=U_{0}\subset\cdots\subset U_{m}=S_{2}$,
\item $\#U_{i}=\#U_{i-1}+1$ for all $1\leq i\leq m$,
\item $U_{i}=U_{i-1}\cup\{p_{i},p_{i}'\}$ for all $1\leq i\leq m$,
\item $q\notin U_{i}$ for all $q$ such that $p_{i}<q<p_{i}'$.
\end{itemize}
\end{defn}

We write a history $((U_{0},\dots,U_{m});(p_{1},p_{1}'),\dots,(p_{m},p_{m}'))$
as
\[
U_{0}\xrightarrow{p_{1}p_{1}'}U_{1}\xrightarrow{p_{2}p_{2}'}\cdots\xrightarrow{p_{m}p_{m}'}U_{m}.
\]

\begin{defn}
For $S_{1}\subweak S_{2}$, we denote by $\history{S_{1}}{S_{2}}$
the set of histories from $S_{1}$ to $S_{2}$. Furthermore, if $\#S_{1}=2$,
we write $\monohistory{S_{2}}$ for $\history{S_{1}}{S_{2}}$.
\end{defn}

\begin{example}
Let $P=\{0,1,2,3\}$. We denote a subset $\{a_{1},\dots,a_{k}\}$
of $P$ simply as $a_{1}\dots a_{k}$. Then $\monohistory{0123}$
consists of the following eight elements:
\[
\begin{split}03\xrightarrow{01}013\xrightarrow{12}0123,\ 03\xrightarrow{01}013\xrightarrow{23}0123,\ 03\xrightarrow{13}013\xrightarrow{12}0123,\ 03\xrightarrow{13}013\xrightarrow{23}0123\\
03\xrightarrow{02}023\xrightarrow{01}0123,\ 03\xrightarrow{02}023\xrightarrow{12}0123,\ 03\xrightarrow{23}023\xrightarrow{01}0123,\ 03\xrightarrow{23}023\xrightarrow{12}0123.
\end{split}
\]
\end{example}

For $U=\{b_{0}<\cdots<b_{k+1}\}\in B_{k}$ and $\gamma\in\pi(\iota(b_{0}),\iota(b_{k+1}))$,
we put
\begin{align*}
\mathbb{I}_{\gamma}(U) & :=\formalit{\gamma}{\iota(b_{0})}{\iota(b_{1}),\dots,\iota(b_{k})}{\iota(b_{k+1})}\in\mathscr{B}_{k},\\
\mathbb{I}(U) & :=\mathbb{I}(\iota(b_{0});\iota(b_{1}),\dots,\iota(b_{k});\iota(b_{k+1}))\in\mathscr{B}_{k}^{\mathfrak{a}}.
\end{align*}

\begin{defn}
For a history
\[
f=\left(U_{0}\xrightarrow{p_{1}p_{1}'}U_{1}\xrightarrow{p_{2}p_{2}'}\cdots\xrightarrow{p_{m}p_{m}'}U_{m}\right)\in\history{U_{0}}{U_{m}},
\]
define the sign of $f$ as
\[
{\rm sgn}(f)\coloneqq(-1)^{t(f)}
\]
where $t(f)$ is the number of $j$ such that $U_{j}=U_{j-1}\cup\{p_{j}'\}$.
\end{defn}

\begin{defn}
For a history
\[
f=\left(U_{0}\xrightarrow{p_{1}p_{1}'}U_{1}\xrightarrow{p_{2}p_{2}'}\cdots\xrightarrow{p_{m}p_{m}'}U_{m}\right)\in\history{U_{0}}{U_{m}},
\]
and $x,y\in\tilde{S}$, define $T_{x,y}(f)\in\mathcal{H}$ by
\[
T_{x,y}(f)\coloneqq{\rm sgn}(f)\sum_{c_{1},\dots,c_{m}\in S}\left(\prod_{i=1}^{m}{\rm ord}_{z-c_{i}}\left(\iota(p_{i})-\iota(p_{i}')\right)\right)I^{\mathfrak{m}}(x;c_{1},\dots,c_{m};y).
\]
\end{defn}

Then by definition we have
\begin{equation}
\psi_{x,y}(\mathbb{I}_{\gamma}(U))=-\evallim y(\mathbb{I}_{\gamma}(U))+\sum_{\substack{V\subweak U}
}\sum_{f\in\history VU}\evallim x(\mathbb{I}_{\gamma}(V))T_{x,y}(f).\label{eq:motiv_combphi}
\end{equation}

\subsection{A combinatorial interpretation of $D\left(\psi_{x,y}\right)$}

Let
\[
f\coloneqq\left(U_{0}\xrightarrow{p_{1}p_{1}'}U_{1}\xrightarrow{p_{2}p_{2}'}\cdots\xrightarrow{p_{m}p_{m}'}U_{m}\right)
\]
be a history. Assume that $m>0$. Then
\begin{align}
D(T_{x,y}(f)) & =\sum_{0\leq i<j\leq m}{\rm sgn}(f)\sum_{c_{1},\dots,c_{m}\in S}\left(\prod_{s=1}^{m}{\rm ord}_{z-c_{s}}\left(\iota(p_{s})-\iota(p_{s}')\right)\right)\label{eq:motiv_d1}\\
 & \ \ \ \ \ \times I^{\mathfrak{l}}(c_{i};c_{i+1},\dots,c_{j};c_{j+1})\otimes I^{\mathfrak{m}}(x;c_{1},\dots,c_{i},c_{j+1},\dots,c_{m};y)\nonumber 
\end{align}
where we put $c_{0}\coloneqq x$ and $c_{m+1}\coloneqq y$. We denote
by $T_{x,y}^{\mathfrak{l}}(f)$ the image of $T_{x,y}(f)$ in $\mathfrak{L}$.
For (\ref{eq:motiv_d1}), we introduce the following notation.
\begin{defn}
For three histories
\[
f_{1}=\left(V\xrightarrow{p_{1}p_{1}'}\cdots\xrightarrow{p_{i}p_{i}'}W_{1}\right)\in\history V{W_{1}},\ f_{2}\in\history{W_{1}}{W_{2}},\ f_{3}=\left(W_{2}\xrightarrow{p_{i+1}p_{i+1}'}\cdots\xrightarrow{p_{m}p_{m}'}U\right)\in\history{W_{2}}U,
\]
we define $T_{x,y}(f_{1},f_{2},f_{3})\in\mathfrak{L}\otimes\mathcal{H}$
by
\[
T_{x,y}(f_{1},f_{2},f_{3})\coloneqq{\rm sgn}(f_{1}){\rm sgn}(f_{3})\sum_{c_{1},\dots,c_{m}\in S}\left(\prod_{s=1}^{m}{\rm ord}_{z-c_{s}}\left(\iota(p_{s})-\iota(p_{s}')\right)\right)T_{c_{i},c_{i+1}}^{\mathfrak{l}}(f_{2})\otimes I^{\mathfrak{m}}(x;c_{1},\dots,c_{m};y)
\]
where we again put $c_{0}\coloneqq x$ and $c_{m+1}\coloneqq y$.
\end{defn}

Then, by definition, we have
\begin{align}
D(\sum_{f\in\history VU}T_{x,y}(f)) & =\sum_{V\subweak W_{1}\subweakneq W_{2}\subweak U}\sum_{\substack{f_{1}\in\history V{W_{1}}\\
f_{2}\in\history{W_{1}}{W_{2}}\\
f_{3}\in\history{W_{2}}U
}
}T_{x,y}(f_{1},f_{2},f_{3})\nonumber \\
 & =\sum_{V\subweak W_{1}\subweakneq W_{2}\subweak U}\sum_{\substack{f_{1}\in\history V{W_{1}}\\
f_{3}\in\history{W_{2}}U
}
}T_{x,y}(f_{1},f_{3})\label{eq:motiv_e1}
\end{align}
where we put
\[
T_{x,y}(f_{1},f_{3})\coloneqq\sum_{f_{2}\in\history{W_{1}}{W_{2}}}T_{x,y}(f_{1},f_{2},f_{3}).
\]
By (\ref{eq:motiv_combphi}) and (\ref{eq:motiv_e1}), we have
\begin{align}
D\left(\psi_{x,y}(\mathbb{I}_{\gamma}(U))\right) & =-D\left(\evallim y(\mathbb{I}_{\gamma}(U))\right)+\sum_{\substack{V\subweak U}
}\sum_{f\in\history VU}\left(1\otimes\evallim x(\mathbb{I}_{\gamma}(V))\right)\cdot D\left(T_{x,y}(f)\right)\nonumber \\
 & \ \ +\sum_{\substack{V\subweak U}
}\sum_{f\in\history VU}\left(1\otimes T_{x,y}(f)\right)\cdot D\left(\evallim x(\mathbb{I}_{\gamma}(V))\right)\nonumber \\
 & =-D\left(\evallim y(\mathbb{I}_{\gamma}(U))\right)+\sum_{V\subweak W_{1}\subweakneq W_{2}\subweak U}\sum_{\substack{f\in\history V{W_{1}}\\
g\in\history{W_{2}}U
}
}\left(1\otimes\evallim x(\mathbb{I}_{\gamma}(V))\right)\cdot T_{x,y}(f,g)\nonumber \\
 & \ \ +\sum_{\substack{V\subweak U}
}\sum_{f\in\history VU}\left(1\otimes T_{x,y}(f)\right)\cdot D\left(\evallim x(\mathbb{I}_{\gamma}(V))\right)\label{eq:motiv_e2}
\end{align}
Now we calculate $T_{x,y}(f,g)$.
\begin{defn}
Assume that $V\subweakneq W$. We write $V\substrong W$ if $V$ and
$W$ can be written as
\[
V=\{a_{0}<\cdots<a_{i}<a_{i+1}<\cdots<a_{k+1}\}
\]
and
\[
W=\{a_{0}<\cdots<a_{i}<b_{1}<\cdots<b_{j}<a_{i+1}<\cdots<a_{k+1}\}
\]
for some $0\leq i\leq k$ and $j>0$ and $a_{0},\ldots,a_{k+1},b_{1},\ldots,b_{j}\in P$.
Furthermore, we define $\omega(V,W)$ to be the substring
\[
\omega(V,W)\coloneqq\{a_{i}<b_{1}<\cdots<b_{j}<a_{i+1}\}
\]
of $W$ in this case.
\end{defn}

\begin{lem}
\label{lem:motiv_Tsum}Let 
\begin{align*}
V & =\{a_{1}<\cdots<a_{n+1}\},\\
U & =\{a_{1}<b_{1,1}<\cdots<b_{1,r_{1}}<a_{2}<b_{2,1}<\cdots<b_{2,r_{2}}<\cdots<a_{n}<b_{n,1}<\cdots<b_{n,r_{n}}<a_{n+1}\}
\end{align*}
and
\[
U_{i}=\{a_{i}<b_{i,1}<\cdots<b_{i,r_{i}}<a_{i+1}\}\quad(1\leq i\leq n)
\]
where $a_{1},\ldots,a_{n+1},b_{1,1},\ldots,b_{1,r_{1}},b_{2,1},\ldots,b_{2,r_{2}},\ldots,b_{n,1},\ldots,b_{n,r_{n}}\in P$.
Then, for $x,y\in\tilde{S}$, 
\[
\sum_{f\in\history VU}T_{x,y}(f)=\prod_{i=1}^{n}\left(\sum_{g\in\monohistory{U_{i}}}T_{x,y}(g)\right).
\]
\end{lem}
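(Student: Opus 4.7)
The plan is to establish a natural bijection between $\history{V}{U}$ and the set of ``shuffled collections of monohistories'' $(g_1,\dots,g_n;\sigma)$, where $g_i \in \monohistory{U_i}$ and $\sigma$ is an interleaving permutation, and then verify that all three ingredients of $T_{x,y}(f)$ (sign, product of orders, motivic iterated integral) factorise in a way that turns the sum over $\history{V}{U}$ into the product on the right-hand side.

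First, I would set up the combinatorial bijection. The key observation is that when a history $f = (U_0 \xrightarrow{p_1 p_1'} \cdots \xrightarrow{p_m p_m'} U_m) \in \history{V}{U}$ performs its $s$-th move, it adjoins a single new element $c_s \in U \setminus V$ to the current set $U_{s-1}$, and $(p_s, p_s')$ is an adjacent pair in $U_s$ consisting of $c_s$ together with one of its two neighbours in $U_s$. Since $c_s \in \{b_{i,1},\dots,b_{i,r_i}\}$ for exactly one $i$, and every element of $U_s \setminus U_i$ (i.e., the other intervals) lies outside $(a_i, a_{i+1})$, the neighbours of $c_s$ in $U_s$ both lie in $[a_i, a_{i+1}]$. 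Consequently, restricting the sequence of moves of $f$ to those moves that add an element of $U_i \setminus \{a_i, a_{i+1}\}$ gives a well-defined monohistory $g_i \in \monohistory{U_i}$, and the overall sequence of moves is recovered from $(g_1,\dots,g_n)$ together with a shuffle $\sigma$ specifying the interleaving. Conversely, any such $(g_1,\dots,g_n;\sigma)$ assembles into a valid history $f$, because each inserted neighbour lies strictly inside its own interval and never interferes with elements from other intervals.

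Next, under this bijection the data defining $T_{x,y}(f)$ factorises cleanly. The sign $\mathrm{sgn}(f) = (-1)^{t(f)}$ is multiplicative, since $t(f) = \sum_i t(g_i)$: each move's decision of which of $\{p_s,p_s'\}$ is the newly added element depends only on the monohistory $g_i$ it belongs to, not on the shuffle. Similarly, the weight $\prod_{s=1}^m {\rm ord}_{z-c_s}(\iota(p_s)-\iota(p_s'))$ is a product over all moves, and splits as $\prod_{i=1}^n \prod_{\text{moves of } g_i}$. Finally, summing the motivic iterated integrals $I^{\mathfrak{m}}(x;c_1,\dots,c_m;y)$ over all shuffles $\sigma$ of fixed per-interval sequences $(c_{i,1},\dots,c_{i,r_i})$ yields, via the shuffle product formula for motivic iterated integrals with common endpoints, the product $\prod_{i=1}^n I^{\mathfrak{m}}(x;c_{i,1},\dots,c_{i,r_i};y)$.

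Assembling these three factorisations, the sum over $f \in \history{V}{U}$ reorganises as
\[
\sum_{f \in \history{V}{U}} T_{x,y}(f) = \prod_{i=1}^n \left( \sum_{g_i \in \monohistory{U_i}} \mathrm{sgn}(g_i) \sum_{c_{i,\cdot}} \prod_{\text{moves of }g_i} {\rm ord}_{z-c_{i,j}}\bigl(\iota(p_{i,j})-\iota(p_{i,j}')\bigr)\, I^{\mathfrak{m}}(x;c_{i,1},\dots,c_{i,r_i};y) \right),
\]
which is precisely $\prod_{i=1}^n \sum_{g_i \in \monohistory{U_i}} T_{x,y}(g_i)$. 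The main obstacle is the careful bookkeeping in the bijection of the first paragraph, especially checking that the ``no $q$ in between'' condition is preserved under both restriction (from $f$ to each $g_i$) and re-interleaving (from $(g_1,\dots,g_n;\sigma)$ to $f$); everything else is a formal application of the shuffle product formula.
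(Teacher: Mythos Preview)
Your proposal is correct and follows exactly the approach the paper has in mind: the paper's proof is the single sentence ``It follows from the shuffle product formula and the definition of history,'' and you have simply unpacked those two ingredients in detail (the bijection $\history{V}{U}\leftrightarrow\{(g_1,\dots,g_n;\sigma)\}$ coming from the definition of history, and the factorisation of $\sum_\sigma I^{\mathfrak m}(x;\dots;y)$ coming from the shuffle product formula).
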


\begin{proof}
It follows from the shuffle product formula and the definition of
history.
\end{proof}
\begin{lem}
\label{lem:motiv_Tlsum}Suppose that ${\rm Hyp}(k-1)$ holds. Then
for any $V\subweakneq U$ such that $\#U-\#V\leq k-1$, we have
\[
\sum_{f\in\history VU}T_{x,y}^{\mathfrak{l}}(f)=\begin{cases}
\mathrm{ev}_{y}^{\mathfrak{l}}(\mathbb{I}(\omega(V,U)))-\mathrm{ev}_{x}^{\mathfrak{l}}(\mathbb{I}(\omega(V,U))) & V\substrong U\\
0 & {\rm otherwise}.
\end{cases}
\]
\end{lem}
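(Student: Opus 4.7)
My plan is to derive the identity as a clean consequence of Lemma \ref{lem:motiv_Tsum} combined with the inductive hypothesis ${\rm Hyp}(k-1)$. Writing $V = \{a_1 < \cdots < a_{n+1}\}$ and decomposing $U$ into the substrings $U_i = \{a_i < b_{i,1} < \cdots < b_{i,r_i} < a_{i+1}\}$ as in that lemma, one has
\[
\sum_{f \in \history{V}{U}} T_{x,y}(f) \;=\; \prod_{i=1}^n \Bigl( \sum_{g \in \monohistory{U_i}} T_{x,y}(g) \Bigr),
\]
where the factors with $r_i = 0$ equal $1$ (contribution of the unique empty history) and the factors with $r_i > 0$ lie in the augmentation ideal $\mathcal{H}_{>0}$, since $T_{x,y}(g)$ is then a combination of motivic iterated integrals of strictly positive degree $r_i$.

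If $V \substrong U$ fails, then by definition $U \setminus V$ meets at least two distinct gaps of $V$, so the product above has at least two factors in $\mathcal{H}_{>0}$. Since $\mathfrak{L} = \mathfrak{A}_{>0}/(\mathfrak{A}_{>0} \cdot \mathfrak{A}_{>0})$ annihilates such products, the image of the sum in $\mathfrak{L}$ vanishes, which handles the ``otherwise'' case of the lemma.

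In the case $V \substrong U$, exactly one index $i_0$ satisfies $r_{i_0} > 0$, the remaining factors are $1$, and $U_{i_0} = \omega(V,U)$. Setting $W \coloneqq \omega(V,U)$, it therefore suffices to prove
\[
\sum_{g \in \monohistory{W}} T_{x,y}^{\mathfrak{l}}(g) \;=\; \mathrm{ev}_y^{\mathfrak{l}}(\mathbb{I}(W)) - \mathrm{ev}_x^{\mathfrak{l}}(\mathbb{I}(W)).
\]
For this I would apply ${\rm Hyp}(k-1)$ to $\mathbb{I}_\gamma(W)$: its degree is $\#W - 2 = \#U - \#V \leq k-1$, so $\psi_{x,y}(\mathbb{I}_\gamma(W)) = 0$. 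Expanding this vanishing via the combinatorial formula (\ref{eq:motiv_combphi}) and projecting to $\mathfrak{L}$, every term indexed by $V'$ with $\{\min W, \max W\} \subsetneq V' \subsetneq W$ dies, because both $\evallim{x}(\mathbb{I}_\gamma(V'))$ and the corresponding $T_{x,y}(f)$ then have strictly positive degree. The only surviving contributions come from $V' = W$ (which yields $\mathrm{ev}_x^{\mathfrak{l}}(\mathbb{I}(W))$ via the trivial history with $T_{x,y}=1$) and $V' = \{\min W, \max W\}$ (which yields $\sum_g T_{x,y}^{\mathfrak{l}}(g)$). Rearranging gives the claim, using that $\mathrm{ev}_z^{\mathfrak{l}}$ factors through $\mathscr{B}^{\mathfrak{a}}$.

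The main obstacle is purely bookkeeping: verifying that the edge cases (empty histories, the $r_i = 0$ factors being $1$, and the extremal choices of $V'$) all behave as described under the passage to $\mathfrak{L}$, and matching the degree count so that the inductive hypothesis applies at precisely the right weight. Conceptually, Lemma \ref{lem:motiv_Tsum} supplies the combinatorial factorization and ${\rm Hyp}(k-1)$ handles the single nontrivial gap, so once the bookkeeping is in place the lemma follows essentially immediately.
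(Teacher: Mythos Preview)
Your proposal is correct and follows essentially the same route as the paper's own proof: both invoke Lemma~\ref{lem:motiv_Tsum} to factorize $\sum_{f}T_{x,y}(f)$ as a product over the gaps of $V$, kill the non-$\substrong$ case by observing that two or more positive-degree factors die in $\mathfrak{L}$, and then in the $\substrong$ case apply ${\rm Hyp}(k-1)$ to $\psi_{x,y}(\mathbb{I}_\gamma(\omega(V,U)))=0$ via (\ref{eq:motiv_combphi}), keeping only the extremal terms $V'=\{\min W,\max W\}$ and $V'=W$ under projection to $\mathfrak{L}$. Your bookkeeping remarks about the $r_i=0$ factors equalling $1$ and the degree count $\#W-2\leq k-1$ are exactly the points the paper uses implicitly.
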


\begin{proof}
By Lemma \ref{lem:motiv_Tsum}, $\sum_{f\in\history VU}T_{x,y}(f)$
vanishes if there exists $1\leq i<j\leq n$ such that $r_{i},r_{j}>0$.
Thus, we have
\[
\sum_{f\in\history VU}T_{x,y}^{\mathfrak{l}}(f)=\begin{cases}
\sum_{g\in\monohistory{\omega(V,U)}}T_{x,y}^{\mathfrak{l}}(g) & V\substrong U\\
0 & {\rm otherwise}.
\end{cases}
\]
Since $\#U-\#V\leq k-1$,
\[
\psi_{x,y}(\mathbb{I}_{\gamma}(\omega(V,U)))=0
\]
by ${\rm Hyp}(k-1)$, and thus we have
\begin{align*}
0 & =-\mathrm{ev}_{y}^{\mathfrak{l}}(\mathbb{I}(\omega(V,U)))+\sum_{\substack{W\subweak\omega(V,U)}
}\sum_{f\in\monohistory W}\mathrm{ev}_{x}^{\mathfrak{l}}(\mathbb{I}(W))T_{x,y}^{\mathfrak{l}}(f)\\
 & =-\mathrm{ev}_{y}^{\mathfrak{l}}(\mathbb{I}(\omega(V,U)))+\sum_{\substack{\substack{W\subweak\omega(V,U)\\
\#W\text{ is }2\text{ or }\#\omega(V,U)
}
}
}\sum_{f\in\monohistory W}\mathrm{ev}_{x}^{\mathfrak{l}}(\mathbb{I}(W))T_{x,y}^{\mathfrak{l}}(f)\\
 & =-\mathrm{ev}_{y}^{\mathfrak{l}}(\mathbb{I}(\omega(V,U)))+\mathrm{ev}_{x}^{\mathfrak{l}}(\mathbb{I}(\omega(V,U)))+\sum_{f\in\monohistory{\omega(V,U)}}T_{x,y}^{\mathfrak{l}}(f).
\end{align*}
Hence
\[
\sum_{g\in\monohistory{\omega(V,U)}}T_{x,y}^{\mathfrak{l}}(g)=\mathrm{ev}_{y}^{\mathfrak{l}}(\mathbb{I}(\omega(V,U)))-\mathrm{ev}_{y}^{\mathfrak{l}}(\mathbb{I}(\omega(V,U)))
\]
as desired.
\end{proof}
\begin{defn}
Let $V\subweak U$. We call a pair of histories a \emph{divided-history
}from $V$ to $U$ if it is of the form $(f,g)$ where $f\in\history V{W_{1}}$,
$g\in\history{W_{2}}U$ with some $V\subweak W_{1}\substrong W_{2}\subweak U$\emph{.}
\end{defn}

We express a divided-history $(f,g)$ as
\[
V\xrightarrow{p_{1}p_{1}'}\cdots\xrightarrow{p_{r}p_{r}'}W_{1}\Rightarrow W_{2}\xrightarrow{q_{1}q_{1}'}\cdots\xrightarrow{q_{s}q_{s}'}U
\]
where
\[
f=\left(V\xrightarrow{p_{1}p_{1}'}\cdots\xrightarrow{p_{r}p_{r}'}W_{1}\right),\ g=\left(W_{2}\xrightarrow{q_{1}q_{1}'}\cdots\xrightarrow{q_{s}q_{s}'}U\right).
\]
We denote by $\jhistory VU$ the set of divided-histories from $V$
to $U$.
\begin{defn}
For a divided-history
\[
(f,g)\coloneqq\left(V\xrightarrow{p_{1}p_{1}'}\cdots\xrightarrow{p_{i}p_{i}'}W_{1}\Rightarrow W_{2}\xrightarrow{p_{i+1}p_{i+1}'}\cdots\xrightarrow{p_{m}p_{m}'}U\right),
\]
define $T_{x,y}^{R}(f,g)$ and $T_{x,y}^{L}(f,g)$ by
\begin{align*}
T_{x,y}^{R}(f,g) & ={\rm sgn}(f){\rm sgn}(g)\sum_{c_{1},\dots,c_{m}\in S}\left(\prod_{s=1}^{m}{\rm ord}_{z-c_{s}}\left(\iota(p_{s})-\iota(p_{s}')\right)\right)\mathrm{ev}_{c_{i+1}}^{\mathfrak{l}}(\mathbb{I}(\omega(W_{1},W_{2})))\otimes I^{\mathfrak{m}}(x;c_{1},\dots,c_{m};y)\\
T_{x,y}^{L}(f,g) & ={\rm sgn}(f){\rm sgn}(g)\sum_{c_{1},\dots,c_{m}\in S}\left(\prod_{s=1}^{m}{\rm ord}_{z-c_{s}}\left(\iota(p_{s})-\iota(p_{s}')\right)\right)\mathrm{ev}_{c_{i}}^{\mathfrak{l}}(\mathbb{I}(\omega(W_{1},W_{2})))\otimes I^{\mathfrak{m}}(x;c_{1},\dots,c_{m};y)
\end{align*}
where we put $c_{0}\coloneqq x$ and $c_{m+1}\coloneqq y$.
\end{defn}

Now suppose that ${\rm Hyp}(k-1)$ holds and that $U\in\mathscr{B}_{k}$.
Then by Lemma \ref{lem:motiv_Tlsum}, for $V\subweak W_{1}\subweakneq W_{2}\subweak U\in\mathscr{B}_{k}$,
$f\in\history V{W_{1}}$ and $g\in\history{W_{2}}U$, we have
\[
T_{x,y}(f,g)\equiv\begin{cases}
T_{x,y}^{R}(f,g)-T{}_{x,y}^{L}(f,g) & W_{1}\substrong W_{2}\\
0 & W_{1}\not\ntriangleleft W_{2}
\end{cases}
\]
except for the case where $\#W_{1}=2$ and $W_{2}=U$. Thus, we have
\begin{multline}
\sum_{V\subweak W_{1}\subweakneq W_{2}\subweak U}\sum_{\substack{f\in\history V{W_{1}}\\
g\in\history{W_{2}}U
}
}\left(1\otimes\evallim x(\mathbb{I}_{\gamma}(V))\right)\cdot T_{x,y}(f,g)\\
\equiv\sum_{V\subweakneq U}\left(1\otimes\evallim x(\mathbb{I}_{\gamma}(V))\right)\cdot\sum_{(f,g)\in\jhistory VU}\left(T_{x,y}^{R}(f,g)-T_{x,y}^{L}(f,g)\right)\pmod{\mathfrak{L}_{k}\otimes\mathbb{Q}}.\label{eq:motiv_e3}
\end{multline}
Now, we decompose $\jhistory VU$ in two ways:
\[
\jhistory VU=\jexthistory R{(1)}VU\sqcup\jexthistory R{(2)}VU\sqcup\jexthistory R{(3)}VU=\jexthistory L{(1)}VU\sqcup\jexthistory L{(2)}VU\sqcup\jexthistory L{(3)}VU
\]
where $\jexthistory R{(1)}VU$ the subset of $\jhistory VU$ consisting
of the elements of the forms
\[
V\to\cdots\to W\Rightarrow U,
\]
$\jexthistory R{(2)}VU$ the subset of $\jhistory VU$ consisting
of the elements of the forms
\[
\cdots\to\left(Aa_{0}a_{k+1}B\right)\Rightarrow\left(Aa_{0}a_{1}\dots a_{i-1}a_{i+1}\dots a_{k}a_{k+1}B\right)\to\left(Aa_{0}a_{1}\dots a_{k}a_{k+1}B\right)\to\cdots\ \ \ \ (1\leq i\leq k),
\]
$\jexthistory R{(3)}VU$ the subset of $\jhistory VU$ consisting
of the elements of the forms
\[
\cdots\to\left(A_{1}A_{2}abB\right)\Rightarrow\left(A_{1}A_{2}aWbB\right)\to\left(A_{1}xA_{2}aWbB\right)\to\cdots
\]
or
\[
\cdots\to\left(AabB_{1}B_{2}\right)\Rightarrow\left(AaWbB_{1}B_{2}\right)\to\left(AaWbB_{1}xB_{2}\right)\to\cdots,
\]
$\jexthistory L{(1)}VU$ the subset of $\jhistory VU$ consisting
of the elements of the form
\[
V\Rightarrow W\to\cdots\to U,
\]
$\jexthistory L{(2)}VU$ the subset of $\jhistory VU$ consisting
of the elements of the form
\[
\cdots\to\left(Aa_{0}a_{k+1}B\right)\to\left(Aa_{0}a_{i}a_{k+1}B\right)\Rightarrow\left(Aa_{0}a_{1}\dots a_{k}a_{k+1}B\right)\to\cdots\ \ \ \ (k\geq1,i\in\{1,k\}),
\]
$\jexthistory L{(3)}VU$ the subset of $\jhistory VU$ consisting
of the elements of the form
\[
\cdots\to\left(A_{1}A_{2}abB\right)\to\left(A_{1}xA_{2}abB\right)\Rightarrow\left(A_{1}xA_{2}aWbB\right)\to\cdots
\]
or
\[
\cdots\to\left(AabB_{1}B_{2}\right)\to\left(AabB_{1}xB_{2}\right)\Rightarrow\left(AaWbB_{1}xB_{2}\right)\to\cdots.
\]
By Lemma \ref{lem:motiv_copro2} and ${\rm Hyp}(k-1)$, we have
\begin{align}
 & \sum_{V\subweakneq U}\left(1\otimes\evallim x(\mathbb{I}_{\gamma}(V))\right)\cdot\sum_{(f,g)\in\jexthistory R{(1)}VU}T_{x,y}^{R}(f,g)\nonumber \\
 & =\sum_{V\subweakneq U}\left(1\otimes\evallim x(\mathbb{I}_{\gamma}(V))\right)\cdot\left(\sum_{V\subweak W\substrong U}\mathrm{ev}_{y}^{\mathfrak{l}}(\mathbb{I}(\omega(W,U)))\otimes\sum_{f\in\history VW}T_{x,y}(f)\right)\nonumber \\
 & =\sum_{W\substrong U}\mathrm{ev}_{y}^{\mathfrak{l}}(\mathbb{I}(\omega(W,U)))\otimes\left(\sum_{V\subweak W}\evallim x(\mathbb{I}_{\gamma}(V))\sum_{f\in\history VW}T_{x,y}(f)\right)\nonumber \\
 & =\sum_{W\substrong U}\mathrm{ev}_{y}^{\mathfrak{l}}(\mathbb{I}(\omega(W,U)))\otimes\left(\psi_{x,y}(\mathbb{I}_{\gamma}(W))+\evallim y(\mathbb{I}_{\gamma}(W))\right)\nonumber \\
 & =\sum_{W\substrong U}\mathrm{ev}_{y}^{\mathfrak{l}}(\mathbb{I}(\omega(W,U)))\otimes\evallim y(\mathbb{I}_{\gamma}(W))\nonumber \\
 & =D\left(\evallim y(\mathbb{I}_{\gamma}(U))\right)\label{eq:motiv_4a}
\end{align}
and similarly
\begin{align}
 & \sum_{V\subweakneq U}\left(1\otimes\evallim x(\mathbb{I}_{\gamma}(V))\right)\cdot\sum_{(f,g)\in\jexthistory L{(1)}VU}T_{x,y}^{L}(f,g)\nonumber \\
 & =\sum_{V\subweakneq U}\left(1\otimes\evallim x(\mathbb{I}_{\gamma}(V))\right)\cdot\left(\sum_{V\substrong W\subweak U}\mathrm{ev}_{x}^{\mathfrak{l}}(\mathbb{I}(\omega(V,W)))\otimes\sum_{f\in\history WU}T_{x,y}(f)\right)\nonumber \\
 & =\sum_{W\subweak U}\left(1\otimes\sum_{f\in\history WU}T_{x,y}(f)\right)\cdot\left(\sum_{V\substrong W}\mathrm{ev}_{x}^{\mathfrak{l}}(\mathbb{I}(\omega(V,W)))\otimes\evallim x(\mathbb{I}_{\gamma}(V))\right)\nonumber \\
 & =\sum_{W\subweak U}\left(1\otimes\sum_{f\in\history WU}T_{x,y}(f)\right)\cdot D(\evallim x(\mathbb{I}_{\gamma}(W)).\label{eq:motiv_4b}
\end{align}
Next by Lemma \ref{lem:motiv_copro3}, we have
\begin{align*}
 & \sum_{i=1}^{k}T_{x,y}^{R}(\cdots\to\left(Aa_{0}a_{k+1}B\right)\Rightarrow\left(Aa_{0}a_{1}\dots a_{i-1}a_{i+1}\dots a_{k}a_{k+1}B\right)\xrightarrow{a_{i}a_{i+1}}\left(Aa_{0}a_{1}\dots a_{k}a_{k+1}B\right)\to\cdots)\\
 & +\sum_{i=1}^{k}T_{x,y}^{R}(\cdots\to\left(Aa_{0}a_{k+1}B\right)\Rightarrow\left(Aa_{0}a_{1}\dots a_{i-1}a_{i+1}\dots a_{k}a_{k+1}B\right)\xrightarrow{a_{i-1}a_{i}}\left(Aa_{0}a_{1}\dots a_{k}a_{k+1}B\right)\to\cdots)\\
= & \sum_{i\in\{1,k\}}T_{x,y}^{L}(\cdots\to\left(Aa_{0}a_{k+1}B\right)\xrightarrow{a_{i}a_{k+1}}\left(Aa_{0}a_{i}a_{k+1}B\right)\Rightarrow\left(Aa_{0}a_{1}\dots a_{k}a_{k+1}B\right)\to\cdots)\\
 & +\sum_{i\in\{1,k\}}T_{x,y}^{L}(\cdots\to\left(Aa_{0}a_{k+1}B\right)\xrightarrow{a_{0}a_{i}}\left(Aa_{0}a_{i}a_{k+1}B\right)\Rightarrow\left(Aa_{0}a_{1}\dots a_{k}a_{k+1}B\right)\to\cdots)
\end{align*}
and thus
\begin{equation}
\sum_{V\subweakneq U}\left(1\otimes\evallim x(\mathbb{I}_{\gamma}(V))\right)\cdot\sum_{(f,g)\in\jexthistory R{(2)}VU}T_{x,y}^{R}(f,g)=\sum_{V\subweakneq U}\left(1\otimes\evallim x(\mathbb{I}_{\gamma}(V))\right)\cdot\sum_{(f,g)\in\jexthistory L{(2)}VU}T_{x,y}^{L}(f,g).\label{eq:motiv_4c}
\end{equation}
Furthermore, since
\begin{align*}
 & T_{x,y}^{R}\left(\cdots\to\left(A_{1}A_{2}abB\right)\Rightarrow\left(A_{1}A_{2}aWbB\right)\to\left(A_{1}xA_{2}aWbB\right)\to\cdots\right)\\
 & =T_{x,y}^{L}\left(\cdots\to\left(A_{1}A_{2}abB\right)\to\left(A_{1}xA_{2}abB\right)\Rightarrow\left(A_{1}xA_{2}aWbB\right)\to\cdots\right)
\end{align*}
and
\begin{align*}
 & T_{x,y}^{R}\left(\cdots\to\left(AabB_{1}B_{2}\right)\Rightarrow\left(AaWbB_{1}B_{2}\right)\to\left(AaWbB_{1}xB_{2}\right)\to\cdots\right)\\
 & =T_{x,y}^{L}(\cdots\to\left(AabB_{1}B_{2}\right)\to\left(AabB_{1}xB_{2}\right)\Rightarrow\left(AaWbB_{1}xB_{2}\right)\to\cdots),
\end{align*}
we have
\begin{equation}
\sum_{V\subweakneq U}\left(1\otimes\evallim x(\mathbb{I}_{\gamma}(V))\right)\cdot\sum_{(f,g)\in\jexthistory R{(3)}VU}T_{x,y}^{R}(f,g)=\sum_{V\subweakneq U}\left(1\otimes\evallim x(\mathbb{I}_{\gamma}(V))\right)\cdot\sum_{(f,g)\in\jexthistory L{(3)}VU}T_{x,y}^{L}(f,g).\label{eq:motiv_4d}
\end{equation}
By (\ref{eq:motiv_e2}), (\ref{eq:motiv_e3}), (\ref{eq:motiv_4a}),
(\ref{eq:motiv_4b}), (\ref{eq:motiv_4d}) and (\ref{eq:motiv_4c}),
we have
\begin{equation}
D\left(\psi_{x,y}(\mathbb{I}_{\gamma}(U))\right)\equiv0\pmod{\mathcal{L}_{k}\otimes\mathbb{Q}}\label{eq:motiv_key}
\end{equation}
 for $U\in B_{k}$ under the hypothesis $H(k-1)$. Now we are ready
to prove the following Theorem.
\begin{thm}[Confluence relation for motivic iterated integrals]
\label{thm:motivicity}For $x,y\in\tilde{S}$ and $u\in\mathscr{B}$,
we have
\[
\psi_{x,y}(u)=0.
\]
\end{thm}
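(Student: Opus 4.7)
The plan is to prove ${\rm Hyp}(k)$ by induction on $k$, combining three ingredients: a quasi-multiplicative property for $\psi_{x,y}$, the complex-valued confluence relation, and the combinatorial computation of the infinitesimal coaction already carried out in (\ref{eq:motiv_combphi})--(\ref{eq:motiv_key}) above. The base case $k=0$ is immediate since $\mathscr{B}_{0}=\mathbb{Q}$ and all the terms defining $\psi_{x,y}$ cancel on constants; so assume ${\rm Hyp}(k-1)$ holds and fix $u\in\mathscr{B}_{k}$.

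The first task is to establish a quasi-multiplicative property for $\psi_{x,y}$. Setting $\tilde{\psi}_{x,y}(u)\coloneqq\psi_{x,y}(u)+\evallim y(u)$, I claim that $\tilde{\psi}_{x,y}:\mathscr{B}\to\mathcal{H}$ is a ring homomorphism. Indeed, since each $\partial_{c}$ is a derivation, the iterated expression $\partial_{c_{1}}\cdots\partial_{c_{l}}(u_{1}u_{2})$ expands by the generalized Leibniz rule as a sum over splittings of $\{1,\dots,l\}$ into two ordered subsets; pairing this with $I^{\mathfrak{m}}(x;c_{1},\dots,c_{l};y)$ and invoking its shuffle product formula identifies the sum with $\tilde{\psi}_{x,y}(u_{1})\cdot\tilde{\psi}_{x,y}(u_{2})$. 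Rearranging yields
\[
\psi_{x,y}(u_{1}u_{2})=\psi_{x,y}(u_{1})\psi_{x,y}(u_{2})+\psi_{x,y}(u_{1})\evallim y(u_{2})+\evallim y(u_{1})\psi_{x,y}(u_{2}),
\]
which together with ${\rm Hyp}(k-1)$ forces $\psi_{x,y}(u_{1}u_{2})=0$ whenever $u_{1},u_{2}$ both have positive degree strictly less than $k$. Since $\mathscr{B}_{k}$ is spanned by such products together with the single generators $\mathbb{I}_{\gamma}(U)$ with $U\in B_{k}$, it suffices to verify the vanishing in the latter case.

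For a single generator $\mathbb{I}_{\gamma}(U)$, the theorem then follows from Brown's Lemma \ref{lem:Brown} applied to $\psi_{x,y}(\mathbb{I}_{\gamma}(U))\in\mathcal{H}_{k}$: the period vanishing $\compmap(\psi_{x,y}(\mathbb{I}_{\gamma}(U)))=0$ is the complex-valued confluence relation already stated above (proved from Goncharov's differential formula in essentially the same manner as Theorem \ref{thm:Confluecne_relations_for_Euler_sums}), while the infinitesimal coaction vanishing $D_{r}(\psi_{x,y}(\mathbb{I}_{\gamma}(U)))=0$ for all $r<k$ is precisely the content of the congruence (\ref{eq:motiv_key}). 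The main technical obstacle is therefore already absorbed into (\ref{eq:motiv_key}), whose derivation requires the intricate combinatorial bookkeeping via histories and divided-histories and crucially invokes the inductive hypothesis ${\rm Hyp}(k-1)$ through Lemma \ref{lem:motiv_Tlsum}; granted this, the final assembly via Brown's Lemma is routine and completes the induction.
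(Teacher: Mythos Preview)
Your proof is correct and follows essentially the same route as the paper: induction on $k$, using the complex-valued confluence relation for the period vanishing, the congruence (\ref{eq:motiv_key}) for the infinitesimal coaction, and Brown's Lemma \ref{lem:Brown} to conclude. The paper's written proof treats only the single generators $\mathbb{I}_{\gamma}(U)$ and leaves the reduction from arbitrary $u\in\mathscr{B}_{k}$ implicit; your explicit verification that $\tilde{\psi}_{x,y}$ is a ring homomorphism (via iterated Leibniz plus the shuffle product of $I^{\mathfrak{m}}(x;-;y)$) fills in that step cleanly and is a welcome addition.
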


\begin{proof}
Assume that $U\in B_{k}$. We prove $\psi_{x,y}(\mathbb{I}_{\gamma}(U))=0.$
by induction on $k$. By definition, we have
\[
\compmap(\psi_{x,y}(\mathbb{I}_{\gamma}(U)))=0.
\]
By (\ref{eq:motiv_key}) and the induction hypothesis, we have
\[
D(\psi_{x,y}(\mathbb{I}_{\gamma}(U)))\equiv0\pmod{\mathcal{L}_{k}\otimes\mathbb{Q}}.
\]
Thus by Lemma \ref{lem:Brown}, we have
\[
\psi_{x,y}(\mathbb{I}_{\gamma}(U))=0.
\]
which completes the proof.
\end{proof}
\begin{rem}
Theorem \ref{thm:motivicity} can be viewed as an ultimate generalization
of the confluence relation defined in the authors' previous article
\cite{HS_confluence}. In fact, the confluence relation in \cite{HS_confluence}
is obtained from Theorem \ref{thm:motivicity} under the setting $\mathcal{P}\coloneqq\{0,1,z\}$
and $S\coloneqq\{0,1\}$ together with the tangential vectors $v(0)=1,$
$v(1)=-1$.
\end{rem}

\section{\label{sec:Proof-of-Theorem}Proofs of the main theorems}

In this section, we first prove the motivicity of the confluence relations
for Euler sums discussed in Part 1 i.e. $\iconf\subset\ker(L^{\mathfrak{m}})$
by using the general result in the previous section. We then derive
the main theorems by combining this motivicity result and the results
in Part 1.

For the proof of $\iconf\subset\ker(L^{\mathfrak{m}})$, we apply
Theorem \ref{thm:motivicity} under the setting $\mathcal{P}\coloneqq\{0,-1,z,-z^{2}\}$
and $S\coloneqq\{0,1,-1\}$ where we set the tangential vectors as
$v(0)=1,$ $v(1)=-1$ and $v(-1)\in\mathbb{Q}^{\times}$ arbitrary.
Define a linear map $\mathfrak{i}:\mathcal{B}\to\mathscr{B}$ by $\mathfrak{i}(e_{p_{1}}\cdots e_{p_{k}})\coloneqq\mathbb{I}_{\gamma}(0;p_{1},\ldots,p_{k};z)$
where $\gamma$ is the straight path from $0$ to $z$.
\begin{lem}
\label{lem:Lm_equals_ev}For $w\in\mathcal{B}$, we have $L^{\mathfrak{m}}(\mathrm{reg}_{z\to0}(w))=\evallim 0(\mathfrak{i}(w))$.
\end{lem}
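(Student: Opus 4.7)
The plan is to prove the identity by strong induction on the weight $k=\word(w)$, applying Brown's criterion (Lemma \ref{lem:Brown}) to the difference $f(w)\coloneqq L^{\mathfrak{m}}(\mathrm{reg}_{z\to 0}(w))-\evallim{0}(\mathfrak{i}(w))\in\mathcal{H}_{k}$. The base case $k=0$ is immediate since $\mathrm{reg}_{z\to 0}(1)=1=\mathfrak{i}(1)$ and both $L^{\mathfrak{m}}(1)=\evallim{0}(1)=1$. For the inductive step, it suffices to verify (i) $\compmap(f(w))=0$ and (ii) $D_{r}(f(w))=0$ for every $0<r<k$.

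Verification of (i) is immediate: by equation \eqref{eq:reg_z0_val}, the period of $L^{\mathfrak{m}}(\mathrm{reg}_{z\to 0}(w))$ equals $L_{1}(\mathrm{reg}_{z\to 0}(w))=\Reg_{z\to+0}L_{z}(w)$, while by the defining property of $J_{\gamma}$ in Section \ref{sec:Evaluation-map}---namely, that $\compmap\circ J_{\mathrm{up}}$ computes the regularized limit of the corresponding complex-valued iterated integral---the period of $\evallim{0}(\mathfrak{i}(w))$ also equals $\Reg_{z\to+0}L_{z}(w)$, so both periods coincide.

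For (ii), Lemma \ref{lem:motiv_copro2} yields
\[
D_{r}(\evallim{0}(\mathfrak{i}(w)))=\sum_{i=0}^{k-r}\mathrm{ev}_{0}^{\mathfrak{l}}\bigl(\mathbb{I}(p_{i};p_{i+1},\dots,p_{i+r};p_{i+r+1})\bigr)\otimes\evallim{0}\bigl(\mathfrak{i}(\tilde{w}^{(i)})\bigr),
\]
where $(p_{0},\ldots,p_{k+1})=(0,a_{1},\ldots,a_{k},z)$ and $\tilde{w}^{(i)}$ denotes the subword of $w$ with its middle $r$ letters deleted. The standard coproduct formula for motivic iterated integrals gives an analogous expansion of $D_{r}(L^{\mathfrak{m}}(\mathrm{reg}_{z\to 0}(w)))$. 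For those $\tilde{w}^{(i)}$ lying in $\mathcal{B}$ (which have weight $k-r<k$), the induction hypothesis identifies the $\mathcal{H}$-factors on both sides. Matching the $\mathfrak{L}$-factors term by term follows from Lemma \ref{lem:motiv_copro3} together with a direct analysis of the structure of $\mathrm{reg}_{z\to 0}$ built in Definition \ref{def:reg_limit_Euler}: each of its three reduction steps (the annihilation $e_{-1}\mapsto 0$, the shuffle decomposition $\mathcal{B}'\simeq\mathcal{B}''\otimes\mathcal{B}'''$, and the applications of $\dist\circ\barreg_{z\to 0}$ and $\wp$) corresponds to a specific motivic computation of $\mathrm{ev}_{0}^{\mathfrak{l}}$ on the relevant short subsequences.

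The hard part is twofold. First, one must extend $\mathrm{reg}_{z\to 0}$ to non-admissible subwords $\tilde{w}^{(i)}$ (those beginning with $e_{0}$ or ending with $e_{z}$) compatibly with a shuffle regularization of $L^{\mathfrak{m}}$, so that the induction hypothesis applies uniformly to every term in the expansion of $D_{r}$. Second, one must establish the motivic lift of Lemma \ref{lem:L1_shreg_varrho_is_L1_wp}, whose classical proof invokes the regularized double shuffle relation, star-regularization, the distribution relation, and the duality of MZVs. Since each of these relations is known to lift to the level of motivic Euler sums via Brown's coproduct techniques, the motivic argument proceeds essentially verbatim, which completes the induction.
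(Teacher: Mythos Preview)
Your approach differs substantially from the paper's, and carries a real gap. The paper does \emph{not} use induction via Brown's criterion; it proves the identity by direct computation, exploiting that $\reg_{z\to0}$ and $J_{\mathrm{up}}$ are \emph{defined} via parallel shuffle decompositions. After reducing to the straight path (so $\evallim{0}\mathfrak{i}(w)=\restr{J_{\mathrm{up}}(0;\dots;z)}{T=0}$), the paper kills the case $-1\in\{p_i\}$ using Lemma~\ref{lem:J_vanish}, then uses the shuffle isomorphism $\mathcal{B}'\simeq\mathcal{B}''\otimes\mathcal{B}'''$ simultaneously on both sides to reduce to monomials in $\mathcal{B}''$ and in $\mathcal{B}'''$ separately. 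On $\mathcal{B}''$ the sequence $(0,p_1/z,\dots,p_k/z,1)$ is very regular and both sides compute to the same motivic iterated integral in one line. On $\mathcal{B}'''$ the sequence $(0,p_1/z^2,\dots,p_k/z^2,1/z)$ is very regular; after the M\"obius transformation $t\mapsto t/(t-1)$ one is left exactly with the motivic lift of Lemma~\ref{lem:L1_shreg_varrho_is_L1_wp}, which holds because $L^{\mathfrak{m}}$ satisfies the regularized double shuffle, duality, and distribution relations used in that proof.

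Your inductive step (ii) does not close. The expansion of $D_r(\evallim{0}(\mathfrak{i}(w)))$ from Lemma~\ref{lem:motiv_copro2} has $\mathfrak{L}$-factors $\mathrm{ev}_0^{\mathfrak{l}}\mathbb{I}(p_i;\dots;p_{i+r+1})$ with arbitrary endpoints in $\{0,-1,z,-z^2\}$; these are not of the form $\mathfrak{i}(\cdot)$ for anything in $\mathcal{B}$, so the induction hypothesis tells you nothing about them. On the other side, $D_r(L^{\mathfrak{m}}(\reg_{z\to0}(w)))$ does not decompose along contiguous subwords of $w$ at all: on $\mathcal{B}'''$ the map $\reg_{z\to0}$ passes through $\wp$, which is built from the recursive $\theta$, then $\varsigma$, $\shreg$, and $\dist$, and scrambles letters globally, so Goncharov's coproduct on the output bears no term-by-term relation to subsequences of the input. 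Your appeal to Lemma~\ref{lem:motiv_copro3} does not bridge this: that lemma equates a weighted sum of $\mathrm{ev}_a^{\mathfrak{l}}$'s arising from a single differentiation, not a comparison of full coproduct structures. Making your scheme work would require a coproduct-compatibility statement for $\reg_{z\to0}$ itself, and none is available (nor expected, given how $\wp$ is constructed). The paper avoids all of this by matching the two sides at the level of their common shuffle decomposition rather than through $D_r$.
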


\begin{proof}
By definition of $\evallim 0$ (Definition \ref{def:ev}), for $p_{1},\dots,p_{k}\in\{0,-1,z,-z^{2}\}$
with $p_{1}\neq0$ and $p_{k}\neq z$, we have
\begin{align*}
\evallim 0(\mathfrak{i}(e_{p_{1}}\cdots e_{p_{k}})) & =\evallim 0(\mathbb{I}_{\gamma}(0;p_{1},\ldots,p_{k};z))\\
 & =\restr{J_{\gamma}(0;p_{1},\dots,p_{k};z)}{T=0}.
\end{align*}
By the definition of $J_{\gamma}(;;)$ (see Section \ref{subsec:limit-for-any-path}),
the last quantity is equal to
\[
J_{\mathrm{up}}(0;p_{1},\dots,p_{k};z).
\]
Thus the lemma is reduced to the equality
\[
L^{\mathfrak{m}}(\reg_{z\to0}(e_{p_{1}}\cdots e_{p_{k}}))=\restr{J_{\mathrm{up}}(0;p_{1},\dots,p_{k};z)}{T=0}.
\]
If $-1\in\{p_{1},\dots,p_{k}\}$, then the left hand side is zero
since $\mathrm{reg}_{z\to0}(e_{p_{1}}\cdots e_{p_{k}})=0$, while
the right hand side is also zero by Lemma \ref{lem:J_vanish}. Thus
we assume that $p_{1},\dots,p_{k}\in\{0,z,-z^{2}\}$. Furthermore,
since the both $L^{\mathfrak{m}}\circ\mathrm{reg}_{z\to0}$ and $\evallim 0\circ\mathfrak{i}$
satisfy the shuffle relation, it is enough to only consider the cases
\[
e_{p_{1}}\cdots e_{p_{k}}\in\mathcal{B}'',\ \text{i.e. }p_{1}=z
\]
and
\[
e_{p_{1}}\cdots e_{p_{k}}\in\mathcal{B}''',\ \text{i.e. }p_{1},\dots,p_{k}\in\{0,-z^{2}\}.
\]
For the first case, since the sequence $(0,\frac{p_{1}}{z},\dots,\frac{p_{k}}{z},1)$
is very regular, we have
\begin{align*}
J_{\mathrm{up}}(0;p_{1},\dots,p_{k};z) & =I_{{\rm up}}^{\mathfrak{m}}(0;(\frac{p_{1}}{z})[0],\dots,(\frac{p_{k}}{z})[0];1)\\
 & =L^{\mathfrak{m}}(\barreg_{z\to0}(e_{p_{1}}\cdots e_{p_{k}}))\\
 & =L^{\mathfrak{m}}(\dist(\barreg_{z\to0}(e_{p_{1}}\cdots e_{p_{k}})))\\
 & =L^{\mathfrak{m}}(\reg_{z\to0}(e_{p_{1}}\cdots e_{p_{k}})),
\end{align*}
and thus we are done. For the second case, since the sequence $(0,\frac{p_{1}}{z^{2}},\dots,\frac{p_{k}}{z^{2}};1/z)$
is very regular, we have
\[
J_{\mathrm{up}}(0;p_{1},\dots,p_{k};z)=I_{{\rm up}}^{\mathfrak{m}}(0;(\frac{p_{1}}{z^{2}})[0],\dots,(\frac{p_{k}}{z^{2}})[0];(\frac{1}{z})[0]),
\]
where $(\frac{1}{z})[0])$ is the extended tangential base point at
$\infty$ with the tangential vector $e^{T}\in\mathbb{Q}[[T]]$. Thus,
if we put $b_{j}=-\frac{p_{j}}{z^{2}}\in\{0,1\}$, then
\begin{align*}
\restr{J_{\mathrm{up}}(0;p_{1},\dots,p_{k};z)}{T=0} & =I_{{\rm up}}^{\mathfrak{m}}(0;b_{1},\dots,b_{k};\overrightarrow{-1}_{\infty})\\
 & =L^{\mathfrak{m}}(\shreg(\varrho(e_{b_{1}}\cdots e_{b_{k}})))
\end{align*}
where we have used the M\"{o}bius transformation$t\mapsto t/(t-1)$
for the last equality. Now that $L^{\mathfrak{m}}$ satisfies the
regularized double shuffle relation, the duality relation, and the
distribution relation, we can show
\[
L^{\mathfrak{m}}(\shreg(\varrho(e_{b_{1}}\cdots e_{b_{k}})))=L^{\mathfrak{m}}(\wp(e_{b_{1}}\cdots e_{b_{k}})),
\]
since the proof of Lemma \ref{lem:L1_shreg_varrho_is_L1_wp} only
uses those relations. Since
\[
L^{\mathfrak{m}}(\wp(e_{b_{1}}\cdots e_{b_{k}}))=L^{\mathfrak{m}}(\reg_{z\to0}(e_{p_{1}}\cdots e_{p_{k}}))
\]
by definition, this completes the proof.
\end{proof}
As a special case of Theorem \ref{thm:motivicity}, we obtain the
following theorem.
\begin{prop}
\label{prop:confluence_ker_L}We have
\[
\iconf\subset\ker(L^{\mathfrak{m}}).
\]
\end{prop}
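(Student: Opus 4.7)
The plan is to deduce Proposition \ref{prop:confluence_ker_L} as a direct consequence of Theorem \ref{thm:motivicity} applied to the setting $\mathcal{P} = \{0, -1, z, -z^{2}\}$, $S = \{0, 1, -1\}$, with the tangential vectors $v(0) = 1$, $v(1) = -1$, and $v(-1) \in \mathbb{Q}^{\times}$ arbitrary. Let $x, y \in \widetilde{S}$ be lifts of $0$ and $1$, respectively, whose associated motivic tangential base points are precisely $\overrightarrow{1}_{0}$ and $-\overrightarrow{1}_{1}$. Theorem \ref{thm:motivicity} then guarantees $\psi_{x,y}(\mathfrak{i}(u)) = 0$ for every $u \in \mathcal{B}$, so the whole task reduces to recognizing this vanishing as the motivic confluence identity, namely
\[
\psi_{x,y}(\mathfrak{i}(u)) \;=\; L^{\mathfrak{m}}(\varphi(u)) \;-\; L^{\mathfrak{m}}\bigl(u|_{z\to 1}\bigr),
\]
from which $L^{\mathfrak{m}}(u|_{z\to 1} - \varphi(u)) = 0$ is immediate, proving that every generator of $\iconf$ lies in $\ker(L^{\mathfrak{m}})$.

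To establish this matching, I would first verify that the two differential operators named $\partial_{c}$ (on $\mathcal{B}$ in Definition \ref{def:diff_operator_words}, and on $\mathscr{B}$ in Definition \ref{def:diff_operator_formal}) are intertwined through $\mathfrak{i}$; this is a routine combinatorial check based on the elementary identity $\mathrm{ord}_{z=c}\bigl(\tfrac{a_{i}-a_{i+1}}{a_{i}-a_{i-1}}\bigr) = \mathrm{ord}_{z-c}(a_{i}-a_{i+1}) - \mathrm{ord}_{z-c}(a_{i}-a_{i-1})$ together with the common convention $a_{0} = 0$, $a_{k+1} = z$ built into both definitions. Second, I would match the two evaluation maps: Lemma \ref{lem:Lm_equals_ev} already gives $\evallim{x}(\mathfrak{i}(\partial_{c_{1}}\cdots\partial_{c_{l}}u)) = L^{\mathfrak{m}}(\reg_{z\to 0}(\partial_{c_{1}}\cdots\partial_{c_{l}}u))$, and a parallel but simpler computation will show $\evallim{y}(\mathfrak{i}(u)) = L^{\mathfrak{m}}(u|_{z\to 1})$: the change of variables $z \mapsto 1 + v(1)\tilde{z} = 1 - \tilde{z}$ built into Definition \ref{def:ev} turns $\mathfrak{i}(u)$ into a very regular sequence whose letters at $\tilde{z} = 0$ are exactly the restrictions $p_{j}(1) \in \{0, 1, -1\}$ and whose endpoints acquire tangential vectors $\overrightarrow{1}_{0}$ and $-\overrightarrow{1}_{1}$, so $J_{\mathrm{up}}$ at $T = 0$ computes precisely $L^{\mathfrak{m}}(u|_{z\to 1})$.

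Third, I would align the coefficients appearing in the two sums. Since $I^{\mathfrak{m}}(x; c_{1}, \ldots, c_{l}; y) = L^{\mathfrak{m}}(e_{c_{1}}\cdots e_{c_{l}})$ by the very definition of $L^{\mathfrak{m}}$ given in the introduction, the sum in the definition of $\psi_{x,y}$ rewrites as $\sum L^{\mathfrak{m}}(\reg_{z\to 0}(\partial_{c_{1}}\cdots\partial_{c_{l}}u)) \cdot L^{\mathfrak{m}}(e_{c_{1}}\cdots e_{c_{l}})$. On the other hand, the definition of $\varphi$ combined with shuffle multiplicativity of $L^{\mathfrak{m}}$ applied to the admissible shuffle product yields $L^{\mathfrak{m}}(\varphi(u)) = \sum L^{\mathfrak{m}}(\reg_{z\to 0}(\partial_{c_{1}}\cdots\partial_{c_{l}}u)) \cdot L^{\mathfrak{m}}(\shreg(e_{c_{1}}\cdots e_{c_{l}}))$. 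The residual identity $L^{\mathfrak{m}}(e_{c_{1}}\cdots e_{c_{l}}) = L^{\mathfrak{m}}(\shreg(e_{c_{1}}\cdots e_{c_{l}}))$ follows from the shuffle-decomposition $e_{c_{1}}\cdots e_{c_{l}} = \sum_{n,m} e_{0}^{n} \shuffle e_{1}^{m} \shuffle v_{n,m}$ with $v_{n,m}$ admissible, together with the vanishings $L^{\mathfrak{m}}(e_{0}) = L^{\mathfrak{m}}(e_{1}) = 0$, which in turn hold automatically thanks to the chosen tangential vectors $\overrightarrow{1}_{0}$ and $-\overrightarrow{1}_{1}$.

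The principal obstacle is the bookkeeping for the endpoint identification $\evallim{y}(\mathfrak{i}(u)) = L^{\mathfrak{m}}(u|_{z\to 1})$, since it requires tracing the affine change of variables of Definition \ref{def:ev} through Definition \ref{def:J_for_regular_sequence} and confirming that the induced tangential data at the endpoints exactly reproduce $\overrightarrow{1}_{0}$ and $-\overrightarrow{1}_{1}$. Once this is verified, the proposition falls out as a mechanical translation of the general motivicity result, with every other ingredient supplied by the lemmas already proven in Part 2 (most notably Lemma \ref{lem:Lm_equals_ev} and Theorem \ref{thm:motivicity}).
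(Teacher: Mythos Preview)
Your proposal is correct and follows essentially the same route as the paper's own proof: apply Theorem \ref{thm:motivicity} with $\mathcal{P}=\{0,-1,z,-z^{2}\}$, $S=\{0,1,-1\}$, $v(0)=1$, $v(1)=-1$, then identify $\evallim{y}(\mathfrak{i}(u))=L^{\mathfrak{m}}(u|_{z\to1})$, invoke Lemma \ref{lem:Lm_equals_ev} for the $x$-side, and match the remaining sum to $L^{\mathfrak{m}}(\varphi(u))$ via the $\partial_{c}$-compatibility and the shuffle-regularization property of $L^{\mathfrak{m}}$. The only difference is that you spell out the step $L^{\mathfrak{m}}(e_{c_{1}}\cdots e_{c_{l}})=L^{\mathfrak{m}}(\shreg(e_{c_{1}}\cdots e_{c_{l}}))$ and the $\partial_{c}$-intertwining explicitly, whereas the paper absorbs these into ``by definition''.
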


\begin{proof}
We apply Theorem \ref{thm:motivicity} under the setting $\mathcal{P}\coloneqq\{0,-1,z,-z^{2}\}$
and $S\coloneqq\{0,1,-1\}$ where we set the tangential vectors as
$v(0)=1,$ $v(1)=-1$ and $v(-1)\in\mathbb{Q}^{\times}$ arbitrary.
Let $u=e_{p_{1}}\cdots e_{p_{k}}\in\mathcal{B}$ with $p_{1},\ldots,p_{k}\in\mathcal{P}$
such that $p_{1}\neq0$ and $p_{k}\neq0$. It is enough to show that
\[
L^{\mathfrak{m}}(\restr u{z\to1}-\varphi(u))=0.
\]
 Then
\[
L^{\mathfrak{m}}(\restr u{z\to1})=\mathrm{ev}_{1}^{\mathfrak{m}}\mathfrak{i}(u)
\]
and
\begin{align*}
L^{\mathfrak{m}}(\varphi(u)) & =\sum_{l=0}^{\infty}\sum_{c_{1},\dots,c_{l}\in S}L^{\mathfrak{m}}(\mathrm{reg}_{z\to0}(\partial_{c_{1}}\cdots\partial_{c_{l}}u))\cdot L^{\mathfrak{m}}(e_{c_{1}}\cdots e_{c_{l}})\\
 & =\sum_{l=0}^{\infty}\sum_{c_{1},\dots,c_{l}\in S}\evallim 0(\mathfrak{i}(\partial_{c_{1}}\cdots\partial_{c_{l}}u))\cdot L^{\mathfrak{m}}(e_{c_{1}}\cdots e_{c_{l}})\\
 & =\sum_{l=0}^{\infty}\sum_{c_{1},\dots,c_{l}\in S}\evallim 0(\partial_{c_{1}}\cdots\partial_{c_{l}}\mathfrak{i}(u))\cdot I^{\mathfrak{m}}(0;c_{1},\dots,c_{l};1)\\
 & =\psi_{0,1}(\mathfrak{i}(u))+\evallim 1(\mathfrak{i}(u)).
\end{align*}
Here, $\partial_{c}$ on the first and second lines are the one defined
in Definition \ref{def:diff_operator_formal} while that on the third
line is the one defined in Definition \ref{def:diff_operator_words}.
The second equality is by Lemma \ref{lem:Lm_equals_ev}, and the other
equalities are by definition. Hence, by Theorem \ref{thm:motivicity},
\[
L^{\mathfrak{m}}(\restr u{z\to1}-\varphi(u))=-\psi_{0,1}(\mathfrak{i}(u))=0
\]
as desired.
\end{proof}
\begin{thm}
\label{thm:main_conf-2}Let $\widehat{\mathcal{I}}_{\mathrm{CF}}$
be the ideal of $\mathfrak{H}^{(2)}\coloneqq(\mathbb{Q}\left\langle e_{0},e_{1},e_{-1}\right\rangle ,\shuffle)$
generated by $e_{0},e_{1}$ and $\iconf$. Then, we have
\[
\widehat{\mathcal{I}}_{\mathrm{CF}}=\ker(L^{\mathfrak{m}}).
\]
\end{thm}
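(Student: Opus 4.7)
The plan is to prove the two inclusions separately; the forward inclusion $\widehat{\mathcal{I}}_{\mathrm{CF}}\subset\ker(L^{\mathfrak{m}})$ is straightforward, while the reverse inclusion will be obtained by a dimension comparison in each weight. For the forward inclusion, I would argue as follows. First, by the standard tangential-base-point regularization conventions for motivic iterated integrals, $L^{\mathfrak{m}}(e_{0})=I^{\mathfrak{m}}(\overrightarrow{1}_{0};0;-\overrightarrow{1}_{1})=0$ and $L^{\mathfrak{m}}(e_{1})=I^{\mathfrak{m}}(\overrightarrow{1}_{0};1;-\overrightarrow{1}_{1})=0$. Second, $\iconf\subset\ker(L^{\mathfrak{m}})$ is exactly Proposition \ref{prop:confluence_ker_L}. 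Third, since $L^{\mathfrak{m}}:(\mathfrak{H}^{(2)},\shuffle)\to\mathcal{H}_{2}$ is a $\mathbb{Q}$-algebra homomorphism (thanks to the shuffle relation for motivic iterated integrals), its kernel is automatically a shuffle ideal and therefore contains the shuffle ideal generated by $\{e_{0},e_{1}\}\cup\iconf$, which is $\widehat{\mathcal{I}}_{\mathrm{CF}}$ by definition.

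For the reverse inclusion, the key observation is that Theorem \ref{thm:main_main} (together with its restatement Theorem \ref{thm:main_general_zeta}) is formulated in terms of an abstract $\mathbb{Q}$-linear map $Z:\mathcal{A}_{k,d}^{0}(\{0,1,-1\})\to R$ with the single hypothesis $Z(\iconf)=\{0\}$, and the proof uses only that hypothesis together with purely algebraic manipulations in $\mathcal{A}^{0}(\{0,1,-1\})$. Applying Theorem \ref{thm:main_general_zeta} with $Z$ taken to be $L^{\mathfrak{m}}$ restricted to $\mathcal{A}_{k,d}^{0}(\{0,1,-1\})_{\mathbb{Q}}$---for which the required vanishing is Proposition \ref{prop:confluence_ker_L}---shows that every $\zeta^{\mathfrak{m}}(\Bbbk)$ with $\Bbbk\in\indset(k,\infty)$ is a $\mathbb{Q}$-linear combination of $\{\zeta^{\mathfrak{m}}(\Bbbk')\mid\Bbbk'\in\indset^{\mathrm{D}}(k,\infty)\}$. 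Combining this with the standard shuffle reduction that identifies $(\mathfrak{H}^{(2)}/((e_{0})+(e_{1})))_{k}$ with $\mathcal{A}_{k,\infty}^{0}(\{0,1,-1\})_{\mathbb{Q}}$, we obtain the dimension estimate
\[
\dim_{\mathbb{Q}}\bigl(\mathfrak{H}^{(2)}_{k}/\widehat{\mathcal{I}}_{\mathrm{CF}}\bigr)\leq\#\indset^{\mathrm{D}}(k,\infty).
\]
On the other hand, Deligne's upper bound in \cite{Deli_es} together with the matching lower bound coming from the known structure of the motivic fundamental Lie algebra of $\mathcal{MT}(\mathbb{Z}[1/2])$ (equivalently, Borel's computation of $K_{\ast}(\mathbb{Z}[1/2])\otimes\mathbb{Q}$) gives $\dim_{\mathbb{Q}}\mathcal{H}_{2}^{(k)}=\#\indset^{\mathrm{D}}(k,\infty)$. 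Since $L^{\mathfrak{m}}:\mathfrak{H}^{(2)}_{k}\to\mathcal{H}_{2}^{(k)}$ is surjective---its image already contains the Deligne basis---the induced surjection $\mathfrak{H}^{(2)}_{k}/\widehat{\mathcal{I}}_{\mathrm{CF}}\twoheadrightarrow\mathcal{H}_{2}^{(k)}$ between finite-dimensional $\mathbb{Q}$-vector spaces of the same dimension must be an isomorphism, forcing $\widehat{\mathcal{I}}_{\mathrm{CF}}=\ker(L^{\mathfrak{m}})$ in weight $k$, and hence globally.

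The main technical obstacle, such as it is, lies not in the transfer of Part 1's argument---which is purely algebraic and goes through verbatim, given Proposition \ref{prop:confluence_ker_L}---but in securing the unconditional dimension equality $\dim_{\mathbb{Q}}\mathcal{H}_{2}^{(k)}=\#\indset^{\mathrm{D}}(k,\infty)$. The upper bound is Deligne's and the matching lower bound is a standard consequence of the theory of mixed Tate motives over $\mathbb{Z}[1/2]$ together with the injectivity of $\phi_{2}$ (also due to Deligne), which I would invoke by reference. Once this dimension equality is granted, the dimension-squeeze argument collapses everything to formalities.
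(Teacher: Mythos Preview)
Your approach is essentially the paper's: the forward inclusion via Proposition~\ref{prop:confluence_ker_L} together with the fact that $\ker(L^{\mathfrak{m}})$ is a shuffle ideal, and the reverse inclusion via a weight-by-weight dimension squeeze using Theorem~\ref{thm:main_main} against Deligne's result that $\dim_{\mathbb{Q}}(\mathfrak{H}^{(2)}_{k}/\ker L^{\mathfrak{m}})=F_{k}=\#\indset^{\mathrm{D}}(k,\infty)$.

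There is one slip in your write-up of the reverse direction. Applying Theorem~\ref{thm:main_general_zeta} with $Z=L^{\mathfrak{m}}$ only tells you that the \emph{image} of $L^{\mathfrak{m}}$ in weight $k$ is spanned by the Deligne basis, i.e.\ it bounds $\dim_{\mathbb{Q}}(\mathfrak{H}^{(2)}_{k}/\ker L^{\mathfrak{m}})$ from above; it says nothing about $\dim_{\mathbb{Q}}(\mathfrak{H}^{(2)}_{k}/\widehat{\mathcal{I}}_{\mathrm{CF}})$. To obtain the inequality $\dim_{\mathbb{Q}}(\mathfrak{H}^{(2)}_{k}/\widehat{\mathcal{I}}_{\mathrm{CF}})\leq\#\indset^{\mathrm{D}}(k,\infty)$ you must invoke Theorem~\ref{thm:main_main} directly (it is already a statement about $\hconf=(\mathbb{Q}\otimes\mathcal{A}^{0}(\{0,1,-1\}))/\iconf$, which your shuffle reduction identifies with $\mathfrak{H}^{(2)}/\widehat{\mathcal{I}}_{\mathrm{CF}}$), or equivalently apply Theorem~\ref{thm:main_general_zeta} with $Z$ equal to the quotient map onto $\hconf$ rather than $L^{\mathfrak{m}}$. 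This is exactly what the paper does; with that correction your argument and the paper's coincide.
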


\begin{proof}
Let $k$ be a non-negative integer and $\mathfrak{H}_{k}^{(2)}$ the
degree $k$ part of $\mathfrak{H}^{(2)}$. Let $F_{k}$ be the Fibonacci
number defined by $\sum_{k=0}^{\infty}F_{k}t^{k}=1/(1-t-t^{2})$.
Then by a result of Deligne \cite{Deli_es}, 
\begin{equation}
\dim_{\mathbb{Q}}\left(\mathfrak{H}_{k}^{(2)}/(\mathfrak{H}_{k}^{(2)}\cap\ker(L^{\mathfrak{m}}))\right)=F_{k},\label{eq:last_1}
\end{equation}
whereas by Theorem \ref{thm:main_main}, 
\begin{equation}
\dim_{\mathbb{Q}}\left(\mathfrak{H}_{k}^{(2)}/(\mathfrak{H}_{k}^{(2)}\cap\widehat{\mathcal{I}}_{\mathrm{CF}})\right)=\dim_{\mathbb{Q}}\left(\mathcal{A}_{k,\infty}^{0}(\{0,1,-1\})/\iconf\right)\leq\#\indset^{\mathrm{D}}(k,\infty)=F_{k}.\label{eq:last_2}
\end{equation}
Combining (\ref{eq:last_1}) and (\ref{eq:last_2}), we have
\begin{equation}
\dim_{\mathbb{Q}}\left(\mathfrak{H}_{k}^{(2)}\cap\widehat{\mathcal{I}}_{\mathrm{CF}}\right)\geq\dim_{\mathbb{Q}}\left(\mathfrak{H}_{k}^{(2)}\cap\ker(L^{\mathfrak{m}})\right).\label{eq:last_3}
\end{equation}
On the other hand, by Proposition \ref{prop:confluence_ker_L}, 
\begin{equation}
\mathfrak{H}_{k}^{(2)}\cap\widehat{\mathcal{I}}_{\mathrm{CF}}\subset\mathfrak{H}_{k}^{(2)}\cap\ker(L^{\mathfrak{m}}).\label{eq:last_4}
\end{equation}
By comparing (\ref{eq:last_3}) and (\ref{eq:last_4}), we find 
\[
\mathfrak{H}_{k}^{(2)}\cap\widehat{\mathcal{I}}_{\mathrm{CF}}=\mathfrak{H}_{k}^{(2)}\cap\ker(L^{\mathfrak{m}}).
\]
Thus, we can conclude that $\widehat{\mathcal{I}}_{\mathrm{CF}}=\ker(L^{\mathfrak{m}})$.
\end{proof}
Put $\zeta^{\mathfrak{m}}(\Bbbk)=(-1)^{d}L^{\mathfrak{m}}(\word(\Bbbk)),\ \modzeta^{\mathfrak{m}}(\Bbbk)=L^{\mathfrak{m}}(\modword(\Bbbk))$
where $\word(\Bbbk),\modword(\Bbbk)$ are as defined in Definition
\ref{def:index_word}. Then, by Proposition \ref{prop:confluence_ker_L},
we obtain the following refinement of Theorem Theorem \ref{thm:main_explicit}.
\begin{thm}
\label{thm:main_general_motivic_zeta}Let $k,d\geq0$. For $\Bbbk\in\indset(k,d)$,
$\modzeta^{\mathfrak{m}}(\Bbbk)$ is a $\mathbb{Z}_{(2)}$-linear
combination of $\left\{ \modzeta^{\mathfrak{m}}(\Bbbk')\mid\Bbbk'\in\indset^{\mathrm{D}}(k,d)\right\} $.
More explicitly, for $\Bbbk\in\indset(k,d)\setminus\indset^{\mathrm{D}}(k,d)$,
we have 
\[
\modzeta^{\mathfrak{m}}(\Bbbk)=\sum_{\Bbbk'\in\indset^{\mathrm{D}}(k,d)}\alpha_{\Bbbk,\Bbbk'}\modzeta^{\mathfrak{m}}(\Bbbk')
\]
where $\alpha_{\Bbbk,\Bbbk'}$'s are as defined in Theorem \ref{thm:main_explicit}.
\end{thm}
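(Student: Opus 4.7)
The proof is essentially a direct application of the explicit reduction algorithm developed in Part 1 to the motivic setting. The key bridge is Proposition \ref{prop:confluence_ker_L}, which asserts the motivicity of the confluence relations: $\iconf \subset \ker(L^{\mathfrak{m}})$. Since the coefficients $\alpha_{\Bbbk,\Bbbk'}$ in Theorem \ref{thm:main_explicit} were produced by a purely algebraic reduction process inside $\mathcal{A}^{0}_{k,d}(\{0,1,-1\})/\iconf$, the very same identities must hold after applying any $\mathbb{Q}$-linear map that vanishes on $\iconf$. This is precisely what Theorem \ref{thm:main_general_zeta} formalises.

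The plan is therefore as follows. First, I would specialise Theorem \ref{thm:main_general_zeta} (equivalently, the explicit form in Theorem \ref{thm:main_explicit}) by taking $R \coloneqq \mathcal{H}$ and $Z \coloneqq L^{\mathfrak{m}}|_{\mathcal{A}^{0}_{k,d}(\{0,1,-1\})}$. To invoke the theorem one needs to verify $Z(\iconf) = \{0\}$, and this is exactly the content of Proposition \ref{prop:confluence_ker_L}. Thus Theorem \ref{thm:main_general_zeta} immediately gives, for every $\Bbbk \in \indset(k,d)\setminus \indset^{\mathrm{D}}(k,d)$,
\[
L^{\mathfrak{m}}(\modword(\Bbbk)) = \sum_{\Bbbk' \in \indset^{\mathrm{D}}(k,d)} \alpha_{\Bbbk,\Bbbk'}\, L^{\mathfrak{m}}(\modword(\Bbbk'))
\]
with exactly the same rational coefficients $\alpha_{\Bbbk,\Bbbk'}$, which by construction have denominators in $1+2\mathbb{Z}$.

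Next, it only remains to match up $L^{\mathfrak{m}}(\modword(\Bbbk))$ with $\modzeta^{\mathfrak{m}}(\Bbbk)$. Unwinding the definition $\modword(\Bbbk) = 2^{d}\word(\Bbbk)$ from Definition \ref{def:index_word} together with $\zeta^{\mathfrak{m}}(\Bbbk) = (-1)^{d} L^{\mathfrak{m}}(\word(\Bbbk))$ and $\modzeta^{\mathfrak{m}}(\Bbbk) = (-2)^{\depth(\Bbbk)} \zeta^{\mathfrak{m}}(\Bbbk)$ yields $L^{\mathfrak{m}}(\modword(\Bbbk)) = \modzeta^{\mathfrak{m}}(\Bbbk)$, and substituting gives the displayed identity. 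The first (non-explicit) assertion that $\modzeta^{\mathfrak{m}}(\Bbbk)$ is a $\mathbb{Z}_{(2)}$-combination of $\{\modzeta^{\mathfrak{m}}(\Bbbk') \mid \Bbbk' \in \indset^{\mathrm{D}}(k,d)\}$ is then immediate.

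Consequently, there is no real obstacle left at this stage: the heavy lifting has already been done in Part 1 (the combinatorial reduction producing the matrices $C$, $C'$ and thus the rationals $\alpha_{\Bbbk,\Bbbk'}$) and in Section \ref{sec:Motivicity-of-confluence} (the motivicity theorem \ref{thm:motivicity}, from which Proposition \ref{prop:confluence_ker_L} is deduced via Lemma \ref{lem:Lm_equals_ev}). The only subtlety worth double-checking in the write-up is that the reduction in Part 1 is carried out with $\mathbb{Z}_{(2)}$-coefficients rather than over $\mathbb{Q}$, but this is already built into the statement of Theorem \ref{thm:main_general_zeta} and imposes no additional condition on $Z$, so the specialisation $Z = L^{\mathfrak{m}}$ goes through verbatim.
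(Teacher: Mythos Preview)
Your proposal is correct and follows exactly the approach the paper takes: the paper simply states this theorem as an immediate consequence of Proposition~\ref{prop:confluence_ker_L} (giving $\iconf\subset\ker(L^{\mathfrak{m}})$) applied to Theorem~\ref{thm:main_explicit}/\ref{thm:main_general_zeta} with $Z=L^{\mathfrak{m}}$, together with the definitional identity $\modzeta^{\mathfrak{m}}(\Bbbk)=L^{\mathfrak{m}}(\modword(\Bbbk))$. There is nothing to add.
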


\subsection*{Acknowledgements}

This work was supported by JSPS KAKENHI Grant Numbers JP18J00982,
JP18K13392, JP19J00835, and JP20K14293.


\begin{thebibliography}{10}
\bibitem{Belyi}G.V. Bely\u{i}, `On Galois extensions of a maximal
cyclotomic field,' Izv. Akad. Nauk SSSR Ser. Mat., 1979, 43(2): 267--276.

\bibitem{Bro_mix}F. Brown, `Mixed Tate motives over $\mathbb{Z}$,'
Ann. Math., 175 (2012), 949-976.

\bibitem{Bro_dec}F. Brown, `On the decomposition of motivic multiple
zeta values,' in Galois-Teichmüller Theory and Arithmetic Geometry,
H. Nakamura et. al. (eds.), Adv. Studies in Pure Math. 68, Math. Soc.
Japan, Tokyo, 2012, pp. 31-58.

\bibitem{Deli_tan}P. Deligne, `Le groupe fondamental de la droite
projective moins trois points,' in Galois groups over $\mathbb{Q}$
(Berkeley, CA, 1987), Math. Sci. Res. Inst. Publ., 16 (1989), 79-297.

\bibitem{Deli_es}P. Deligne, `Le groupe fondamental unipotent motivique
de $\mathbb{G}_{m}-\mu_{N}$, pour $N=2,3,4,6$ ou $8$,' Publ. Math.
Inst. Hautes Etudes Sci. (2010), 101--141.

\bibitem{Dr_quasi}V. G. Drinfeld, `On quasitriangular quasi-Hopf
algebras and a group closely connected with Gal($\bar{\mathbb{Q}}/\mathbb{Q}$),'
Leningrad Math. J. 2 (1991), no. 4, 829--860.

\bibitem{Furusho_confluence}H. Furusho, `The pentagon equation and
the confluence relations,' arXiv:1809.00789 {[}math.QA{]}.

\bibitem{Glanois_fundamental_groupoid}C. Glanois `Motivic unipotent
fundamental groupoid of $\mathbb{G}_{m}\setminus\mu_{N}$ for $N=2,3,4,6,8$
and Galois descents,' J. Number Theory, 160 (2016), 334-384.

\bibitem{GonSym}A. B. Goncharov, `Galois symmetries of fundamental
groupoids and noncommutative geometry,' Duke Math. J. 128 (2005),
no. 2, 209--284.

\bibitem{Gro}A. Grothendieck, `Esquisse d\textquoteright un programme,'
in \textquotedblleft Geometric Galois actions, 1\textquotedblright ,
London Math. Soc. Lecture Note Ser., 242, (1997) 5--48.

\bibitem{HS_confluence}M. Hirose and N. Sato, `Iterated integrals
on $\mathbb{P}^{1}\setminus\{0,1,\infty,z\}$ and a class of relations
among multiple zeta values,' Adv. Math. 348 (2019), 163-182.

\bibitem{Hoffman}M. Hoffman, `The Algebra of Multiple Harmonic Series,'
J. Algebra, 194-2 (1997), 477 - 495.

\bibitem{IKZ}K. Ihara, M. Kaneko, and D. Zagier, `Derivation and
double shuffle relations for multiple zeta values,' Compos. Math.
142 (2006), 307-338

\bibitem{KY}M. Kaneko and S. Yamamoto, `A new integral-series identity
of multiple zeta values and regularizations,' Selecta Math. (N. S.)
24 (2018), 2499-2521.

\bibitem{Kontsevich-Zagier}M. Kontsevich and D. Zagier, `Periods,'
in Mathematics unlimited---2001 and beyond, Springer, Berlin (2001)
771--808.

\bibitem{Li-Qin_DS}Z-h. Li and C. Qin, `Some relations deduced from
regularized double shuffle relations of multiple zeta values,' Int.
J. Number Theory 17 (2021), 91-146

\bibitem{Panzer}E. Panzer, `Feynman integrals and hyperlogarithms,'
PhD thesis, Humboldt-Universit\"{a}t zu Berlin, 2014; arXiv:1506.07243
{[}math-ph{]}.
\end{thebibliography}
\end{document}